\documentclass[twoside,10pt]{amsart}

\usepackage[english]{babel}
\usepackage{amsmath,amsfonts,amssymb,amsthm}
\usepackage{mathrsfs}
\usepackage{amscd}
\usepackage{graphics,color,graphicx,pstricks}

\setlength\textwidth{4.93truein}
\setlength\textheight{7.75truein}
\baselineskip=12pt
%
\theoremstyle{plain}
\newtheorem{thm}{Theorem}[section]
\newtheorem{lem}[thm]{Lemma}
\newtheorem{prop}[thm]{Proposition}
\newtheorem{cor}[thm]{Corollary}

\theoremstyle{plain}
\newtheorem{thmIntro}{Theorem}

\theoremstyle{definition}
\newtheorem{defn}{Definition}[section]

\theoremstyle{remark}
\newtheorem{rem}{Remark}[section]

\newtheorem{nota}{Notation}

\newcommand{\Z}{\mathbb{Z}}
\newcommand{\Q}{\mathbb{Q}}
\newcommand{\R}{\mathbb{R}}
\newcommand{\C}{\mathbb{C}}

\newcommand{\got}[1]{\mathfrak{#1}}
\newcommand{\Ad}{\mathop{\mathrm{Ad}}}
\newcommand{\ad}{\mathop{\mathrm{ad}}}
\newcommand{\tr}{\mathop{\mathrm{Tr}}}
\newcommand{\diag}{\mathrm{diag}}

\newcommand{\Orb}{\mathcal{O}}
\newcommand{\Mat}{\mathcal{M}at}

\newcommand{\Chol}{\mathcal{C}_{\mathrm{hol}}}
\newcommand{\Pic}{\mathrm{Pic}}
\newcommand{\PicG}{\mathrm{Pic}^{\KC}}
\newcommand{\PicGQ}{\mathrm{Pic}^{\KC}_{\Q}}
\newcommand{\Xss}{X^{\mathrm{ss}}}
\newcommand{\XssL}{X^{\mathrm{ss}}(\mathcal{L})}

\newcommand{\id}{\mathrm{id}}
\newcommand{\KC}{K_{\C}}
\newcommand{\TC}{T_{\C}}
\newcommand{\hKC}{\hat{K}_{\C}}
\newcommand{\hTC}{\hat{T}_{\C}}
\newcommand{\tKC}{\tilde{K}_{\C}}
\newcommand{\tTC}{\tilde{T}_{\C}}
\newcommand{\pt}{[\mathrm{pt}]}
\newcommand{\wh}{\hat}
\newcommand{\wt}{\tilde}
\newcommand{\halpha}{\hat{\alpha}}

\newcommand{\bmin}{\beta_{\mathrm{min}}}
\newcommand{\XEC}{X_{E\oplus\C}}

\newcommand{\CR}{\mathrm{Cone}_{\mathbb{R}}}
\newcommand{\WT}{\mathcal{W}_{\TC}}

\newcommand{\coh}[1]{\mathrm{H}^*(#1,\Z)}
\newcommand{\homol}[1]{\mathrm{H}_*(#1,\Z)}

\setcounter{tocdepth}{1}

\setcounter{secnumdepth}{2}
\begin{document}

\title{Kirwan polyhedron of holomorphic coadjoint orbits}

\author{Guillaume~DELTOUR}
\address{Universit\'e Montpellier 2, CC051\\
Place Eug\`ene Bataillon\\
34095 Montpellier Cedex}
\email{gdeltour@math.univ-montp2.fr
}
\date{\today}

\begin{abstract}
Let $G$ be a simple, noncompact, connected, real Lie group with finite center, and $K$ a maximal compact subgroup of $G$. 
We assume that $G/K$ is Hermitian. Using GIT methods derived from the generalized eigenvalue problem, we compute a set of affine equations describing the moment polyhedron of the projection $G\cdot\Lambda\subset\got{g}^*\rightarrow\got{k}^*$ for holomorphic coadjoint orbits of $G$.
\end{abstract}

\maketitle


\section*{Introduction}

In the last 15 years, important breakthroughs have been made in the study of compact orbit projection. This was initiated by the resolution of Horn's famous conjecture, by Klyachko \cite{Klyachko98} and Knutson-Tao-Woodward \cite{Knutson-Tao-Woodward}.

However, the noncompact case is still misunderstood. Hilgert-Neeb-Plank \cite{Hilgert-Neeb-Plank} and Eshmatov-Foth \cite{Eshmatov-Foth} have been able to describe the moment polyhedra of special noncompact orbit projections, as the sum of a convex polytope and a convex cone generated by roots of the Lie algebra. Unfortunately, the generic formulas of the equations of these noncompact orbit projections are unknown.

In \cite{duflo84}, Duflo-Heckman-Vergne computed the pushforward of the Liouville measure by the orbit projection of any regular elliptic orbit $\mathcal{O}$ of certain reductive Lie groups, as an alternate sum of measures supported by cones. The Kirwan polyhedron $\Delta_K(\mathcal{O})$ is exactly the support of this measure. However, similarly to the Kostant formula, the Duflo-Heckman-Vergne formula does not allow to explicitly describe its support in general.

In this paper, we study the equations of the moment polyhedron associated to the orbit projection of another type of noncompact coadjoint orbits.

\medskip
Let $G$ be a connected real Lie group, and $K$ a compact connected Lie subgroup of $G$. Let $\got{g}$ and $\got{k}$ denote the corresponding Lie algebras. Any coadjoint orbit $\Orb\in\got{g}^*$ of $G$, endowed with its Kirillov-Kostant-Souriau symplectic form, is naturally a Hamiltonian $K$-manifold. The standard moment map is the \emph{orbit projection} $\Phi_K:\Orb\rightarrow\got{k}^*$, which is the composition of the injection $\Orb\subseteq\got{g}^*$ with the linear projection $\got{g}^*\rightarrow\got{k}^*$. When $\Phi_K$ is a proper map, a noncompact version of Kirwan's Hamiltonian convexity \cite{lerman_et_al,sjamaar_convexity} asserts that the image $\Phi_K(\Orb)$ intersects some Weyl chamber $\got{t}_+^*$ of $K$ into a set, denoted by $\Delta_K(\Orb)$, which is convex locally polyhedral. The set $\Delta_K(\Orb)$ is called the \emph{Kirwan (\emph{or} moment) polyhedron} of the projection of the orbit $\Orb$.

Now assume that $G$ is also compact, and $\Lambda$ is a weight of $G$. Then, the coadjoint orbit $\Orb_{\Lambda}=G\cdot\Lambda$ is a prequantizable K\"ahler $K$-manifold, and its Kirwan polytope may be described in terms of irreducible representations of $K$ and $G$. Indeed, $\Delta_K(\Orb_{\Lambda})$ is the closure in $\got{t}^*_+$ of the following rational polytope
\[
\{\mu \text{ dominant rational weight of $K$}\,|\,\exists N\geq 1 \text{ integer s.t. } V_{N\mu}^K\subseteq V_{N\Lambda}^G\}.
\]
Here, $V_{\nu}^K$ (resp. $V_{\nu}^G$) denotes the irreducible representation of $K$ (resp. $G$) of highest weight $\nu$. It happens that this rational polytope is (roughly) an affine section of a bigger polyhedral convex cone,
\[
\{(\mu,\nu) \text{ dominant rational weight of $K\times G$}\,|\,\exists N\geq 1 \text{ s.t. } (V_{N\mu}^{K*}\otimes V_{N\nu}^{G*})^K\neq 0\},
\]
called the \emph{semiample cone} of the complete flag variety of $K\times G$, see \cite{Dolgachev-Hu, ressayre10}. Ressayre's recent results \cite{ressayre10} gives a (minimal) set of equations of such semiample cone, using the notion of well covering pairs on the complete flag variety of $K\times G$. The equations of $\Delta_K(\Orb_{\Lambda})$ follows from the one of the semiample cone.

A (nonminimal) set of equations of $\Delta_K(\Orb_{\Lambda})$ has also been given by Berenstein-Sjamaar in \cite{BS00}, making use of Hilbert-Mumford criterion directly on the complete flag of $G$, acted on by left multiplication of $K$.

\medskip
When $G$ is not compact, we cannot apply such method, at least not directly on the complete flag of $K\times G$. Nevertheless, in some special cases of noncompact groups $G$ and coadjoint orbits of $G$, it is possible to obtain the formulas of the equations of $\Delta_K(\Orb)$ by applying the previous techniques on a good compactification of the coadjoint orbit $\Orb$. The best way to find coadjoint orbits on which we can apply such method, is to consider orbits satisfying similar hypotheses to the compact setting. That is, we want $\Orb$ to be a prequantizable K\"ahler manifold.

\medskip
Now, assume that $G$ is a semisimple, noncompact, connected, real Lie group with finite center, and let $K$ be the maximal compact subgroup of $G$ arising from a Cartan decomposition $\got{g} = \got{k}\oplus\got{p}$ on Lie algebra level. We also assume that $G/K$ is a Hermitian symmetric space.

Among the integral elliptic coadjoint orbits of $G$, some of them are naturally prequantizable K\"ahler $K$-manifolds. These orbits are called the \emph{holomorphic coadjoint orbits} of $G$. They are the strongly elliptic coadjoint orbits closely related to the holomorphic discrete series of Harish-Chandra. These orbits intersect the Weyl chamber $\got{t}^*_+$ of $K$ into a subchamber called the \emph{holomorphic chamber} $\Chol$. If $\Lambda$ is an element of $\Chol$, then $\Orb_{\Lambda}=G\cdot\Lambda$ admits a simple symplectic model, given by the product of symplectic manifolds $(K\cdot\Lambda\times\got{p}, \Omega_{K\cdot\Lambda}\oplus\Omega_{\got{p}})$ \cite{deltour,mcduff}. See section \ref{section:projection_of_holomorphic_coadjoint_orbits} for more details about these facts.

We thus consider a new Kirwan polyhedron $\Delta_K(K\cdot\Lambda\times\got{p})$. It appears that a good compactification of $\Orb_{\Lambda}$ is the projective variety $K\cdot\Lambda\times\mathbb{P}(\got{p}\oplus\C)$.

Following this idea, we are able to determine the equations of $\Delta_K(\Orb_{\Lambda})$, by computing the ones of the semiample cone of $K/T\times K/T\times\mathbb{P}(\got{p}\oplus\C)$, where $T$ is a maximal torus in $K$.

Let $\got{t}$ denote the Lie algebra of $T$, $W$ the Weyl group of $T$ in $K$. Assume also that $\got{t}_+^*$ is a Weyl chamber of $K$ in $\got{t}^*$.

\begin{thmIntro}
\label{thmIntro:thmA}
Let $\Lambda$ be in $\Chol$. There exists a finite subset $\mathscr{P}_0$ of $W\times W\times \got{t}$ such that, for all $\Lambda\in\Chol$, an element $\xi\in\got{t}^*_+$ is in $\Delta_K(\mathcal{O}_{\Lambda})$ if and only if $\xi$ satisfies the equation $\langle w\lambda,\xi\rangle \leq \langle w_0w'\lambda,\Lambda\rangle$ 
for all $(w,w',\lambda)\in\mathscr{P}_0$.
\end{thmIntro}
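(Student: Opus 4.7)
The plan is to express $\Delta_K(\Orb_\Lambda)$ as an affine slice of a single convex polyhedral cone --- the \emph{semiample cone} of a projective compactification of $\Orb_\Lambda$ --- and to extract explicit equations for this cone using Ressayre's characterization by well-covering pairs. First, I replace $\Orb_\Lambda$ by its symplectic model $K\cdot\Lambda\times\got{p}$ and compactify the non-compact factor through the $K$-equivariant open embedding $v\mapsto[v:1]$ of $\got{p}$ into $\mathbb{P}(\got{p}\oplus\C)$. Since $K\cdot\Lambda\simeq K/K_\Lambda$ depends on $\Lambda$, I lift the construction to the full flag $K/T$, so that $\Lambda$ enters only as a Borel--Weil linearization parameter. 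The relevant projective $K$-variety is then
\[
Y = K/T \times K/T \times \mathbb{P}(\got{p}\oplus\C),
\]
carrying the diagonal $K$-action and a three-parameter family $\mathcal{L}_{\xi,\Lambda,n}$ of ample $K$-linearized line bundles.

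Next, in the spirit of \cite{BS00}, I reformulate the polyhedron in terms of invariant sections: $\xi\in\got{t}^*_+$ lies in $\Delta_K(\Orb_\Lambda)$ if and only if there exist $N\geq 1$ and $n\geq 0$ such that $H^0(Y,\mathcal{L}_{N\xi,N\Lambda,Nn})^K\neq 0$, the sections of $\mathcal{O}(n)$ on $\mathbb{P}(\got{p}\oplus\C)$ playing the role of the symmetric algebra $\mathrm{Sym}^\bullet(\got{p}^*)$ appearing in the quantization of the non-compact factor. Thus $\Delta_K(\Orb_\Lambda)$ is the fibre over $\Lambda$ of the projection of the semiample cone $\mathrm{SAmpC}^K(Y)\subset\got{t}^*\times\got{t}^*\times\R$ onto its first two factors. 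Applying Ressayre's theorem \cite{ressayre10} to $Y$, this cone is cut out by finitely many linear inequalities, one for each well-covering pair $(C,\tau)$ with $\tau\in\got{t}$ rational and $C$ a component of the fixed locus $Y^\tau$ of a prescribed type. The fixed components on each $K/T$ factor are labelled by Weyl group elements $w$ and $w'$, producing terms $\langle w\tau,\xi\rangle$ and $\langle w'\tau,\Lambda\rangle$; the fixed components on $\mathbb{P}(\got{p}\oplus\C)$ arise from $\tau$-weight subspaces of $\got{p}\oplus\C$ and contribute a term linear in $n$. Each well-covering pair therefore yields an inequality of the form $\langle w\tau,\xi\rangle+\langle w'\tau,\Lambda\rangle+n\,c_\tau\leq 0$; after applying the duality $V_\mu^*\simeq V_{-w_0\mu}$ (which is responsible for the factor $w_0$) and projecting out $n$, this specializes to the inequality $\langle w\lambda,\xi\rangle\leq\langle w_0 w'\lambda,\Lambda\rangle$, and the finite collection of triples $(w,w',\lambda)$ so obtained is the set $\mathscr{P}_0$, manifestly independent of $\Lambda\in\Chol$.

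The main obstacle is to verify that the compactification does not introduce spurious boundary inequalities: well-covering pairs supported on the hyperplane at infinity $\mathbb{P}(\got{p})\subset\mathbb{P}(\got{p}\oplus\C)$ could a priori yield inequalities of $\mathrm{SAmpC}^K(Y)$ that become irrelevant for $\Delta_K(\Orb_\Lambda)$ after projecting out $n$, and one must show precisely this. The expected mechanism is that for $\Lambda\in\Chol$ the gradient flow generated by $\Lambda$ on $\mathbb{P}(\got{p}\oplus\C)$ has $[0:1]$ as its unique attracting fixed point, so that no nonzero $K$-invariant section with non-trivial $\Lambda$-component can concentrate on $\mathbb{P}(\got{p})$. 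A complementary, more combinatorial but still delicate step is the explicit enumeration of well-covering pairs on $Y$, reducing to a Schubert-calculus-type computation on $K/T\times K/T\times\mathbb{P}(\got{p}\oplus\C)$.
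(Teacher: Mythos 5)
Your proposal follows the same route as the paper: symplectic model $K\cdot\Lambda\times\got{p}$, compactification to $K/T\times K/T\times\mathbb{P}(\got{p}\oplus\C)$, Borel--Weil reformulation of the polyhedron via $K$-invariant sections, and Ressayre's well covering pairs for the semiample cone. But the step you yourself identify as ``the main obstacle'' is exactly where the substance of the proof lies, and your proposed mechanism does not close it. Each well covering pair $(C(w,w',m),\lambda)$ contributes an inequality $\langle w\lambda,\mu\rangle+\langle w'\lambda,\nu\rangle+mr\leq 0$, where $m$ is the $\lambda$-weight of the corresponding fixed component of $\mathbb{P}(\got{p}\oplus\C)$; to see that only the $m=0$ inequalities survive the elimination of $r$ one needs two facts that your gradient-flow heuristic does not supply. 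First, every \emph{dominant} pair has $m\leq 0$: since $0\in\WT(\got{p}^-\oplus\C)$ (the trivial summand $\C$), an $m>0$ would force the factor $\Theta(0)=0$ into the cohomological product $\prod_{\beta\in\WT(M_{<m})}\Theta(-\beta)^{n_{\beta}}$, which must be nonzero for a dominant pair. Second, the preimage of the semiample cone is stable under increasing the $r$-coordinate, so every inequality with $m<0$ (strictly negative coefficient of $r$) can be satisfied by choosing $r$ large; hence the projected cone is cut out exactly by the $m=0$ inequalities. Without these two points, ``projecting out $n$'' is not justified and Theorem A does not follow.

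A second gap is the finiteness of $\mathscr{P}_0$, which you assert but do not argue. Ressayre's criterion a priori yields one inequality for each well covering pair, indexed over \emph{all} dominant indivisible one parameter subgroups $\lambda$ of $T_{\C}$ --- an infinite family. The paper obtains finiteness by first checking that the semiample cone has nonempty interior (equivalent to $\ker(K\rightarrow U(\got{p}^-))$ being finite, which holds because $\got{g}$ is simple), so that one may restrict to pairs admitting a point $x$ with $(\KC)_x^{\circ}=\lambda(\C^*)$, and then showing that such $\lambda$ must be $\got{p}^-$-admissible, i.e. $\C\lambda$ is an intersection of kernels of noncompact roots; only finitely many indivisible $\lambda$ satisfy this. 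This admissibility reduction, together with the $m\leq 0$ argument above, is what makes $\mathscr{P}_0$ finite and independent of $\Lambda$.
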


The set $\mathscr{P}_0$ is defined by Ressayre's well covering pairs on the flag variety $K/T\times K/T\times\mathbb{P}(\got{p}\oplus\C)$. We have a criterion which allows to compute these well covering pairs. This criterion is stated in Theorem \ref{thmIntro:thmB}.

For any $\lambda\in\got{t}$, let $W_{\lambda}$ be the stabilizer of $\lambda$. We define $\{\sigma_{w}^B; w\in W\}$ to be the dual basis in $\coh{\KC/B}$ of the Schubert basis consisting of the fundamental classes of the Schubert varieties. Here, $\KC$ (resp. $\TC$) is the complexification of $K$ (resp. $T$) and $B$ is a Borel subgroup of $\KC$. We denote by $w_0$ (resp. $w_{\lambda}$) the longest element in $W$ (resp. $W_{\lambda}$). For any weight $\mu$ of $K$, let $\Theta(\mu)=c_1(\mathcal{L}_{\mu})$ be the first Chern class of the line bundle $\mathcal{L}_{\mu}$ on $K/T$ with weight $\mu$.

\begin{thmIntro}
\label{thmIntro:thmB}
Let $(w,w',\lambda)\in W\times W\times\got{t}$. The triple $(w,w',\lambda)$ is in $\mathscr{P}_0$ if and only if
\begin{enumerate}
\item $\lambda$ is a dominant indivisible one parameter subgroup of $\TC$,
\item $\C\lambda = \cap_{\beta\in I}\ker\beta$, for some subset $I$ of the set $\got{R}_n^+$ of noncompact positive roots,
\item $\sigma_{w_0w}^B\,.\,\sigma_{w_0w'}^B\,.\,\prod_{\beta\in\got{R}_n^+, \langle\lambda,\beta\rangle>0}\Theta(\beta) = \sigma_{w_0w_{\lambda}}^B$,
\item $\langle w\lambda+w'\lambda,\rho\rangle=\sum_{\beta\in\got{R}_n^+}\langle\lambda,\beta\rangle$.
\end{enumerate}
\end{thmIntro}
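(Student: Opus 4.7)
The plan is to apply Ressayre's cohomological characterization of well-covering pairs \cite{ressayre10} to the projective $\KC$-variety $X = K/T \times K/T \times \mathbb{P}(\got{p}\oplus\mathbb{C})$ under the diagonal $\KC$-action, and translate each piece of the criterion into a statement about $(w,w',\lambda)$. Recall that a well-covering pair consists of an indivisible one-parameter subgroup $\lambda$ of $\TC$, which we may assume dominant by a Weyl-group translation (giving (1)), together with an irreducible component $C$ of $X^\lambda$ such that the collapsing map $\KC\times_{P(\lambda)}C^+ \to X$ is birational onto a dense open subset of $X$, where $C^+$ is the attracting open set and $P(\lambda)$ the parabolic of $\lambda$.

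I would first describe $X^\lambda$. The $\TC$-fixed points of $\mathbb{P}(\got{p}\oplus\mathbb{C})$ are the noncompact root lines $[\got{p}_\beta]$ and the distinguished base point $[\mathbb{C}]$; so the $\lambda$-fixed components there are projectivizations of $\lambda$-isotypic subspaces. Since $\Delta_K(\Orb_\Lambda)$ concerns the open subset $K\cdot\Lambda\times\got{p}\subset K\cdot\Lambda\times\mathbb{P}(\got{p}\oplus\mathbb{C})$, a component $C$ yields an inequality in $\mathscr{P}_0$ only when its $\mathbb{P}$-factor meets the affine chart $\got{p}$, i.e.\ contains $[\mathbb{C}]$; this forces the factor to be $\mathbb{P}(\got{p}^\lambda\oplus\mathbb{C})$ with $\got{p}^\lambda = \bigoplus_{\langle\lambda,\beta\rangle=0}\got{p}_\beta$, and demanding that $\lambda$ be primitive among directions with the same vanishing set on $\got{R}_n^+$ gives (2). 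The $(K/T)$-factors of $C$ are indexed by cosets in $W/W_\lambda$, for which one selects canonical representatives $w,w'\in W$.

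I would then invoke Ressayre's criterion: well-coveringness is equivalent to a cohomological identity in $\coh{X}$ together with a numerical equality of dimensions. Using the K\"unneth decomposition of $\coh{X}$ and identifying the Euler class of the normal to $[\mathbb{C}]\in\mathbb{P}(\got{p}\oplus\mathbb{C})$ with $\prod_{\beta\in\got{R}_n^+}\Theta(\beta)$, the cohomological equation pulls back to an identity in $\coh{\KC/B}$: the Poincar\'e duals of the two $(K/T)$-factors of $C$ contribute $\sigma_{w_0w}^B\cdot\sigma_{w_0w'}^B$, the positive-$\lambda$-weight normal directions of $\mathbb{P}(\got{p}\oplus\mathbb{C})$ contribute $\prod_{\beta\in\got{R}_n^+,\,\langle\lambda,\beta\rangle>0}\Theta(\beta)$, and the product must equal $\sigma_{w_0 w_\lambda}^B$ (the class pulled back from $\KC/P(\lambda)$), yielding (3). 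The numerical condition $\dim(\KC/P(\lambda))+\dim C = \dim X$ decomposes across factors: the two $(K/T)$-contributions sum to $\langle w\lambda+w'\lambda,\rho\rangle$, and the $\mathbb{P}$-contribution to $\sum_{\beta\in\got{R}_n^+,\,\langle\lambda,\beta\rangle>0}\langle\lambda,\beta\rangle$; under (2) this positive-weight restriction can be dropped, yielding (4).

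The main obstacle will be verifying that exactly the components of $X^\lambda$ containing $[\mathbb{C}]$ produce inequalities relevant to the noncompact polyhedron $\Delta_K(\Orb_\Lambda)$ rather than spurious ones coming from the boundary $\mathbb{P}(\got{p})$ of the compactification, and correctly handling the singular-$\lambda$ case where $(K/T)^\lambda$ has positive-dimensional components and the representatives $w,w'$ must be chosen within $W_\lambda$-cosets. In particular, one must check that Ressayre's cohomological identity lifts faithfully from $\coh{\KC/P(\lambda)}$ to $\coh{\KC/B}$ via the $\Theta(\beta)$-factors, so that the equation in (3) really captures the well-covering condition with no ambiguity.
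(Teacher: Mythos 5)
Your overall strategy --- compactify to $X_{\got{p}\oplus\C}$, identify the irreducible components $C(w,w',m)$ of $X^{\lambda}$, and feed them into Ressayre's criterion --- is the same as the paper's (Section \ref{section:wellcoveringpairs}), and your reading of conditions (1)--(3) is essentially right. Conditions (1) and (2) are the dominance, indivisibility and $\got{p}^-$-admissibility built into the definition of $\mathscr{P}_0$; note that admissibility is extracted from the stabilizer condition $(\KC)_x^{\circ}=\lambda(\C^*)$ via Proposition \ref{prop:subtore_Eadmissible}, not from the requirement that the $\mathbb{P}$-factor contain $[\C]$ (the restriction to $m=0$ is a separate matter, settled by Theorems \ref{thm:nsc_wellcoveringpair_zeroweight} and \ref{thm:equations_PiQ}). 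Condition (3) is indeed the pullback to $\coh{\KC/B}$ of Ressayre's cohomological identity; the paper makes this precise by realizing $\mathbb{P}(\got{p}^-\oplus\C)$ as $GL_r(\C)/\wh{Q}$ and computing $(f_{\lambda}^B)^*$ of the relevant Schubert class by the Chevalley formula (Lemma \ref{lem:cohomological_alternatives} and the Appendix) --- the technical step you flag but do not carry out.

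The genuine gap is your derivation of (4). You attribute it to the dimension count $\dim(\KC/P(\lambda))+\dim C=\dim X$. That count only yields the length identity $l(w)+l(w')=l(w_0)+l(w_{\lambda})+\dim_{\C}(\got{p}^-_{<0})$ of Corollary \ref{cor:info_lengths_elements_for_wellcoveringpair}, which counts the roots with $\langle\lambda,\beta\rangle>0$ and is already forced by the degrees in the cohomological equation (3); moreover it is invariant under rescaling $\lambda$, whereas both sides of (4) scale linearly in $\lambda$, so the two conditions cannot coincide. Condition (4) is in fact the translation of the second, independent clause of Ressayre's well-covering criterion (Theorem \ref{thm:nsc_pair_is_wellcovering_generalcase}~(\textit{b})), namely $\langle \lambda,\rho+w^{-1}\rho\rangle + \langle i^*(\lambda),\tilde{\rho}+\tilde{w}^{-1}\tilde{\rho}\rangle = \langle\lambda,2\rho\rangle$, which is precisely what separates well-covering pairs from merely covering ones; the paper evaluates it via Lemma \ref{lem:gammahat} to get $\langle w\lambda+w'\lambda,\rho\rangle+\sum_{k<0}(-k)\dim_{\C}(M_{\lambda,k})=0$. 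With only (3) and a dimension count you would characterize covering pairs: the ``only if'' direction of the theorem (every triple of $\mathscr{P}_0$ satisfies (4)) would be unproved, and the ``if'' direction would admit extra triples. To close the gap you must state and compute this second numerical criterion, as is done in subsection \ref{subsection:proof_thm_wellcoveringpair}.
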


%

In section 1, we reduce our problem to finding equations of a rational polyhedral convex cone $\Pi_{\Q}(\got{p})$ defined by tensor products of irreducible representations of $K$. Section 2 is a collection of prerequisite GIT facts. The relation between $\Pi_{\Q}(\got{p})$ and the semiample cone of $X_{\got{p}\oplus\C}=K/T\times K/T\times\mathbb{P}(\got{p}\oplus\C)$ is then shown in section 3. We give general equations for $C_{\Q}(X_{\got{p}\oplus\C})^+$, but these equations may be redundant, thus we give two conditions on $\lambda$ to exclude some of them in section 4, proving $\mathscr{P}_0$ is finite. The main result of this paper, Theorem \ref{thmIntro:thmA}, is proved in section 5. In section 6, we complete the proof of the main criterion stated in Theorem \ref{thmIntro:thmB}. Then, we compute the set of equations for the examples $Sp(\R^{2n})$ and $SU(n,1)$, for all $n\geq 2$, and $SU(2,2)$. The Appendix collects some technical results about combinatorics of the Weyl group of $GL_r(\C)$.

\section{Projection of holomorphic coadjoint orbits}
\label{section:projection_of_holomorphic_coadjoint_orbits}

\subsection{The holomorphic chamber $\Chol$}
\label{subsection:holomorphic_chamber}

Let $G$ be a noncompact, connected, real, semisimple Lie group with finite center, and $K$ a maximal compact subgroup of $G$. Let $\got{g}$ and $\got{k}$ denote their Lie algebras. The Lie subalgebra $\got{k}$ of $\got{g}$ arises from a Cartan decomposition on the Lie algebra level, $\got{g}=\got{k}\oplus\got{p}$. Moreover, $K$ is connected (see for instance \cite[Theorem 6.31]{knapp}). The vector subspace $\got{p}$, called the \emph{noncompact part} of the Cartan decomposition of $\got{g}$, is stable by the adjoint action of $K$ on $\got{g}$.

We recall that the symmetric space $G/K$ is Hermitian if it admits a complex-manifold structure such that $G$ acts by holomorphic transformations. The following assertions are equivalent:
\begin{enumerate}
\item $G/K$ is Hermitian,
\item there exists $z_0$ in the center of $\got{k}$ such that $\ad(z_0)|_{\got{p}}^2 = -\id_{\got{p}}$.
\end{enumerate}
A proof of this equivalence is given by Theorems 7.117 and 7.119 in \cite{knapp}.

We have a complete classification of simple groups $G$ (satisfying the above hypotheses) with $G/K$ Hermitian, up to isomorphism: $Sp(\R^{2n}), n\geq 1$, $SO^*(2n), n\geq 3$, $SU(p,q), p\geq q\geq 1$, and $SO_0(p,2), p\geq 1$ for the classical noncompact groups, and $E{\mathrm{III}}$ and $E{\mathrm{VII}}$ for the exceptional cases. One can find this classification in \cite{knapp}.

Now assume $G/K$ is Hermitian, and let $z_0$ be an element of the center of $\got{k}$ such that $\ad(z_0)|_{\got{p}}^2 = -\id_{\got{p}}$. It means that $\ad(z_0)|_{\got{p}}$ defines a $K$-invariant $\C$-vector space structure on $\got{p}$. Denote by $\got{p}_{\C}$ the complexification of $\got{p}$, and similarly $\got{g}_{\C}$ and $\got{k}_{\C}$. The linear action of $K$ on $\got{p}$, defined by the adjoint action, induces a complex-linear action of $K$ on $\got{p}_{\C}$.

Denote by $\got{p}^{\pm,z_0}$ the eigenspace $\ker(\ad(z_0)|_{\got{p}_{\C}}\mp i)$ of $\ad(z_0)|_{\got{p}_{\C}}$ associated to the eigenvalue $\pm i$. Especially, $\ad(z_0)$ is multiplication by the complex number $\pm i$ on $\got{p}^{\pm,z_0}$. These two subspaces of $\got{p}_{\C}$ are $K$-stable.

Let $T$ be a maximal torus of $K$. We set the following convention: an element $\alpha\in\got{t}^*$ is a \emph{root} of $\got{g}$ (resp. $\got{k}$) if there exists $X\in\got{g}_{\C}$ (resp. $X\in\got{k}_{\C}$), $X\neq 0$, such that $[H,X] = i\alpha(H)X$ for all $H\in\got{t}$. The associated root space is
\[
\got{g}_{\alpha} := \{X\in\got{g}_{\C}\ |\ [H,X] = i\alpha(H)X, \forall H\in\got{t}\}.
\]
If $\alpha$ is a root of $\got{g}$, then either $\got{g}_{\alpha}\subseteq\got{k}_{\C}$ ($\alpha$ is said \emph{compact root}), or $\got{g}_{\alpha}\subseteq\got{p}_{\C}$ (\emph{noncompact root}). Note that the compact roots are the roots of the Lie algebra $\got{k}$. The set of compact (resp. noncompact) roots is denoted by $\got{R}_c$ (resp. $\got{R}_n$). Fix once and for all $\got{t}^*_+$ a Weyl chamber of $K$ in $\got{t}^*$, and let $\got{R}_c^+$ be the system of positive compact roots associated to this Weyl chamber. Notice that, since $z_0\in\got{t}$, for any noncompact root $\beta$, we have either $\got{g}_{\beta}\subseteq\got{p}^{+,z_0}$ (\emph{positive noncompact roots}) or $\got{g}_{\beta}\subseteq\got{p}^{-,z_0}$ (\emph{negative noncompact roots}). Denote by $\got{R}_n^{+,z_0}$ the set of positive noncompact roots of $\got{g}$. Then $\got{R}_c^+\cup\got{R}_n^{+,z_0}$ is a system of positive roots of $\got{g}$. Indeed, we can easily see that for all $\alpha\in\got{R}_c^+$, we have $\alpha(z_0)=0$, and, for all $\beta\in\got{R}_n^{+,z_0}$, $\beta(z_0) = 1$.

\begin{defn}
The \emph{holomorphic chamber} is the subchamber of $\got{t}^*_+$ defined by
\[
\Chol^{z_0}:=\{\xi\in\got{t}^*\ | \ (\beta,\xi)>0, \forall\beta\in\got{R}_n^{+,z_0}\},
\]
where $(\cdot,\cdot)$ is the inner product on $\got{t}^*$ induced by the Killing form on $\got{g}$. A \emph{holomorphic coadjoint orbit} is a coadjoint orbit $\Orb$ of $G$ which intersects $\Chol^{z_0}$ on a nonempty set.
\end{defn}

\begin{rem}
In the rest of the paper, we will assume that the element $z_0$ is fixed, so we will write $\Chol$, $\got{p}^{\pm}$ and $\got{R}_n^{\pm}$ instead of $\Chol^{z_0}$, $\got{p}^{\pm,z_0}$ and $\got{R}_n^{\pm,z_0}$.
\end{rem}

\medskip

Let $\Lambda\in\Chol$. The holomorphic coadjoint orbit $\Orb_{\Lambda}:=G\cdot\Lambda$ has a natural $G$-invariant K\"ahlerian structure:
\begin{enumerate}
\item a canonical $G$-invariant symplectic form $\Omega_{\Orb_{\Lambda}}$, called the Kirillov-Kostant-Souriau symplectic structure on $\Omega_{\Orb_{\Lambda}}$;
\item a $G$-invariant complex structure $J_{\Orb_{\Lambda}}$, which holomorphic tangent bundle $T^{1,0}(\Orb_{\Lambda})\rightarrow \Orb_{\Lambda}$ is equal, above $\Lambda$, to the $T$-submodule
\[
\sum_{\alpha\in\got{R}_c^+, (\alpha,\Lambda)\neq 0}\got{g}_{\alpha} + \underbrace{\sum_{\beta\in\got{R}_n^-}\got{g}_{\beta}}_{\got{p}^-}
\]
One can check that this complex structure is compatible with the symplectic form $\Omega_{\Orb_{\Lambda}}$.
\end{enumerate}
Besides, the stabilizer of $\Lambda$ is clearly compact, since $(\beta,\Lambda)\neq 0$ for all $\beta\in\got{R}_n$. More precisely, we have $G_{\Lambda} = K_{\Lambda}$.

\subsection{Kirwan polyhedron of the orbit projection}

Let $\Lambda$ be any element $\got{t}^*$, and let $\Orb_{\Lambda}$ be its coadjoint $G$-orbit. The induced action of the connected compact group $K$ on $\Orb_{\Lambda}$ is Hamiltonian, with moment map the orbit projection $\Phi_K:\Orb_{\Lambda}\rightarrow\got{k}^*$, which is the composition of the inclusion $\Orb_{\Lambda}\hookrightarrow\got{g}^*$ with the canonical linear projection $\got{g}^*\rightarrow\got{k}^*$. Since $\Orb_{\Lambda}$ is elliptic, its projection $\Phi_K$ is a proper map. In particular, a noncompact version of Kirwan's convexity theorem of Sjamaar \cite{sjamaar_convexity} (see also the version of Lerman {\it et al.} \cite{lerman_et_al}), asserts that the image of the orbit projection $\Phi_K$ intersects the Weyl chamber $\got{t}_+^*$ of $K$ into a convex locally polyhedral set $\Delta_K(\Orb_{\Lambda}) := \Phi_K(\Orb_{\Lambda})\cap\got{t}_+^*$, called the \emph{Kirwan polyhedron} of the orbit projection of $\Orb_{\Lambda}$.

Now assume that $\Lambda$ is in $\Chol$, that is, the coadjoint orbit $\Orb_{\Lambda}$ is holomorphic. Then, the stabilizer $G_{\Lambda}$ is compact, and the Cartan decomposition of $G$ induces a $K$-equivariant diffeomorphism $K\cdot\Lambda\times\got{p}\rightarrow \Orb_{\Lambda}, (k\Lambda, X)\mapsto e^Xk\Lambda$, where $K$ acts diagonally on $K\cdot\Lambda\times\got{p}$ (with the obvious actions on $K\cdot\Lambda$ and $\got{p}$).

The manifold $K\cdot\Lambda\times\got{p}$ carries a canonical symplectic structure $\Omega_{K\cdot\Lambda}\oplus\Omega_{\got{p}}$, arising from the direct product of the symplectic manifolds $(K\cdot\Lambda, \Omega_{K\cdot\Lambda})$ and $(\got{p},\Omega_{\got{p}})$, where $\Omega_{K\cdot\Lambda}$ is the Kirillov-Kostant-Souriau symplectic form on the compact coadjoint orbit $K\cdot\Lambda$, and $\Omega_{\got{p}}$ is the constant symplectic form defined on the vector space $\got{p}$ by
\[
\Omega_{\got{p}}(X,Y) = B_{\got{g}}(X,\ad(z_0)Y), \quad \text{for all } X,Y\in\got{p}.
\]
Here, $B_{\got{g}}$ denotes the Killing form on $\got{g}$.

\begin{thm}[Deltour \cite{deltour}, McDuff \cite{mcduff}]
\label{thm:symplecto_mcduff_generalized}
Let $\Lambda\in\Chol$. Then, there exists a $K$-equivariant diffeomorphism from $\Orb_{\Lambda}$ onto $K\cdot\Lambda\times\got{p}$ which takes the symplectic form $\Omega_{\Orb_{\Lambda}}$ on $\Orb_{\Lambda}$ to the symplectic form $\Omega_{K\cdot\Lambda}\oplus\Omega_{\got{p}}$.
\end{thm}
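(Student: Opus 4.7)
The plan is to build the diffeomorphism explicitly from the Cartan decomposition and then to identify it with a symplectomorphism by an equivariant Moser-type argument.

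First I would define
\[
\phi : K\cdot\Lambda \times \got{p} \longrightarrow \Orb_{\Lambda}, \qquad (k\Lambda, X) \longmapsto \mathrm{Ad}^*(e^X k)\,\Lambda.
\]
The Cartan decomposition $K \times \got{p} \to G$, $(k,X) \mapsto e^X k$, is a $K$-equivariant diffeomorphism (for the diagonal $K$-action coming from the adjoint action on $\got{p}$ and left translation on $K$). Since $\Lambda \in \Chol$ implies $G_{\Lambda}=K_{\Lambda}$, this map descends to the required diffeomorphism onto $\Orb_{\Lambda}$. Equivariance with respect to the $K$-action (diagonal on the source, coadjoint on the target) follows because $e^{\mathrm{Ad}(k')X}k' = k' e^X$ up to the torus element needed to adjust the first factor.

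Next I would prove the symplectic equality. Both $\Omega_0 := \Omega_{K\cdot\Lambda} \oplus \Omega_{\got{p}}$ and $\Omega_1 := \phi^*\Omega_{\Orb_{\Lambda}}$ are $K$-invariant. I would first verify that they agree on the zero section $K\cdot\Lambda \times \{0\}$. At the point $(\Lambda,0)$ the tangent space decomposes as $T_{\Lambda}(K\cdot\Lambda) \oplus \got{p}$. On the first summand, $\Omega_1$ restricts to the Kirillov--Kostant--Souriau form of $K\cdot\Lambda$ (this is the compact-orbit part and is tautological). On the second summand, $\Omega_1$ at $(\Lambda, 0)$ evaluated on $X, Y \in \got{p}$ equals $\langle \Lambda, [X,Y]\rangle$, and the key observation, using $\ad(z_0)^2|_{\got{p}} = -\id$ and the fact that $\Lambda \in \Chol$ lies in the center direction generated by $z_0$ modulo compact directions, is that this equals $B_{\got{g}}(X,\ad(z_0)Y) = \Omega_{\got{p}}(X,Y)$. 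Finally, the two summands are $\Omega_1$-orthogonal at $(\Lambda,0)$ because $[\got{k},\Lambda]$ is $B_{\got{g}}$-orthogonal to $\got{p}$. By $K$-invariance this extends to the entire zero section.

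To pass from the zero section to arbitrary points, I would apply the equivariant Moser trick to the family $\Omega_t := (1-t)\Omega_0 + t\Omega_1$. One needs: (i) each $\Omega_t$ is symplectic; (ii) the two forms are equivariantly cohomologous. For (ii), the $K$-equivariant cohomology of $K\cdot\Lambda \times \got{p}$ equals that of $K\cdot\Lambda$ (since $\got{p}$ is $K$-equivariantly contractible), so equality of equivariant classes reduces to equality of the restrictions to the zero section together with matching moment maps; and both moment maps are $K$-equivariantly the orbit projection $\Phi_K$ pulled back through $\phi$, agreeing up to an additive constant that can be absorbed. The standard Moser vector field $V_t$ defined by $\iota_{V_t}\Omega_t = -(\Omega_1-\Omega_0)$ integrates, by properness of $\Phi_K$ on the holomorphic orbit, to a $K$-equivariant isotopy realizing the symplectomorphism.

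The main obstacle is verifying non-degeneracy of $\Omega_t$ off the zero section, equivalently checking that $\phi^*\Omega_{\Orb_\Lambda}$ does not degenerate when deformed to the product form along the $\got{p}$-fibers. This is where the holomorphy hypothesis $\Lambda \in \Chol$ is essential: the positivity condition $(\beta,\Lambda)>0$ for all $\beta \in \got{R}_n^+$ controls the pairing $\langle \mathrm{Ad}^*(e^X k)\Lambda, [\cdot,\cdot]\rangle$ on the noncompact part, keeping $\Omega_t$ symplectic throughout the interpolation; without it the argument breaks down, which is precisely why the result is special to holomorphic orbits.
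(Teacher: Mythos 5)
The paper does not prove this theorem: it is imported wholesale from \cite{deltour} and \cite{mcduff}, so your proposal has to be judged on its own merits. The diffeomorphism $\phi$ you define is the right one (it is the map the paper writes down just before the theorem), but your symplectic comparison has a genuine error at its very first step. The two forms do \emph{not} agree on the zero section in the $\got{p}$-directions. At $(\Lambda,0)$ the pullback $\phi^*\Omega_{\Orb_\Lambda}$ evaluated on $X,Y\in\got{p}$ is the KKS pairing $\langle\Lambda,[X,Y]\rangle = B_{\got{g}}(X,\ad(\Lambda)Y)$ (identifying $\Lambda$ with an element of $\got{t}$ via the Killing form), whereas $\Omega_{\got{p}}(X,Y)=B_{\got{g}}(X,\ad(z_0)Y)$. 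These coincide only if $\ad(\Lambda)|_{\got{p}}=\ad(z_0)|_{\got{p}}$, which holds for essentially one point of $\Chol$, not for a general $\Lambda$ in the open cone $\Chol$; even in the central case $\Lambda=cz_0$ they differ by the factor $c$. Your claim that ``$\Lambda\in\Chol$ lies in the center direction generated by $z_0$ modulo compact directions'' is false, and the compact directions do contribute to $\ad(\Lambda)|_{\got{p}}$. What the hypothesis $(\beta,\Lambda)>0$ for all $\beta\in\got{R}_n^+$ actually gives is that the operator $-\ad(z_0)\ad(\Lambda)|_{\got{p}}$ is positive definite, so the two bilinear forms on $\got{p}$ are \emph{equivalent}; the correct argument must therefore precompose $\phi$ with a $K$-equivariant fiberwise linear change of variables on $\got{p}$ (built from the square root of that operator) before any deformation argument can start. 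Your proof never introduces this step, so the Moser argument is launched from a false base case.

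Beyond that, the two points you flag as ``to be checked'' are precisely the hard content of the theorem and are only asserted: nondegeneracy of the convex combination $\Omega_t=(1-t)\Omega_0+t\Omega_1$ on all of $K\cdot\Lambda\times\got{p}$ (not just near the zero section) requires an explicit computation of $\phi^*\Omega_{\Orb_\Lambda}$ at points $e^Xk\Lambda$ with $X$ large, and this is where the real work in \cite{deltour} lies; and on a noncompact manifold the Moser flow must be shown to be complete for $t\in[0,1]$, which does not follow from properness of $\Phi_K$ without a growth estimate on a primitive of $\Omega_1-\Omega_0$. As written, the proposal reduces the theorem to exactly the statements it was supposed to establish.
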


The symplectic manifold $(K\cdot\lambda\times\got{p},\Omega_{K\cdot\lambda\times\got{p}})$ also have a Hamiltonian $K$-manifold structure. It is given by the moment map defined for all $(\xi,v)\in K\cdot\Lambda\times\got{p}$ by
\[
\Phi_{K\cdot\Lambda\times\got{p}}(\xi,v) : X\in\got{k}\longmapsto\langle\xi,X\rangle + \frac{1}{2}\Omega_{\got{p}}(v,[X,v])\in\R
\]
This moment map is proper, so that we can define the associated Kirwan polyhedron $\Delta_K(K\cdot\Lambda\times\got{p}):=\Phi_{K\cdot\Lambda\times\got{p}}(K\cdot\Lambda\times\got{p})\cap\got{t}^*_+$. Theorem \ref{thm:symplecto_mcduff_generalized} has the following direct consequence, originally proved by Nasrin (for $\Lambda$ in the center of $\got{k}^*$) and Paradan in totally different ways \cite{nasrin,paradan}.

\begin{cor}[Nasrin \cite{nasrin}, Paradan \cite{paradan}]
\label{cor:equality_of_momentpolyhedra}
Let $\Lambda\in\Chol$. Then
\[
\Delta_K(G\cdot\Lambda) = \Delta_K(K\cdot\Lambda\times\got{p}).
\]
\end{cor}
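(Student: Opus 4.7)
The plan is to deduce this directly from Theorem~\ref{thm:symplecto_mcduff_generalized} together with the elementary fact that two moment maps for the same Hamiltonian action on a connected symplectic manifold differ by an element of $(\got{k}^*)^K$.

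First, let $\Psi\colon K\cdot\Lambda\times\got{p}\to \Orb_{\Lambda}$ be the $K$-equivariant symplectomorphism provided by Theorem~\ref{thm:symplecto_mcduff_generalized}. The orbit projection $\Phi_K$ on $\Orb_{\Lambda}$ is a moment map for the $K$-action, so its pullback $\Phi_K\circ\Psi$ is a moment map for the diagonal Hamiltonian $K$-action on $K\cdot\Lambda\times\got{p}$ equipped with $\Omega_{K\cdot\Lambda}\oplus\Omega_{\got{p}}$. On the other hand, $\Phi_{K\cdot\Lambda\times\got{p}}$ is by definition a moment map for the same action. Since $K\cdot\Lambda\times\got{p}$ is connected and $K$ is compact connected, we must have
\[
\Phi_K\circ\Psi \;=\; \Phi_{K\cdot\Lambda\times\got{p}} + c
\]
for some constant $c\in(\got{k}^*)^K$.

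To show that $c$ vanishes, I would evaluate at a single convenient point. The natural choice is $(\Lambda,0)\in K\cdot\Lambda\times\got{p}$, for which $\Psi(\Lambda,0)=\Lambda$ (using the explicit form of $\Psi$ coming from the Cartan decomposition $(k\Lambda,X)\mapsto e^X k\Lambda$). On one hand, $\Phi_K(\Lambda)$ equals the restriction of $\Lambda\in\got{g}^*$ to $\got{k}$, which is $\Lambda$ itself regarded as an element of $\got{k}^*$ (since $\Lambda$ vanishes on $\got{p}$ by extension from $\got{t}^*\subset\got{k}^*$). On the other hand, the quadratic term $\frac{1}{2}\Omega_{\got{p}}(v,[X,v])$ in $\Phi_{K\cdot\Lambda\times\got{p}}$ vanishes at $v=0$, so $\Phi_{K\cdot\Lambda\times\got{p}}(\Lambda,0)=\Lambda$ as well. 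Hence $c=0$ and the two moment maps coincide under the symplectomorphism $\Psi$.

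Since the images of the moment maps agree, $\Phi_K(\Orb_{\Lambda})=\Phi_{K\cdot\Lambda\times\got{p}}(K\cdot\Lambda\times\got{p})$, and intersecting both sides with the Weyl chamber $\got{t}^*_+$ yields the claimed equality $\Delta_K(G\cdot\Lambda)=\Delta_K(K\cdot\Lambda\times\got{p})$. The only step requiring a bit of care is the constancy-of-moment-map identification and the explicit evaluation at $(\Lambda,0)$; this is really the content of the corollary since Theorem~\ref{thm:symplecto_mcduff_generalized} already encodes the hard symplectic geometry.
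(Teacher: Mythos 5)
Your overall strategy is the same as the paper's (the paper simply records the corollary as a direct consequence of Theorem~\ref{thm:symplecto_mcduff_generalized}), and the first part of your argument is correct: the pullback $\Phi_K\circ\Psi$ is a moment map for the diagonal $K$-action on $(K\cdot\Lambda\times\got{p},\Omega_{K\cdot\Lambda}\oplus\Omega_{\got{p}})$, so it differs from $\Phi_{K\cdot\Lambda\times\got{p}}$ by a constant $c\in(\got{k}^*)^K=\got{z}(\got{k})^*$, and since $c$ is central the translation by $c$ preserves $\got{t}^*_+$, so the two Kirwan polyhedra agree up to translation by $c$.

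The gap is in your normalization $c=0$. You evaluate $\Psi$ at $(\Lambda,0)$ ``using the explicit form of $\Psi$ coming from the Cartan decomposition $(k\Lambda,X)\mapsto e^Xk\Lambda$'', but the Cartan-decomposition map is only the $K$-equivariant \emph{diffeomorphism}; it is in general not a symplectomorphism (if it were, Theorem~\ref{thm:symplecto_mcduff_generalized} would be trivial). The theorem as stated only asserts the \emph{existence} of some $K$-equivariant symplectomorphism $\Psi$, and $K$-equivariance alone does not force $\Psi(\Lambda,0)\in K\cdot\Lambda$, so you cannot conclude $\Phi_K(\Psi(\Lambda,0))=\Lambda$. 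Two standard repairs: (i) invoke the actual construction in \cite{deltour,mcduff}, where $\Psi$ is obtained by a Moser-type deformation that restricts to the identity on the zero section $K\cdot\Lambda\times\{0\}\to K\cdot\Lambda$, which legitimizes your evaluation; or (ii) determine $c$ intrinsically by pairing with $z_0$: since $G$ is simple Hermitian, $\got{z}(\got{k})=\R z_0$, and $\langle\Phi_{K\cdot\Lambda\times\got{p}}(k\Lambda,v),z_0\rangle=\langle\Lambda,z_0\rangle+\tfrac12\|[z_0,v]\|^2$ attains the minimum $\langle\Lambda,z_0\rangle$, while $\langle\Phi_K(g\Lambda),z_0\rangle=\langle g\Lambda,z_0\rangle$ also attains the minimum $\langle\Lambda,z_0\rangle$ on $K\cdot\Lambda$ (a standard property of holomorphic orbits, $\Lambda\in\Chol$); comparing minima of $\langle\cdot,z_0\rangle$ on both sides of $\Phi_K\circ\Psi=\Phi_{K\cdot\Lambda\times\got{p}}+c$ gives $\langle c,z_0\rangle=0$, hence $c=0$. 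Either way the argument closes; as written, the evaluation step is unjustified.
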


This new description of the Kirwan polyhedron $\Delta_K(G\cdot\Lambda)$ will allow to describe its faces, using GIT methods on the second setting. This question will be dealt with in the rest of the paper.

More generally, we are going to determine the equations of the Kirwan polyhedron $\Delta_K(K\cdot\Lambda\times E)$, when $E$ is a complex representation of the compact group $K$, satisfying some specific assumptions.

\subsection{Kirwan polyhedron of the symplectic manifold $K\cdot\Lambda\times E$}
\label{subsection:Kirwanpolyhedron_KLambdaE}


Let $(E,h)$ be a Hermitian vector space, $U:=U(E,h)$ the unitary group associated to $(E,h)$, and $\got{u}$ the Lie algebra of $U$.
Let $\Omega_E$ be the imaginary part of $-h$. Then $\Omega_E$ is a constant symplectic structure on $E$, invariant by the action of $U$. The (real) symplectic vector space $(E,\Omega_E)$ is $U$-Hamiltonian, with moment map $\Phi_U:E\rightarrow \got{u}^*$ defined by $\langle\Phi_U(v),X\rangle = \frac{1}{2}\Omega_E(Xv,v)$, for all $X\in\got{u}^*$.

Let $\varphi:K\rightarrow U$ be a Lie group homomorphism, and $\Phi_E : E\rightarrow \got{k}^*$ the moment map induced by the composition of the map $\Phi_U$ with the transpose $^t(d\varphi):\got{u}^*\rightarrow\got{k}^*$.

\begin{prop}[\cite{paradan_fgq}, Lemma 5.2]
The following conditions are equivalent :
\begin{enumerate}
\item The map $\Phi_E$ is proper,
\item $\Phi_E^{-1}(0) = \{0\}$.
\end{enumerate}
\end{prop}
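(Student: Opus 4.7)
The plan is to prove both implications by exploiting the quadratic homogeneity of the moment map $\Phi_E$, which follows directly from its definition. Since $\Phi_U(v)$ is defined by $\langle \Phi_U(v),X\rangle = \tfrac12\Omega_E(Xv,v)$ and $v\mapsto\Omega_E(Xv,v)$ is a real quadratic form on $E$, we have $\Phi_U(tv) = t^2\Phi_U(v)$ for all $t\in\R$. Composing with the fixed linear map $^t(d\varphi)$ gives $\Phi_E(tv)=t^2\Phi_E(v)$. This is the only algebraic fact I need from the construction.

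For the implication $(1)\Rightarrow(2)$, I would argue that properness of $\Phi_E$ forces $\Phi_E^{-1}(0)$ to be compact as the preimage of the compact set $\{0\}\subset\got{k}^*$. But by the homogeneity above, $\Phi_E^{-1}(0)$ is stable under the scaling action of $\R$ on $E$: if $v\in\Phi_E^{-1}(0)$ then $tv\in\Phi_E^{-1}(0)$ for every $t\in\R$. A subset of $E$ that is both compact and stable under all real dilations must reduce to $\{0\}$, since otherwise it would contain an unbounded half-line. Hence $\Phi_E^{-1}(0)=\{0\}$.

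For $(2)\Rightarrow(1)$, I would show that the continuous map $\Phi_E$ sends every unbounded sequence to an unbounded sequence, from which properness follows (a closed preimage of a compact set is then automatically bounded, hence compact, since $\Phi_E$ is continuous and $E$ is finite-dimensional). Let $(v_n)\subset E$ satisfy $\|v_n\|\to\infty$, and write $v_n = t_n u_n$ with $t_n=\|v_n\|$ and $\|u_n\|=1$. By compactness of the unit sphere I can extract a subsequence with $u_n\to u$, $\|u\|=1$. The hypothesis (2) together with $u\neq 0$ gives $\Phi_E(u)\neq 0$, and by continuity $\|\Phi_E(u_n)\|\to\|\Phi_E(u)\|>0$. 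The homogeneity then yields
\[
\|\Phi_E(v_n)\| \;=\; t_n^{2}\,\|\Phi_E(u_n)\|\;\longrightarrow\;\infty,
\]
along the subsequence. Since the same argument applies to every subsequence of $(v_n)$, the full sequence $\|\Phi_E(v_n)\|$ tends to infinity, proving properness.

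There is no real obstacle here: the whole argument rests on the degree-two homogeneity of $\Phi_E$, which converts the naive intuition that ``the only bounded cone is the origin'' into a rigorous proof. The slightly delicate point to state cleanly is that properness for a continuous map between finite-dimensional normed spaces is equivalent to the property that preimages of bounded sets are bounded, which is what the sequential argument above establishes.
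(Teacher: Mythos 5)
Your proof is correct: the degree-two homogeneity $\Phi_E(tv)=t^2\Phi_E(v)$ is exactly the right tool, and both implications (compact dilation-stable set must be $\{0\}$; positive minimum of $\|\Phi_E\|$ on the unit sphere forcing $\|\Phi_E(v)\|\geq c\|v\|^2$) are carried out rigorously, including the subsequence argument upgrading ``bounded preimages of bounded sets'' to properness in finite dimension. The paper itself gives no proof, only a citation to Paradan's Lemma 5.2 with the remark that the argument extends from subgroups $K\subseteq U$ to homomorphisms $K\to U$; your argument is precisely that standard one and visibly uses nothing beyond the homomorphism setting, so it also justifies the paper's generalization remark.
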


In \cite{paradan_fgq}, $K$ is a subgroup of $U$, but the proof of this fact can be easily generalized to the case of a Lie group homomorphism $K\rightarrow U$.

We use this theorem for example when $E = \got{p}$. Making the identification $\got{g}^*\cong\got{g}$ induced by the Killing form on $\got{g}$, we get $\Phi_\got{p}(X) = -[X,[z_0,X]]$, for all $X\in\got{p}$, and then $\langle\Phi_\got{p}(X),z_0\rangle = \|[z_0,X]\|^2 >0$, if $X\neq 0$, where $\|\cdot\|$ is the norm  on $\got{p}$ induced by $B_{\got{g}}$.

Let $\Lambda\in\got{t}^*_+$. As for $K\cdot\Lambda\times\got{p}$, the manifold $K\cdot\Lambda\times E$ carries a canonical symplectic product $\Omega_{K\cdot\Lambda}\oplus\Omega_{E}$ given by the direct product of the symplectic manifolds $(K\cdot\Lambda,\Omega_{K\cdot\Lambda})$ and $(E,\Omega_E)$. Moreover, the diagonal action of $K$ on $(K\cdot\Lambda\times E,\Omega_{K\cdot\Lambda}\oplus\Omega_{E})$ is Hamiltonian, with moment map
\[
\begin{array}{cccl}
\Phi_{K\cdot\Lambda\times E} : & K\cdot\Lambda\times E & \rightarrow & \got{k}^* \\
& (k\Lambda, v) & \mapsto & k\Lambda + \Phi_E(v).
\end{array}
\]

Now, assume that $\Phi_E$ is proper. By \cite[Theorem 1.5]{paradan}, since $(E,\Omega_E)$ is a symplectic vector space endowed with a proper Hamiltonian $K$-action, then the moment map $\Phi_{K\cdot\Lambda\times E}:K\cdot\Lambda\times E\rightarrow\got{k}^*$ is also proper. We will denote by $\Delta_K(K\cdot\Lambda\times E):=\Phi_{K\cdot\Lambda\times E}(K\cdot\Lambda\times E)\cap\got{t}_+^*$ the associated Kirwan polyhedron.

Actually, we are going to see that the polyhedral convex set $\Delta_K(K\cdot\Lambda\times E)$ is an affine section of some bigger polyhedral set.

It is a well-known fact that the coadjoint orbit $K\cdot\Lambda$ is a symplectic quotient of the cotangent bundle $T^*K$ relatively to the right multiplication of $K$. Thus, $K\cdot\Lambda\times E$ is a symplectic quotient of $T^*K\times E$. Let us clarify this.

First, we recall that the cotangent bundle $T^*K$ is identified to $K\times\got{k}^*$ by means of left translations. The group $K\times K$ acts on $K\times\got{k}^*\times E$ by
\[
(k_1,k_2)\cdot(k,\mu,v) := (k_1kk_2^{-1},k_2\mu,k_2v),
\]
for all $(k_1,k_2)\in K\times K$ and $(k,\mu,v)\in K\times\got{k}^*\times E$. This action is Hamiltonian, with moment map
\[
\Phi_{K\times K}(k,\mu,v) = (l\mu,-\mu+\Phi_E(v)), \quad \forall (k,\mu,v)\in K\times\got{k}^*\times E,
\]
see for instance \cite{guillemin_sternberg, sjamaar_convexity}. This induces a Hamiltonian action of $K\times\{1\}$ on $K\times\got{k}^*\times E$, with moment map
\[
\Phi_{K\times\{1\}}(k,\mu,v) = l\mu \in\got{k}^*, \quad \forall (k,\mu,v)\in K\times\got{k}^*\times E,
\]
and, clearly, the symplectic $K$-manifold $K\cdot\Lambda\times E$ can be identified to the symplectic quotient $\Phi_{K\times\{1\}}^{-1}(K\cdot\Lambda)/(K\times\{1\})$. Moreover, for any $(k,v)\in K\times E$, we have
\[
\Phi_{K\cdot\Lambda\times E}(k\Lambda,v) = k\Lambda+\Phi_E(v) = \Phi_{\{1\}\times K}(l,k(-\Lambda),v)
\]
for any $l\in K$. Thus, taking $l=w_0k^{-1}$ yields
\[
\Phi_{K\times K}(w_0k^{-1},k(-\Lambda),v) = (-w_0\Lambda,\Phi_{K\cdot\Lambda\times E}(k\Lambda,v)),
\]
which obviously proves next proposition.

\begin{prop}
\label{prop:relation_between_the_two_momentpolyhedron}
Let $\Lambda$ and $\mu$ be in $\got{t}_+^*$. Then $\mu\in\Delta_K(K\cdot\Lambda\times E)$ if and only if $(-w_0\Lambda,\mu)\in\Delta_{K\times K}(T^*K\times E)$.
\end{prop}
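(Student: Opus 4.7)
The plan is to deduce both implications of the biconditional directly from the moment-map identity $\Phi_{K\times K}(w_0k^{-1},k(-\Lambda),v)=(-w_0\Lambda,\Phi_{K\cdot\Lambda\times E}(k\Lambda,v))$ established in the paragraph immediately preceding the statement. For the forward implication, I would assume $\mu\in\Delta_K(K\cdot\Lambda\times E)$, so that $\mu\in\got{t}_+^*$ and there exists $(k\Lambda,v)\in K\cdot\Lambda\times E$ with $\Phi_{K\cdot\Lambda\times E}(k\Lambda,v)=\mu$. Substituting into the identity immediately exhibits $(-w_0\Lambda,\mu)$ as a value of $\Phi_{K\times K}$ on $T^*K\times E$. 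To conclude membership in $\Delta_{K\times K}(T^*K\times E)$, it remains to verify $(-w_0\Lambda,\mu)\in\got{t}_+^*\times\got{t}_+^*$, which reduces to the standard fact that the opposition involution $-w_0$ preserves the positive Weyl chamber of $K$.

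For the converse, suppose $(-w_0\Lambda,\mu)\in\Delta_{K\times K}(T^*K\times E)$. By definition there exists $(k,\nu,v)\in K\times\got{k}^*\times E$ with $\Phi_{K\times K}(k,\nu,v)=(-w_0\Lambda,\mu)$, which unpacks via the explicit formula for $\Phi_{K\times K}$ into the two equations $k\nu=-w_0\Lambda$ and $-\nu+\Phi_E(v)=\mu$. The first gives $-\nu=k^{-1}(w_0\Lambda)$. I would then choose a lift $\dot{w}_0\in N_K(T)$ of $w_0$, so that $w_0\Lambda=\dot{w}_0\Lambda$ under the coadjoint action, and obtain $-\nu=(k^{-1}\dot{w}_0)\Lambda\in K\cdot\Lambda$. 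Substituting into the second equation yields $\mu=(k^{-1}\dot{w}_0)\Lambda+\Phi_E(v)=\Phi_{K\cdot\Lambda\times E}((k^{-1}\dot{w}_0)\Lambda,v)$. Combined with the assumption $\mu\in\got{t}_+^*$ (part of membership in $\Delta_{K\times K}$), this gives $\mu\in\Delta_K(K\cdot\Lambda\times E)$.

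There is no real obstacle here: the explicit section $(k\Lambda,v)\mapsto(w_0k^{-1},k(-\Lambda),v)$ of the symplectic-reduction description of $K\cdot\Lambda\times E$ is already in place, and the only ancillary facts needed are that $-w_0$ stabilizes $\got{t}_+^*$ and that $W\cong N_K(T)/T$ allows $w_0\Lambda$ to be realized as a $K$-translate of $\Lambda$. Both are standard, so the proof amounts essentially to a bookkeeping of the two Weyl-chamber conditions in the definitions of the two Kirwan polyhedra.
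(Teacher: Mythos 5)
Your proof is correct and follows essentially the same route as the paper, which derives the proposition as an immediate consequence of the identity $\Phi_{K\times K}(w_0k^{-1},k(-\Lambda),v)=(-w_0\Lambda,\Phi_{K\cdot\Lambda\times E}(k\Lambda,v))$ established just before the statement. You merely make explicit the bookkeeping (dominance of $-w_0\Lambda$ and the unpacking of the converse via a representative of $w_0$ in $N_K(T)$) that the paper leaves as "obvious."
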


\begin{rem}
One can show that the moment map $\Phi_{K\times K}$ is proper, because $\Phi_E$ is. Hence, by the noncompact version of Kirwan's Hamiltonian convexity theorem \cite{lerman_et_al,BS00}, the set $\Delta_{K\times K}(T^*K\times E)$ is convex locally polyhedral. So, it boils down to compute the equations of $\Delta_{K\times K}(T^*K\times E)$.
\end{rem}

\begin{nota}
We will denote by $\wedge^*$ the weight lattice of $\got{t}^*$. It is the set of elements $\frac{1}{i}\alpha$, where $\alpha$ is the differential of a character of $T$.

We will also denote by $\wedge^*_{\Q}=\wedge^*\otimes_{\Z}\Q$ (resp. $\wedge_+^*=\wedge^*\cap\got{t}_+^*$, resp. $\wedge^*_{\Q,+}=\wedge^*_{\Q}\cap\got{t}_+^*$) the set of rational weights (resp. dominant weights, resp. dominant rational weights) of $\got{t}^*$.
\end{nota}

\medskip

From \cite{lerman_et_al}, it turns out that $\Delta_{K\times K}(T^*K\times E)$ is a rational locally polyhedral convex set, because $\Phi_{K\times K}$ is proper. Then, determining the equations of $\Delta_{K\times K}(T^*K\times E)$ in $\got{t}^*$ is then equivalent to determining the ones of its set of rational points in $\wedge^*_{\Q,+}$.

From now on, for any dominant weight $\nu\in\wedge^*_+$, $V^K_{\nu}$ denotes the irreducible representation of $K$ with highest weight $\nu$.

\begin{thm}
\label{thm:momentpolyhedron_and_irreduciblerepresentations}
Assume $\Phi_E: E\rightarrow \got{k}^*$ is proper. Let $(\mu,\nu)\in(\wedge^*_{\Q,+})^2$. Then $(\mu,\nu)\in\Delta_{K\times K}(T^*K\times E)$ if and only if there exists an integer $N\geq 1$ such that $(N\mu,N\nu)\in(\wedge^{*})^2$ and $\left(V^{K*}_{N\mu}\otimes V^{K*}_{N\nu}\otimes \C[E]\right)^K \neq 0$.
\end{thm}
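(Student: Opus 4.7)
The plan is to apply the Kähler form of Kirwan's noncompact convexity theorem (Sjamaar \cite{sjamaar_convexity}) to the Hamiltonian $(K\times K)$-manifold $M:=T^*K\times E$. First observe that $T^*K\cong\KC$ carries a natural $(K\times K)$-invariant Kähler structure for which the two-sided translation $(k_1,k_2)\cdot g=k_1gk_2^{-1}$ realizes the $(K\times K)$-action of the text, and $E$ is already Kähler with $K$-action through $\varphi$; so $M$ is a $(K\times K)$-equivariant Kähler manifold. For integral $(\alpha,\beta)\in\wedge^*\times\wedge^*$ the prequantum line bundle $\mathcal{L}_{(\alpha,\beta)}\to M$ exists, and the standard ``quantization commutes with reduction'' principle in its noncompact form (available since $\Phi_{K\times K}$ is proper by the preceding discussion) gives the following reformulation: a rational $(\mu,\nu)\in\wedge^*_{\Q,+}\times\wedge^*_{\Q,+}$ lies in $\Delta_{K\times K}(M)$ if and only if there exists $N\geq 1$ with $(N\mu,N\nu)$ integral and
\[
\bigl(V^{K*}_{N\mu}\otimes V^{K*}_{N\nu}\otimes H^0(M,\mathcal{O}_M)\bigr)^{K\times K}\neq 0.
\]

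The next step is to identify these invariants via Peter--Weyl / Bargmann quantization. Regular holomorphic functions on $T^*K\cong\KC$ decompose under the two-sided $(K\times K)$-translation action as
\[
\C[\KC]=\bigoplus_{\lambda\in\wedge^*_+}V^K_\lambda\otimes V^{K*}_\lambda,
\]
while Bargmann quantization of $(E,\Omega_E)$ gives $\C[E]$ with its $K$-action via $\varphi$, carried by the second copy of $K$ (the copy acting on both $\got{k}^*$ and $E$). Consequently
\[
\bigl(V^{K*}_{N\mu}\otimes V^{K*}_{N\nu}\otimes\C[\KC]\otimes\C[E]\bigr)^{K\times K}\cong\bigoplus_{\lambda\in\wedge^*_+}(V^{K*}_{N\mu}\otimes V^K_\lambda)^K\otimes(V^{K*}_{N\nu}\otimes V^{K*}_\lambda\otimes\C[E])^K.
\]
Schur's lemma applied to the first tensor factor forces $\lambda=N\mu$, and the expression collapses to $\bigl(V^{K*}_{N\mu}\otimes V^{K*}_{N\nu}\otimes\C[E]\bigr)^K$, matching the criterion stated in the theorem.

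The main obstacle is the non-compactness of $M=T^*K\times E$: both $T^*K$ and $E$ are non-compact, so Sjamaar's original compact statement does not apply directly, and one must work with its extension to proper moment maps. Properness of $\Phi_{K\times K}$, which follows from properness of $\Phi_E$, provides the required finiteness: for each fixed $(N\mu,N\nu)$ only the single summand $\lambda=N\mu$ contributes to the $(K\times K)$-invariants above, and a degree truncation on $\C[E]$ further reduces the computation to a finite-dimensional pairing. With this in hand the two directions follow: a nonzero invariant gives a nonempty symplectic reduction at $(N\mu,N\nu)$ and hence a point of $\Delta_{K\times K}(M)$, while conversely any rational point of the polyhedron produces, after scaling by a suitable $N$, a nonzero $(K\times K)$-invariant in the tensor product above.
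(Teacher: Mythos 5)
Your argument is correct and follows essentially the same route as the paper: the paper invokes Sjamaar's Theorem 4.9 for the affine $(K\times K)$-variety $\KC\times E$ (which is exactly the ``quantization commutes with reduction'' input you describe) and then applies the Frobenius/Peter--Weyl decomposition of $\C[\KC]$ together with Schur's lemma to collapse the invariants to $\bigl(V^{K*}_{N\mu}\otimes V^{K*}_{N\nu}\otimes\C[E]\bigr)^K$. The only cosmetic difference is that the paper works directly with the algebraic coordinate ring $\C[\KC\times E]$ rather than $H^0(M,\mathcal{O}_M)$, which is the precise form in which Sjamaar's theorem applies.
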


\begin{proof}
Since $T^*K$ is $K\times K$-isomorphic to $\KC$, the complexified group of the compact Lie group $K$, then $T^*K\times E$ is an affine variety. Theorem 4.9 of \cite{sjamaar_convexity} yields that $\Delta_{K\times K}(T^*K\times E)$ is the convex cone generated by the monoid
\[
\left\{(\mu,\nu)\in(\wedge_+^*)^2\ |\ V_{(\mu,\nu)}^{K\times K}\subseteq \C[\KC\times E]\right\}.
\]
By Frobenius' theorem, we have $\C[\KC] = \bigoplus_{\delta\in\wedge_+^*}V^K_{\delta}\otimes V_{\delta}^{K*}$. Thus, a pair $(\mu,\nu)\in(\wedge^*_+)^2$ satisfies $V_{(\mu,\nu)}^{K\times K}\subseteq \C[\KC\times E]$ if and only if $V^K_{\mu}\subseteq V_{\nu}^{K*}\otimes\C[E]$, or, equivalently, $\left(V^{K*}_{\mu}\otimes V^{K*}_{\nu}\otimes \C[E]\right)^K \neq 0$. Hence, Theorem \ref{thm:momentpolyhedron_and_irreduciblerepresentations} directly follows.
\end{proof}

\begin{defn}
\label{defn:DGIT}
When $\Phi_E: E\rightarrow \got{k}^*$ is proper, we define the set
\[
\Pi_{\Q}(E) = 
\left\{(\mu,\nu)\in(\wedge^*_{\Q,+})^2 \left|
\begin{array}{l}
\exists N\in\Z_{>0} \mbox{ such that } (N\mu,N\nu)\in(\wedge^*)^2, \\
\mbox{and } \left(V^{K*}_{N\mu}\otimes V^{K*}_{N\nu}\otimes \C[E]\right)^K \neq 0
\end{array}\right.\right\}.
\]
\end{defn}

\begin{cor}
\label{cor:DeltaClassic_DeltaGIT}
Assume $\Phi_E: E\rightarrow \got{k}^*$ is proper. Then, $\Delta_{K\times K}(T^*K\times E)\cap(\wedge^*_{\Q})^2 = \Pi_{\Q}(E)$, and $\Delta_{K\times K}(T^*K\times E)$ is the closure of $\Pi_{\Q}(E)$ in $\got{t}^*$. In particular, $\Delta_{K\times K}(T^*K\times E)$ is a rational polyhedral convex cone.
\end{cor}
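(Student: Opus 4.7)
The plan is to reduce the corollary to a direct unpacking of Theorem \ref{thm:momentpolyhedron_and_irreduciblerepresentations} together with two previously cited results. For the first equality $\Delta_{K\times K}(T^*K\times E)\cap(\wedge^*_{\Q})^2=\Pi_{\Q}(E)$, I would first note that $\Delta_{K\times K}(T^*K\times E)$ is contained in the closed dominant chamber $(\got{t}_+^*)^2$, so its rational points lie automatically in $(\wedge^*_{\Q,+})^2$. For $(\mu,\nu)\in(\wedge^*_{\Q,+})^2$, Theorem \ref{thm:momentpolyhedron_and_irreduciblerepresentations} characterizes membership in $\Delta_{K\times K}(T^*K\times E)$ by exactly the predicate that defines $\Pi_{\Q}(E)$, so the two sets coincide.

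Next, for the closure statement, I would apply the noncompact Kirwan convexity theorem \cite{lerman_et_al,sjamaar_convexity}. As observed in the Remark following Proposition \ref{prop:relation_between_the_two_momentpolyhedron}, the map $\Phi_{K\times K}$ is proper (because $\Phi_E$ is), so this theorem yields that $\Delta_{K\times K}(T^*K\times E)$ is a rational locally polyhedral convex subset of $\got{t}^*\oplus\got{t}^*$. Any such set equals the closure of its rational points, since rational points are dense in each of its rational polyhedral faces. Combined with the previous step, this delivers $\overline{\Pi_{\Q}(E)}=\Delta_{K\times K}(T^*K\times E)$.

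To upgrade \emph{locally polyhedral} to \emph{polyhedral convex cone}, I would reuse Sjamaar's Theorem 4.9 already invoked in the proof of Theorem \ref{thm:momentpolyhedron_and_irreduciblerepresentations}: it expresses $\Delta_{K\times K}(T^*K\times E)$ as the $\R_{\geq 0}$-convex cone generated by the semigroup $S=\{(\mu,\nu)\in(\wedge_+^*)^2\,|\,V_{(\mu,\nu)}^{K\times K}\subseteq\C[\KC\times E]\}$. Since $\KC\times E$ is affine, its coordinate ring is a finitely generated $\C$-algebra on which the reductive group $K\times K$ acts; by a classical invariant-theoretic argument (finite generation applied isotypically) the semigroup $S$ is finitely generated, so $\CQ(S)$ and its closure are rational polyhedral. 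Intrinsically, one may also confirm the cone structure by checking that $\Pi_{\Q}(E)$ is stable under positive rational scaling and addition via the Cartan product of irreducible representations.

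The only step here that is not purely formal is the finite generation of the highest-weight semigroup $S$, but this is standard, following from Hilbert's finite generation theorem applied to the reductive action on the affine variety $\KC\times E$. Every other step is an immediate consequence of the definitions together with the two cited convexity theorems and Theorem \ref{thm:momentpolyhedron_and_irreduciblerepresentations}.
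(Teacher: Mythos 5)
Your proposal is correct and follows essentially the same route as the paper: the first equality is the direct translation of Theorem \ref{thm:momentpolyhedron_and_irreduciblerepresentations} via the definition of $\Pi_{\Q}(E)$, and the closure and polyhedrality statements rest on Sjamaar's Theorem 4.9 together with the rational local polyhedrality from \cite{lerman_et_al} that the paper itself invokes just before Theorem \ref{thm:momentpolyhedron_and_irreduciblerepresentations}. The paper's proof is a two-line citation; you have merely filled in the standard details (containment of $\Delta_{K\times K}(T^*K\times E)$ in the dominant chamber, density of rational points, finite generation of the highest-weight semigroup) that it delegates to the references.
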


\begin{proof}
By definition of $\Pi_{\Q}(E)$, this is given by Theorem \ref{thm:momentpolyhedron_and_irreduciblerepresentations}. We point out that the second assertion is also a consequence of Theorem 4.9 from \cite{sjamaar_convexity}.
\end{proof}



\section{Semiample cone and well covering pairs}

We introduce in this section the main GIT notions that we will use in the rest of the article: the semiample cone, in the sense of Dolgachev-Hu \cite{Dolgachev-Hu}, which is a polyhedral convex cone whose equations are known thanks to Ressayre's well covering pairs \cite{ressayre10}.


\subsection{Notations}
\label{subsection:notations_preliminaries_GIT}

In this section, $K$ will denote a compact connected real Lie group, and $\KC$ its complexification. Let $X$ be a projective $\KC$-variety, and let $\PicG(X)$ denote the group of isomorphism classes of $\KC$-linearized line bundles on $X$. Recall that a $\KC$-linearized line bundle is a line bundle $\mathcal{L}$ on $X$ together with a lifting of the $\KC$-action to $\mathcal{L}$ which is linear on the fibers. The group structure is given by the tensor product. See \cite{dolgachev03, knopkraftvurst} for more details about $\PicG(X)$.

Let $\PicGQ(X) := \PicG(X)\otimes_{\Z}\Q$ be the $\Q$-vector space generated by the elements of $\PicG(X)$, and, for any $\mathcal{L}\in\PicG(X)$, $\mathrm{H}^0(X,\mathcal{L})$ the $\KC$-module of regular sections of $\mathcal{L}$.

Let $T$ be a maximal torus of $K$. We fix $\TC$ a maximal torus of $\KC$ such that $T=K\cap \TC$, and $B$ a Borel subgroup of $\KC$ containing $\TC$. From now on, we will identify the group of characters of $\TC$ with the weight lattice $\wedge^*\subset\got{t}^*$. For any dominant weight $\mu\in\wedge^*$, we will denote by $V_{\mu}^{\KC}$ the irreducible representation  of either $K$ or $\KC$, with highest weight $\mu$.

\begin{nota}
For $M$ a complex representation of $\KC$, we will denote by $X_M$ the smooth $\KC$-variety
\[
X_M := \KC/B\times \KC/B\times \mathbb{P}(M),
\]
equipped with the diagonal action of $\KC$.
\end{nota}
%

\subsection{Semistability}

For any $\KC$-linearized line bundle $\mathcal{L}$ on $X$, one of the most important GIT objects associated to $\mathcal{L}$ and $X$ is the set of semistable points on $X$,
\[
\XssL = \left\{x\in X\ |\ \exists k\geq 1, \exists s\in \mathrm{H}^0(X,\mathcal{L}^{\otimes k})^{\KC}, \mbox{ s.t. } s(x)\neq 0\right\}.
\]
This is not the standard definition of $\Xss(\mathcal{L})$, but it is if $\mathcal{L}$ is ample. This definition is introduced in \cite{ressayre10}. The reader may refer to \cite{dolgachev03} for the standard definition.

If $\mathcal{L}$ is ample, we have a categorical quotient $\pi : \XssL \rightarrow \XssL/\!/\KC$, such that $\XssL/\!/\KC$ is a projective variety and $\pi$ is affine.

It is clear that for any $\KC$-linearized $\mathcal{L}$, and any positive integer $n$, we have $\Xss(\mathcal{L}) = \Xss(\mathcal{L}^{\otimes n})$. So we can define $\Xss(\mathcal{L})$ for any element $\mathcal{L}\in\PicGQ(X)$.

\subsection{Semiample cone}

Now we are going to introduce the notion of  semiample cone $C_{\Q}(X)^+$ associated to an irreducible projective variety.
%

We denote by $\PicG(X)^+$ the set of semiample $\KC$-linearized line bundle on $X$. Furthermore, we denote by $\PicGQ(X)^+$ the convex cone generated by the semiample elements of $\PicGQ(X)$. Now, we define the \emph{semiample cone}
\[
C_{\Q}(X)^+ = \{\mathcal{L}\in\PicGQ(X)^+\ | \ \Xss(\mathcal{L}) \neq\emptyset\}.
\]

%

In our setting, we will have some special type of variety $X$. Let $\hKC$ be a connected reductive group such that $\KC\subseteq\hKC$, and $Q$ (resp. $\wh{Q}$) be a parabolic subgroup of $\KC$ (resp. $\hKC$). The next theorem justify the terminology of semiample cone.

\begin{thm}[\cite{ressayre10}, Proposition 10]
\label{thm:sacX_closedconvpoly_and_closureofacX}
When $X=\KC/Q\times\hKC/\wh{Q}$, then the semiample cone $C_{\Q}(X)^+$ is a closed convex polyhedral cone in $\PicGQ(X)$. Moreover, if $C_{\Q}(X)^{+}$ contains an ample line bundle, then $C_{\Q}(X)^+$ is the closure, in $\PicGQ(X)$, of the set of its ample elements.
\end{thm}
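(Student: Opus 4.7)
The plan is to explicitly describe $\PicGQ(X)$ and the cone $\PicGQ(X)^+$ in combinatorial terms, then use Borel--Weil together with finite generation of invariant rings to identify $C_{\Q}(X)^+$ with (the rational cone over) a finitely generated branching semigroup.

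First I would set up the picture. For a parabolic $Q\subset\KC$ one has the standard isomorphism $\PicG(\KC/Q)\cong \chi(Q)$, and similarly for $\hKC/\wh{Q}$. Together with the pull-back from $\mathbb{P}(M)$ in our application (or here, just both flag factors), this identifies $\PicGQ(X)$ with a finite-dimensional $\Q$-vector space coming from characters, and cuts out $\PicGQ(X)^+$ as a rational polyhedral convex cone by the (half-open) dominance inequalities on each factor. By Borel--Weil, for an integral semiample $\mathcal{L}=\mathcal{L}_{(\mu,\lambda)}\in\PicG(X)^+$,
\[
\mathrm{H}^0(X,\mathcal{L}^{\otimes k})\;\cong\;V^{\KC*}_{k\mu}\otimes V^{\hKC*}_{k\lambda},
\]
so $\Xss(\mathcal{L})\neq\emptyset$ exactly when, for some $k\geq 1$, $(V^{\KC*}_{k\mu}\otimes V^{\hKC*}_{k\lambda})^{\KC}\neq 0$, i.e.\ when $V^{\hKC}_{k\lambda}\!\downarrow_{\KC}$ contains a copy of $V^{\KC}_{k\mu}$.

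Next I would show this branching condition defines a finitely generated semigroup in $\PicG(X)^+$, and hence that its rational saturation $C_{\Q}(X)^+$ is a rational polyhedral convex cone. The idea is to consider the multigraded algebra
\[
R\;=\;\bigoplus_{(\mu,\lambda)}\mathrm{H}^0\bigl(X,\mathcal{L}_{(\mu,\lambda)}\bigr)^{\KC},
\]
graded by $\chi(Q)\oplus\chi(\wh{Q})$. The Cox-type ring $\bigoplus_{(\mu,\lambda)}\mathrm{H}^0(X,\mathcal{L}_{(\mu,\lambda)})$ is finitely generated since $X$ is a product of (partial) flag varieties, and then $R$ is finitely generated as an invariant subalgebra of a finitely generated $\C$-algebra under the reductive group $\KC$ (Hilbert--Nagata). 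The support of $R$ in $\chi(Q)\oplus\chi(\wh{Q})$ is therefore a finitely generated semigroup, so its rational cone---which by the previous paragraph is exactly $C_{\Q}(X)^+$---is a rational polyhedral convex cone. Closedness is automatic.

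For the second assertion, note that the ample cone is open in $\PicGQ(X)^+$, and ampleness is preserved under perturbation; moreover, for $\mathcal{L}$ ample, $\Xss(\mathcal{L})\neq\emptyset$ is preserved under small perturbation of $\mathcal{L}$ within the ample cone (the Hilbert--Mumford inequalities, evaluated at a fixed semistable point and over finitely many relevant one-parameter subgroups, remain satisfied on a neighborhood). Hence if $C_{\Q}(X)^+$ contains one ample element $\mathcal{L}_0$, it contains a full-dimensional open subset of ample line bundles, and since it is a closed convex polyhedral cone it equals the closure of that open subset of its ample elements.

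The main obstacle is the polyhedrality (as opposed to just convexity) of the branching cone in Stage 3: convexity is an easy semigroup property, but polyhedrality genuinely requires the finite-generation input above. All the rest is formal manipulation with characters, Borel--Weil, and openness of (semi)stability.
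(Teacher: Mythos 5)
The paper does not actually prove this statement: it is imported from \cite{ressayre10} (Proposition 10) without proof, so your proposal has to be judged on its own merits rather than against an argument in the text. The first half of what you propose is sound and is essentially the standard proof of the first assertion: identify $\PicGQ(X)$ with characters of $Q\times\wh{Q}$, use Borel--Weil to translate $\Xss(\mathcal{L})\neq\emptyset$ into nonvanishing of $\bigl(V^{\KC*}_{k\mu}\otimes V^{\hKC*}_{k\lambda}\bigr)^{\KC}$, and get polyhedrality from finite generation of the invariant ring $R$: since $R$ is a domain, its support is exactly the semigroup generated by the degrees of finitely many homogeneous generators, and the rational cone over a finitely generated semigroup is rational polyhedral and closed.

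The second half has a genuine gap, in fact several. First, semistability is \emph{not} an open condition in $\mathcal{L}$: the Hilbert--Mumford inequalities $\mu^{\mathcal{L}}(x,\lambda)\leq 0$ are non-strict, and if $x$ is semistable but not stable for $\mathcal{L}_0$ (i.e.\ some inequality is an equality), an arbitrarily small perturbation of $\mathcal{L}_0$ can violate it; already for $\C^*$ acting on $\mathbb{P}^1$ the set of ample linearizations with nonempty semistable locus is a closed cone, not an open one. Second, even if $C_{\Q}(X)^+$ did contain a full-dimensional open set $U$ of ample elements, a closed convex cone containing an open subset $U$ need not equal $\overline{U}$, so your last inference is a non sequitur. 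Third, $C_{\Q}(X)^+$ may contain an ample element while having empty interior --- compare Proposition \ref{prop:SAC_equivalentdef_with_irrep}\,(2) and Corollary \ref{cor:coneample_nonemptyinterior}: for $X_{E\oplus\C}$ the cone always contains an ample bundle but has nonempty interior only when $\ker\zeta$ is finite --- so ``full-dimensional open subset'' is not even available in general. The correct route is pure convexity: given $\mathcal{L}\in C_{\Q}(X)^+$ and an ample $\mathcal{L}_0\in C_{\Q}(X)^+$, the bundles $\mathcal{L}^{\otimes a}\otimes\mathcal{L}_0^{\otimes b}$ with $a,b>0$ are ample (ample tensor semiample is ample) and lie in $C_{\Q}(X)^+$ because the product of nonzero invariant sections is a nonzero invariant section in the integral domain $R$; letting $b/a\to 0$ exhibits $\mathcal{L}$ as a limit of ample elements of the cone, and the reverse inclusion follows from closedness, which you have already established.
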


\subsection{Hilbert-Mumford numerical criterion}
\label{subsection:hilbertmumford_numcriterion}

Let $\mathcal{L}\in\PicG(X)$ be a semiample $\KC$-linearized line bundle on $X$. For any point $x\in X$ and one parameter subgroup $\lambda$ of $\KC$, one can define the limit $\lim_{t\rightarrow 0} \lambda(t)\cdot x$ as follows: since $X$ is projective, we can extend the rational morphism $f:t\in\C^*\mapsto \lambda(t)\cdot x\in X$ to an algebraic map $\tilde{f}:\mathbb{P}^1\rightarrow X$. Let $\lim_{t\rightarrow 0}\lambda(t)\cdot x = \tilde{f}(0)$ be this limit.

Let $z = \lim_{t\rightarrow 0}\lambda(t)\cdot x$, this point $z$ is then a fixed point of the induced action of $\C^*$ by $\lambda$ on $X$. Since $\mathcal{L}$ is $\KC$-linearized, the group $\C^*$ acts linearly on the fiber $\mathcal{L}_z$ over the point $z$. This action defines $\mu^{\mathcal{L}}(x,\lambda)\in\Z$, setting $\lambda(t)\cdot v = t^{-\mu^{\mathcal{L}}(x,\lambda)}v$, for any $t\in\C^*$ and $v\in\mathcal{L}_z$.

It is easy to check that the numbers $\mu^{\mathcal{L}}(x,\lambda)$ satisfy the three following properties:
\begin{enumerate}
\item $\mu^{\mathcal{L}}(g.x,g\cdot\lambda\cdot g^{-1}) = \mu^{\mathcal{L}}(x,\lambda)$, for all $g\in \KC$,
\item the map $\mathcal{L}\in\PicG(X)\mapsto\mu^{\mathcal{L}}(x,\lambda)\in\Z$ is a group homomorphism.
\item for all $n\in\Z_{\geq 0}$, we have $\mu^{\mathcal{L}}(x,n\lambda) = n\mu^{\mathcal{L}}(x,\lambda)$,
\end{enumerate}
where $n\lambda$ is the one parameter subgroup of $\KC$ defined by $(n\lambda)(t) := \lambda(t^n)$, for all $t\in\C^*$.

\begin{defn}
A one parameter subgroup $\lambda$ of $\KC$ is \emph{indivisible}, if for any one parameter subgroup $\lambda'$ of $\KC$ and integer $n>1$, $n\lambda'$ is not equal to $\lambda$.
\end{defn}

These numbers $\mu^{\mathcal{L}}(x,\lambda)$ give a characterization of the semistable point set of $X$ for $\mathcal{L}$. Indeed, by \cite{mumford94} in the ample case, and \cite[Lemma $2$]{ressayre10} in the semiample case,
\[
x\in\Xss(\mathcal{L}) \ \Longleftrightarrow \ \mu^{\mathcal{L}}(x,\lambda) \leq 0, \ \mbox{for any one parameter subgroup } \lambda \mbox{ of } \KC.
\]
We notice we could only consider indivisible one parameter subgroups in the above equivalence, using ($3$).

To any one parameter subgroup $\lambda$ of $\KC$, we can associate a parabolic subgroup
\[
P(\lambda) = \left\{g\in \KC\ | \ \lim_{t\rightarrow 0} \lambda(t)\cdot g\cdot \lambda(t)^{-1} \mbox{ exists in } \KC\right\}.
\]
Let $\KC^{\lambda}$ be the centralizer of the image of $\lambda$ in $\KC$. Actually, this group $\KC^{\lambda}$ is a Levi subgroup of $P(\lambda)$. 




The next proposition is a well-known and easy fact.

\begin{prop}
\label{prop:product_twobundles_numericalcriterion}
If $(X_1,\mathcal{L}_1)$ and $(X_2,\mathcal{L}_2)$ are two $\KC$-linearized varieties, then
\[
\mu^{\mathcal{L}_1\boxtimes\mathcal{L}_2}((x_1,x_2),\lambda) = \mu^{\mathcal{L}_1}(x_1,\lambda) + \mu^{\mathcal{L}_2}(x_2,\lambda),
\]
for all $(x_1,x_2)\in X_1\times X_2$.
\end{prop}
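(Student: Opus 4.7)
The plan is to unpack the definition of $\mu^{\mathcal{L}}(x,\lambda)$ and exploit the compatibility of products of varieties with the external tensor product $\boxtimes$ of line bundles.

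First I would observe that for the diagonal action on $X_1\times X_2$, the limit point splits componentwise: $\lim_{t\to 0}\lambda(t)\cdot(x_1,x_2) = (z_1,z_2)$, where $z_i=\lim_{t\to 0}\lambda(t)\cdot x_i$. This follows from the universal property of the product applied to the extended maps $\tilde{f}_i:\mathbb{P}^1\to X_i$; their product $\tilde{f}_1\times\tilde{f}_2:\mathbb{P}^1\to X_1\times X_2$ is precisely the extension of $t\mapsto \lambda(t)\cdot(x_1,x_2)$, and its value at $0$ is $(z_1,z_2)$.

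Next, by definition $\mathcal{L}_1\boxtimes\mathcal{L}_2 = p_1^*\mathcal{L}_1\otimes p_2^*\mathcal{L}_2$ as $\KC$-linearized line bundles, so the fiber over $(z_1,z_2)$ is canonically identified with $(\mathcal{L}_1)_{z_1}\otimes_{\C}(\mathcal{L}_2)_{z_2}$, and the induced $\C^*$-action via $\lambda$ on this fiber is the tensor product of the two induced actions. If $\lambda(t)$ acts on $(\mathcal{L}_i)_{z_i}$ by the character $t\mapsto t^{-\mu^{\mathcal{L}_i}(x_i,\lambda)}$, then on the tensor product it acts by $t\mapsto t^{-\mu^{\mathcal{L}_1}(x_1,\lambda)-\mu^{\mathcal{L}_2}(x_2,\lambda)}$, yielding the claimed additivity.

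There is no real obstacle here: the proposition is essentially a tautology once one combines the functoriality of the limit construction with property (2) of $\mu^{\bullet}$ (it is a group homomorphism in $\mathcal{L}$). Alternatively, one can shortcut the argument by applying (2) to the pullbacks $p_i^*\mathcal{L}_i$, noting that $\mu^{p_i^*\mathcal{L}_i}((x_1,x_2),\lambda) = \mu^{\mathcal{L}_i}(x_i,\lambda)$ since the projections $p_i$ are $\KC$-equivariant and send the limit $(z_1,z_2)$ to $z_i$.
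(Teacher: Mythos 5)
Your proof is correct: the paper states this proposition without proof (calling it ``a well-known and easy fact''), and your argument --- componentwise splitting of the limit by separatedness/uniqueness of the extension to $\mathbb{P}^1$, plus additivity of the $\C^*$-weights on the fiber $(\mathcal{L}_1)_{z_1}\otimes(\mathcal{L}_2)_{z_2}$ --- is precisely the standard justification the author is implicitly invoking. The shortcut via property (2) applied to the pullbacks $p_i^*\mathcal{L}_i$ is also valid and equally clean.
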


\subsection{Well covering pairs}
\label{subsection:redgp_git_wellcoveringpairs}

The notion of well covering pair was introduced by Ressayre in \cite{ressayre10}. This gives a necessary and sufficient condition for $\mathcal{L}\in\PicG(X)^+$ being in the semiample cone $C_{\Q}(X)^+$, in terms of linear equations.

Let $X$ be a smooth projective variety. In the rest of this paper, if $\lambda$ is a one parameter subgroup of $\KC$, we denote by $X^{\lambda}$ the set of points of $X$ fixed by the action of the subgroup $\lambda(\C^*)$ in $\KC$.

\begin{defn}
Let $\lambda$ be a one parameter subgroup of $\KC$ and $C$ a irreducible component of $X^{\lambda}$. Let $C^+ := \{x\in X\ ; \ \lim_{t\rightarrow 0} \lambda(t)\cdot x \in C\}$. We consider the following $\KC$-equivariant map :
\[
\begin{array}{cccc}
\eta : & \KC\times_{P(\lambda)} C^+ & \longrightarrow & X \\
& [g,x] & \longmapsto & g\cdot x.
\end{array}
\]
The pair $(C,\lambda)$ is said \emph{covering} (resp. \emph{dominant}) if $\eta$ is a birational map (resp. dominant map). The pair $(C,\lambda)$ is said \emph{well covering} if it is a covering pair and if there exists a $P(\lambda)$-stable open subset $\Omega$ of $C^+$ intersecting $C$ such that $\eta$ induces an isomorphism from $\KC\times_{P(\lambda)}\Omega$ onto an open subset of $X$.
\end{defn}

Let us fix a line bundle $\mathcal{L}\in\PicG(X)$ and a one parameter subgroup $\lambda$ of $\KC$. Then the map $x\mapsto\mu^{\mathcal{L}}(x,\lambda)$ takes its values in $\Z$. We notice that, for any irreducible component $C$ of $X^{\lambda}$, the value of $\mu^{\mathcal{L}}(x,\lambda)$ does not depend of $x\in C$, thus we can define the integer $\mu^{\mathcal{L}}(C,\lambda)$.

The two following results of Ressayre asserts that the integers $\mu^{\mathcal{L}}(C,\lambda)$ associated to well covering pairs (resp. dominant pairs) of $X$ give a complete description of $C_{\Q}(X)^+$.

\begin{lem}[\cite{ressayre10}, Lemma 3]
\label{lem:CN_dominantpairs_semiamplecone}
Let $(C,\lambda)$ be a dominant pair of $X$ and $\mathcal{L}\in C_{\Q}(X)^+$. Then $\mu^{\mathcal{L}}(C,\lambda)\leq 0$.
\end{lem}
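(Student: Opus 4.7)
The plan is to combine the density of $\eta$'s image with the Hilbert--Mumford numerical criterion. Let $\mathcal{L}\in C_{\Q}(X)^+$, so that by definition $\mathcal{L}$ is semiample in $\PicGQ(X)$ and $\Xss(\mathcal{L})\neq\emptyset$. Choose an integer $N\geq 1$ such that $\mathcal{L}^{\otimes N}$ lies in $\PicG(X)$; since $\Xss(\mathcal{L})=\Xss(\mathcal{L}^{\otimes N})$ and $\mu^{\mathcal{L}^{\otimes N}}(\cdot,\lambda)=N\mu^{\mathcal{L}}(\cdot,\lambda)$ by the group-homomorphism property recalled in section \ref{subsection:hilbertmumford_numcriterion}, we may assume without loss of generality that $\mathcal{L}\in\PicG(X)$ when checking the sign of $\mu^{\mathcal{L}}(C,\lambda)$.

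First, I would produce a semistable point lying in $C^+$. The locus $\Xss(\mathcal{L})$ is open in $X$ (it is the union of nonvanishing loci of $\KC$-invariant sections of powers of $\mathcal{L}$), and it is nonempty by hypothesis. On the other hand, $(C,\lambda)$ is a dominant pair, so the image of $\eta:\KC\times_{P(\lambda)}C^+\rightarrow X$ is dense in $X$. Therefore this image meets the nonempty open set $\Xss(\mathcal{L})$: there exist $g\in\KC$ and $y\in C^+$ with $g\cdot y\in\Xss(\mathcal{L})$. As $\Xss(\mathcal{L})$ is $\KC$-stable, we deduce $y\in\Xss(\mathcal{L})$.

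Next, I would apply the numerical criterion from section \ref{subsection:hilbertmumford_numcriterion}: semistability of $y$ against the one parameter subgroup $\lambda$ yields $\mu^{\mathcal{L}}(y,\lambda)\leq 0$. Now I would identify this number with $\mu^{\mathcal{L}}(C,\lambda)$. By definition of $C^+$, the limit $z:=\lim_{t\rightarrow 0}\lambda(t)\cdot y$ exists and lies in $C$; the integer $\mu^{\mathcal{L}}(y,\lambda)$ is read off the weight of the $\C^*$-action via $\lambda$ on the fiber $\mathcal{L}_z$. This weight depends only on $z$, and, being locally constant on the fixed-point scheme $X^{\lambda}$, is constant on the irreducible component $C$. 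Hence $\mu^{\mathcal{L}}(y,\lambda)=\mu^{\mathcal{L}}(C,\lambda)$, and $\mu^{\mathcal{L}}(C,\lambda)\leq 0$ follows.

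The argument is essentially a one-line application of Hilbert--Mumford, so there is no serious analytic or combinatorial obstacle. The one point demanding some care is the transition from $\PicG(X)$ to $\PicGQ(X)$: the numerical criterion is stated integrally, so one must clear the denominator of $\mathcal{L}$ before invoking it, and then transfer the resulting inequality back using $\Z$-linearity of $\mu^{\bullet}(x,\lambda)$ in the line-bundle argument. A secondary subtlety is verifying that $C^+$ carries a reasonable scheme structure so that $\eta$ is truly a morphism of varieties whose density of image makes sense; this is guaranteed by the Bialynicki--Birula decomposition, since $X$ is smooth projective and $C$ is an irreducible component of $X^{\lambda}$.
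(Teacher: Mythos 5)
Your argument is correct and is exactly the standard one: the paper itself does not reprove this lemma (it imports it from Ressayre's Lemma 3), and your chain — clear denominators, use dominance of $\eta$ to intersect the dense set $\KC\cdot C^+$ with the nonempty open $\KC$-stable locus $\Xss(\mathcal{L})$, then apply the Hilbert--Mumford inequality at a semistable point of $C^+$ and identify $\mu^{\mathcal{L}}(y,\lambda)$ with $\mu^{\mathcal{L}}(C,\lambda)$ via the limit point in $C$ — is precisely the cited proof. No gaps.
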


\begin{thm}[\cite{ressayre10}, Proposition 4]
\label{thm:ressayre_equations_amplecone}
Assume $X$ is a smooth projective variety. Then the semiample cone $C_{\Q}(X)^+$ is the set of the line bundles $\mathcal{L}\in\PicGQ(X)^+$ such that for all well covering pairs $(C,\lambda)$ of $X$, we have $\mu^{\mathcal{L}}(C,\lambda) \leq 0$.
\end{thm}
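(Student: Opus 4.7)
The plan is to establish both inclusions defining the semiample cone. For the forward direction, I would note that any well covering pair $(C,\lambda)$ is automatically dominant: by definition $\eta:\KC\times_{P(\lambda)}C^+\to X$ restricts to an isomorphism onto a nonempty open subset of $X$, which in particular makes $\eta$ dominant. Hence Lemma \ref{lem:CN_dominantpairs_semiamplecone} applies directly and yields $\mu^{\mathcal{L}}(C,\lambda)\leq 0$ for every $\mathcal{L}\in C_{\Q}(X)^+$ and every well covering pair.

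For the reverse direction, I would argue by contrapositive: assume $\mathcal{L}\in\PicGQ(X)^+$ with $\Xss(\mathcal{L})=\emptyset$, and construct a well covering pair $(C,\lambda)$ with $\mu^{\mathcal{L}}(C,\lambda)>0$. Replacing $\mathcal{L}$ by a sufficiently divisible tensor power, I may assume $\mathcal{L}$ is a genuine semiample $\KC$-linearized line bundle, and that a large power gives a $\KC$-equivariant morphism $f:X\to\mathbb{P}(V)$ with $V=H^0(X,\mathcal{L}^{\otimes N})^*$, so that $\mu^{\mathcal{L}}(x,\lambda)$ can be computed from the natural linearization on $\mathbb{P}(V)$. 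Every point of $X$ being unstable, Kempf's theorem assigns to each $x\in X$ a conjugacy class of indivisible one-parameter subgroup $\lambda_x$ maximizing $\mu^{\mathcal{L}}(x,\lambda)/\|\lambda\|$; these classes form a finite set, and the associated Kempf--Ness--Hesselink stratification partitions $X$ into finitely many $\KC$-stable locally closed strata of the shape $\KC\times_{P(\lambda)}Y^+$, where $Y^+$ is an open $P(\lambda)$-stable subset of $C^+$ for some irreducible component $C$ of $X^{\lambda}$. Since $X$ is irreducible and projective, exactly one stratum $S_0$ is open and dense, and for it the restriction of $\eta$ is an isomorphism onto an open subset of $X$: this is precisely what is required for $(C,\lambda)$ to be a well covering pair, and the common (positive) optimal destabilizing value on $S_0$ is the constant $\mu^{\mathcal{L}}(C,\lambda)$, contradicting the hypothesis.

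The principal obstacle I foresee is the Kempf--Ness--Hesselink stratification in the \emph{semiample} setting: the classical theorem and the resulting stratification are stated for ample linearizations on projective varieties, so one must either develop the theory intrinsically for semiample $\mathcal{L}$ or pull it back along $f:X\to\mathbb{P}(V)$. In the latter case one must check that the preimages of the strata on $\mathbb{P}(V)$ remain $\KC$-stable locally closed subvarieties of $X$ of the desired form $\KC\times_{P(\lambda)}Y^+$, that the open stratum of $X$ really exists and does not get collapsed under $f$, and that the resulting $P(\lambda)$-stable open $\Omega\subset C^+$ is exactly the one demanded by the definition of a well covering pair. Once this geometric identification is set up, the positivity $\mu^{\mathcal{L}}(C,\lambda)>0$ on the open stratum is immediate from the optimal destabilizing property, and the theorem follows.
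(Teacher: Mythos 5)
You should first note that the paper contains no proof of this statement at all: it is imported verbatim from \cite{ressayre10} (Proposition 4), so the only meaningful comparison is with Ressayre's original argument, and your outline in fact reproduces that argument. The forward direction is complete as you state it: a well covering pair is in particular covering, hence dominant, and Lemma \ref{lem:CN_dominantpairs_semiamplecone} yields $\mu^{\mathcal{L}}(C,\lambda)\leq 0$. The reverse direction via Kempf's optimal destabilizing one-parameter subgroups and the Hesselink--Kirwan--Ness stratification, applied through the morphism $f:X\rightarrow\mathbb{P}(V)$ defined by a suitable power of $\mathcal{L}$, is exactly the mechanism behind the cited proposition.

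That said, as written your reverse direction is a plan rather than a proof: the three verifications you flag are the substance of the statement, so let me indicate why each one goes through. (i) The blade of the open stratum need not a priori lie over a single irreducible component $C$ of $X^{\lambda}$, since the center of a Hesselink stratum is in general a union of components of the fixed-point set; here irreducibility of $X$ saves you: the open stratum is irreducible, hence so is its blade $\Omega$, hence $\Omega\subset C^{+}$ for a single component $C$, and $\Omega$ meets $C$, as the definition of well covering requires, because the limit map sends $\Omega$ into $\Omega\cap C$. (ii) Well covering also requires covering, i.e.\ birationality of $\eta$ on all of $\KC\times_{P(\lambda)}C^{+}$, not only the isomorphism over $\Omega$; this follows because $C^{+}$ is irreducible (Bialynicki-Birula), so $\KC\times_{P(\lambda)}\Omega$ is dense in $\KC\times_{P(\lambda)}C^{+}$ and $\eta$ restricts to an isomorphism between dense open subsets of source and target. (iii) The semiample-to-ample transfer is unproblematic: $N\mu^{\mathcal{L}}(x,\lambda)=\mu^{\mathcal{O}(1)}(f(x),\lambda)$ by functoriality, $\Xss(\mathcal{L})$ is the preimage of the semistable locus of $\mathbb{P}(V)$, equivariance of $f$ transports the structure $\KC\times_{P(\lambda)}Y^{\mathrm{ss}}$ of a stratum to its preimage, and the finitely many $\KC$-stable locally closed preimages covering the irreducible variety $X$ force exactly one of them to be open and dense. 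With these three points written out, your argument is correct and is, in substance, the proof given in \cite{ressayre10}.
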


In some special cases of varieties $X$, including the varieties of type $X_M$, we obtain a smaller set of equations determining the semiample cone.


\begin{thm}[\cite{ressayre10}, Theorem 3]
\label{thm:coneample_equations_stabilizer}
Let $X=\KC/B\times\hKC/\wh{Q}$. Assume $C_{\Q}(X)^{+}$ has nonempty interior in $\PicGQ(X)$. Let $\mathcal{L}\in\PicGQ(X)^{+}$. Then $\mathcal{L}$ is in $C_{\Q}(X)^{+}$ if and only if for all well covering pairs $(C,\lambda)$ of $X$ such that there exists $x\in C$ with $(\KC)_x^{\circ} = \lambda(\C^*)$, we have $\mu^{\mathcal{L}}(C,\lambda)\leq 0$.
\end{thm}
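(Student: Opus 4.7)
The plan is to use the previous Theorem \ref{thm:ressayre_equations_amplecone} as the starting point, since it already gives an equational description of $C_{\Q}(X)^+$ in terms of \emph{all} well covering pairs. The content of the present theorem is therefore the reduction to the strictly smaller class of well covering pairs satisfying the stabilizer condition $(\KC)_x^{\circ} = \lambda(\C^*)$ for some $x\in C$. The forward implication is immediate: any well covering pair subject to the stabilizer condition is in particular a well covering pair, so if $\mathcal{L}\in C_{\Q}(X)^+$, Theorem \ref{thm:ressayre_equations_amplecone} directly yields $\mu^{\mathcal{L}}(C,\lambda)\leq 0$.

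For the nontrivial direction, I would fix a well covering pair $(C,\lambda)$ \emph{without} the stabilizer condition, and show that the inequality $\mu^{\mathcal{L}}(C,\lambda)\leq 0$ is a positive rational combination of inequalities coming from well covering pairs $(C_i,\lambda_i)$ that do satisfy it; from this and the linearity of $\mathcal{L}\mapsto\mu^{\mathcal{L}}(\cdot,\lambda)$ (together with property (3) of Section \ref{subsection:hilbertmumford_numcriterion} for the linearity in $\lambda$), the desired inequality follows. The geometric picture is the following: if the generic identity-component stabilizer on $C$ strictly contains $\lambda(\C^*)$, then there is a subtorus $S\subseteq\KC$, containing $\lambda(\C^*)$ strictly, that acts trivially on some open dense subset of $C$. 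Any one parameter subgroup $\lambda'$ of $S$ sufficiently close to $\lambda$ still has $C\subseteq X^{\lambda'}$ as an irreducible component of the fixed locus, and, since the condition of being a well covering pair is open for $\lambda'$ close to $\lambda$ (because the map $\eta$ is an isomorphism on an open set), the pair $(C,\lambda')$ remains well covering. By perturbing $\lambda$ generically inside the cocharacter lattice of $S$, one can produce finitely many indivisible $\lambda_i\in S_*$ such that $\lambda=\sum c_i\lambda_i$ with $c_i\in\Q_{>0}$ and such that each $(C,\lambda_i)$ is a well covering pair whose generic point has identity-component stabilizer reduced to $\lambda_i(\C^*)$. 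Then $\mu^{\mathcal{L}}(C,\lambda)=\sum_i c_i\mu^{\mathcal{L}}(C,\lambda_i)\leq 0$ by hypothesis.

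The main obstacle is the existence of this perturbation-and-decomposition: one must verify that nearby one parameter subgroups in $S$ truly inherit the well-covering property of the original pair (that $\eta$ remains a birational isomorphism onto an open subset and not just dominant), and that one can arrange the perturbed pairs to satisfy the stabilizer condition. For the first point I would exploit the rigidity of Bia\l{}ynicki-Birula cells: the attracting set $C^+$ varies semicontinuously, and since $C$ is an irreducible component common to $X^\lambda$ and $X^{\lambda'}$, the corresponding $C^+$ for $\lambda'$ is contained in the one for $\lambda$ on a $P(\lambda)\cap P(\lambda')$-stable open neighbourhood of $C$, preserving the isomorphism property defining well covering pairs. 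For the second point, the special form $X=\KC/B\times\hKC/\wh Q$ is essential: stabilizers of points can be computed explicitly as intersections of Borel/parabolic conjugates, and generic perturbations of the cocharacter inside $S$ reduce the generic stabilizer to $\lambda_i(\C^*)$ itself. The nonempty-interior hypothesis on $C_{\Q}(X)^+$ ensures the cone is full-dimensional in $\PicGQ(X)$, so working with facet-defining inequalities is meaningful and the above decomposition argument yields a genuine reduction rather than a vacuous one.
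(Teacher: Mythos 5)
First, note that the paper does not prove this statement at all: it is imported verbatim from Ressayre (\cite{ressayre10}, Theorem 3), so there is no internal proof to compare against; your proposal has to stand on its own, and it does not.

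The fatal problem is the geometric premise of your reduction. You claim that if no $x\in C$ has $(\KC)_x^{\circ}=\lambda(\C^*)$, then some subtorus $S\supsetneq\lambda(\C^*)$ acts trivially on a dense open subset of $C$. Semicontinuity only tells you that the \emph{dimension} of the generic stabilizer exceeds $1$; the stabilizers themselves vary from point to point, and on varieties of the form $\KC/B\times\hKC/\wh{Q}$ they are intersections of \emph{varying} conjugates of $B$ and $\wh{Q}$, which typically have trivial common kernel (already $\KC$ acting on $\KC/B$ has every point-stabilizer of large dimension but no nontrivial subgroup acting trivially on a dense open set). So the torus $S$ you need generally does not exist. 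Worse, even if such an $S$ did exist, your construction is self-defeating: if $S$ fixes a dense open subset of $C$ pointwise (hence all of $C$), then every $x\in C$ satisfies $(\KC)_x\supseteq S$ with $\dim S\geq 2$, so \emph{no} one parameter subgroup $\lambda_i\subset S$ can satisfy $(\KC)_x^{\circ}=\lambda_i(\C^*)$ on $C$. The perturbed pairs $(C,\lambda_i)$ therefore still fail the stabilizer condition, and the decomposition $\mu^{\mathcal{L}}(C,\lambda)=\sum_i c_i\mu^{\mathcal{L}}(C,\lambda_i)$ does not reduce to inequalities of the allowed type. The secondary claims (that $(C,\lambda')$ remains well covering for $\lambda'$ near $\lambda$, that $C$ remains a full irreducible component of $X^{\lambda'}$) are also asserted rather than proved, but they are moot given the collapse of the main step.

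The actual argument in \cite{ressayre10} runs differently: since $C_{\Q}(X)^{+}$ has nonempty interior, it suffices to show that every facet (other than those of $\PicGQ(X)^+$ itself) is cut out by a well covering pair satisfying the stabilizer condition. One takes $\mathcal{L}$ in the relative interior of such a facet, studies $\Xss(\mathcal{L})$ and its quotient, and uses Luna's \'etale slice theorem to produce a semistable point whose stabilizer has identity component a one-dimensional torus $\lambda(\C^*)$; the component $C$ of $X^{\lambda}$ through that point then yields the required well covering pair, and all remaining inequalities from Theorem \ref{thm:ressayre_equations_amplecone} are redundant because they do not define facets. If you want a self-contained proof, that is the route to follow; the perturbation scheme cannot be repaired in its present form.
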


\begin{rem}
\label{rem:ops_onlyindivisibledominantneeded}
Noticing that, for any one parameter subgroup $\lambda$ of $\KC$ and $g\in \KC$, the set of fixed points of $g\lambda g^{-1}$ is $X^{g\lambda g^{-1}}=g\cdot X^{\lambda}$, and $C$ is a irreducible component of $X^{\lambda}$ if and only if $g\cdot C$ is a irreducible component of $X^{g\lambda g^{-1}}$, we can apply ($1$) and ($3$) of subsection \ref{subsection:hilbertmumford_numcriterion} so as to show that it is sufficient to consider only the well covering pairs $(C,\lambda)$ with $\lambda$ dominant indivisible one parameter subgroup of $\TC$, in the statements of Theorems \ref{thm:ressayre_equations_amplecone} and \ref{thm:coneample_equations_stabilizer}.
\end{rem}

\begin{rem}
\label{rem:cns_with_dominantpairs}
It is also clear from Lemma \ref{lem:CN_dominantpairs_semiamplecone} that Theorems \ref{thm:ressayre_equations_amplecone} and \ref{thm:coneample_equations_stabilizer} are also true if we replace ``well covering pairs" by ``dominant pairs" or ``covering pairs" in their statements.
\end{rem}

\subsection{Description of the irreducible components of $X_M^{\lambda}$}

If $\beta$ is a weight of $\TC$ in $M$, we define the associated weight space
\[
M_{\beta} := \{v\in M\,\ | \ d\rho(Y)v = \beta(Y)v, \mbox{ for all } Y\in \got{t}_{\C}\},
\]
and, for all $k\in\Z$, and for all one parameter subgroup $\lambda$ of $\KC$,
\[
M_{\lambda,k} := \{v\in M\ | \ \lambda(t)\cdot v = t^k v, \forall t\in\C^*\}.
\]
We notice that $M_{\lambda,0} = M^{\lambda}$ is the subspace of vectors of $M$ fixed by $\lambda$.

Now, let us fix a dominant one parameter subgroup $\lambda$ of $\TC$. We will denote by $W=W(\TC;\KC)$ the Weyl group of $\TC$ in $\KC$ and $W_{\lambda}$ the Weyl group of the Levi subgroup $\KC^{\lambda}$ of $P(\lambda)$.

It is clear that $X_M^{\lambda} = (\KC/B)^{\lambda}\times (\KC/B)^{\lambda} \times \mathbb{P}(M)^{\lambda}$. The first two factors are of the form $(\KC/B)^{\lambda} = \bigcup_{w\in W_{\lambda}\backslash W} \KC^{\lambda}wB/B$. Obviously, we will also have $\mathbb{P}(M)^{\lambda} = \bigcup_{m\in\Z} C_m$, where $C_m = \mathbb{P}(M_{\lambda,m})$ for any $m\in\Z$. For $(w,w',m)\in W/W_{\lambda}\times W/W_{\lambda}\times\Z$, we define
\[
C(w,w',m) = \KC^{\lambda}w^{-1}B/B \times \KC^{\lambda}w'^{-1}B/B \times C_m.
\]
We keep the $^{-1}$ introduced in the notations of \cite{ressayre10}. We now have
\[
X_M^{\lambda} = \bigcup_{\stackrel{w,w'\in W/W_{\lambda}}{m\in\Z}} C(w,w',m).
\]

\section{Semiample cone of the $\KC$-variety $X_{E\oplus\C}$}

As we are interested in computing the equations of the convex polyhedron $\Delta_K(K\cdot\Lambda\times E)$ defined in subsection \ref{subsection:Kirwanpolyhedron_KLambdaE}, we would like to apply GIT methods to it, that is applying the well covering pair's machinery to the ``semimaple cone of $K\cdot\Lambda\times E$''. Here are two issues: $K\cdot\Lambda\times E$ is neither a flag variety nor a projective variety. We get around this two problems simultaneously by considering its compactification $K\cdot\Lambda\times \mathbb{P}(E\oplus\C)$.

Indeed, we are going to see that the semiample cone of $X_{E\oplus\C} = \KC/B\times\KC/B\times\mathbb{P}(E\oplus\C)$ is closely related to the polyhedral cone $\Delta_{K\times K}(T^*K\times E)$, and thus to $\Delta_K(K\cdot\Lambda\times E)$.

\subsection{The Picard group $\PicG(X_M)$}

We begin with describing the set of $\KC$-linearized line bundles on the variety $X_M:=\KC/B\times \KC/B\times\mathbb{P}(M)$, where $M$ is any complex representation of $\KC$.

First, we consider the complete flag variety $\KC/B$. It is a well known fact that the $\KC$-linearized line bundles on $\KC/B$ are bijectively identified with the elements of the weight lattice $\wedge^*$ of $\KC$, by the map
\[
\mu\in\wedge^*\mapsto\mathcal{L}_{\mu} := \KC \times_{B} \C_{-\mu}\in\PicG(\KC/B),
\]
with $\C_{-\mu}$ the representation of $B$ on $\C$ with weight $-\mu$. Moreover, the Borel-Weyl theorem implies that the semiample line bundles are those associated to dominant weights, that is $\PicG(\KC/B)^+ \cong \wedge^*_+$.


Let us now look at the last factor in $X$, the projective space $\mathbb{P}(M)$. Let $\mathcal{X}(\KC)$ denote the group of characters of $\KC$, that is, the group of rational homomorphisms of algebraic groups from $\KC$ to $\mathbb{G}_m$. We know that $\Pic(\mathbb{P}(M))$ is isomorphic to $\Z$, by the map $k\in\Z\mapsto\mathcal{L}_k\in\Pic(\mathbb{P}(M))$, with $\mathcal{L}_k = M\backslash\{0\}\times_{\C^*}\C_{k}$, where $\C_{k}$, for any $k\in\Z$, is the complex vector space $\C$ equipped with the action of $\C^*$ defined by
\[
\forall v\in\C,\ \forall z\in\C^*,\ z\cdot v := z^k v.
\]
The semiample line bundles of $\Pic(\mathbb{P}(M))$ are the line bundles $\mathcal{L}_k$, with $k\in\Z_{\geq 0}$. 

Now, for any $(k,\chi)\in\Z\times\mathcal{X}(\KC)$, the action of $\KC$ on $\mathcal{L}_k$ defined by
\[
g\cdot[x,z] := [g\cdot x, \chi(g^{-1})z], \text{ for all } g\in \KC, \text{ and all } [x,z]\in\mathcal{L}_k,
\]
is a $\KC$-linearization of the line bundle $\mathcal{L}_k$, and we will denote by $\mathcal{L}_{\chi,k}$ this $\KC$-linearized line bundle of $\mathbb{P}(M)$. Consequently, the forgetful homomorphism $\alpha:\PicG(\mathbb{P}(M))\rightarrow\Pic(\mathbb{P}(M))$ is surjective. By \cite[Lemma 2.2]{knopkraftvurst} (or \cite[Theorem 7.1]{dolgachev03}), $\ker\alpha$ is isomorphic to $\mathcal{X}(\KC)$. Thus any $\KC$-linearized line bundle $\mathcal{L}$ is isomorphic to $\mathcal{L}_{\chi,k}$ for some $(\chi,k)\in\mathcal{X}(\KC)\times\Z$. Clearly, for all $(\chi,k)\in\mathcal{X}(\KC)\times\Z$, $\mathcal{L}_{\chi,k}$ is isomorphic to the tensor product of $\KC$-linearized line bundles $\mathcal{L}_k\boxtimes(\mathbb{P}(M)\times\C_{-\chi})$.

We identify $\mathcal{X}(\KC)$ with the sublattice $(\got{k}^*)^{K}\cap\wedge^*$ of $\wedge^*$. We denote by $\mathcal{X}(\KC)_{\Q}$ the $\Q$-vector space generated by $\mathcal{X}(\KC)$ in $\got{k}^*$. We notice that $(\got{k}^*)^{K}$ is equal to the dual $\got{z}(\got{k})^*:=[\got{k},\got{k}]^{\bot}$ of the center of $\got{k}$.

The following proposition is analogous to \cite[Lemma 13]{ressayre10}.

\begin{prop}
The $\Q$-linear map
\begin{equation}
\label{eq:application_Qlin_surjective_onto_PicKC(X_M)}
\begin{array}{rrcl}
\pi_{M}:&\wedge^*_{\Q}\times\wedge^*_{\Q}\times\Q & \longrightarrow & \PicG(X_M)_{\Q} \\
&(\mu,\nu,r) & \longmapsto & \mathcal{L}_{\mu,\nu,r} := \mathcal{L}_{\mu}\boxtimes\mathcal{L}_{\nu}\boxtimes\mathcal{L}_{r}
\end{array}
\end{equation}
is surjective. Its kernel is the subspace $\{(\mu,-\mu,0)\ |\ \mu\in\mathcal{X}(\KC)_{\Q}\}$.
\end{prop}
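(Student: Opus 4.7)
My plan is to reduce the statement to the analysis of the forgetful morphism $\PicG(X_M)\to\Pic(X_M)$, combining the descriptions of $\PicG(\KC/B)$ and $\PicG(\mathbb{P}(M))$ recalled above with a Künneth-type decomposition of the ordinary Picard group of the product.

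Since $\KC/B$ and $\mathbb{P}(M)$ are smooth projective rational varieties with vanishing $\mathrm{H}^1(\cdot,\mathcal{O})$, external tensor product induces the Künneth decomposition
\[
\Pic(X_M)\cong\Pic(\KC/B)\oplus\Pic(\KC/B)\oplus\Pic(\mathbb{P}(M))\cong\wedge^*\oplus\wedge^*\oplus\Z.
\]
Because $X_M$ is connected and complete and $\KC$ is connected reductive, after rationalization the forgetful morphism fits into the exact sequence
\[
0\longrightarrow\mathcal{X}(\KC)_{\Q}\longrightarrow\PicGQ(X_M)\stackrel{f}{\longrightarrow}\Pic(X_M)_{\Q}\longrightarrow 0,
\]
in which the left map sends a character $\chi$ to the trivial line bundle equipped with the linearization $g\cdot(x,z)=(gx,\chi(g^{-1})z)$; surjectivity of $f$ rationally follows from the standard fact that some tensor power of any line bundle on a normal $\KC$-variety admits a $\KC$-linearization. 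The crucial identification is that, for $\chi\in\mathcal{X}(\KC)\subset\wedge^*$, the bundle $\mathcal{L}_\chi\in\PicG(\KC/B)$ is $\KC$-equivariantly isomorphic to the trivial line bundle with $\chi$-linearization, via the global section $gB\mapsto[g,\chi(g)]$; in particular $\pi_M(\chi,0,0)$ realises this trivial linearized bundle.

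To prove surjectivity of $\pi_M$, I take any $\mathcal{L}\in\PicGQ(X_M)$ and choose $(\mu,\nu,r)\in\wedge^*_{\Q}\times\wedge^*_{\Q}\times\Q$ mapping to $f(\mathcal{L})$ under the Künneth decomposition. Since $\pi_M(\mu,\nu,r)$ has the same image under $f$, the bundle $\mathcal{L}\otimes\pi_M(\mu,\nu,r)^{-1}$ lies in $\ker f=\mathcal{X}(\KC)_{\Q}$ and therefore equals $\pi_M(\chi,0,0)$ for some $\chi\in\mathcal{X}(\KC)_{\Q}$. Hence $\mathcal{L}\cong\pi_M(\mu+\chi,\nu,r)$.

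For the kernel, if $\pi_M(\mu,\nu,r)$ is equivariantly trivial then its image under $f$ vanishes: the $\mathbb{P}(M)$-factor forces $r=0$, while on each $\KC/B$-factor the kernel of $\wedge^*\to\Pic(\KC/B)$ is exactly $\mathcal{X}(\KC)$, so $\mu,\nu\in\mathcal{X}(\KC)_{\Q}$. Using the identification above, $\pi_M(\mu,\nu,0)$ becomes the trivial bundle equipped with linearization by the character $\mu+\nu$, which is equivariantly trivial if and only if $\nu=-\mu$; the converse inclusion is immediate from the same identification. The only real delicacy is keeping sign conventions consistent between the three factors; once these are fixed the argument is direct bookkeeping, parallel to \cite[Lemma 13]{ressayre10}.
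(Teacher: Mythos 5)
Your proof is correct, and its kernel computation is the same as the paper's, but the surjectivity step is organized along a genuinely different route. The paper proves surjectivity by invoking Mumford's result to produce a $\KC\times\KC\times GL_{\C}(M)$-linearization of a power of the underlying line bundle, then uses that $X_M$ is a flag variety of this larger reductive group to write that linearization as $\mathcal{L}_{\mu}\boxtimes\mathcal{L}_{\nu}\boxtimes\mathcal{L}_{\hat{\nu}}$, and finally restricts the linearization to $\KC$. You instead stay with $\KC$ throughout: you split the ordinary Picard group of the product by a K\"unneth decomposition and feed it into the exact sequence $0\rightarrow\mathcal{X}(\KC)_{\Q}\rightarrow\PicGQ(X_M)\rightarrow\Pic(X_M)_{\Q}\rightarrow 0$, the Mumford-type input entering only through the rational surjectivity of the forgetful map. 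What your version buys is a cleaner isolation of the general facts being used (kernel of the forgetful map equals $\mathcal{X}(\KC)$, finite cokernel, K\"unneth for $\Pic$), whereas the paper's version buys an explicit normal form for a linearized power of the bundle without having to discuss $\Pic(X_M)$ as an abstract group; note the paper does use the factorwise triviality implicitly in its kernel step, so the two arguments converge there. One small imprecision to fix: $\Pic(\KC/B)$ is not isomorphic to $\wedge^*$ for general reductive $\KC$ — the natural map $\wedge^*\rightarrow\Pic(\KC/B)$ has kernel $\mathcal{X}(\KC)$ and (only) finite cokernel — but since your entire argument is carried out after tensoring with $\Q$, all you need is that $\wedge^*_{\Q}\times\wedge^*_{\Q}\times\Q$ surjects onto $\Pic(X_M)_{\Q}$ and that the kernel of each factor map is $\mathcal{X}(\KC)_{\Q}$ (resp. $0$ for the $\mathbb{P}(M)$ factor), both of which hold, so the argument goes through.
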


\begin{proof}
Let $\mu$ and $\nu$ be two characters of $\KC$. Then, as elements of $\Pic(\KC/B)$, the line bundles $\mathcal{L}_{\mu}$ and $\mathcal{L}_{\nu}$ are trivial. That is, $\mathcal{L}_{\mu,\nu,0}$ is a $\KC$-linearization of the trivial bundle of $\Pic(X_M)$ and $\mathcal{L}_{\mu,\nu,0} = X_M\times\C_{-(\mu+\nu)}$. In particular, any $\KC$-linearization of the trivial bundle on $X_M$ is in the image of \eqref{eq:application_Qlin_surjective_onto_PicKC(X_M)}.

Now, let $\mathcal{L}\in\PicG(X_M)$. Let $\mathcal{L}'$ denote the line bundle obtained by forgetting the $\KC$-action on $\mathcal{L}$. From \cite[Proposition 1.5]{mumford94} or \cite[Corollaire 7.2]{dolgachev03}, we know that there exists a positive integer $n$ and a $\KC\times\KC\times GL_{\C}(M)$-linearization $\mathcal{M}$ of $\mathcal{L}'^{\otimes n}$. But $X_M$ is a flag manifold of the reductive group $\KC\times\KC\times GL_{\C}(M)$, then $\mathcal{M}\cong\mathcal{L}_{\mu}\boxtimes\mathcal{L}_{\nu}\boxtimes\mathcal{L}_{\hat{\nu}}$, with $\mu,\nu\in\wedge^*_+$ and $\hat{\nu}$ a dominant weight of $GL_{\C}(M)$. Furthermore, the $GL_{\C}(M)$-action on $\mathcal{L}_{\hat{\nu}}$ induces a $\KC$-linearization of the line bundle $\mathcal{L}_{\hat{\nu}}$, that is, $\mathcal{L}_{\hat{\nu}}\cong\mathcal{L}_{\chi,k}\in\PicG(\mathbb{P}(M))$ for some $k\in\Z$ and $\chi\in\mathcal{X}(\KC)$. Finally, $\mathcal{M}\cong\mathcal{L}_{\mu}\boxtimes\mathcal{L}_{\nu}\boxtimes\mathcal{L}_{\chi,k}\cong \mathcal{L}_{\mu+\chi,\nu,k}$ is a $\KC$-linearization of $\mathcal{L}'^{\otimes n}$ and $\mathcal{L}_{\mu+\chi,\nu,k}^{\otimes(-1)}\otimes\mathcal{L}^{\otimes n}$ is a $\KC$-linearization of the trivial bundle. From the first paragraph of this proof, it means that $\mathcal{L}$ is in the image of \eqref{eq:application_Qlin_surjective_onto_PicKC(X_M)}.

Assume that $\mathcal{L}_{\mu,\nu,k}$ is trivial. Since $X_M$ is a flag variety, one can check that, necessarily, $\mathcal{L}_{\mu}$, $\mathcal{L}_{\nu}$ and $\mathcal{L}_{k}$ are trivial if we don't take account of the $\KC$-actions. Consequently, $\mu$ and $\nu$ are characters of $\KC$, $k=0$, and $\mathcal{L}_{\mu,\nu,k} = X_M\times\C_{-(\mu+\nu)}$. Finally, we have $\mu+\nu=0$ because $\mathcal{L}_{\mu,\nu,k}$ is trivial in $\PicG(X_M)$.
\end{proof}

\begin{rem}
\label{rem:pi_M^-1(CQ(XM))_is_closedpolyhedralcone}
By Theorem \ref{thm:sacX_closedconvpoly_and_closureofacX}, we know that $C_{\Q}(X_M)^+$ is a closed convex polyhedral cone of $\PicG(X_M)_{\Q}$. Since $\pi_{M}$ is a linear projection between finite dimensional vector spaces, then $\pi_{M}^{-1}(C_{\Q}(X_{M})^+)$ is a closed convex polyhedral cone of $(\wedge^*)^2$. Moreover, its equations are described by Theorem \ref{thm:ressayre_equations_amplecone}.
\end{rem}

\begin{prop}
\label{prop:equations_of_CQ(XM)}
Let $(\mu,\nu,r)$ be in $\wedge^*_{\Q,+}\times\wedge^*_{\Q,+}\times\Q_{\geq 0}$. Then $\mathcal{L}_{\mu,\nu,r}\in C_{\Q}(X_M)^+$ (or, equivalently, $(\mu,\nu,r)\in\pi_{M}^{-1}(C_{\Q}(X_M)^+)$) if and only if
\begin{equation}
\label{eq:equation_of_CQ(XM)_fromgivenwellcoveringpair}
\langle w\lambda,\mu\rangle + \langle w'\lambda,\nu\rangle + mr \leq 0,
\end{equation}
for all indivisible dominant one parameter subgroups $\lambda$ of $\TC$, and for all $(w,w',m)$ in $W/W_{\lambda}\times W/W_{\lambda}\times\Z$ such that $(C(w,w',m),\lambda)$ is a well covering pair (resp. dominant pair) of $X_M$.
\end{prop}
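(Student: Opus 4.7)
The plan is to combine Ressayre's criterion (Theorem \ref{thm:ressayre_equations_amplecone}) with Remark \ref{rem:ops_onlyindivisibledominantneeded}, and then to compute $\mu^{\mathcal{L}_{\mu,\nu,r}}(C(w,w',m),\lambda)$ explicitly using the multiplicativity of $\mu^{\mathcal{L}}$ under exterior tensor products (Proposition \ref{prop:product_twobundles_numericalcriterion}). The assumption $r\geq 0$ together with $\mu,\nu\in\wedge^*_{\Q,+}$ guarantees that $\mathcal{L}_{\mu,\nu,r}$ lies in $\PicGQ(X_M)^+$, so Theorem \ref{thm:ressayre_equations_amplecone} is applicable.

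First, by Theorem \ref{thm:ressayre_equations_amplecone} combined with Remark \ref{rem:ops_onlyindivisibledominantneeded}, the line bundle $\mathcal{L}_{\mu,\nu,r}$ lies in $C_{\Q}(X_M)^+$ if and only if $\mu^{\mathcal{L}_{\mu,\nu,r}}(C,\lambda)\leq 0$ for every well covering pair $(C,\lambda)$ of $X_M$ with $\lambda$ an indivisible dominant one parameter subgroup of $\TC$. The description of $X_M^{\lambda}$ given at the end of section 2 shows that the irreducible components of $X_M^{\lambda}$ are precisely the subvarieties $C(w,w',m)$ for $(w,w',m)\in W/W_{\lambda}\times W/W_{\lambda}\times\Z$. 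Hence the problem reduces to evaluating the integer $\mu^{\mathcal{L}_{\mu,\nu,r}}(C(w,w',m),\lambda)$.

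Second, since $\mathcal{L}_{\mu,\nu,r} = \mathcal{L}_{\mu}\boxtimes\mathcal{L}_{\nu}\boxtimes\mathcal{L}_{r}$ and $\mu^{\mathcal{L}}$ is constant on each irreducible component of the fixed point set, two applications of Proposition \ref{prop:product_twobundles_numericalcriterion} give
\[
\mu^{\mathcal{L}_{\mu,\nu,r}}(C(w,w',m),\lambda) \;=\; \mu^{\mathcal{L}_{\mu}}(w^{-1}B/B,\lambda) + \mu^{\mathcal{L}_{\nu}}(w'^{-1}B/B,\lambda) + \mu^{\mathcal{L}_{r}}([v_m],\lambda),
\]
for any chosen base point $[v_m]\in \mathbb{P}(M_{\lambda,m})$.

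Third, the three summands are computed directly. For $\mathcal{L}_{\mu}=\KC\times_B\C_{-\mu}$, writing $\lambda(t)\cdot w^{-1} = w^{-1}\cdot(w\lambda w^{-1})(t)$ with $(w\lambda w^{-1})(t)\in\TC\subseteq B$ and applying the definition of $\mu^{\mathcal{L}}$ from subsection \ref{subsection:hilbertmumford_numcriterion} yields $\mu^{\mathcal{L}_{\mu}}(w^{-1}B/B,\lambda)=\langle w\lambda,\mu\rangle$; the same computation gives $\mu^{\mathcal{L}_{\nu}}(w'^{-1}B/B,\lambda)=\langle w'\lambda,\nu\rangle$. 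For the third term, $\lambda(t)$ acts on any $v_m\in M_{\lambda,m}$ by multiplication by $t^m$, so the quotient description $\mathcal{L}_{r}=M\setminus\{0\}\times_{\C^*}\C_{r}$ gives $\mu^{\mathcal{L}_{r}}([v_m],\lambda)=mr$. Summing the three produces the inequality $\langle w\lambda,\mu\rangle+\langle w'\lambda,\nu\rangle+mr\leq 0$ stated in the proposition, and the parenthetical version with dominant pairs follows from Remark \ref{rem:cns_with_dominantpairs}. The main obstacle is essentially bookkeeping: one has to reconcile three sign conventions (the $-\mu$ in the definition of $\mathcal{L}_{\mu}$, the exponent $-\mu^{\mathcal{L}}(x,\lambda)$ in the definition of the numerical invariant, and the Weyl group action on $\got{t}$) to ensure the final formula carries the correct signs; once this is settled, the computation is routine.
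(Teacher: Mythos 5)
Your proposal is correct and follows essentially the same route as the paper: Ressayre's criterion (Theorem \ref{thm:ressayre_equations_amplecone}) restricted to indivisible dominant one parameter subgroups via Remark \ref{rem:ops_onlyindivisibledominantneeded}, additivity of the numerical invariant over the three factors (Proposition \ref{prop:product_twobundles_numericalcriterion}), and the explicit evaluations that the paper isolates as Lemmas \ref{lem:numericalcriterion_GoverB} and \ref{lem:numericalcriterion_P(M)}. You merely write out inline the ``straightforward calculations'' the paper leaves to those two lemmas, and your sign bookkeeping matches theirs.
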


The form of equation \eqref{eq:equation_of_CQ(XM)_fromgivenwellcoveringpair} follows from Proposition \ref{prop:product_twobundles_numericalcriterion}, and from the next two lemmas, which proofs are straightforward calculations. Here, we assume that $\lambda$ is a one parameter subgroup of $\TC$.

\begin{lem}
\label{lem:numericalcriterion_GoverB}
Let $X=\KC/B$, and let $w$ be an element of the Weyl group of $\TC$ in $\KC$, and $\nu$ a dominant weight of $B$. Then $\mu^{\mathcal{L}_{\nu}}(w^{-1}B/B, \lambda) = \langle w\lambda,\nu\rangle$.
\end{lem}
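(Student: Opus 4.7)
The plan is to unwind the definition of $\mu^{\mathcal{L}_\nu}$ directly using the explicit model $\mathcal{L}_\nu = \KC\times_B\C_{-\nu}$, exploiting the fact that $w^{-1}B/B$ is a torus fixed point.

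First I would observe that, since $\lambda$ takes values in $\TC$ and $w^{-1}B/B$ is fixed by $\TC$, the limit $z:=\lim_{t\to 0}\lambda(t)\cdot w^{-1}B/B$ equals $w^{-1}B/B$ itself, and the fiber $(\mathcal{L}_\nu)_z$ is the line of classes $[w^{-1},v]$ with $v\in\C_{-\nu}$.

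The main computation is then to express the action of $\lambda$ on this fiber. Writing
\[
\lambda(t)\cdot[w^{-1},v] \;=\; [\lambda(t)w^{-1},v] \;=\; [w^{-1}(w\lambda(t)w^{-1}),v],
\]
and using that $w\lambda(t)w^{-1}\in\TC\subset B$ can be absorbed into the second factor by the defining equivalence of the associated bundle, one obtains
\[
\lambda(t)\cdot[w^{-1},v] \;=\; \bigl[w^{-1},\,\chi_{-\nu}\bigl(w\lambda(t)w^{-1}\bigr)\,v\bigr],
\]
where $\chi_{-\nu}$ is the character of $B$ of weight $-\nu$. Since $t\mapsto w\lambda(t)w^{-1}$ is by definition the one parameter subgroup $w\lambda$ of $\TC$, the standard pairing between characters and cocharacters gives $\chi_{-\nu}(w\lambda(t)w^{-1}) = t^{-\langle w\lambda,\nu\rangle}$.

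Comparing with the defining relation $\lambda(t)\cdot v = t^{-\mu^{\mathcal{L}_\nu}(x,\lambda)}v$ on the fiber then identifies $\mu^{\mathcal{L}_\nu}(w^{-1}B/B,\lambda) = \langle w\lambda,\nu\rangle$. The lemma is really an unraveling of definitions, so I do not expect a serious obstacle; the only point that needs careful tracking is the sign convention in the $\KC$-linearization of $\mathcal{L}_\nu$ (the $-\nu$ appearing in $\C_{-\nu}$), which is fixed by the definition recalled just before the statement, and a consistent choice of the pairing $\langle\cdot,\cdot\rangle$ between weights and one parameter subgroups of $\TC$.
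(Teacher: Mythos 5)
Your computation is correct and is precisely the ``straightforward calculation'' the paper alludes to without writing out: since $w^{-1}B/B$ is a $\TC$-fixed point, the limit is the point itself, and absorbing $w\lambda(t)w^{-1}\in\TC\subset B$ into the fiber $\C_{-\nu}$ of $\mathcal{L}_{\nu}=\KC\times_B\C_{-\nu}$ gives the weight $t^{-\langle w\lambda,\nu\rangle}$, hence $\mu^{\mathcal{L}_{\nu}}(w^{-1}B/B,\lambda)=\langle w\lambda,\nu\rangle$. The only implicit point worth noting is that you work with a representative of $w$ in $N_{\KC}(\TC)$; since any two representatives differ by an element of $\TC\subset B$, neither the point $w^{-1}B/B$ nor the resulting weight depends on this choice, so the argument is complete.
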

%
%

\begin{lem}
\label{lem:numericalcriterion_P(M)}
Let $M$ be a complex representation of $\KC$, $M\neq\{0\}$, and $X=\mathbb{P}(M)$. Let $m\in\Z$, and $v\in M_{\lambda,m}\setminus\{0\}$. 
Then, for all $k\in\Z$, we have
\[
\mu^{\mathcal{L}_{k}}([v],\lambda) = mk,
\]
where $[v]$ is the class of $v$ in $\mathbb{P}(M)$.
\end{lem}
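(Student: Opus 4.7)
The plan is to compute $\mu^{\mathcal{L}_k}([v],\lambda)$ directly from its definition. First I identify the fixed point $z := \lim_{t\to 0}\lambda(t)\cdot[v]$: since $v\in M_{\lambda,m}$, we have $\lambda(t)\cdot v = t^m v$, which spans the same line as $v$ for every $t\in\C^*$. Hence $\lambda(t)\cdot[v]=[v]$ for all $t$, so the extension of the orbit map to $\mathbb{P}^1$ is constant and $z=[v]$.

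Next I describe the induced $\C^*$-action on the fiber $(\mathcal{L}_k)_{[v]}$. I trivialise this fiber via $\C\to(\mathcal{L}_k)_{[v]}$, $\xi\mapsto[v,\xi]$, using the equivalence relation $(w,\xi)\sim(zw,z^k\xi)$ that defines $\mathcal{L}_k=(M\setminus\{0\})\times_{\C^*}\C_k$ (with the sign fixed so that $\mathcal{L}_k$ is semiample precisely for $k\geq 0$, as stated in the paper). The $\KC$-action on $\mathcal{L}_k$ then reads $g\cdot[w,\xi]=[gw,\xi]$. Specialising to $g=\lambda(t)$ and $w=v$, and applying the equivalence relation with $z=t^{-m}$ to bring the first component back to $v$, I obtain
\[
\lambda(t)\cdot[v,\xi] = [t^m v,\xi] = [v,t^{-mk}\xi].
\]

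Comparing with the defining relation $\lambda(t)\cdot\xi = t^{-\mu^{\mathcal{L}_k}([v],\lambda)}\xi$ from Section \ref{subsection:hilbertmumford_numcriterion} yields $\mu^{\mathcal{L}_k}([v],\lambda)=mk$, as claimed. There is no genuine obstacle here, as the paper itself signals by calling the computation straightforward; the only point requiring care is the consistent tracking of two sign conventions, namely the associated-bundle equivalence relation defining $\mathcal{L}_k$ and the minus sign in the definition of $\mu^{\mathcal{L}}$. Once both are pinned down, the lemma reduces to the single line displayed above.
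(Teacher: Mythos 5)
Your computation is correct and is precisely the direct calculation that the paper omits, declaring the proofs of Lemmas \ref{lem:numericalcriterion_GoverB} and \ref{lem:numericalcriterion_P(M)} to be ``straightforward calculations'': identify the fixed point $[v]$, track the induced $\C^*$-action on the fiber of $\mathcal{L}_k$, and read off the exponent against the sign convention $\lambda(t)\cdot\xi=t^{-\mu^{\mathcal{L}}}\xi$. Your care in pinning the associated-bundle equivalence relation to the paper's statement that $\mathcal{L}_k$ is semiample exactly for $k\geq 0$ (equivalently, that $\mathrm{H}^0(\mathbb{P}(M),\mathcal{L}_n)=\mathrm{Sym}^n(M^*)$) is exactly the right way to resolve the only ambiguity in the setup.
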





\subsection{Projection of the semiample cone of $X_{E\oplus\C}$}

Let $E$ be a complex representation of $\KC$. We denote by $\zeta:\KC\rightarrow GL_{\C}(E)$ the homomorphism of reductive groups associated to the representation $E$. We consider the induced representation $E\oplus\C$ of $\KC$, where $\KC$ acts trivially on $\C$.

\begin{prop}
\label{prop:SAC_equivalentdef_with_irrep}
Let $(\mu,\nu,r)\in\wedge^*_{\Q,+}\times\wedge^*_{\Q,+}\times\Q_{\geq 0}$, and let $\mathrm{pr}:\wedge^*_{\Q}\times\wedge^*_{\Q}\times\Q\rightarrow\wedge^*_{\Q}\times\wedge^*_{\Q}$ be the canonical linear projection.
\begin{enumerate}
\item We have $\mathcal{L}_{\mu,\nu,r}\in C_{\Q}(\XEC)^+$ if and only if there exists $k\geq 1$ such that $(k\mu,k\nu)\in(\wedge^*)^2$, $kr\in\Z$, and $\left(V_{k\mu}^{\KC*}\otimes V_{k\nu}^{\KC*}\otimes \C_{\leq kr}[E]\right)^{\KC}\neq 0$.
\item The set $C_{\Q}(\XEC)^+$ contains an ample line bundle.
\item We have $\Pi_{\Q}(E) = \mathrm{pr}(\pi_{E\oplus\C}^{-1}(C_{\Q}(X_{E\oplus\C})^+))$.
\end{enumerate}
\end{prop}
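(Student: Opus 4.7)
The three claims follow from a single core identification of $\KC$-invariant sections of tensor powers of $\mathcal{L}_{\mu,\nu,r}$. For part (1), I pick $k\geq 1$ so that $(k\mu,k\nu)\in(\wedge^*)^2$ and $kr\in\Z$, making $\mathcal{L}_{\mu,\nu,r}^{\otimes k}=\mathcal{L}_{k\mu,k\nu,kr}$ a genuine $\KC$-linearized line bundle. The K\"unneth formula factors $H^0(X_{E\oplus\C},\mathcal{L}_{k\mu,k\nu,kr})$ as a tensor product over the three components. Borel--Weil applied to the two copies of $\KC/B$ yields $V_{k\mu}^{\KC*}$ and $V_{k\nu}^{\KC*}$. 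For $\mathbb{P}(E\oplus\C)$, $H^0(\mathcal{O}(kr))=\mathrm{Sym}^{kr}((E\oplus\C)^*)$; splitting $(E\oplus\C)^*=E^*\oplus\C^*$ and using triviality of the $\KC$-action on the $\C^*$ factor yields $\bigoplus_{j=0}^{kr}\mathrm{Sym}^j(E^*)=\C_{\leq kr}[E]$. Taking $\KC$-invariants and recalling the criterion $\mathcal{L}\in C_{\Q}(X)^+\iff\mathcal{L}\in\PicGQ^+$ and some $H^0(X,\mathcal{L}^{\otimes n})^{\KC}\neq 0$ proves (1); semiampleness of $\mathcal{L}_{\mu,\nu,r}$ is automatic from the hypothesis $(\mu,\nu,r)\in\wedge^*_{\Q,+}\times\wedge^*_{\Q,+}\times\Q_{\geq 0}$.

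For (2), I exhibit an explicit ample element. Take any regular dominant $\mu\in\wedge^*_+$, set $\nu:=-w_0\mu$ (which is also regular dominant), and choose any $r>0$. Then $\mathcal{L}_{\mu,\nu,r}$ is ample on $X_{E\oplus\C}$, each tensor factor being ample on its component. Moreover $V_{k\nu}^{\KC*}$ has highest weight $-w_0(k\nu)=k\mu$, so $V_{k\nu}^{\KC*}\cong V_{k\mu}^{\KC}$ and $V_{k\mu}^{\KC*}\otimes V_{k\nu}^{\KC*}\cong\mathrm{End}_{\C}(V_{k\mu}^{\KC})$ carries the nonzero $\KC$-invariant $\mathrm{id}_{V_{k\mu}^{\KC}}$. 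Tensoring with the trivial constants $\C\subset\C_{\leq kr}[E]$ and invoking (1) gives $\mathcal{L}_{\mu,\nu,r}\in C_{\Q}(X_{E\oplus\C})^+$.

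For (3), by (1) the condition $(\mu,\nu)\in\mathrm{pr}(\pi_{E\oplus\C}^{-1}(C_{\Q}(X_{E\oplus\C})^+))$ is equivalent to the existence of $r\in\Q_{\geq 0}$ and $k\geq 1$ (with the required integrality) making $(V_{k\mu}^{\KC*}\otimes V_{k\nu}^{\KC*}\otimes\C_{\leq kr}[E])^{\KC}\neq 0$. Setting $N=k$ and $d=kr\in\Z_{\geq 0}$, this reduces to the existence of $N\geq 1,d\geq 0$ with a nonzero invariant in $V_{N\mu}^{\KC*}\otimes V_{N\nu}^{\KC*}\otimes\C_{\leq d}[E]$. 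Since $\C[E]=\bigcup_d\C_{\leq d}[E]$ is a filtered union of finite-dimensional $\KC$-submodules, $\KC$-invariants commute with the union, and this matches exactly the defining condition of $\Pi_{\Q}(E)$. The main obstacle, already present in (1), is the K\"unneth/Borel--Weil bookkeeping: the appearance of $\C_{\leq kr}[E]$ rather than $\C_{kr}[E]$ is precisely what the compactification $\mathbb{P}(E\oplus\C)$ (rather than $\mathbb{P}(E)$) provides, and this ``degree budget'' $r$, upon projection, is exhausted by the direct limit $\C[E]$, realizing $\Pi_{\Q}(E)$ as a true projection of the semiample cone rather than a mere slice.
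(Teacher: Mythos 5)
Your proof is correct and follows essentially the same route as the paper's: the same Borel--Weil/K\"unneth identification $\mathrm{H}^0(X_{E\oplus\C},\mathcal{L}_{k\mu,k\nu,kr})\cong V_{k\mu}^{\KC*}\otimes V_{k\nu}^{\KC*}\otimes\C_{\leq kr}[E]$ for (1), the same ample witness $\mathcal{L}_{\mu,-w_0\mu,r}$ with the invariant $\mathrm{id}_{V_{k\mu}^{\KC}}$ for (2), and the same reduction of (3) to (1). Your remark that the filtered union $\C[E]=\bigcup_d\C_{\leq d}[E]$ is what lets the degree parameter $r$ be absorbed in the projection is a point the paper leaves implicit ("it is clear"), so you have if anything been slightly more careful there.
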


\begin{proof}
By definition, for any $(\mu,\nu,r)\in\wedge^*_{\Q,+}\times\wedge^*_{\Q,+}\times\Q_{\geq 0}$, the line bundle $\mathcal{L}_{\mu,\nu,r}$ is in the semiample cone $C_{\Q}(\XEC)^+$ if and only if there exists a positive integer $k$ such that $(k\mu,k\nu,kr)\in\wedge^*\times\wedge^*\times\Z$, and $\Xss(\mathcal{L}_{k\mu,k\nu,kr}) \neq \emptyset$. This last assertion is equivalent to $\mathrm{H}^0(X,\mathcal{L}_{k\mu,k\nu,kr}^{\otimes k'})^{\KC}$ containing a nontrivial element, for some $k'\geq 1$. Multiplying $k$ by $k'$, we may assume that $\mathrm{H}^0(X,\mathcal{L}_{k\mu,k\nu,kr})^{\KC}$ is not zero. The $\KC$-module $\mathrm{H}^0\left(\mathbb{P}(E\oplus\C),\mathcal{L}_{kr}\right)$ is isomorphic to the $\KC$-module of polynomials on $E$ of degree less than or equal to $kr$, denoted by $\C_{\leq kr}[E]$. Now, by the Borel-Weyl theorem, we have
\[
\mathrm{H}^0(X,\mathcal{L}_{k\mu,k\nu,kr}) = V_{k\mu}^{\KC*}\otimes V_{k\nu}^{\KC*}\otimes \C_{\leq kr}[E].
\]
This proves the first assertion.

We notice that, for any regular $\mu\in\wedge^*_{\Q,+}$, $r\in\Q_{>0}$, and $k\in\Z_{>0}$ such that $(k\mu,kr)\in\wedge^*\times\Z$, we have $(V_{k\mu}^{\KC*}\otimes V_{k(-w_0\mu)}^{\KC*}\otimes\C_{\leq kr}[E])^{\KC}\neq 0$, since it contains $(V_{k\mu}^{\KC*}\otimes V_{k(-w_0\mu)}^{\KC*}\otimes\C)^{\KC}\neq 0$, where $w_0$ denotes the longest element of the Weyl group of $T$ in $K$. Thus $\mathcal{L}_{\mu,-w_0\mu,0,r}$ is an ample line bundle of $C_{\Q}(\XEC)^{+}$, which proves assertion \emph{(2)}.

Now, by definition of $\Pi_{\Q}(E)$ (Definition \ref{defn:DGIT}) and from assertion \emph{(1)}, it is clear that the image of $\mathrm{pr}(\pi_{E\oplus\C}^{-1}(C_{\Q}(X_{E\oplus\C})^+))$ by the projection map $\mathrm{pr}$ is exactly the set $\Pi_{\Q}(E)$.
\end{proof}

\begin{cor}
\label{cor:PiQ_convexpolyhedralcone}
The set $\Pi_{\Q}(E)$ is a closed convex polyhedral cone of $(\wedge^*_{\Q,+})^2$.
\end{cor}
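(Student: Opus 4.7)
The plan is to combine two facts already established in the excerpt: first, that $\Pi_{\Q}(E)$ equals the image of a closed convex polyhedral cone under a linear projection, and second, that linear images of finitely generated convex cones are again finitely generated convex cones, hence polyhedral.

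More precisely, I would start from assertion (3) of Proposition \ref{prop:SAC_equivalentdef_with_irrep}, which identifies
\[
\Pi_{\Q}(E) = \mathrm{pr}\bigl(\pi_{E\oplus\C}^{-1}(C_{\Q}(X_{E\oplus\C})^+)\bigr),
\]
where $\mathrm{pr}:\wedge^*_{\Q}\times\wedge^*_{\Q}\times\Q\to\wedge^*_{\Q}\times\wedge^*_{\Q}$ is the canonical $\Q$-linear projection. Remark \ref{rem:pi_M^-1(CQ(XM))_is_closedpolyhedralcone}, applied to $M=E\oplus\C$, tells us that the preimage $\pi_{E\oplus\C}^{-1}(C_{\Q}(X_{E\oplus\C})^+)$ is a closed convex polyhedral cone in the finite-dimensional $\Q$-vector space $\wedge^*_{\Q}\times\wedge^*_{\Q}\times\Q$.

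The key step is then to observe that the linear image of a convex polyhedral cone is again a convex polyhedral cone. By the Minkowski--Weyl theorem (in its rational form over $\Q$), being a closed convex polyhedral cone is equivalent to being finitely generated: if the preimage is generated by rays $\Q_{\geq 0}\cdot v_1,\dots,\Q_{\geq 0}\cdot v_N$, then its image under $\mathrm{pr}$ is generated by $\Q_{\geq 0}\cdot\mathrm{pr}(v_1),\dots,\Q_{\geq 0}\cdot\mathrm{pr}(v_N)$, which is automatically a closed convex polyhedral cone. The inclusion $\Pi_{\Q}(E)\subseteq(\wedge^*_{\Q,+})^2$ is built into Definition \ref{defn:DGIT} and hence poses no issue.

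The only subtlety to be aware of is that closedness under a linear map is not automatic for general closed convex sets; what saves us is the polyhedral (equivalently, finitely generated) nature of the preimage. Invoking Minkowski--Weyl rather than a naive topological argument sidesteps this entirely, so no deep analytical input is needed. I would therefore present the corollary as an immediate combination of Proposition \ref{prop:SAC_equivalentdef_with_irrep}(3), Remark \ref{rem:pi_M^-1(CQ(XM))_is_closedpolyhedralcone}, and this standard polyhedral-cone principle.
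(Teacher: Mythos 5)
Your proposal is correct and follows exactly the paper's own route: the paper proves this corollary by citing Remark \ref{rem:pi_M^-1(CQ(XM))_is_closedpolyhedralcone} together with Proposition \ref{prop:SAC_equivalentdef_with_irrep}~(3), leaving the stability of polyhedral cones under linear projection implicit. Your explicit appeal to the Minkowski--Weyl theorem to justify closedness of the image merely fills in that tacit step.
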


\begin{proof}
This is induced by Remark \ref{rem:pi_M^-1(CQ(XM))_is_closedpolyhedralcone} and Proposition \ref{prop:SAC_equivalentdef_with_irrep} \emph{(3)}.
\end{proof}

Now, it remains to compute the equations of $\Pi_{\Q}(E)$. Since it is the linear projection of the convex polyhedral cone $C_{\Q}(\XEC)^+$ which we know the equations, see Proposition \ref{prop:equations_of_CQ(XM)}, we may obtain the equations of $\Pi_{\Q}(E)$ from the ones of $C_{\Q}(\XEC)^+$. Unfortunately, here are two issues:
\begin{itemize}
\item Proposition \ref{prop:equations_of_CQ(XM)} gives a set of equations for $C_{\Q}(\XEC)^+$ which is not finite. We will get around this problem in section \ref{section:equations_redundancy}, applying Theorem \ref{thm:coneample_equations_stabilizer} to remove some redundancy in the set of equations of $C_{\Q}(\XEC)^+$. However,  Theorem \ref{thm:coneample_equations_stabilizer} needs $C_{\Q}(\XEC)^+$ to have nonempty interior. This question will be discussed in paragraph \ref{subsection:interior_of_CQ(XEC)^+}.
\item In general, one cannot easily obtain the equations of the projection of some convex polyhedron. But, here, this will be possible using an improvement of Ressayre's well covering pair criterion on $\XEC$, see sections \ref{section:equations_momentpolyhedron_KLambdaE} and \ref{section:wellcoveringpairs}.
\end{itemize}

%

\begin{rem}
\label{rem:nsc_C_Q(X_EoplusC)hasnonemptyinterior}
In the proof of assertion ($2$) of Proposition \ref{prop:SAC_equivalentdef_with_irrep}, we actually proved that $\pi_{E\oplus\C}^{-1}(C_{\Q}(X_{E\oplus\C})^+)$ is not empty, thus $\Pi_{\Q}(E)$ is also not empty. But this can be easily seen by taking $(0,0)$ which is clearly in $\Pi_{\Q}(E)$. Moreover, since $\pi_{E\oplus\C}$ is a linear projection between finite dimensional vector spaces, we obviously have that $C_{\Q}(X_{E\oplus\C})^+$ has nonempty interior in $\PicG(X_{E\oplus\C})_{\Q}$ if and only if $\pi_{E\oplus\C}^{-1}(C_{\Q}(X_{E\oplus\C})^+)$ has nonempty interior in $\wedge^*_{\Q}\times\wedge^*_{\Q}\times\Q$.
\end{rem}


\subsection{Interior of $C_{\Q}(X_{E\oplus\C})^+$}
\label{subsection:interior_of_CQ(XEC)^+}

In this section, we give a necessary and sufficient condition to the nonvacuity of the interior of the set $C_{\Q}(X_{E\oplus\C})^+$. This will be very useful to determine a finite set of equations defining the convex polyhedral cone $C_{\Q}(\XEC)^+$ (cf. section \ref{section:equations_redundancy}).

Let $U$ be the unipotent radical of $B$, and $U^-$ the unipotent radical of the Borel subgroup $B^-$ such that $B\cap B^- = \TC$. We consider the action of $\KC\times \KC$ on $\KC\times E$ defined, for any $(g_1,g_2)\in \KC\times \KC$ and any $(g,v)\in \KC\times E$, by
\[
(g_1,g_2)\cdot (g,v) := (g_1gg_2^{-1},\zeta(g_1)v).
\]
Since $\TC\times \TC$ normalizes the subgroup $U\times U^-$ of $\KC\times \KC$, it induces an action of $\TC\times \TC$ on $\C[\KC\times E]^{U\times U^-}$.

We denote by $\Pi_{\Z}(E)$ the set of integral points of $\Pi_{\Q}(E)$.

\begin{thm}
\label{thm:PiQ_nonemptyinterior}
Let $E$ be a complex representation of $\KC$.
\begin{enumerate}
\item $\Pi_{\Q}(E) = \Q_{>0}\cdot\Pi_{\Z}(E)$.
\item The interior of $\Pi_{\Q}(E)$ is not empty in $(\wedge_{\Q}^*)^2$ if and only if $\ker(\zeta)$ is finite.
\end{enumerate}
\end{thm}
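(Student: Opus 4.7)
The plan is to dispatch part (1) by unwinding the definition of $\Pi_\Q(E)$ together with Corollary \ref{cor:PiQ_convexpolyhedralcone}, and to address part (2) by a contrapositive Schur-lemma argument for one direction and by identifying $\Pi_\Z(E)$ with a torus weight semigroup for the other.

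Part (1) is immediate. The inclusion $\Q_{>0}\cdot\Pi_\Z(E)\subseteq\Pi_\Q(E)$ follows because, by Corollary \ref{cor:PiQ_convexpolyhedralcone}, $\Pi_\Q(E)$ is a rational polyhedral (in particular convex) cone and hence is closed under positive rational scaling, and clearly contains $\Pi_\Z(E)$. Conversely, if $(\mu,\nu)\in\Pi_\Q(E)$ then the witness $N\geq 1$ in the definition makes $(N\mu,N\nu)\in\Pi_\Z(E)$, so $(\mu,\nu)=\tfrac{1}{N}(N\mu,N\nu)\in\Q_{>0}\cdot\Pi_\Z(E)$.

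For the direction ``non-empty interior $\Rightarrow\ker\zeta$ finite'' in (2), I will argue by contrapositive. If $\ker\zeta$ is positive-dimensional, then $\got{n}:=\ker(d\zeta)$ is a non-zero ideal of $\got{k}_\C$, and any ideal of a reductive Lie algebra has non-trivial intersection with the Cartan, so $\got{t}_n:=\got{t}_\C\cap\got{n}\neq 0$. Decomposing $\got{k}_\C=\got{n}\oplus\got{n}'$ as a reductive direct sum, each irreducible $V_\tau^{\KC}$ restricts to $\got{n}$ as an isotypic copy of an irreducible $\got{n}$-module determined by $\tau|_{\got{t}_n}$. Any $\KC$-invariant of $V_\mu^{\KC*}\otimes V_\nu^{\KC*}\otimes\C[E]$ is $\got{n}$-invariant, and since $\got{n}$ acts trivially on $\C[E]$, Schur's lemma on the $\got{n}$-pieces forces the identity $\mu|_{\got{t}_n}=-w_{0,\got{n}}(\nu|_{\got{t}_n})$. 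This proper linear constraint on $(\mu,\nu)$ traps $\Pi_\Z(E)$ in a proper subspace of $(\wedge^*_\Q)^2$, so by Corollary \ref{cor:PiQ_convexpolyhedralcone} the cone $\Pi_\Q(E)$ has empty interior.

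For the reverse direction of (2), assume $\ker\zeta$ is finite. I will identify $\Pi_\Z(E)$ (up to a $w_0$-twist on the second factor) with the $T\times T$-weight support of the finitely generated algebra $A:=\C[\KC\times E]^{U\times U^-}$, via the Peter--Weyl decomposition $\C[\KC]=\bigoplus_\lambda V_\lambda^{\KC}\boxtimes V_\lambda^{\KC*}$ combined with the observation that $(V_\lambda^{\KC*})^{U^-}$ is the one-dimensional lowest weight line of weight $-\lambda$. The standard principle for torus actions on affine varieties is that the $\Q$-span of the $T\times T$-weight support of $A$ equals the annihilator in $\got{t}_\C^*\times\got{t}_\C^*$ of the Lie algebra of the kernel of the $T\times T$-action on $\mathrm{Spec}(A)$; therefore $\Pi_\Z(E)$ spans $(\wedge^*_\Q)^2$ iff this kernel is finite. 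The key computation, via Bruhat decomposition, is that this kernel equals $\{(t,t):t\in\ker\zeta\cap T\}$: for $g$ in the open Bruhat cell the $T$-component of the decomposition forces $t_1=t_2$; and for generic $v\in E$ the condition $\zeta(t_1)v\in U\cdot v$, combined with the identity $U\cdot gTg^{-1}\cap T=Z(\KC)$ for generic $g$ and the fact that the core of the principal stabilizer of $\KC$ acting linearly on $E$ is exactly $\ker\zeta$, forces $t_1\in\ker\zeta\cap T$. Since $\ker\zeta\cap T$ is finite iff $\ker\zeta$ is finite, the weights then span $(\wedge^*_\Q)^2$ and $\Pi_\Q(E)$ has non-empty interior. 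The hard part will be this kernel computation, which combines the Bruhat analysis with the ``core of the principal stabilizer equals $\ker\zeta$'' principle.
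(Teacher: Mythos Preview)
Your Part (1) and the ``infinite kernel $\Rightarrow$ empty interior'' direction of Part (2) are correct. The Schur/ideal argument you give for that direction is in fact different from the paper's approach and is a pleasant shortcut: the paper treats both implications uniformly via the weight-support analysis of $\C[\KC\times E]^{U\times U^-}$, whereas you bypass half of that machinery with a direct representation-theoretic constraint. That is a genuine simplification.

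For the converse direction you correctly identify the strategy (match $\Pi_\Z(E)$, up to a $w_0$-twist, with the $\TC\times\TC$-weight monoid of $A=\C[\KC\times E]^{U\times U^-}$, and then show the torus action on $\mathrm{Spec}(A)$ has finite kernel), and this is exactly what the paper does. However, your kernel computation has problems. The intermediate claims you invoke --- the identity ``$U\cdot gTg^{-1}\cap T=Z(\KC)$ for generic $g$'' and the appeal to the ``core of the principal stabilizer'' --- are neither clearly true as stated nor the right tools. What you actually need is this: if $(t_1,t_2)$ acts trivially on the $U\times U^-$-quotient, then for $g=utu^-$ in the big cell the Bruhat factor gives $t_1=t_2$ \emph{and} pins down the required $u_1\in U$ as $u_1=t_1ut_1^{-1}u^{-1}$; the condition $\zeta(u_1)v=\zeta(t_1)v$ then reads $\zeta(ut_1^{-1}u^{-1})v=v$ for all $u,v$, which forces $t_1\in\ker\zeta$ since $\ker\zeta$ is normal. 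Your sketch obscures this by replacing the specific $u_1$ with an unquantified ``$\in U\cdot v$'' and then reaching for two auxiliary facts that do not close the gap.

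The paper avoids this computation entirely by the cleaner observation that $(\KC\times E)/\!/(U\times U^-)$ is birationally $\TC\times\TC$-equivalent to $\TC\times E$ (using that $BU^-$ is open in $\KC$ and $B=U\rtimes\TC$), on which the kernel of the $\TC\times\TC$-action is visibly $\{(t,t):t\in\ker(\zeta|_{\TC})\}$; the final step ``$\ker(\zeta|_{\TC})$ finite $\Leftrightarrow\ker\zeta$ finite'' is then a one-line maximal-torus argument. I would recommend replacing your Bruhat-cell kernel computation with this birational reduction.
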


\begin{proof}
The first assertion directly follows from the fact that $\Pi_{\Q}(E)$ is a cone in $(\wedge^*_{\Q})^2$, see Corollary \ref{cor:PiQ_convexpolyhedralcone}.

Our proof of the last assertion is inspired from the proof of \cite[Proposition 3]{lrcone}. By Frobenius' theorem, the $\KC\times \KC$-module $\C[\KC]$ decomposes into
\[
\C[\KC] = \bigoplus_{\nu\in\wedge^*_+}V_{\nu}^{\KC}\otimes V^{\KC*}_{\nu}.
\] 
Then, we have
\begin{align*}
\C[\KC\times E]^{U\times U^-} & = (\C[\KC]\otimes \C[E])^{U\times U^-} = \Big(\bigoplus_{\nu\in\wedge^*_+} V^{\KC}_{\nu}\otimes V_{\nu}^{\KC*}\otimes \C[E]\Big)^{U\times U^-} \\
& = \bigoplus_{\nu\in\wedge^*_+}(V^{\KC}_{\nu}\otimes\C[E])^U\otimes (V_{\nu}^{\KC*})^{U^-},
\end{align*}
where $\TC$ acts on $(V_{\nu}^{\KC*})^{U^-}$ by the character $-\nu$. Hence, the set of weights of $\TC\times \TC$ in $\C[\KC\oplus E]^{U\times U^-}$ is equal to the set of pairs $(\mu,-\nu)\in(\wedge^*_+)^2$ such that $\mu$ is a weight of $\TC$ in $(V^{\KC}_{\nu}\otimes\C[E])^U$, that is, $V^{\KC}_{\mu}\subset V^{\KC}_{\nu}\otimes\C[E]$. And the last assertion is equivalent to $(V_{\mu}^{\KC*}\otimes V^{\KC}_{\nu}\otimes\C[E])^{\KC}\neq 0$, that is $(\mu,-w_0\nu)\in\Pi_{\Z}(E)$.

The map $l:(\mu,\nu)\in(\wedge^*_{\Q})^2\mapsto(\mu,w_0\nu)\in(\wedge^*_{\Q})^2$ is a $\Q$-linear isomorphism, which restricts to a $\Z$-linear map from $(\wedge^*)^2$ to itself. Then, the set of weights of $\TC\times \TC$ in $\C[\KC\oplus E]^{U\times U^-}$ is exactly the set $l(\Pi_{\Z}(E))$ of integral points of $l(\Pi_{\Q}(E))$.

Now, since $\Pi_{\Q}(E) = \Q_{>0}\cdot\Pi_{\Z}(E)$ and $l$ is an isomorphism, the codimension of $\Pi_{\Q}(E)$ in $(\wedge^*_{\Q})^2$ is equal to the dimension of the kernel of the action of $\TC\times \TC$ on the variety $(\KC\times E)/\!/(U\times U^-)$. The actions of $U$ and $U^-$, induced by the action of $\KC\times \KC$ on $\KC\times E$, commute. Thus, we have
\[
\C[\KC\times E]^{U\times U^-} = \left((\C[\KC]\otimes \C[E])^{U^-}\right)^U = \left(\C[\KC]^{U^-}\otimes \C[E]\right)^U,
\]
because $U^-$ acts trivially on $E$, and so on $\C[E]$. But, $BU^-$ is open in $\KC$, then $\C[\KC]^{U^-}$ is canonically isomorphic to a $B$-submodule of $\C[B]$. These two algebras are not isomorphic in general. However, their fields of fractions are isomorphic, since $B$ can be identified to an open subset of $\KC/U$. Hence, $\C(\KC\times E)^{U\times U^-}$ is isomorphic to $\C(B\times E)^U$ as $\TC\times \TC$-modules.

Moreover, $B$ is the semidirect product of the groups $U$ and $\TC$, where $\TC$ normalizes $U$. Hence, there is a canonical $\TC\times \TC$-equivariant isomorphism between $\C(B\times E)^U$ and $\C(\TC\times E)$, where $\TC\times \TC$ acts on $\TC\times E$ by the action induced from the action of $\KC\times \KC$ on $\KC\times E$. Thus, $(\KC\times E)/\!/(U\times U^{-})$ is birationally $\TC\times \TC$-equivariant to the variety $\TC\times E$.

Now, clearly, the kernel of the action of $\TC\times \TC$ on $\TC\times E$ is finite if and only if the kernel of the action of $\TC$ on $E$ is finite, that is, $\ker(\zeta|_{\TC})$ is finite, because $\TC$ is abelian.

It remains to prove that $\ker\zeta$ is finite if and only if $\ker(\zeta|_{\TC})$ is finite. The first implication is obvious.

Suppose $\ker(\zeta|_{\TC})$ is finite, that is $\ker(\zeta)\cap \TC$ is finite. Let $S$ be a maximal torus of $\ker\zeta$, and $\wt{S}$ be a maximal torus of $\KC$ containing $S$. Since $\KC$ is reductive, $\wt{S}$ is conjugate to $\TC$, hence $S$ is conjugate to a subtorus of $\TC$. But $\ker\zeta$ is a normal subgroup of $\KC$, so $S$ is conjugate to a subgroup of $\ker\zeta\cap \TC$, which is finite. Hence $S$ is trivial, and $\ker\zeta$ is finite. Hence, assertion {\it(2)} is proved.
%
\end{proof}

As a consequence of the previous theorem, we have a necessary and sufficient condition on $C_{\Q}(\XEC)^+$ having a nonempty interior. This is the statement of the next corollary. It will be a consequence of the following property of $C_{\Q}(\XEC)^+$.

\begin{lem}
\label{lem:propertyofCQ(XEC)_rcomponent}
For all $(\mu,\nu,r)\in \pi_{E\oplus\C}^{-1}(C_{\Q}(\XEC)^+)$ and all $r'\in\Q$, if $r'\geq r$ then $(\mu,\nu,r')$ is also in $\pi_{E\oplus\C}^{-1}(C_{\Q}(\XEC)^+)$.
\end{lem}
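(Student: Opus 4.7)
The plan is to use the characterization of membership in $C_{\Q}(\XEC)^+$ given by Proposition \ref{prop:SAC_equivalentdef_with_irrep}~(1) and exploit the obvious monotonicity of the spaces $\C_{\leq d}[E]$ in $d$. Starting from $(\mu,\nu,r) \in \pi_{E\oplus\C}^{-1}(C_{\Q}(\XEC)^+)$, Proposition \ref{prop:SAC_equivalentdef_with_irrep}~(1) produces a positive integer $k$ with $(k\mu,k\nu) \in (\wedge^*)^2$, $kr \in \Z$, and $(V_{k\mu}^{\KC*}\otimes V_{k\nu}^{\KC*}\otimes \C_{\leq kr}[E])^{\KC} \neq 0$. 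The goal is to produce an integer witnessing $\mathcal{L}_{\mu,\nu,r'} \in C_{\Q}(\XEC)^+$.

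The first step is to replace $k$ by a suitable multiple so that $kr'$ is also an integer. Since $r' \in \Q$, pick $N \geq 1$ with $Nr' \in \Z$, and set $k' := Nk$. Then $(k'\mu,k'\nu) \in (\wedge^*)^2$, $k'r = N(kr) \in \Z$, and $k'r' = k(Nr') \in \Z$. Given any nonzero $\KC$-invariant section $s \in H^0(\XEC,\mathcal{L}_{k\mu,k\nu,kr})^{\KC}$, its $N$-th tensor power $s^{\otimes N}$ is a nonzero $\KC$-invariant section of $\mathcal{L}_{k'\mu,k'\nu,k'r}$, which via the Borel--Weil identification used in the proof of Proposition \ref{prop:SAC_equivalentdef_with_irrep} yields $(V_{k'\mu}^{\KC*}\otimes V_{k'\nu}^{\KC*}\otimes \C_{\leq k'r}[E])^{\KC} \neq 0$.

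Finally, since $r' \geq r$, we have $k'r' \geq k'r$, hence the $\KC$-submodule inclusion $\C_{\leq k'r}[E] \subseteq \C_{\leq k'r'}[E]$. This gives
\[
\bigl(V_{k'\mu}^{\KC*}\otimes V_{k'\nu}^{\KC*}\otimes \C_{\leq k'r'}[E]\bigr)^{\KC} \supseteq \bigl(V_{k'\mu}^{\KC*}\otimes V_{k'\nu}^{\KC*}\otimes \C_{\leq k'r}[E]\bigr)^{\KC} \neq 0,
\]
and one more application of Proposition \ref{prop:SAC_equivalentdef_with_irrep}~(1) with $k'$ yields $\mathcal{L}_{\mu,\nu,r'} \in C_{\Q}(\XEC)^+$, i.e.\ $(\mu,\nu,r') \in \pi_{E\oplus\C}^{-1}(C_{\Q}(\XEC)^+)$. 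There is no genuine difficulty here; the only bookkeeping point is passing from $k$ to $Nk$ so that both $k'r$ and $k'r'$ are integers, which is handled by clearing the denominator of $r'$.
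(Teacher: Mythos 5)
Your proof is correct. It rests on the same core observation as the paper's: invariants in $V_{k\mu}^{\KC*}\otimes V_{k\nu}^{\KC*}\otimes \C_{\leq d}[E]$ persist when the degree bound $d$ increases, because $\C_{\leq d}[E]\subseteq\C_{\leq d'}[E]$ for $d\leq d'$. Where the two arguments part ways is in how they deal with the integrality bookkeeping for an arbitrary rational $r'\geq r$. The paper sidesteps the issue: it only verifies membership directly at the special values $r''=m(r+1)$, $m\geq 1$ (for which $nm(r+1)=nmr+nm$ is automatically an integer with the original witness $n$), and then invokes the convexity of the cone $\pi_{E\oplus\C}^{-1}(C_{\Q}(\XEC)^+)$ to interpolate between $r$ and $m(r+1)$ for $m$ large. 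You instead clear the denominator of $r'$ by passing from $k$ to $k'=Nk$, using the $N$-th tensor power of an invariant section (nonzero since $\XEC$ is irreducible) to keep the invariant space nonvanishing, and then conclude in one step. Your route is marginally more self-contained -- it does not lean on the convexity of the cone, which in the paper is itself a consequence of Theorem \ref{thm:sacX_closedconvpoly_and_closureofacX} via Remark \ref{rem:pi_M^-1(CQ(XM))_is_closedpolyhedralcone} -- at the modest cost of the extra tensor-power step; the paper's route is shorter once convexity is granted. Both are complete.
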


\begin{proof}
Let $(\mu,\nu,r)\in\pi_{E\oplus\C}^{-1}(C_{\Q}(\XEC)^+)$, then there exists a positive integer $n$ such that $(n\mu,n\nu)\in(\wedge^*)^2$, $nr\in\Z$ and $(V_{n\mu}^{\KC*}\otimes V_{n\nu}^{\KC*}\otimes\C_{\leq nr}[E])^{\KC}\neq 0$. It is clear that, for any $m\geq 1$, the $\KC$-module $V_{n\mu}^{\KC*}\otimes V_{n\nu}^{\KC*}\otimes\C_{\leq nr}[E]$ is included in $V_{n\mu}^{\KC*}\otimes V_{n\nu}^{\KC*}\otimes\C_{\leq nm(r+1)}[E]$. Thus, $(V_{n\mu}^{\KC*}\otimes V_{n\nu}^{\KC*}\otimes\C_{\leq nm(r+1)}[E])^{\KC}\neq 0$. That is, for all $m\geq 1$, $(\mu,\nu,m(r+1))$ is in $\pi_{E\oplus\C}^{-1}(C_{\Q}(\XEC)^+)$ too. From the convexity of $\pi_{E\oplus\C}^{-1}(C_{\Q}(\XEC)^+)$ in $\wedge^*_{\Q}\times\wedge^*_{\Q}\times\Q$, since $r+1>0$, we deduce that $(\mu,\nu,r')$ belongs to $\pi_{E\oplus\C}^{-1}(C_{\Q}(\XEC)^+)$ for any rational number $r'\geq r$.
\end{proof}

%

\begin{cor}
\label{cor:coneample_nonemptyinterior}
The convex polyhedral cone $C_{\Q}(\XEC)^+$ has a nonempty interior if and only if $\ker\zeta$ is finite.
\end{cor}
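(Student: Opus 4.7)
The result follows by combining three facts stated earlier in this section. By Remark~\ref{rem:nsc_C_Q(X_EoplusC)hasnonemptyinterior}, $C_{\Q}(\XEC)^+$ has nonempty interior in $\PicG(\XEC)_{\Q}$ if and only if $S:=\pi_{E\oplus\C}^{-1}(C_{\Q}(\XEC)^+)$ has nonempty interior in $\wedge^*_{\Q}\times\wedge^*_{\Q}\times\Q$. By Proposition~\ref{prop:SAC_equivalentdef_with_irrep}\,(3), the linear projection $\mathrm{pr}$ onto the first two factors satisfies $\mathrm{pr}(S)=\Pi_{\Q}(E)$. Finally, Theorem~\ref{thm:PiQ_nonemptyinterior}\,(2) asserts that $\Pi_{\Q}(E)$ has nonempty interior in $(\wedge^*_{\Q})^2$ if and only if $\ker\zeta$ is finite. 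So the corollary reduces to proving the equivalence
\begin{equation*}
S \text{ has nonempty interior in } \wedge^*_{\Q}\times\wedge^*_{\Q}\times\Q \;\Longleftrightarrow\; \Pi_{\Q}(E)=\mathrm{pr}(S) \text{ has nonempty interior in } (\wedge^*_{\Q})^2.
\end{equation*}

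The forward implication is immediate: $\mathrm{pr}$ is a surjective linear map between finite-dimensional $\Q$-vector spaces, hence open, so the image of an interior point of $S$ is an interior point of $\Pi_{\Q}(E)$.

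The reverse implication is where Lemma~\ref{lem:propertyofCQ(XEC)_rcomponent} does the work: it tells us that $S$ is upward closed in its last coordinate. Assume $(\mu_0,\nu_0)$ is an interior point of $\Pi_{\Q}(E)=\mathrm{pr}(S)$. I would then pick a full-dimensional simplex $\Delta\subset\Pi_{\Q}(E)$ with vertices $(\mu_1,\nu_1),\dots,(\mu_N,\nu_N)$ such that $(\mu_0,\nu_0)$ lies in the relative interior of $\Delta$. For each vertex there exists $r_i\in\Q$ with $(\mu_i,\nu_i,r_i)\in S$; setting $R:=\max_i r_i$ and invoking Lemma~\ref{lem:propertyofCQ(XEC)_rcomponent}, one gets $(\mu_i,\nu_i,R)\in S$ for every $i$. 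Convexity of $S$ then gives $\Delta\times\{R\}\subset S$, and one more application of the lemma produces $\Delta\times[R,R+1]\subset S$. This is a full-dimensional subset of $\wedge^*_{\Q}\times\wedge^*_{\Q}\times\Q$, so $S$ has nonempty interior.

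\textbf{Main obstacle.} The only nontrivial step is the reverse implication, and specifically the passage from pointwise liftability $(\mu,\nu)\in\mathrm{pr}(S)\Rightarrow\exists r,(\mu,\nu,r)\in S$ to the existence of an open thickened lift: a priori the required $r$ could blow up as $(\mu,\nu)$ varies, so projections of convex sets do not always have interiors pulled back to interiors. The upward-closedness provided by Lemma~\ref{lem:propertyofCQ(XEC)_rcomponent} is exactly what allows one to equalize finitely many lifts (take their maximum) and then thicken in the $r$-direction, bypassing this difficulty.
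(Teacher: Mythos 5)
Your proof is correct and follows the same route as the paper: reduce via Remark~\ref{rem:nsc_C_Q(X_EoplusC)hasnonemptyinterior} and Theorem~\ref{thm:PiQ_nonemptyinterior} to comparing the interiors of $\pi_{E\oplus\C}^{-1}(C_{\Q}(\XEC)^+)$ and of its projection $\Pi_{\Q}(E)$, and settle that comparison using the upward-closedness in the $r$-coordinate from Lemma~\ref{lem:propertyofCQ(XEC)_rcomponent}. The paper states this last equivalence as "directly follows"; your simplex-lifting argument simply supplies the details it omits.
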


\begin{proof}

By Theorem \ref{thm:PiQ_nonemptyinterior} and Corollary \ref{cor:PiQ_convexpolyhedralcone}, it is sufficient to prove that the convex polyhedral cone $\pi_{E\oplus\C}^{-1}(C_{\Q}(\XEC)^+)$ has a nonempty interior in $\wedge^*_{\Q}\times\wedge^*_{\Q}\times\Q$ if and only if $\Pi_{\Q}(E)$ has a nonempty interior in $(\wedge^*_{\Q})^2$.

But this directly follows from the fact that $\Pi_{\Q}(E)$ is the linear projection of $\pi_{E\oplus\C}^{-1}(C_{\Q}(\XEC)^+)$ in finite dimensional spaces, and from the property stated in Lemma \ref{lem:propertyofCQ(XEC)_rcomponent}.
\end{proof}


\section{Equations redundancy}
\label{section:equations_redundancy}

Here, we reduce the number of equations of $C_{\Q}(\XEC)^+$, given by the well covering pairs, to a finite set of equations determined by pairs $(C,\lambda)$ with one parameter subgroup $\lambda$ satisfying a particular ``admissibility'' property.

%
%

\subsection{Admissible one parameter subgroups}
\label{subsection:admissibility}

We adapt the definition of admissible one parameter subgroup given in \cite[section 7.3.2]{ressayre10}.

\begin{defn}
Let $M$ be a $\TC$-module. A subtorus $S$ of $\TC$ is $M$-\emph{admissible} if it is the neutral component of the stabilizer $(\TC)_v$ of some point $v\in M$.
\end{defn}

\begin{rem}
Equivalently, a subtorus $S$ is $M$-admissible if it is the neutral component of the intersection of the kernels of some characters of $\TC$ on $M$. Using the notations of \cite[7.3.2]{ressayre10}, $S$ is admissible in the sense of Ressayre if and only if $S$ is $(\got{g}\oplus\hat{\got{g}})/\got{g}$-admissible.

In the case of one parameter subgroup $\lambda$ of $\TC$, if we identify $\lambda$ with its generator in the Lie algebra $\got{t}_{\C}$, $\lambda$ is $M$-admissible if and only if $\C\lambda\subset\got{t}_{\C}$ is equal to the intersection in $\got{t}_{\C}$ of the kernels of some weights of $\WT(M)$.
\end{rem}

\begin{rem}
\label{rem:setof_indivisibleMadmissibleops_is_finite}
Since we will only consider finite dimensional representations $M$ of $\KC$, the set of indivisible $M$-admissible one parameter subgroups of $\TC$ will always be finite. Indeed, the set $\WT(M)$ is finite. Hence, we only have a finite number of indivisible one parameter subgroup $\lambda$ such that $\C\lambda$ is equal to the intersection of some weights of $\WT(M)$.
\end{rem}

Now we consider the reductive group $GL_{\C}(E\oplus\C)$. The homomorphism $\zeta:\KC\rightarrow GL_{\C}(E)$ induces an obvious homomorphism of algebraic groups $\zeta\oplus\id_{\C}:\KC\rightarrow GL_{\C}(E\oplus\C)$. Then, the Lie algebra $\got{gl}_{\C}(E\oplus\C)$ of $GL_{\C}(E\oplus\C)$ is a $\KC$-module, hence a $\TC$-module. 

\begin{prop}
\label{prop:admissibility_by_stabilizer}
Let $\lambda$ be a one parameter subgroup of $\TC$, and $C$ a irreducible component of $\XEC^{\lambda}$. If there exists $x\in C$ such that $(\KC)_x^{\circ} = \lambda(\C^*)$, then $\lambda$ is $\got{gl}_{\C}(E\oplus\C)$-admissible.
\end{prop}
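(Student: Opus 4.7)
The goal is to exhibit $\C\lambda$ as an intersection of kernels of characters of $\TC$ that occur among the weights of $\TC$ on $\got{gl}_\C(E\oplus\C)$; by the remark preceding the proposition, this is exactly the $\got{gl}_\C(E\oplus\C)$-admissibility of $\lambda$. My first step would be to reduce from the $\KC$-stabilizer of $x$ to the $\TC$-stabilizer. Since $\lambda(\C^*)\subset\TC$, the connected subgroup $\lambda(\C^*)$ lies in $(\TC)_x=(\KC)_x\cap\TC$, hence in $(\TC)_x^\circ$; conversely $(\TC)_x^\circ$ is a connected subgroup of $(\KC)_x$, so it lies in $(\KC)_x^\circ=\lambda(\C^*)$. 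Thus $(\TC)_x^\circ=\lambda(\C^*)$, and passing to Lie algebras
\[
\C\lambda=\mathrm{Lie}((\TC)_x)=\mathrm{Lie}((\TC)_{x_1})\cap\mathrm{Lie}((\TC)_{x_2})\cap\mathrm{Lie}((\TC)_{[v]})
\]
after writing $x=(x_1,x_2,[v])$.

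I would then analyze each factor. For the projective one, decomposing $v=\sum_{\alpha}v_\alpha$ into $\TC$-weight vectors of $E\oplus\C$, the condition $tv\in\C v$ translates to $\alpha(t)$ being independent of $\alpha\in\mathrm{supp}(v)$, so
\[
(\TC)_{[v]}=\bigcap_{\alpha,\beta\in\mathrm{supp}(v)}\ker(\alpha-\beta),
\]
and each $\alpha-\beta$ is directly a $\TC$-weight on the module $\mathrm{End}(E\oplus\C)=\got{gl}_\C(E\oplus\C)$. For the flag factors, writing $x_i=h_iw_i^{-1}B/B$ with $h_i\in\KC^\lambda$, one has $\mathrm{Lie}((\TC)_{x_i})=\got{t}_\C\cap\mathrm{Ad}(h_iw_i^{-1})\got{b}$, cut out by a family of linear functionals on $\got{t}_\C$ determined by the Bruhat position of $x_i$. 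Using that $h_i$ centralizes $\lambda$, these functionals can be expressed as combinations of roots of $\got{k}_\C$; provided $\ker\zeta$ is finite (the nonempty-interior assumption of Corollary~\ref{cor:coneample_nonemptyinterior}), the map $d(\zeta\oplus\id_\C)$ embeds $\got{k}_\C$ as a $\TC$-submodule of $\got{gl}_\C(E\oplus\C)$, so these roots of $\got{k}_\C$ appear among the $\TC$-weights of $\got{gl}_\C(E\oplus\C)$. Combining the three factors yields $\C\lambda$ as the intersection of kernels of a finite family of $\TC$-weights of $\got{gl}_\C(E\oplus\C)$, establishing admissibility.

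The main obstacle is the flag-factor analysis: identifying explicitly the linear functionals on $\got{t}_\C$ cutting out $\mathrm{Lie}((\TC)_{x_i})$ via the Bruhat decomposition, and ensuring that their common zero locus is described by actual weights of $\TC$ on $\got{gl}_\C(E\oplus\C)$. The projective factor involves only a direct weight computation, whereas the flag contribution requires a careful Bruhat/Schubert analysis of $\KC^\lambda$-orbits on $\KC/B$, together with the embedding $\got{k}_\C\hookrightarrow\got{gl}_\C(E\oplus\C)$ provided by the finiteness of $\ker\zeta$.
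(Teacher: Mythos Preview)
Your reduction from the $\KC$-stabilizer to the $\TC$-stabilizer is clean and correct, and your treatment of the projective factor $(\TC)_{[v]}=\bigcap_{\alpha,\beta\in\mathrm{supp}(v)}\ker(\alpha-\beta)$ is exactly right. The gap is the flag-factor analysis, and it is a genuine one. For a point $x_i=h_iw_i^{-1}B/B$ with $h_i\in\KC^\lambda$, the equations cutting out $\got{t}_\C\cap\mathrm{Ad}(h_iw_i^{-1})\got{b}$ inside $\got{t}_\C$ vary continuously with $h_i$; even granting that they lie in the root lattice of $\got{k}_\C$, an arbitrary root-lattice element (say $\alpha+\beta$ for non-proportional roots) is \emph{not} in general a $\TC$-weight of $\got{gl}_\C(E\oplus\C)$, so ``combinations of roots'' does not yield admissibility. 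Moreover, your appeal to the embedding $\got{k}_\C\hookrightarrow\got{gl}_\C(E\oplus\C)$ requires $\ker\zeta$ finite, a hypothesis the proposition does not carry.

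The paper avoids this entirely by a different reduction. Rather than analysing the $\TC$-stabilizer factor by factor, it identifies $\mathbb{P}(E\oplus\C)$ with a partial flag variety $\hKC/\wh{Q}$ for $\hKC=GL_\C(E\oplus\C)$, chooses a Borel $\wh{B}\subset\wh{Q}$, and lifts $x$ to a point $\tilde{x}_{\wh{B}}$ of the product of \emph{complete} flag varieties $\KC/B\times\KC/B\times\hKC/\wh{B}$. The lift is still $\lambda$-fixed, and since the stabilizer can only shrink under the lift while still containing $\lambda(\C^*)$, one gets $(\KC)^{\circ}_{\tilde{x}_{\wh{B}}}=\lambda(\C^*)$. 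Now the fixed component containing $\tilde{x}_{\wh{B}}$ is itself a complete flag variety of $\KC^\lambda\times\KC^\lambda\times\hKC^\lambda$, and one invokes \cite[Lemma~17]{ressayre10} directly: that lemma handles precisely the admissibility question for $\KC$ diagonally embedded in a product of reductive groups acting on their full flag varieties, and gives $(\tilde{\got{g}}/\got{k}_\C)$-admissibility. Here $\tilde{\got{g}}/\got{k}_\C\cong\got{gl}_\C(E\oplus\C)$ as $\TC$-modules, which finishes the proof. The point is that the passage to $\hKC/\wh{B}$ puts all three factors on the same footing and makes Ressayre's black-box lemma applicable, so no explicit Bruhat analysis of the flag factors is needed, and no finiteness assumption on $\ker\zeta$ enters.
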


\begin{proof}
For simplicity, we denote $\hKC = GL_{\C}(E\oplus\C)$, and let $\hat{B}$ be a Borel subgroup of $\hKC$ containing $(\zeta\oplus\id_{\C})(B)$. Since $\mathbb{P}(E\oplus\C)$ is a homogeneous $\hKC$-space, we identify $X$ with the projective $\KC$-variety $\wt{X} = \KC/B\times \KC/B\times\hKC/\wh{Q}$, using the group homomorphism $\zeta\oplus\id_{\C}$. Here, $\wh{Q}$ is a maximal parabolic subgroup of $\hKC$, stabilizer of some point of $\mathbb{P}(E\oplus\C)$. We can always take $\wh{Q}$ such that $\hat{B}\subset\hat{Q}$. Then the pair $(C,\lambda)$ corresponds to the pair $(\wt{C},\lambda)$ of $\wt{X}$, and $x$ corresponds to some point $\tilde{x}$, whose stabilizer is equal to the stabilizer of $x$. The set $\wt{C}$ is an irreducible component of $\wt{X}^{\lambda}$, so it is the pair $\wt{C}(w,w',\hat{w})$, for some triple $(w,w',\hat{w})\in W \times W \times \wh{W}$, where $\wh{W}$ is the Weyl group of $\hKC$. We write $\tilde{x} = (gwB/B,g'w'B/B,\hat{g}\hat{w}\wh{Q}/\wh{Q})$. We define $\tilde{x}_{\wh{B}} = (gwB/B,g'w'B/B,\hat{g}\hat{w}\wh{B}/\wh{B})\in\wt{C}_{\wh{B}}(w,w',\hat{w})\subset \KC/B\times \KC/B\times\hKC/\wh{B}$. We can easily check that $(\KC)_{\tilde{x}_{\wh{B}}}^{\circ} \subset (\KC)_{\tilde{x}}^{\circ}$, and finally that $(\KC)^{\circ}_{\tilde{x}_{\wh{B}}} = \lambda(\C^*)$. But the irreducible component $\wt{C}_{\wh{B}}(w,w',\hat{w})$ is a complete flag variety of $\KC^{\lambda}\times \KC^{\lambda}\times\hKC^{\lambda}$. Now the result directly follows from \cite[Lemma 17]{ressayre10}, since the $\TC$-module $\bigl(\got{k}_{\C}\oplus\got{gl}_{\C}(E\oplus\C)\bigr)/\got{k}_{\C}$ is canonically isomorphic to the $\TC$-module $\got{gl}_{\C}(E\oplus\C)$.
\end{proof}

The following theorem gives a set of equations for $C_{\Q}(\XEC)^+$. It is similar to \cite[Theorem 9]{ressayre10}.

\begin{thm}
\label{thm:equations_wellcoveringpair_admissible}
Assume that $\ker\zeta$ is finite. Let $(\mu,\nu,r)$ be in $\wedge^*_{\Q,+}\times\wedge^*_{\Q,+}\times\Q_{\geq 0}$. Then $\mathcal{L}_{\mu,\nu,r}\in C_{\Q}(\XEC)^+$ if and only if
\begin{equation}
\label{eq:equation_fromgivenwellcoveringpair}
\langle w\lambda,\mu\rangle + \langle w'\lambda,\nu\rangle + mr \leq 0,
\end{equation}
for all indivisible $\got{gl}_{\C}(E\oplus\C)$-admissible dominant one parameter subgroups $\lambda$ of $\TC$, and for all $(w,w',m)$ in $W/W_{\lambda}\times W/W_{\lambda}\times\Z$ such that $(C(w,w',m),\lambda)$ is a well covering pair (resp. dominant pair) of $X_{E\oplus\C}$.
\end{thm}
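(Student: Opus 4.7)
The plan is to deduce the theorem by directly applying Theorem~\ref{thm:coneample_equations_stabilizer} to $\XEC$ and then converting the stabilizer condition appearing there into admissibility via Proposition~\ref{prop:admissibility_by_stabilizer}. I first realize $\XEC$ as a variety of the form $\KC/B\times\hKC/\wh{Q}$ covered by Theorem~\ref{thm:coneample_equations_stabilizer}: take $\hKC=\KC\times GL_{\C}(E\oplus\C)$ with $\KC$ embedded diagonally by $g\mapsto (g,(\zeta\oplus\id_{\C})(g))$, and $\wh{Q}=B\times\wh{Q}'$, where $\wh{Q}'\subset GL_{\C}(E\oplus\C)$ is the maximal parabolic stabilizing the line spanned by $(0,1)\in E\oplus\C$. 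Since $\ker\zeta$ is finite by hypothesis, Corollary~\ref{cor:coneample_nonemptyinterior} guarantees that $C_{\Q}(\XEC)^{+}$ has nonempty interior, so Theorem~\ref{thm:coneample_equations_stabilizer} is available.

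Combined with Remark~\ref{rem:ops_onlyindivisibledominantneeded}, Theorem~\ref{thm:coneample_equations_stabilizer} states that $\mathcal{L}_{\mu,\nu,r}\in C_{\Q}(\XEC)^{+}$ if and only if $\mu^{\mathcal{L}_{\mu,\nu,r}}(C,\lambda)\leq 0$ for every well covering pair $(C,\lambda)$ with $\lambda$ an indivisible dominant one parameter subgroup of $\TC$ and \emph{some} $x\in C$ satisfying $(\KC)_{x}^{\circ}=\lambda(\C^{*})$. By Proposition~\ref{prop:admissibility_by_stabilizer}, such a $\lambda$ is automatically $\got{gl}_{\C}(E\oplus\C)$-admissible, so the collection of inequalities imposed in Theorem~\ref{thm:equations_wellcoveringpair_admissible} is a superset of those required by Theorem~\ref{thm:coneample_equations_stabilizer}. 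This proves the ``if'' direction: assuming the inequalities hold for every admissible well covering pair, they hold in particular for pairs satisfying the stronger stabilizer condition, and Theorem~\ref{thm:coneample_equations_stabilizer} then gives $\mathcal{L}_{\mu,\nu,r}\in C_{\Q}(\XEC)^{+}$.

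For the ``only if'' direction, assume $\mathcal{L}_{\mu,\nu,r}\in C_{\Q}(\XEC)^{+}$. Since every well covering pair is a dominant pair, Lemma~\ref{lem:CN_dominantpairs_semiamplecone} yields $\mu^{\mathcal{L}_{\mu,\nu,r}}(C(w,w',m),\lambda)\leq 0$ for each pair in the statement. It remains to translate this numerical invariant into the linear form~\eqref{eq:equation_fromgivenwellcoveringpair}: Proposition~\ref{prop:product_twobundles_numericalcriterion} splits it as the sum of the contributions of the three factors of $\XEC$, and Lemmas~\ref{lem:numericalcriterion_GoverB} and~\ref{lem:numericalcriterion_P(M)} evaluate these contributions to $\langle w\lambda,\mu\rangle$, $\langle w'\lambda,\nu\rangle$ and $mr$ respectively. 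The ``resp. dominant pairs'' variant is obtained in the same way, invoking Remark~\ref{rem:cns_with_dominantpairs} in place of the well-covering version.

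I do not expect a genuine obstacle: every ingredient has already been put in place, and the theorem is essentially a repackaging of Theorem~\ref{thm:coneample_equations_stabilizer}. The only delicate point is to notice that enlarging the class of pairs from ``stabilizer condition on $\lambda$'' to ``$\lambda$ admissible'' is harmless in both directions, the sufficient direction because it only needs the inequalities on the smaller subclass, and the necessary direction because dominant (hence all well covering) pairs automatically produce inequalities via Lemma~\ref{lem:CN_dominantpairs_semiamplecone}.
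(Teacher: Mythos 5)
Your proposal is correct and follows essentially the same route as the paper's proof: nonempty interior via Corollary~\ref{cor:coneample_nonemptyinterior}, reduction to stabilizer-condition pairs via Theorem~\ref{thm:coneample_equations_stabilizer} (with Remark~\ref{rem:ops_onlyindivisibledominantneeded}), admissibility of those pairs via Proposition~\ref{prop:admissibility_by_stabilizer}, the reverse inclusion of inequalities via Lemma~\ref{lem:CN_dominantpairs_semiamplecone}, and the explicit linear form via Proposition~\ref{prop:equations_of_CQ(XM)}. The only cosmetic difference is that you spell out the identification of $\XEC$ with $\KC/B\times\hKC/\wh{Q}$ and the superset/subset logic that the paper leaves implicit.
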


\begin{proof}
Let $(C(w,w',m),\lambda)$ be a dominant pair of $\XEC$ such that there exists $x\in C(w,w',m)$ with $(\KC)_x^{\circ} = \lambda(\C^*)$. From Proposition \ref{prop:admissibility_by_stabilizer}, $\lambda$ is $\got{gl}_{\C}(E\oplus\C)$-admissible. 

Applying Corollary \ref{cor:coneample_nonemptyinterior}, we know that $C_{\Q}(\XEC)^{+}$ has a nonempty interior, since $\ker\zeta$ is assumed to be finite. Now the proof immediately results from Theorems \ref{thm:ressayre_equations_amplecone} and \ref{thm:coneample_equations_stabilizer}. The expression of equation \eqref{eq:equation_fromgivenwellcoveringpair} are given by Proposition \ref{prop:equations_of_CQ(XM)}.
\end{proof}

\subsection{Special admissibility for pairs of type $(C(w,w',0),\lambda)$}

We are going to see that we can have a more specific condition of admissibility for one parameter subgroups $\lambda$ appearing in a well covering pair $(C(w,w',0),\lambda)$.

The next proposition is inspired from \cite[Lemma 17]{ressayre10}.

\begin{prop}
\label{prop:subtore_Eadmissible}
Let $S$ be a subtorus of $\TC$. We consider the action of $\KC^S$ on the variety $X' = \KC^S/B^S\times \KC^S/B^S\times\mathbb{P}(E^S\oplus\C)$. If there exists $x\in X'$ such that the neutral component of $(\KC^S)_x$ is equal to $S$, then $S$ is $E$-admissible.
\end{prop}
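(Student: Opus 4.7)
The plan is to exhibit a vector $v_E \in E$ whose $\TC$-stabilizer has neutral component equal to $S$, which is exactly the definition of $E$-admissibility.

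First I would put $x$ in standard form. Since $\KC^S$ is by construction the centralizer of $S$ in $\KC$, the subtorus $S$ lies in the center of $\KC^S$, so the $\KC^S$-conjugation action fixes $S$ pointwise; hence moving $x$ along its $\KC^S$-orbit preserves the hypothesis $(\KC^S)_x^{\circ}=S$. The $\KC^S$-orbits on $\KC^S/B^S\times\KC^S/B^S$ are indexed by the Weyl group $W(\KC^S,\TC)$ via Bruhat, so I may assume
\[
x=(B^S/B^S,\ \dot{w}\,B^S/B^S,\ [v]),
\]
for some $w\in W(\KC^S,\TC)$ with lift $\dot{w}\in N_{\KC^S}(\TC)$ and some nonzero $v\in E^S\oplus\C$. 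Decompose $v=v_E+c$ with $v_E\in E^S\subseteq E$ and $c\in\C$; the candidate witness will be $v_E$.

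The heart of the argument is then a short chain of inclusions. Since $\dot{w}$ normalizes $\TC$ and $\TC\subseteq B^S$, the torus $\TC$ lies in both Borel stabilizers $B^S$ and $\dot{w}\,B^S\,\dot{w}^{-1}$. Any $t\in(\TC)_{v_E}$ fixes $v_E$ by construction and fixes $c$ trivially because $\C$ carries the trivial $\KC$-action, so it fixes $v$ and hence the line $[v]$. Combining these,
\[
(\TC)_{v_E}\ \subseteq\ B^S\cap\dot{w}\,B^S\,\dot{w}^{-1}\cap(\KC^S)_{[v]}\ =\ (\KC^S)_x,
\]
and passing to neutral components yields $(\TC)_{v_E}^{\circ}\subseteq(\KC^S)_x^{\circ}=S$. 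Conversely, $S\subseteq\TC$ acts trivially on $E^S$, so $S\subseteq(\TC)_{v_E}$, and the connectedness of $S$ forces $S\subseteq(\TC)_{v_E}^{\circ}$. Thus $S=(\TC)_{v_E}^{\circ}$, which is precisely the $E$-admissibility of $S$.

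The only degenerate scenario is $v_E=0$, which forces $c\neq 0$ and $(\KC^S)_{[v]}=\KC^S$; then $(\KC^S)_x=B^S\cap\dot{w}\,B^S\,\dot{w}^{-1}$ has neutral component containing the connected group $\TC$, so $S=\TC$, which is $E$-admissible via the point $0\in E$. The step requiring the most care will be the reduction to standard form, hinging on the centrality of $S$ in $\KC^S$; the subsequent inclusions are routine stabilizer computations, directly parallel to the argument of Lemma~17 in \cite{ressayre10}.
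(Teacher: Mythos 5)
Your proof is correct, but it takes a genuinely different route from the paper's. The paper argues by genericity: it first observes that the locus of points $y\in X'$ with $(\KC^S)_y^{\circ}=S$ is open (lower semicontinuity of the stabilizer dimension), deduces that $B^S/S$ acts with finite general isotropy on $\KC^S/B^S\times\mathbb{P}(E^S\oplus\C)$, passes to the big Bruhat cell $B^S/\TC$ and then quotients by $U^S$ to conclude that $\TC/S$ acts on $\mathbb{P}(E^S\oplus\C)$ — hence on the open subset $E^S$ — with finite general isotropy, so that $S$ is the neutral component of the kernel of the $\TC$-action on $E^S$. You instead produce an explicit witness: after putting $x$ in Bruhat normal form (legitimate, since $S$ is central in its centralizer $\KC^S$, so the hypothesis is orbit-invariant), the chain $(\TC)_{v_E}\subseteq B^S\cap\dot{w}B^S\dot{w}^{-1}\cap(\KC^S)_{[v]}=(\KC^S)_x$ sandwiches $S\subseteq(\TC)_{v_E}^{\circ}\subseteq(\KC^S)_x^{\circ}=S$, verifying the definition of $E$-admissibility directly; the degenerate case $v_E=0$ forces $S=\TC$ and is handled by the vector $0$. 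Your argument is shorter and avoids the semicontinuity machinery, at the cost of yielding only one admissible vector rather than the paper's stronger conclusion that $S$ is the connected kernel of the $\TC$-action on all of $E^S$ (i.e.\ that the \emph{generic} $v\in E^S$ works); since the proposition and its later uses only require the existence statement, both proofs are adequate.
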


\begin{proof}
Assume that there exists $x\in X'$ such that $(\KC^S)_x^{\circ} = S$. The torus $S$ acts trivially on $X'$, so for all $y\in X'$, $S\subseteq (\KC^S)_y$. But the dimension of the stabilizer at a point of $X'$ is a lower semicontinuous function, so the set of $x\in X'$ such that $(\KC^S)_x^{\circ} = S$ is open.

Then the general isotropy of $B^S/S$ acting on $\KC^S/B^S\times\mathbb{P}(E^S\oplus\C)$ is finite. Hence, by the Bruhat decomposition, the general isotropy of $B^S/S$ acting on $B^S/\TC\times\mathbb{P}(E^S\oplus\C)$ is finite. The maximal torus $\TC$ normalizes $U^S$ in $B^S$. Thus, the group $\TC/S$ acts on the variety $\left(B^S/\TC\times\mathbb{P}(E^S\oplus\C)\right)/U^S\cong\mathbb{P}(E^S\oplus\C)$ with finite general isotropy. Hence, for general $x\in\mathbb{P}(E^S\oplus\C)$, we have $(\TC)_x^{\circ} = S$.

We notice that $E^S$ is a $\KC^S$-stable open subset of $\mathbb{P}(E^S\oplus\C)$. Consequently, for generic $x\in E^S$, we have $(\TC)_x^{\circ} = S$. Since the general isotropy is equal to the kernel of the action of $\TC$ on $E^S$, $S$ is the neutral component of the kernel of the action of $\TC$ on $E^S$. Thus, $S$ is $E$-admissible.
\end{proof}

\begin{rem}
We can easily check that the set of $\TC$-weights on $\got{gl}_{\C}(E\oplus\C)$ is
\[
\WT(\got{gl}_{\C}(E\oplus\C)) = \left\{\beta-\beta'\ | \ \beta,\beta'\in\WT(E)\right\}\ \cup \ \WT(E).
\]
Thus the set $\WT(E)$ is included in $\WT(\got{gl}_{\C}(E\oplus\C))$. Then a $E$-admissible one parameter subgroup $\lambda$ of $\TC$ is necessarily $\got{gl}_{\C}(E\oplus\C)$-admissible.
\end{rem}

\begin{cor}
\label{cor:misequaltozero_admissible_implies_Eadmissible}
In the statement of Theorem \ref{thm:equations_wellcoveringpair_admissible}, among the well covering (resp. dominant) pairs $(C(w,w',0),\lambda)$ of $X$ such that $\lambda$ is dominant indivisible and $\got{gl}_{\C}(E\oplus\C)$-admissible, we can remove all the pairs with $\lambda$ not $E$-admissible.
\end{cor}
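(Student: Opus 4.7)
My plan is to show something slightly stronger than a ``redundancy'' statement: any one parameter subgroup $\lambda$ that can appear in such a pair with $m=0$ is \emph{automatically} $E$-admissible, so the pairs we propose to remove do not exist in the first place. The key input will be Proposition \ref{prop:subtore_Eadmissible} applied to $S=\lambda(\C^*)$, and the main point is to reduce from the ambient $\KC$-stabilizer condition to a $\KC^\lambda$-stabilizer condition on the right product variety.

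First, I would recall the hypothesis inherited from Theorem \ref{thm:equations_wellcoveringpair_admissible}: the relevant well covering (resp. dominant) pairs $(C(w,w',m),\lambda)$ are, by the way the proof uses Theorem \ref{thm:coneample_equations_stabilizer} together with Remark \ref{rem:cns_with_dominantpairs}, exactly those for which there exists $x\in C(w,w',m)$ with $(\KC)_x^{\circ}=\lambda(\C^*)$. So let $(C(w,w',0),\lambda)$ be such a pair and pick such an $x$.

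Next I would identify the irreducible component. Specializing to $m=0$, and using that $\KC$ acts trivially on the $\C$-summand of $E\oplus\C$, we have $M_{\lambda,0}=E^{\lambda}\oplus\C$, and the $\KC^{\lambda}$-orbits $\KC^{\lambda}w^{-1}B/B$ (resp. $\KC^{\lambda}w'^{-1}B/B$) are $\KC^{\lambda}$-isomorphic to $\KC^{\lambda}/B^{\lambda}$ after conjugation (their stabilizers $\KC^{\lambda}\cap w^{-1}Bw$ are Borel subgroups of $\KC^{\lambda}$). Consequently $C(w,w',0)$ is $\KC^{\lambda}$-equivariantly isomorphic to the variety
\[
X'=\KC^{\lambda}/B^{\lambda}\times \KC^{\lambda}/B^{\lambda}\times \mathbb{P}(E^{\lambda}\oplus\C)
\]
appearing in the statement of Proposition \ref{prop:subtore_Eadmissible} with $S=\lambda(\C^*)$ (noting that $\KC^S=\KC^{\lambda}$ and $E^S=E^{\lambda}$).

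Then I would verify the hypothesis of Proposition \ref{prop:subtore_Eadmissible} by an easy inclusion argument: since $\lambda(\C^*)\subset \KC^{\lambda}$, we have $\lambda(\C^*)\subset (\KC^{\lambda})_x\subset (\KC)_x$, and passing to neutral components gives $\lambda(\C^*)\subset (\KC^{\lambda})_x^{\circ}\subset (\KC)_x^{\circ}=\lambda(\C^*)$. Thus $(\KC^S)_x^{\circ}=S$ after transporting $x$ through the identification $C(w,w',0)\cong X'$. Proposition \ref{prop:subtore_Eadmissible} then yields that $S$, hence $\lambda$, is $E$-admissible. This proves the corollary, as any pair $(C(w,w',0),\lambda)$ with $\lambda$ not $E$-admissible fails to satisfy the stabilizer hypothesis of Theorem \ref{thm:coneample_equations_stabilizer} and contributes no equation to retain. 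The only delicate point is the identification in the second paragraph; everything else is formal once Proposition \ref{prop:subtore_Eadmissible} is available.
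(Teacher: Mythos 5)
Your proposal is correct and follows essentially the same route as the paper's own proof: identify $C(w,w',0)$ with $\KC^{\lambda}/B^{\lambda}\times\KC^{\lambda}/B^{\lambda}\times\mathbb{P}(E^{\lambda}\oplus\C)$, deduce $(\KC^{\lambda})_x^{\circ}=\lambda(\C^*)$ from the sandwich of inclusions, and invoke Proposition \ref{prop:subtore_Eadmissible} together with Theorem \ref{thm:coneample_equations_stabilizer}. No gaps.
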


\begin{proof}
Let $(C(w,w',0),\lambda)$ be a dominant pair of $\XEC$ such that there exists $x\in C(w,w',0)$ with $(\KC)_x^{\circ} = \lambda(\C^*)$. It is clear that $C(w,w',0)$ is $\KC^{\lambda}$-equivariantly isomorphic to the variety $X' = \KC^{\lambda}/B^{\lambda}\times \KC^{\lambda}/B^{\lambda}\times\mathbb{P}(E^{\lambda}\oplus\C)$. So from Proposition \ref{prop:subtore_Eadmissible}, $\lambda$ is $E$-admissible. Indeed $\lambda(\C^*)\subset (\KC^{\lambda})_x^{\circ} = (\KC)_x^{\circ}\cap \KC^{\lambda} \subset (\KC)_x^{\circ} = \lambda(\C^*)$, and then $(\KC^{\lambda})_x^{\circ} = \lambda(\C^*)$. And the assertion follows again from Corollary \ref{cor:coneample_nonemptyinterior} and Theorem \ref{thm:coneample_equations_stabilizer}.
\end{proof}


\section{Equations of the moment polyhedron $\Delta_K(K\cdot\Lambda\times E)$}
\label{section:equations_momentpolyhedron_KLambdaE}

We now have all the materials to express the equations of $\Pi_{\Q}(E)$ and, thus, the ones of $\Delta_K(K\cdot\Lambda\times E)$ if $\Phi_E:E\rightarrow\got{k}^*$ is proper, using well covering pairs. We still assume that the Lie group homomorphism $\zeta:\KC\rightarrow GL_{\C}(E)$ has finite kernel. 

\begin{lem}
\label{lem:nc_elementsofPiQ(E)_by_dominantpairs}
Let $(C(w,w',0),\lambda)$ be a dominant pair of $\XEC$. Then, for all $(\mu,\nu)\in\Pi_{\Q}(E)$, we have $\langle w\lambda,\mu\rangle+\langle w'\lambda,\nu\rangle \leq 0$.
\end{lem}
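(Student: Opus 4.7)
The plan is to lift the statement to $C_{\Q}(X_{E\oplus\C})^+$, apply Ressayre's necessary condition for dominant pairs (Lemma \ref{lem:CN_dominantpairs_semiamplecone}), and then use the additivity of the numerical function $\mu^{\mathcal{L}}(\cdot,\lambda)$ to extract the desired inequality. The crucial observation, which is why the $r$-parameter disappears entirely from the final inequality, is that the third component of the irreducible component $C(w,w',0)$ is $C_0=\mathbb{P}((E\oplus\C)_{\lambda,0})$, on which $\lambda$ acts with weight $m=0$.

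First, I would take $(\mu,\nu)\in\Pi_{\Q}(E)$. By Proposition \ref{prop:SAC_equivalentdef_with_irrep}(3), there exists $r\in\Q_{\geq 0}$ such that $(\mu,\nu,r)\in\pi_{E\oplus\C}^{-1}(C_{\Q}(\XEC)^+)$, i.e., the line bundle $\mathcal{L}_{\mu,\nu,r}=\mathcal{L}_{\mu}\boxtimes\mathcal{L}_{\nu}\boxtimes\mathcal{L}_{r}$ belongs to $C_{\Q}(\XEC)^+$. Since $(C(w,w',0),\lambda)$ is by hypothesis a dominant pair of $\XEC$, Lemma \ref{lem:CN_dominantpairs_semiamplecone} yields
\[
\mu^{\mathcal{L}_{\mu,\nu,r}}(C(w,w',0),\lambda) \leq 0.
\]

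Second, I would compute this integer explicitly. By iterated application of Proposition \ref{prop:product_twobundles_numericalcriterion} (additivity of $\mu^{\cdot}(\cdot,\lambda)$ under box products),
\[
\mu^{\mathcal{L}_{\mu,\nu,r}}(C(w,w',0),\lambda) = \mu^{\mathcal{L}_{\mu}}(\KC^{\lambda}w^{-1}B/B,\lambda) + \mu^{\mathcal{L}_{\nu}}(\KC^{\lambda}w'^{-1}B/B,\lambda) + \mu^{\mathcal{L}_{r}}(C_0,\lambda).
\]
The value of $\mu^{\cdot}(\cdot,\lambda)$ on an irreducible $\lambda$-fixed component is constant, so the first two terms may be evaluated at the points $w^{-1}B/B$ and $w'^{-1}B/B$, giving $\langle w\lambda,\mu\rangle$ and $\langle w'\lambda,\nu\rangle$ respectively by Lemma \ref{lem:numericalcriterion_GoverB}. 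For the third term, any point of $C_0=\mathbb{P}((E\oplus\C)_{\lambda,0})$ is of the form $[v]$ with $v\in (E\oplus\C)_{\lambda,0}$, so Lemma \ref{lem:numericalcriterion_P(M)} gives $\mu^{\mathcal{L}_{r}}(C_0,\lambda)=0\cdot r=0$.

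Combining these identities yields $\mu^{\mathcal{L}_{\mu,\nu,r}}(C(w,w',0),\lambda)=\langle w\lambda,\mu\rangle+\langle w'\lambda,\nu\rangle$, and the inequality follows. There is no real obstacle here since all the ingredients are already in place; the content of the lemma is simply the observation that restricting to pairs with $m=0$ produces equations which constrain only $(\mu,\nu)$ and therefore descend to $\Pi_{\Q}(E)=\mathrm{pr}(\pi_{E\oplus\C}^{-1}(C_{\Q}(\XEC)^+))$.
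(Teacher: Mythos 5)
Your proof is correct and follows essentially the same route as the paper: lift $(\mu,\nu)$ to some $(\mu,\nu,r)\in\pi_{E\oplus\C}^{-1}(C_{\Q}(\XEC)^+)$ via Proposition \ref{prop:SAC_equivalentdef_with_irrep}~(3), then apply the necessary condition coming from dominant pairs with $m=0$ so that the $r$-term drops out. The only cosmetic difference is that the paper cites Proposition \ref{prop:equations_of_CQ(XM)} directly, whereas you re-derive its ``only if'' direction by combining Lemma \ref{lem:CN_dominantpairs_semiamplecone} with Proposition \ref{prop:product_twobundles_numericalcriterion} and Lemmas \ref{lem:numericalcriterion_GoverB} and \ref{lem:numericalcriterion_P(M)} --- which is exactly how that proposition is proved.
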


\begin{proof}
Let $(\mu,\nu)\in\Pi_{\Q}(E)$. Then, by Proposition \ref{prop:SAC_equivalentdef_with_irrep} {\it (3)}, there exists $r\in\Q_{\geq0}$ such that $(\mu,\nu,r)\in\pi_{E\oplus\C}^{-1}(C_{\Q}(\XEC)^+)$. Thus, Proposition \ref{prop:equations_of_CQ(XM)} yields that $\langle w\lambda,\mu\rangle+\langle w'\lambda,\nu\rangle+0r\leq 0$, which proves the lemma.
\end{proof}

In fact, we are going to prove that these equations determine completely the polyhedron $\Pi_{\Q}(E)$.

%

Let $\mathscr{P}^{wc}$ be the set of well covering pairs $(C,\lambda)$ of $X_{E\oplus\C}$ such that $\lambda$ is dominant indivisible, and there exists $x\in C$ with $(\KC)_x^{\circ} = \lambda(\C^*)$. This set of well covering pairs is finite, by Proposition \ref{prop:admissibility_by_stabilizer} and Remark \ref{rem:setof_indivisibleMadmissibleops_is_finite}.

Furthermore, for any set $\mathscr{P}$ of dominant pairs of $\XEC$, we define
\[
\mathscr{P}_0 := \{(C(w,w',m),\lambda)\in\mathscr{P} \ | \ m=0\}.
\]


The next statement is the key fact to obtain a set of equations of $\Pi_{\Q}(E)$ from the one of $C_{\Q}(\XEC)^+$.

\begin{thm}
\label{thm:nsc_wellcoveringpair_zeroweight}
Consider the $\KC$-variety $X_M$, where $M$ is a complex representation of $\KC$ such that $0\in\WT(M)$. Let $(w,w',m)\in W/W_{\lambda} \times W/W_{\lambda} \times \Z$ such that $(C(w,w',m),\lambda)$ is a dominant pair  of $X_M$. Then $m$ is non-positive.
\end{thm}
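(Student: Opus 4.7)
The plan is to exhibit a $\KC$-linearized line bundle on $X_M$ lying in the semiample cone $C_{\Q}(X_M)^+$ whose Hilbert-Mumford numerical invariant on the given pair equals $m$; the bound $m\leq 0$ will then follow at once from Lemma~\ref{lem:CN_dominantpairs_semiamplecone}. My candidate is $\mathcal{L}_{0,0,1} = \mathcal{O}_{\KC/B}\boxtimes\mathcal{O}_{\KC/B}\boxtimes\mathcal{O}_{\mathbb{P}(M)}(1)$, the pullback of the ample bundle $\mathcal{O}_{\mathbb{P}(M)}(1)$ along the third projection, hence semiample. By Proposition~\ref{prop:product_twobundles_numericalcriterion} combined with Lemmas~\ref{lem:numericalcriterion_GoverB} and~\ref{lem:numericalcriterion_P(M)},
\[
\mu^{\mathcal{L}_{0,0,1}}(C(w,w',m),\lambda) = \langle w\lambda,0\rangle + \langle w'\lambda,0\rangle + m = m,
\]
so the entire proof reduces to showing that $\mathcal{L}_{0,0,1}$ belongs to $C_{\Q}(X_M)^+$.

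Since $\mathrm{H}^0(\KC/B,\mathcal{O}) = \C$, the K\"unneth formula yields $\mathrm{H}^0(X_M,\mathcal{L}_{0,0,n})\cong\mathrm{Sym}^n M^*$ as $\KC$-modules, and consequently $\Xss(\mathcal{L}_{0,0,1})\neq\emptyset$ is equivalent to the existence of a positive-degree $\KC$-invariant polynomial on $M$. By classical GIT this, in turn, is equivalent to the null cone $\mathcal{N}:=\{v\in M:0\in\overline{\KC\cdot v}\}$ being a proper subset of $M$.

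The main step, where the hypothesis $0\in\WT(M)$ enters essentially, is to establish $\mathcal{N}\subsetneq M$; I expect this to be the crux of the argument. Fix a nonzero zero-weight vector $v_0\in M_0$ and a $K$-invariant Hermitian form $h$ on $M$ (obtained by averaging), with respect to which distinct $\TC$-weight spaces are mutually orthogonal. For $\xi\in\got{t}$, the infinitesimal action annihilates $v_0$ since $v_0\in M_0$; and for $\xi\in\got{k}$ arising from root vectors, $\xi\cdot v_0$ lies in $\bigoplus_{\alpha\neq 0}M_\alpha$ and is therefore $h$-orthogonal to $v_0$. Consequently the $K$-moment map of the Hamiltonian action on $(M,-\mathrm{Im}\,h)$ vanishes at $v_0$, and the Kempf-Ness theorem then guarantees that the $\KC$-orbit of $v_0$ is closed in $M$. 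Being a nonempty closed $\KC$-orbit avoiding $0$ (since $v_0\neq 0$), it certifies $v_0\notin\mathcal{N}$, so $\mathcal{N}\subsetneq M$. This completes the proof that $\mathcal{L}_{0,0,1}\in C_{\Q}(X_M)^+$, and hence the theorem.
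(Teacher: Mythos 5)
Your proof is correct, but it follows a genuinely different route from the paper's. The paper proves Theorem \ref{thm:nsc_wellcoveringpair_zeroweight} in one line from its cohomological criterion for dominance (Proposition \ref{prop:cohomological_criterion_dominant/covering_pairs}): if $m>0$ then the zero weight lies in $\WT(M_{<m})$, so the product $\prod_{\beta\in\WT(M_{<m})}\Theta(-\beta)^{n_{\beta}}$ contains the factor $\Theta(0)=0$ and vanishes, which is incompatible with the pair being dominant. You instead produce an explicit witness in the semiample cone, namely $\mathcal{L}_{0,0,1}$, compute $\mu^{\mathcal{L}_{0,0,1}}(C(w,w',m),\lambda)=m$ via Proposition \ref{prop:product_twobundles_numericalcriterion} and Lemmas \ref{lem:numericalcriterion_GoverB} and \ref{lem:numericalcriterion_P(M)}, and conclude $m\leq 0$ from Lemma \ref{lem:CN_dominantpairs_semiamplecone}; the hypothesis $0\in\WT(M)$ enters through the Kempf--Ness argument showing that the orbit of a nonzero zero-weight vector is closed, so the null cone is proper and a nonconstant homogeneous invariant exists, giving $\Xss(\mathcal{L}_{0,0,1})\neq\emptyset$. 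Each step checks out: the moment-map computation at $v_0$ is valid because a $K$-invariant Hermitian form makes distinct $\TC$-weight spaces orthogonal, and the value of $\mu^{\mathcal{L}}$ is constant on the component $C(w,w',m)$, so evaluating at the point $(w^{-1}B/B,w'^{-1}B/B,[v])$ with $v\in M_{\lambda,m}$ is legitimate. What your approach buys is independence from the Schubert-calculus machinery of section \ref{section:wellcoveringpairs} (the statement is used already in section \ref{section:equations_momentpolyhedron_KLambdaE}, so a proof not deferred to the later section is aesthetically cleaner); what it costs is the import of classical affine GIT (null cone, Kempf--Ness), which the paper otherwise does not need at this point.
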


The proof of this theorem uses completely different tools, so we postpone it to section \ref{section:wellcoveringpairs}. In the present case, i.e. $M=E\oplus\C$ with trivial action of $\KC$ on $\C$, the condition $0\in\WT(E\oplus\C)$ is satisfied.


\begin{thm}
\label{thm:equations_PiQ}
Let $\mathscr{P}$ be a set of dominant pairs of $\XEC$ such that $\mathscr{P}^{wc}_0\subseteq\mathscr{P}$. For all $(\mu,\nu)\in\wedge^*_{\Q,+}\times\wedge^*_{\Q,+}$, we have $(\mu,\nu)\in\Pi_{\Q}(E)$ if and only if, for all pairs $(C(w,w',0),\lambda)$ in $\mathscr{P}_0$, we have
\begin{equation}
\label{eq:thm_equations_momentpolyhedron}
\langle w\lambda,\mu\rangle + \langle w'\lambda,\nu\rangle \leq 0.
\end{equation}
\end{thm}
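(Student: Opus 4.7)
The forward implication is immediate from Lemma \ref{lem:nc_elementsofPiQ(E)_by_dominantpairs}, since every pair $(C(w,w',0),\lambda)\in\mathscr{P}_0\subseteq\mathscr{P}$ is by hypothesis a dominant pair of $\XEC$.

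For the reverse implication, the strategy is to promote a datum $(\mu,\nu)$ satisfying \eqref{eq:thm_equations_momentpolyhedron} to a triple $(\mu,\nu,r)\in\pi_{E\oplus\C}^{-1}(C_{\Q}(\XEC)^+)$ by choosing $r\in\Q_{\geq 0}$ sufficiently large; Proposition \ref{prop:SAC_equivalentdef_with_irrep}~(3) will then give $(\mu,\nu)\in\Pi_{\Q}(E)$. Since $\ker\zeta$ is finite, Corollary \ref{cor:coneample_nonemptyinterior} ensures that $C_{\Q}(\XEC)^+$ has nonempty interior, so Theorem \ref{thm:coneample_equations_stabilizer} (read off via Proposition \ref{prop:equations_of_CQ(XM)}) reduces membership of $(\mu,\nu,r)$ in $\pi_{E\oplus\C}^{-1}(C_{\Q}(\XEC)^+)$ to the finite system
\[
\langle w\lambda,\mu\rangle + \langle w'\lambda,\nu\rangle + mr \leq 0, \qquad (C(w,w',m),\lambda)\in\mathscr{P}^{wc},
\]
where finiteness of $\mathscr{P}^{wc}$ comes from Proposition \ref{prop:admissibility_by_stabilizer} and Remark \ref{rem:setof_indivisibleMadmissibleops_is_finite}. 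The hypothesis $\mathscr{P}^{wc}_0\subseteq\mathscr{P}$ gives $\mathscr{P}^{wc}_0\subseteq\mathscr{P}_0$, so the pairs with $m=0$ are already covered by \eqref{eq:thm_equations_momentpolyhedron}, independently of $r$.

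The crux is Theorem \ref{thm:nsc_wellcoveringpair_zeroweight}, applied to $M=E\oplus\C$: since $\KC$ acts trivially on the $\C$-summand, the weight $0$ lies in $\WT(E\oplus\C)$, hence every dominant pair of $\XEC$ satisfies $m\leq 0$. Consequently, for each pair in $\mathscr{P}^{wc}\setminus\mathscr{P}^{wc}_0$ one has $m<0$, and the corresponding inequality is an \emph{increasing} constraint on $r$, namely $r\geq (\langle w\lambda,\mu\rangle+\langle w'\lambda,\nu\rangle)/(-m)$. Since there are only finitely many such constraints, taking any $r\in\Q_{\geq 0}$ that dominates all of them simultaneously produces the required triple and concludes the proof.

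The principal obstacle is entirely encapsulated in Theorem \ref{thm:nsc_wellcoveringpair_zeroweight}, which is established separately in section \ref{section:wellcoveringpairs}; once its sign constraint $m\leq 0$ is available, the passage from the three-variable cone $C_{\Q}(\XEC)^+$ to its two-variable projection $\Pi_{\Q}(E)$ is the elementary observation that the inequalities with $m<0$ can always be satisfied by enlarging $r$, while the inequalities with $m=0$ are \emph{exactly} the system \eqref{eq:thm_equations_momentpolyhedron}. This clean dichotomy is precisely what compactifying $E$ into $\mathbb{P}(E\oplus\C)$ was designed to achieve.
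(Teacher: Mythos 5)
Your proposal is correct and follows essentially the same route as the paper's own proof: the forward direction via Lemma \ref{lem:nc_elementsofPiQ(E)_by_dominantpairs}, and the converse by invoking Theorem \ref{thm:nsc_wellcoveringpair_zeroweight} to get $m<0$ on $\mathscr{P}^{wc}\setminus\mathscr{P}^{wc}_0$, then choosing $r\in\Q_{\geq 0}$ large enough (possible since $\mathscr{P}^{wc}$ is finite) so that Theorem \ref{thm:equations_wellcoveringpair_admissible} places $(\mu,\nu,r)$ in $\pi_{E\oplus\C}^{-1}(C_{\Q}(\XEC)^+)$, whence $(\mu,\nu)\in\Pi_{\Q}(E)$ by Proposition \ref{prop:SAC_equivalentdef_with_irrep}. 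This matches the paper's argument, which takes $r_0$ to be the maximum of the finitely many ratios $(\langle w\lambda,\mu\rangle+\langle w'\lambda,\nu\rangle)/(-m)$ adjusted to be positive.
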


\begin{proof}
It only remains to prove implication ``$\Leftarrow$", since implication ``$\Rightarrow$" is given by Lemma \ref{lem:nc_elementsofPiQ(E)_by_dominantpairs}.

Let $\mathcal{C}$ be the polyhedron defined by the equations \eqref{eq:thm_equations_momentpolyhedron}. Then, we must prove that $\mathcal{C}$ is included in $\Pi_{\Q}(E)$.

Fix $(\mu,\nu)\in\mathcal{C}$. From its definition, $(\mu,\nu)$ satisfies the equations \eqref{eq:equation_fromgivenwellcoveringpair}, with $m = 0$, for all $(C(w,w',0),\lambda)\in\mathscr{P}_0$, and consequently for all $(C(w,w',0),\lambda)\in\mathscr{P}^{wc}_0$. By Theorem \ref{thm:nsc_wellcoveringpair_zeroweight} and the fact that $0\in\WT(E\oplus\C)$, for any pair $(C(w,w',m),\lambda)$ of $\mathscr{P}^{wc}\backslash\mathscr{P}^{wc}_0$, we have $-m> 0$. We define the rational number
\[
r_0 = \max_{C(w,w',m)\in\mathscr{P}^{wc}\backslash\mathscr{P}^{wc}_0}\left\{\frac{\langle w\lambda,\mu\rangle + \langle w'\lambda,\nu\rangle}{-m}\right\}.
\]
This maximum is reached because $\mathscr{P}^{wc}$ is finite. We may assume $r_0>0$, since we can replace $r_0$ by $\max\{r_0,1\}$. Indeed, we still have $\langle w\lambda,\mu\rangle + \langle w'\lambda,\nu\rangle \leq (-m)r_0$, for all $(C(w,w',m),\lambda)\in\mathscr{P}^{wc}\backslash\mathscr{P}^{wc}_0$, because $-m>0$. Hence, $\langle w\lambda,\mu\rangle + \langle w'\lambda,\nu\rangle + mr_0 \leq 0$. Combining this with the definition of $\mathcal{C}$, we deduce that $\langle w\lambda,\mu\rangle + \langle w'\lambda,\nu\rangle + mr_0 \leq 0$ for all $(C(w,w',m),\lambda)\in\mathscr{P}^{wc}$, and Theorem \ref{thm:equations_wellcoveringpair_admissible} shows that $(\mu,\nu,r_0)\in\pi_{E\oplus\C}^{-1}(C_{\Q}(X_{E\oplus\C})^+)$, that is, $(\mu,\nu)\in\Pi_{\Q}(E)$. Finally, we have $\mathcal{C}\subseteq\Pi_{\Q}(E)$.

%
\end{proof}

In particular, we will generally consider the following case.

\begin{cor}
Let $\mathscr{P}_0$ be the set of well covering pairs (resp. covering pairs, resp. dominant pairs) $(C(w,w',0),\lambda)$ such that $\lambda$ is $E$-admissible. Then
\[
\Pi_{\Q}(E) = \{(\mu,\nu)\in(\wedge^*_{\Q,+})^2\ | \ \langle w\lambda,\mu\rangle + \langle w'\lambda,\nu\rangle \leq 0, \forall (C(w,w',0),\lambda)\in\mathscr{P}_0\}.
\]
\end{cor}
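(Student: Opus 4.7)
The plan is to derive this corollary as an almost immediate consequence of Theorem \ref{thm:equations_PiQ} combined with Corollary \ref{cor:misequaltozero_admissible_implies_Eadmissible}. The statement really only packages together results already proved: Theorem \ref{thm:equations_PiQ} gives a description of $\Pi_{\Q}(E)$ in terms of any set $\mathscr{P}$ of dominant pairs containing $\mathscr{P}^{wc}_0$, and Corollary \ref{cor:misequaltozero_admissible_implies_Eadmissible} upgrades the admissibility condition for pairs of type $(C(w,w',0),\lambda)$ from $\got{gl}_{\C}(E\oplus\C)$-admissibility to $E$-admissibility. So the strategy is to check that the $\mathscr{P}_0$ from the corollary fits the hypothesis of Theorem \ref{thm:equations_PiQ}.

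For the forward inclusion $\Pi_{\Q}(E)\subseteq\{(\mu,\nu) : \langle w\lambda,\mu\rangle + \langle w'\lambda,\nu\rangle \leq 0,\ \forall(C(w,w',0),\lambda)\in\mathscr{P}_0\}$, I would invoke Lemma \ref{lem:nc_elementsofPiQ(E)_by_dominantpairs}: every element of $\mathscr{P}_0$, in any of the three versions (well covering, covering, or dominant), is in particular a dominant pair of the form $(C(w,w',0),\lambda)$, so the stated inequality must hold for any $(\mu,\nu)\in\Pi_{\Q}(E)$.

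For the reverse inclusion, I would apply Theorem \ref{thm:equations_PiQ}, which reduces the task to verifying the containment $\mathscr{P}^{wc}_0\subseteq\mathscr{P}_0$. Recall that $\mathscr{P}^{wc}_0$ is the set of well covering pairs $(C(w,w',0),\lambda)$ with $\lambda$ dominant indivisible and such that $(\KC)_x^{\circ}=\lambda(\C^*)$ for some $x\in C(w,w',0)$. Corollary \ref{cor:misequaltozero_admissible_implies_Eadmissible} tells us that the stabilizer condition combined with $m=0$ forces $\lambda$ to be $E$-admissible. Hence every pair in $\mathscr{P}^{wc}_0$ is a well covering pair with $\lambda$ $E$-admissible, placing it in the ``well covering'' version of $\mathscr{P}_0$, and a fortiori in the ``covering'' and ``dominant'' versions, since well covering pairs are covering and covering pairs are dominant. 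Invoking Theorem \ref{thm:equations_PiQ} then yields the reverse inclusion in all three cases.

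There is no serious obstacle at this stage: the real work was done earlier. In particular, Corollary \ref{cor:misequaltozero_admissible_implies_Eadmissible} (which itself leans on Proposition \ref{prop:subtore_Eadmissible}) is what lets us replace the large ambient $\got{gl}_{\C}(E\oplus\C)$-admissibility condition by the sharper $E$-admissibility condition in the presence of the fixed component $\mathbb{P}(E^\lambda\oplus\C)$, and Theorem \ref{thm:nsc_wellcoveringpair_zeroweight} (used inside the proof of Theorem \ref{thm:equations_PiQ}) is what allows us to restrict to pairs with $m=0$. Once these tools are in hand, the present corollary is a straightforward bookkeeping step.
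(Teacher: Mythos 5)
Your proposal is correct and follows exactly the route the paper intends: the corollary is stated as an immediate specialization of Theorem \ref{thm:equations_PiQ}, and your verification that $\mathscr{P}^{wc}_0\subseteq\mathscr{P}_0$ (via the $E$-admissibility argument of Corollary \ref{cor:misequaltozero_admissible_implies_Eadmissible}) together with Lemma \ref{lem:nc_elementsofPiQ(E)_by_dominantpairs} for the forward inclusion is precisely the bookkeeping the paper leaves implicit.
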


Finally, we get different sets of equations describing the moment polyhedra $\Delta_{K\times K}(T^*K\times E)$ and $\Delta_K(K\cdot\Lambda\times E)$.

\begin{cor}
\label{cor:generalequations_DeltaKxK(T*KxE)}
Let $\mathscr{P}$ be a set of dominant pairs of $\XEC$ such that $\mathscr{P}^{wc}_0\subseteq\mathscr{P}$. Assume that the moment map $\Phi_E:E\rightarrow\got{k}^*$ is proper. Then the moment polyhedron $\Delta_{K\times K}(T^*K\times E)$ is the following convex polyhedral cone
\[
\{(\mu,\nu)\in(\got{t}_+^*)^2\ | \ \langle w\lambda,\mu\rangle + \langle w'\lambda,\nu\rangle \leq 0, \forall (C(w,w',0),\lambda)\in\mathscr{P}_0\}.
\]
In particular, for any $\Lambda\in\got{t}_+^*$, we have
\[
\Delta_K(K\cdot\Lambda\times E) = \left\{\xi\in\got{t}_+^*\ |\ \langle w\lambda,\xi\rangle\leq \langle w_0w'\lambda,\Lambda\rangle, \mbox{ for all } (C(w,w',0),\lambda)\in\mathscr{P}_0\right\}.
\]
\end{cor}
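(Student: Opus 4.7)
The plan is to derive the corollary as a direct packaging of three previously established results: Theorem \ref{thm:equations_PiQ} (equations of $\Pi_{\Q}(E)$), Corollary \ref{cor:DeltaClassic_DeltaGIT} (identifying $\Delta_{K\times K}(T^*K\times E)$ with the closure of $\Pi_{\Q}(E)$), and Proposition \ref{prop:relation_between_the_two_momentpolyhedron} (the ``symplectic reduction'' relation between $\Delta_K(K\cdot\Lambda\times E)$ and $\Delta_{K\times K}(T^*K\times E)$).

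First I would establish the first identity. Theorem \ref{thm:equations_PiQ} says that $(\mu,\nu)\in(\wedge^*_{\Q,+})^2$ belongs to $\Pi_{\Q}(E)$ if and only if $\langle w\lambda,\mu\rangle+\langle w'\lambda,\nu\rangle\le 0$ for all $(C(w,w',0),\lambda)\in\mathscr{P}_0$. Since $\Phi_E$ is assumed proper, Corollary \ref{cor:DeltaClassic_DeltaGIT} applies and $\Delta_{K\times K}(T^*K\times E)$ is the closure of $\Pi_{\Q}(E)$ in $\got{t}^*\times\got{t}^*$. The inequalities above already cut out a \emph{closed} rational polyhedral convex cone $\mathcal{C}$ in $(\got{t}_+^*)^2$ (only finitely many $(w,w',\lambda)$ occur in $\mathscr{P}_0^{wc}$), and by Theorem \ref{thm:equations_PiQ} the rational points of $\mathcal{C}$ coincide with $\Pi_{\Q}(E)$. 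Because the rational points of a rational polyhedral cone are dense in it, the closure of $\Pi_{\Q}(E)$ is exactly $\mathcal{C}$, proving the first assertion.

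Next I would deduce the second identity from the first via Proposition \ref{prop:relation_between_the_two_momentpolyhedron}: an element $\xi\in\got{t}_+^*$ lies in $\Delta_K(K\cdot\Lambda\times E)$ if and only if the pair $(-w_0\Lambda,\xi)$ lies in $\Delta_{K\times K}(T^*K\times E)$. Substituting $(\mu,\nu)=(-w_0\Lambda,\xi)$ into the equations from the first part and using $W$-invariance of the natural pairing together with $w_0^{-1}=w_0$ gives
\[
\langle w'\lambda,\xi\rangle \;\le\; \langle w\lambda,w_0\Lambda\rangle \;=\; \langle w_0w\lambda,\Lambda\rangle
\]
for every $(C(w,w',0),\lambda)\in\mathscr{P}_0$. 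It remains to exchange the roles of $w$ and $w'$, which is legitimate because $\mathscr{P}_0$ is stable under $(w,w')\mapsto(w',w)$: the involution of $X_{E\oplus\C}$ swapping the two $\KC/B$ factors is $\KC$-equivariant and sends the irreducible component $C(w,w',m)$ of $X_{E\oplus\C}^{\lambda}$ to $C(w',w,m)$, preserving the properties of being dominant, covering, or well covering, and fixing the subset $\{m=0\}$.

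The argument is essentially a mechanical assembly of prior statements; no new geometry enters. The only point requiring a moment's care is the symmetry of $\mathscr{P}_0$ under the swap $(w,w')\mapsto(w',w)$, needed to match the final form $\langle w\lambda,\xi\rangle\le\langle w_0w'\lambda,\Lambda\rangle$ in the statement. Once that symmetry is noted, the two identities fall out by direct substitution and $W$-invariance of the pairing.
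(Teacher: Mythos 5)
Your proof is correct and follows exactly the paper's route: the paper's own proof is the one-line assembly of Theorem \ref{thm:equations_PiQ}, Corollary \ref{cor:DeltaClassic_DeltaGIT} and Proposition \ref{prop:relation_between_the_two_momentpolyhedron}, which is precisely what you carry out. The two details you supply — density of rational points in the rational polyhedral cone, and the $(w,w')\leftrightarrow(w',w)$ symmetry (via the $\KC$-equivariant swap of the two flag factors) needed to match the stated form $\langle w\lambda,\xi\rangle\leq\langle w_0w'\lambda,\Lambda\rangle$ — are exactly the points the paper leaves implicit, and you handle them correctly.
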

%

\begin{proof}
This is a direct consequence of Theorem \ref{thm:equations_PiQ}, Corollary \ref{cor:DeltaClassic_DeltaGIT} and Proposition \ref{prop:relation_between_the_two_momentpolyhedron}.
\end{proof}

Taking $\Lambda=0$ in $\got{t}_+^*$, we deduce the equations of the moment polyhedron $\Delta_K(E):=\Phi_E(E)\cap\got{t}_+^*$ associated to the moment map $\Phi_E:E\rightarrow\got{k}^*$.

\begin{cor}
Assume that $\Phi_E:E\rightarrow\got{k}^*$ is a proper map. Then, we have
\[
\Delta_K(E) = \left\{\xi\in\got{t}^*_+\ | \ \langle w\lambda,\xi\rangle \leq 0, \forall (C(w,w',0),\lambda)\in\mathscr{P}_0^{wc}\right\}.
\]
More generally, let $\Lambda\in\got{t}_+^*$ be central. The set $\Delta_K(K\cdot\Lambda\times E)$
is equal to the polyhedron $\left\{\xi\in\got{t}^*_+\ |\ \langle w\lambda,\xi-\Lambda\rangle \leq 0, \forall (C(w,w',0),\lambda)\in\mathscr{P}_0^{wc}\right\}$. In particular, $\Delta_K(K\cdot\Lambda\times E) = \Lambda + \Delta_K(E)$.
\end{cor}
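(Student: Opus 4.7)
The strategy is to derive both formulas as direct consequences of Corollary \ref{cor:generalequations_DeltaKxK(T*KxE)} (the case of general $\Lambda\in\got{t}_+^*$ with $\mathscr{P}=\mathscr{P}^{wc}$), by exploiting two elementary features of the setup: that taking $\Lambda=0$ reduces $K\cdot\Lambda\times E$ to $E$, and that centrality of $\Lambda$ makes pairings against $\Lambda$ invariant under the Weyl group.

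For the first equality, I would specialize Corollary \ref{cor:generalequations_DeltaKxK(T*KxE)} to $\Lambda=0$. Then $K\cdot\Lambda=\{0\}$, so $K\cdot\Lambda\times E\cong E$ as Hamiltonian $K$-manifolds with moment map $\Phi_E$, hence $\Delta_K(K\cdot 0\times E)=\Phi_E(E)\cap\got{t}_+^*=\Delta_K(E)$. The right-hand sides $\langle w_0w'\lambda,0\rangle$ of the defining inequalities all vanish, yielding the stated description of $\Delta_K(E)$.

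For the second formula, suppose $\Lambda$ is central, i.e.\ fixed by $W$. Then for every $u\in W$, $\langle u\lambda,\Lambda\rangle=\langle\lambda,u^{-1}\Lambda\rangle=\langle\lambda,\Lambda\rangle$, so in particular $\langle w_0w'\lambda,\Lambda\rangle=\langle w\lambda,\Lambda\rangle$ for every pair $(w,w')$. The inequality $\langle w\lambda,\xi\rangle\leq\langle w_0w'\lambda,\Lambda\rangle$ from Corollary \ref{cor:generalequations_DeltaKxK(T*KxE)} therefore rewrites as $\langle w\lambda,\xi-\Lambda\rangle\leq 0$, giving exactly the claimed polyhedron.

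For the translation identity $\Delta_K(K\cdot\Lambda\times E)=\Lambda+\Delta_K(E)$, I would set $\eta=\xi-\Lambda$ and argue that the constraint $\xi\in\got{t}_+^*$ is equivalent to $\eta\in\got{t}_+^*$. This is where centrality is essential: since $\Lambda\in\got{z}(\got{k})^*$, we have $\langle\alpha,\Lambda\rangle=0$ for every compact root $\alpha$, so $\Lambda$ lies on every wall of $\got{t}_+^*$ and the map $\eta\mapsto\eta+\Lambda$ preserves the dominant chamber in both directions. Combining the change of variables with the first statement of the corollary then gives the translation formula. None of these steps presents a real obstacle; the only subtlety worth flagging is the verification that centrality of $\Lambda$ implies $\got{t}_+^*+\Lambda=\got{t}_+^*$, which I would justify by the orthogonality of $\got{z}(\got{k})$ to the root system.
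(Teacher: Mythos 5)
Your proposal is correct and follows essentially the same route as the paper, which simply deduces both statements from Corollary \ref{cor:generalequations_DeltaKxK(T*KxE)} by setting $\Lambda=0$ and, for central $\Lambda$, using $W$-invariance to rewrite $\langle w_0w'\lambda,\Lambda\rangle$ as $\langle w\lambda,\Lambda\rangle$. Your extra care in checking that $\got{t}_+^*+\Lambda=\got{t}_+^*$ for central $\Lambda$ (via orthogonality to the roots) is the one point the paper leaves implicit, and it is handled correctly.
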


%
%
%
%
%
%

Among the pairs of $\mathscr{P}_0^{wc}$, the pairs of type $(C(w,w_0w,0),\lambda)$ are special. They give the codimension one faces of the polyhedral cone $\CR(-\WT(E))$. Before proving this result, we need to state the next theorem, which uses results from section \ref{section:wellcoveringpairs}.

\begin{thm}
\label{thm:wellcoveringpairs_C(w,w_0w,m)}
Let $\lambda$ be a dominant one parameter subgroup of $\TC$, and $(w,m)\in W/W_{\lambda}\times\Z$ such that $C(w,w_0w,m)\neq\emptyset$. Then $(C(w,w_0w,m),\lambda)$ is a well covering pair of $\XEC$, and, for all $\beta\in\WT(E)$, we have $\langle\lambda,\beta\rangle\geq m$.
\end{thm}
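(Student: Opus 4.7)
The approach is to analyse the pair $(C(w,w_0w,m),\lambda)$ via the $\lambda$-weight decomposition of the tangent space $T_xX_{E\oplus\C}$ at a base point $x = (w^{-1}B/B,(w_0w)^{-1}B/B,[v])\in C(w,w_0w,m)$, where $v\in M_{\lambda,m}\setminus\{0\}$ and $M=E\oplus\C$. The crucial feature of the choice $w' = w_0w$ is a Poincar\'e-duality-style cancellation on the two flag factors, which both forces the weight bound and makes the birationality dimensional matching automatic.

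The three tangent contributions decompose as follows: $T_{w^{-1}B/B}(\KC/B)$ and $T_{(w_0w)^{-1}B/B}(\KC/B)$ carry $\lambda$-weights $-\langle w^{-1}\alpha,\lambda\rangle$ and $-\langle(w_0w)^{-1}\alpha,\lambda\rangle$ indexed by positive roots $\alpha$, while $T_{[v]}\mathbb{P}(M)\cong\mathrm{Hom}(\C v,M/\C v)$ carries $\lambda$-weight $k-m$ with multiplicity $\dim M_{\lambda,k}$ (reduced by one when $k=m$). Using $w_0(\got{R}^+)=-\got{R}^+$ via the substitution $\alpha\mapsto -w_0\alpha$, the negative $\lambda$-weight count on the second factor is $\#\{\alpha>0:\langle w^{-1}\alpha,\lambda\rangle<0\}$; combined with the first factor it collapses to $\#\{\alpha>0:\langle w^{-1}\alpha,\lambda\rangle\neq 0\}$. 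The change of variables $\gamma=w^{-1}\alpha$, together with the dominance of $\lambda$, identifies this total with $\#\{\gamma>0:\langle\gamma,\lambda\rangle>0\}=\dim\KC/P(\lambda)$.

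By the standard numerical criterion for $\eta\colon\KC\times_{P(\lambda)}C^+\to X_{E\oplus\C}$ to be birational, the strictly negative $\lambda$-weight part of $T_xX_{E\oplus\C}$ must have dimension equal to $\dim\KC/P(\lambda)$. Having exhausted exactly that budget on the two flag factors, the third-factor contribution $\sum_{k<m}\dim M_{\lambda,k}$ must vanish, which is precisely the inequality $\langle\lambda,\beta\rangle\geq m$ for every $\beta\in\WT(E)$; conversely, when this vanishing holds the dimensions match and $\eta$ is a dominant morphism of the expected dimension.

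The hardest step will be promoting dominance to the full well-covering property by producing a $P(\lambda)$-stable open subset $\Omega\subset C^+$ on which $\eta$ restricts to an open immersion. My plan is to take $\Omega$ as the product of the generic Bruhat cells $U_\lambda^- w^{-1}B/B$ and $U_\lambda^-(w_0w)^{-1}B/B$ with the affine chart of $C_m^+$ cut out by $v_m\neq 0$, and to use the Levi factorization $P(\lambda)=\KC^\lambda\ltimes U_\lambda^+$ together with the Birkhoff decomposition of $\KC/B$ to construct an explicit local inverse of $\eta|_\Omega$. The differential of $\eta$ at $x$ is forced to be an isomorphism by the matching weight counts above, so a standard \'etaleness argument combined with the explicit geometric construction of $\Omega$ yields the required open immersion. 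The combinatorial analysis of the irreducible components of $X_{E\oplus\C}^\lambda$ developed in Section~\ref{section:wellcoveringpairs} should supply the remaining ingredients.
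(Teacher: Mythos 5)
Your tangent-space weight count is correct, and it does recover the degree identity of Corollary \ref{cor:info_lengths_elements_for_wellcoveringpair}: the two flag factors contribute exactly $\dim_{\C}\KC/P(\lambda)$ strictly negative $\lambda$-weights, and $\mathbb{P}(E\oplus\C)$ contributes $\dim_{\C}M_{<m}$, so a \emph{covering} pair of this shape forces $M_{<m}=0$, i.e.\ $\langle\lambda,\beta\rangle\geq m$ for all $\beta\in\WT(E)$. That is a legitimate geometric translation of the paper's cohomological degree count. Note, however, that as you have structured it this only yields the implication ``covering $\Rightarrow$ weight bound'' and (if completed) ``weight bound $\Rightarrow$ well covering''; the operative content is the equivalence, and you never get the weight bound from the hypothesis $C(w,w_0w,m)\neq\emptyset$ alone.

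The genuine gaps are in the converse direction. First, ``the dimensions match, hence $\eta$ is dominant'' is not a valid inference: equality of $\dim(\KC\times_{P(\lambda)}C^+)$ and $\dim X_{E\oplus\C}$ says nothing about the image being dense. What actually makes $\eta$ birational here is that the two Schubert classes are Poincar\'e dual, $\sigma_{w_0w}^{P(\lambda)}\,.\,\sigma_{w_0w'}^{P(\lambda)}=\pt$ for $w'=w_0ww_{\lambda}$ (Lemma \ref{lem:product_twoschubertclasses_inG/P_isequalto_ptclass}), fed into the covering criterion of Lemma \ref{lem:nsc_pair_is_covering/dominant_generalcase}; you allude to ``Poincar\'e-duality-style cancellation'' but never establish this nonvanishing and normalization. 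Second, ``the differential of $\eta$ at $x$ is forced to be an isomorphism by the matching weight counts'' is false: equal counts give equal dimensions, not injectivity of $d\eta$. Upgrading covering to well covering requires Ressayre's trace condition (assertion (\textit{b}) of Theorem \ref{thm:nsc_pair_is_wellcovering_generalcase}, i.e.\ condition (\textit{ii}) of Theorem \ref{thm:nsc_wellcoveringpair}), an identity on the weighted \emph{sum} of the negative $\lambda$-weights, not merely their number. In the present case it holds because $\langle w\lambda+w_0ww_{\lambda}\lambda,\rho\rangle=0$ (Lemma \ref{lem:equation_lambda_extremalcase}) and $\sum_{k<m}(m-k)\dim_{\C}M_{\lambda,k}$ vanishes once $M_{<m}=0$, but you never verify it, and your proposed $\Omega$ with its ``explicit local inverse'' remains a plan rather than a proof. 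The paper sidesteps all of this by specializing the already-proved criterion of Theorem \ref{thm:nsc_wellcoveringpair} to $w'=w_0ww_{\lambda}$.
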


\begin{proof}
It directly results from Theorem \ref{thm:nsc_wellcoveringpair} and Corollary \ref{cor:info_lengths_elements_for_wellcoveringpair}.
\end{proof}

Next proposition gives partial information about $\Delta_K(K\cdot\Lambda\times E)$ and its faces around $\Lambda$.

\begin{prop}
\label{prop:faces_ConeWtT(E)_and_wellcoveringpairs}
Let $\mathscr{P}^{adm}_0$ denote the set of well covering pairs $(C(w,w',0),\lambda)$ of $\XEC$ such that $\lambda$ is dominant indivisible and $E$-admissible. Then, any pair $(C(w,w_0w,0),\lambda)$ in $\mathscr{P}_0^{adm}$ defines a codimension one face of $\CR(-\WT(E))$. Conversely, any codimension one face of $\CR(-\WT(E))$ arises from some such pair of $\mathscr{P}_0^{adm}$. In particular, for all $\Lambda\in\got{t}_+^*$, we have
\[
\Delta_K(K\cdot\Lambda\times E) \subseteq \Lambda + \CR(-\WT(E)).
\]
\end{prop}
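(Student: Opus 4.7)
The plan is to view the pairs $(C(w,w_0w,0),\lambda)\in\mathscr{P}_0^{adm}$ as data indexing the facet-defining half-spaces of $\CR(-\WT(E))$, then use the explicit equations of $\Delta_K(K\cdot\Lambda\times E)$ from Corollary \ref{cor:generalequations_DeltaKxK(T*KxE)} to conclude. The bridge is Theorem \ref{thm:wellcoveringpairs_C(w,w_0w,m)}: applied with $m=0$, it says that $(C(w,w_0w,0),\lambda)$ is automatically well covering as soon as $C(w,w_0w,0)\neq\emptyset$ (which holds for free here, since the $\C$-summand contributes $0\in\WT(E\oplus\C)$, so $\mathbb{P}((E\oplus\C)_{\lambda,0})$ is always nonempty), and that $\langle\lambda,\beta\rangle\geq 0$ for every $\beta\in\WT(E)$. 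Because $\ker\zeta$ is assumed finite, $\WT(E)$ spans $\got{t}^*$, so $\CR(-\WT(E))$ is full-dimensional and its facets biject with extreme rays of its dual cone.

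For the forward direction, I would take $(C(w,w_0w,0),\lambda)\in\mathscr{P}_0^{adm}$ and argue that $w\lambda$ generates an extreme ray of the $W$-stable cone $\{\eta\in\got{t} : \langle\eta,\beta\rangle\geq 0\ \forall\beta\in\WT(E)\}$. The inequality $\langle\lambda,\beta\rangle\geq 0$ from Theorem \ref{thm:wellcoveringpairs_C(w,w_0w,m)} places $\lambda$ in that cone; $E$-admissibility yields $\C\lambda=\bigcap_{\beta\in I}\ker\beta$ for some $I\subseteq\WT(E)$, forcing $I$ (and hence the full active set $J=\{\beta\in\WT(E) : \langle\lambda,\beta\rangle=0\}\supseteq I$) to span a hyperplane in $\got{t}^*$. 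The subspace $\bigcap_{\beta\in J}\ker\beta$ is then one-dimensional and equal to $\C\lambda$, which is precisely the extremality condition. Since $\WT(E)$ is $W$-stable, $w\lambda$ also lies on an extreme ray, so $\{\xi\in\CR(-\WT(E)) : \langle w\lambda,\xi\rangle=0\}$ is a facet, and $\langle w\lambda,\xi\rangle\leq 0$ is its defining inequality.

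For the converse, I would start with a facet $F$ of $\CR(-\WT(E))$ and a generator $\mu$ of the extreme ray of the dual cone cutting out $F$. By $W$-stability of $\WT(E)$, some $W$-translate of $-\mu$ is dominant; calling it $\lambda$ (and rescaling to be indivisible), we have $w\lambda=-\mu$ for an appropriate coset $w\in W/W_\lambda$. Since $\mu$ is on an extreme ray, the set of $\beta\in\WT(E)$ orthogonal to $\mu$ spans a hyperplane, hence so does its $w^{-1}$-translate, allowing one to extract $I\subseteq\WT(E)$ with $\bigcap_{\beta\in I}\ker\beta=\C\lambda$. Thus $\lambda$ is dominant, indivisible, and $E$-admissible. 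As noted, $C(w,w_0w,0)\neq\emptyset$, so Theorem \ref{thm:wellcoveringpairs_C(w,w_0w,m)} gives $(C(w,w_0w,0),\lambda)\in\mathscr{P}_0^{adm}$, and this pair cuts out the facet $F$.

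Finally, the inclusion $\Delta_K(K\cdot\Lambda\times E)\subseteq\Lambda+\CR(-\WT(E))$ drops out of Corollary \ref{cor:generalequations_DeltaKxK(T*KxE)}: the pairs of the form $(C(w,w_0w,0),\lambda)\in\mathscr{P}_0^{adm}$ contribute $\langle w\lambda,\xi\rangle\leq\langle w_0(w_0w)\lambda,\Lambda\rangle=\langle w\lambda,\Lambda\rangle$, equivalently $\langle w\lambda,\xi-\Lambda\rangle\leq 0$, and by the first two parts these inequalities are exactly the facet inequalities of $\CR(-\WT(E))$. The step I expect to be most delicate is the converse direction: one must verify carefully that the Weyl-group representative chosen to make $\lambda$ dominant produces a coset $w\in W/W_\lambda$ with the correct orientation, and that every extreme ray of the dual cone coming from a facet really admits a subset $I\subseteq\WT(E)$ with $\bigcap_{\beta\in I}\ker\beta=\C\lambda$, so that $E$-admissibility is genuinely achieved rather than being a strictly stronger requirement.
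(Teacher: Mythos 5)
Your proposal is correct and follows essentially the same route as the paper's proof: both directions rest on Theorem \ref{thm:wellcoveringpairs_C(w,w_0w,m)} (validity of $\langle\lambda,\beta\rangle\geq 0$ for all $\beta\in\WT(E)$ and the automatic well-covering-ness of $(C(w,w_0w,0),\lambda)$ once $C(w,w_0w,0)\neq\emptyset$), on the equivalence between $E$-admissibility of $\lambda$ and the existence of $\dim\got{t}-1$ linearly independent weights vanishing on $\lambda$, together with $W$-invariance of $\WT(E)$, and on Corollary \ref{cor:generalequations_DeltaKxK(T*KxE)} for the final inclusion. The only cosmetic difference is that you phrase facet-ness via extreme rays of the dual cone (using full-dimensionality from $\ker\zeta$ finite), whereas the paper directly exhibits the linearly independent generators of the face; the converse step you flag as delicate is resolved exactly as you anticipate.
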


\begin{proof}
Fix $(C(w,w_0w,0),\lambda)\in\mathscr{P}_0^{adm}$. Since $\lambda$ is $E$-admissible, there exists a family $(\beta_{i_1},\ldots,\beta_{i_{n-1}})$ of $n-1$ linearly independent weights in $\got{t}_{\C}^*$, such that $\C\lambda = \cap_{j=1}^{n-1}\ker\beta_{i_j}$. Moreover, $\langle\lambda,\beta\rangle \geq 0$ for all $\beta\in\WT(E)$, by Theorem \ref{thm:wellcoveringpairs_C(w,w_0w,m)}. The Weyl group fixes the set of weights of $\TC$ in $E$, so every weight $\beta\in\WT(E)$ also satisfies $\langle w\lambda,-\beta\rangle \leq 0$. But we also have $\langle w\lambda,-w\beta_{i_k}\rangle = 0$ for all $k=1,\ldots,n-1$. Consequently, the equation $\langle w\lambda,\cdot\rangle \leq 0$ defines a codimension one face of $\CR(-\WT(E))$.

Conversely, let $\mathcal{F}$ be a codimension one face of $\CR(-\WT(E))$. This polyhedral cone is rational, thus there exists an indivisible one parameter subgroup $\lambda_{\mathcal{F}}$ such that $\langle \lambda_{\mathcal{F}},-\beta\rangle \leq 0$ for all $\beta\in\WT(E)$, and $\langle \lambda_{\mathcal{F}},-\beta'_{i_k}\rangle = 0$ for some linearly independent weights $\beta'_{i_1},\ldots,\beta'_{i_{n-1}}$. There exists an element $w\in W$ such that $\lambda = w^{-1}\lambda_{\mathcal{F}}$ is dominant. Moreover, since the vectors $\beta'_{i_k}$, for $k\in\{1,\ldots,n-1\}$, are linearly independent, necessarily we have $\C(w^{-1}\lambda_{\mathcal{F}}) = \cap_{j=1}^{n-1}\ker(w^{-1}\beta'_{i_j})$, and then $w^{-1}\lambda_{\mathcal{F}}$ is a dominant indivisible $E$-admissible one parameter subgroup of $\TC$. By Theorem \ref{thm:wellcoveringpairs_C(w,w_0w,m)}, the pair $(C(w,w_0w,0),w^{-1}\lambda_{\mathcal{F}})$ is well covering, so is in $\mathscr{P}_0^{adm}$. And, evidently, $\langle w(w^{-1}\lambda_{\mathcal{F}}),\cdot\rangle = \langle \lambda_{\mathcal{F}},\cdot\rangle = 0$ is the equation of $\mathcal{F}$.

Moreover, if $x\in\Delta_K(K\cdot\Lambda\times E)$, by Corollary \ref{cor:generalequations_DeltaKxK(T*KxE)}, this element of $\got{t}^*_+$ will satisfy the affine equation
\[
\langle w_{\mathcal{F}}(w_{\mathcal{F}}^{-1}\lambda_{\mathcal{F}}),x-\Lambda\rangle = \langle \lambda_{\mathcal{F}},x-\Lambda\rangle \leq 0,
\]
since the pair $(C(w,w_0w,0),w^{-1}\lambda_{\mathcal{F}})$ is in $\mathscr{P}_0^{adm}$, and here $w'=w_0w$.

This is true for any codimension one face of $\CR(-\WT(E))$, then we have $x -\Lambda \in\CR(-\WT(E))$, and $\Delta_K(K\cdot\Lambda\times E)\subset \Lambda + \CR(-\WT(E))$.
\end{proof}



\section{Well covering pairs of $\KC/B\times \KC/B\times\mathbb{P}(M)$}
\label{section:wellcoveringpairs}

In section \ref{subsection:redgp_git_wellcoveringpairs}, we saw that we have a description of the semiample cone of a smooth projective variety $X$ in terms of linear inequations indexed by well covering pairs of $X$.

In general, it is difficult to determine the set of all well covering pairs of some projective variety. However, we study a particular variety with a very interesting form. In \cite{ressayre10}, Ressayre gives a necessary and sufficient condition on $(C,\lambda)$ to be a well covering pair, in the case of a variety of the form $Y=\KC/Q\times \hKC/\wh{Q}$, where $\KC$ is a connected reductive subgroup of a connected complex reductive group $\hKC$, and $Q$ (resp. $\wh{Q}$) a parabolic subgroup of $\KC$ (resp. $\hKC$). Our variety $X_M=\KC/B\times \KC/B\times \mathbb{P}(M)$ is an example of such variety, when $M$ is a finite dimensional $\KC$-module.

This section is dedicated to the computation of the set of well covering pairs of the projective variety $X_M$.

\subsection{The main criterion}
\label{subsection:main_criterion}

We use the same notations from subsection \ref{subsection:notations_preliminaries_GIT}, for groups $\TC\subset B\subset \KC$. Let $\zeta:\KC\rightarrow GL(M)$ be a complex representation of $\KC$, with $r:=\dim_{\C}M$. We denote by $\got{t}_{\C}$, $\got{b}$ and $\got{k}_{\C}$ the respective Lie algebras. Fix a dominant one parameter subgroup $\lambda$ of $\TC$. Denote by $P(\lambda)$ the parabolic subgroup of $\KC$ corresponding to $\lambda$. It is standard since $\lambda$ is dominant.

Let $P$ be any parabolic subgroup of $\KC$ containing $B$, and $W_P$ the Weyl group of its Levi subgroup. For any $w\in W/W_P$, the set $\overline{BwP/P}$ is called the Schubert variety corresponding to $w$. The fundamental classes of the Schubert varieties form a basis of the free $\Z$-module $\homol{\KC/P}$, and we define $\{\sigma_{w}^{P}\ | \ w\in W/W_P\}$ to be the dual basis in $\coh{\KC/P}$. We denote by $\pt := \sigma_{w_0}^{P}$ the class of the point, that is, the generator of $\mathrm{H}^{2\dim_{\C}(\KC/P)}(\KC/P,\Z)$.

From now on, we consider $P=P(\lambda)$. Let $W_{\lambda}=W_{P(\lambda)}$ be the Weyl group of $\KC^{\lambda}$, and let $W^{\lambda}$ denote the set of maximal length representatives of $W/W_{\lambda}$. We define the map $\jmath : gB\in \KC/B \mapsto gP(\lambda)\in \KC/P(\lambda)$, and the induced map
\[
\jmath^*: \coh{\KC/P(\lambda)}\longrightarrow\coh{\KC/B}.
\]
Since $\jmath$ is surjective, the ring homomorphism $\jmath^*$ is injective. It is a well-known fact that, for all $w$ in $W^{\lambda}$, we have $\jmath^*(\sigma_{w_0w}^{P(\lambda)}) = \sigma_{w_0w}^B$ in $\coh{\KC/B}$, since $w_0w$ is the shortest element of $w_0wW_{\lambda}$. In particular, we have $\jmath^*(\pt) = \sigma_{w_0w_{\lambda}}^B$, where $w_{\lambda}$ is the longest element in $W_{\lambda}$. See \cite{brion_flagvar} for more details.

We recall that for all $k\in\Z$, the subspace $M_{\lambda,k}$ is defined by $M_{\lambda,k} := \{v\in M\ | \ \lambda(t)\cdot v = t^k v, \forall t\in\C^*\}$. For any $m\in\Z$, we define the subspaces
\[
M_{< m} := \bigoplus_{k <m} M_{\lambda,k}, \qquad \text{and} \qquad M_{\geq m} := \bigoplus_{k\geq m} M_{\lambda,k}.
\]
We notice that the set $\WT(M_{<m})$ is equal to the set of weights $\beta$ of $M$ such that $\langle\lambda,\beta\rangle<m$. For all $\beta\in\WT(M)$, we denote by $n_{\beta}$ the multiplicity of the weight $\beta$ on $M$, that is, $n_{\beta} := \dim_{\C}M_{\beta}$, where $M_{\beta}$ is the weight space of $M$ with weight $\beta$.

Let $\Theta : \wedge^*\rightarrow\mathrm{H}^2(\KC/B,\Z)$ be the morphism that sends a weight $\mu\in\wedge^*$ of $\TC$, onto the first Chern class $\Theta(\mu) = c_1(\mathcal{L}_{\mu})$, of the line bundle $\mathcal{L}_{\mu}$ with weight $\mu$.

We denote by $\rho$ the half sum of the positive roots of $\got{k}_{\C}$.

\renewcommand{\theenumi}{\textit{\roman{enumi}}}

\begin{thm}
\label{thm:nsc_wellcoveringpair}
Let $(w,w',m)\in W^{\lambda}\times W^{\lambda}\times \Z$ such that $C(w,w',m)$ is nonempty. The pair $(C(w,w',m),\lambda)$ of $X_M$ is well covering if and only if, either $w' = w_0ww_{\lambda}$ and $M_{<m}=0$, or the following assertions are satisfied,
\begin{enumerate}
\item $\sigma_{w_0w}^B\,.\,\sigma_{w_0w'}^B\,.\,\prod_{\beta\in\WT(M_{<m})}\Theta(-\beta)^{n_{\beta}} = \jmath^*(\pt)$,
\item $\langle w\lambda+w'\lambda,\rho\rangle+\sum_{k<m}(m-k)\dim_{\C}(M_{\lambda,k}) = 0$.
\end{enumerate}
\end{thm}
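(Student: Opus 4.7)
The plan is to apply Ressayre's general well-covering criterion \cite{ressayre10} for smooth projective varieties of the form $\KC/Q \times \hKC/\wh{Q}$, specialized to $X_M = \KC/B\times\KC/B\times\mathbb{P}(M)$ (which fits this framework by viewing $\mathbb{P}(M)$ as a $GL_\C(M)$-flag variety acted on by $\KC$ through $\zeta$), combined with an explicit Bialynicki-Birula description of $C^+$. First I would observe that, since $\lambda$ is dominant,
\[
C(w,w',m)^+ = P(\lambda)w^{-1}B/B \times P(\lambda)w'^{-1}B/B \times (C_m)^+,
\]
where $(C_m)^+$ is the $\lambda$-attracting open locus of $\mathbb{P}(M_{\lambda,m})$ in $\mathbb{P}(M)$, namely the set of $[v]\in\mathbb{P}(M_{\geq m})$ with nonzero $M_{\lambda,m}$-component. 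Fix a generic base point $x_0 = (w^{-1}B/B,w'^{-1}B/B,[v_m]) \in C(w,w',m)$. Ressayre's criterion then reduces well-coveringness of $(C(w,w',m),\lambda)$ to two conditions at $x_0$: (a) a $\lambda$-trace identity between $\got{k}_\C/\mathrm{Lie}(P(\lambda))$ and the negative-weight part of $T_{x_0}X_M$, and (b) a cohomological degree-$1$ condition on the class of $\overline{\eta(\KC\times_{P(\lambda)}C^+)}$ in $\coh{X_M}$.

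The plan for (a) is to compute the $\lambda$-trace on each side explicitly. On the source, $\got{k}_\C/\mathrm{Lie}(P(\lambda))$ has $\lambda$-trace equal to $-\langle\lambda,2\rho\rangle$ (summing over negative roots with nonzero $\lambda$-pairing, which by dominance are exactly the negatives of positive roots nontrivial on $\lambda$). On the target, the negative-weight piece of $T_{x_0}X_M$ splits into contributions from the two flag factors (whose $\lambda$-traces are controlled by $\langle w\lambda,\rho\rangle$ and $\langle w'\lambda,\rho\rangle$ via the standard root-space description of tangent spaces at $w^{-1}B/B$ and $w'^{-1}B/B$) and from $T_{[v_m]}\mathbb{P}(M) \cong M/\C v_m$, which contributes $-\sum_{k<m}(m-k)\dim M_{\lambda,k}$ through its weight decomposition. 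Matching the two traces and rearranging gives exactly the identity in (ii).

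For (b) I would carry out a Schubert/Chern-class computation. The class of $\overline{\eta(\KC\times_{P(\lambda)}C^+)}$ in $\coh{X_M}$ is the K\"unneth product $\sigma_{w_0w}^B \otimes \sigma_{w_0w'}^B \otimes c$, where $c = [\mathbb{P}(M_{\geq m})]\in\coh{\mathbb{P}(M)}$ carries a canonical $\TC$-equivariant refinement. By the self-intersection formula for the regular embedding $\mathbb{P}(M_{\geq m})\hookrightarrow\mathbb{P}(M)$, this equivariant class is the top Chern class of the normal bundle, whose $\TC$-character is $\bigoplus_{\beta\in\WT(M_{<m})}\C_\beta^{\oplus n_\beta}$. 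Pulling back to $\KC/B$ via the $\KC$-equivariant structure (equivalently via $\jmath^*$) yields $c = \prod_{\beta\in\WT(M_{<m})}\Theta(-\beta)^{n_\beta}$. The degree-$1$ intersection condition in $\coh{X_M}$, reformulated after projecting down to $\coh{\KC/P(\lambda)}$ via $\jmath^*$, becomes exactly the equality in (i).

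Finally I would treat the first alternative $w'=w_0ww_\lambda$ with $M_{<m}=0$ as a subcase of (i)-(ii): when $M_{<m}=0$ the Chern product in (i) is empty, and the equation reduces via the injectivity of $\jmath^*$ and Poincar\'e duality in $\coh{\KC/P(\lambda)}$ to the sole relation $w'=w_0ww_\lambda$; condition (ii) then becomes $\langle w\lambda+w'\lambda,\rho\rangle=0$, which is automatic from $w_\lambda\lambda=\lambda$ and $w_0\rho=-\rho$. The hard part will be step (b), specifically identifying the $\KC$-equivariant Chern class of $\mathbb{P}(M_{\geq m})\hookrightarrow\mathbb{P}(M)$ as $\prod_\beta\Theta(-\beta)^{n_\beta}$ after pullback to $\KC/B$, and verifying that its intersection with the two Schubert classes in $\coh{X_M}$ produces $\jmath^*(\pt)$ exactly when $\eta$ has degree one --- essentially an equivariant Chern-root computation combined with the projection formula for $\jmath$.
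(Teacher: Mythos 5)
Your overall strategy coincides with the paper's: both reduce to Ressayre's criterion for well covering pairs on $\KC/Q\times\hKC/\wh{Q}$ (Theorem \ref{thm:nsc_pair_is_wellcovering_generalcase}, i.e.\ \cite[Proposition 11]{ressayre10}), with condition (\emph{ii}) coming from the numerical/trace identity and condition (\emph{i}) from the cohomological one. Your treatment of (\emph{ii}) is essentially the paper's computation in disguise: the $\lambda$-trace of the negative-weight part of $T_{[v_m]}\mathbb{P}(M)\cong\mathrm{Hom}(\C v_m,M/\C v_m)$ is exactly the quantity $-\sum_{k<m}(m-k)\dim_{\C}M_{\lambda,k}$ that the paper extracts via Lemma \ref{lem:gammahat}, and the degenerate case $w'=w_0ww_{\lambda}$, $M_{<m}=0$ is handled the same way (Lemma \ref{lem:equation_lambda_extremalcase}).

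The gap is in your derivation of (\emph{i}). You identify the relevant class as the ($\TC$-equivariant) top Chern class of the normal bundle of $\mathbb{P}(M_{\geq m})\hookrightarrow\mathbb{P}(M)$ and assert its character is $\bigoplus_{\beta\in\WT(M_{<m})}\C_{\beta}^{\oplus n_{\beta}}$. That normal bundle is $\mathcal{O}(1)|_{\mathbb{P}(M_{\geq m})}\otimes(M/M_{\geq m})$; at a $\TC$-fixed point $[v]$ with $v$ of weight $\beta_0$ its weights are $\beta-\beta_0$, not $\beta$, so the self-intersection route does not produce $\prod_{\beta}\Theta(-\beta)^{n_{\beta}}$. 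Moreover ``pulling back to $\KC/B$ via the $\KC$-equivariant structure (equivalently via $\jmath^*$)'' is not a defined operation: there is no map $\KC/B\to\mathbb{P}(M)$, and $\jmath$ is the projection $\KC/B\to\KC/P(\lambda)$. The class you need lives on $\KC/P(\lambda)$: it is the class of the incidence locus $\{gP(\lambda)\ |\ v\in gM_{\geq m}\}$ for generic $v\in M$, i.e.\ the Euler class of the homogeneous bundle $\KC\times_{P(\lambda)}(M/M_{\geq m})$ (no $\mathcal{O}(1)$-twist), whose pullback to $\KC/B$ is indeed $\prod_{\beta\in\WT(M_{<m})}\Theta(-\beta)^{n_{\beta}}$ since $\KC\times_B\C_{\beta}=\mathcal{L}_{-\beta}$. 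The paper instead reaches this identity purely inside Schubert calculus: it realizes $\mathbb{P}(M)$ as $GL_r(\C)/\wh{Q}$ with a basis adapted to the $\lambda$-grading, shows that $C_m$ corresponds to the double coset of $\hat{w}_{\wh{Q}}\hat{w}_N$ with $N=\dim_{\C}M_{\geq m}$ (Lemma \ref{lem:C_m_and_wQwN} --- a step you also need, since Ressayre's criterion requires the longest coset representatives), factors the Schubert class $\sigma^{\wh{B}}_{s_{r-1}\cdots s_N}$ into degree-two classes via the Chevalley formula (Theorem \ref{thm:formume_sigmawcheckk}), and pulls back along $f_{\lambda}^B:\KC/B\to\hKC/\wh{B}$, using $\sum_{\beta\in\WT(M)}\beta|_{\got{t}_{ss}}=0$ to close the computation. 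So your final formulas are correct, but the cohomological step needs either the incidence-bundle argument or the paper's $GL_r$ combinatorics to be sound.
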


We postpone the proof of Theorem \ref{thm:nsc_wellcoveringpair} to subsection \ref{subsection:proof_thm_wellcoveringpair}. In the above statement, the case $w' = w_0ww_{\lambda}$ and $M_{<m}=0$ is a specific case of ($i$) and ($ii$), if we use the convention that a product of an empty set of elements is equal to $\sigma_{\id}^B= 1$.

We finish this subsection with a corollary of Theorem \ref{thm:nsc_wellcoveringpair}, which gives a simple necessary condition with some pair $(C(w,w',m),\lambda)$ being well covering, by checking the lengths of $w$ and $w'$.

Let $(C(w,w',m),\lambda)$ be a well covering pair. By Theorem \ref{thm:nsc_wellcoveringpair}, we must have $\sigma_{w_0w}^B\,.\,\sigma_{w_0w'}^B\,.\,\prod_{\beta\in\WT(M_{<m})}\Theta(-\beta)^{n_{\beta}} = \jmath^*(\pt)$, so we have a necessary condition in terms of degree. Indeed, this equation implies that
\[
l(w_0w) + l(w_0w') + \dim_{\C}(M_{<m}) = \dim_{\C}(\KC/P(\lambda)).
\]
But, for all $u\in W$, we have $l(w_0u) = l(w_0)-l(u)$, because $w_0$ is the longest element in $W$. Thus, the previous equation is equivalent to
\[
l(w) + l(w') = \dim_{\C}(M_{<m}) + 2l(w_0) - \dim_{\C}(\KC/P(\lambda)),
\]
that we can also write: $l(w) + l(w') = l(w_0) + l(w_{\lambda}) + \dim_{\C}(M_{<m})$.

\begin{cor}
\label{cor:info_lengths_elements_for_wellcoveringpair}
Let $(w,w',m)\in W^{\lambda} \times W^{\lambda} \times \Z$ such that $(C(w,w',m),\lambda)$ is a well covering pair. Then
\begin{equation}
\label{eq:equality_lengths}
l(w) + l(w') = l(w_0) + l(w_{\lambda}) + \dim_{\C}(M_{<m}).
\end{equation}
In particular, we have $M_{<m} = 0$ if and only if $w' = w_0ww_{\lambda}$.
\end{cor}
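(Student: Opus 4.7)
My plan is to formalize the sketch given in the paragraph preceding the statement. First I would apply condition~(i) of Theorem~\ref{thm:nsc_wellcoveringpair} (with the convention that an empty product equals~$1$) to obtain the identity
\[
\sigma_{w_0 w}^B \cdot \sigma_{w_0 w'}^B \cdot \prod_{\beta \in \WT(M_{<m})} \Theta(-\beta)^{n_\beta} = \jmath^*(\pt) = \sigma_{w_0 w_\lambda}^B
\]
in the graded ring $\coh{\KC/B}$. Matching degrees on both sides, using $\deg\sigma_u^B = 2\,l(u)$, $\deg\Theta(\mu) = 2$, and $\sum_{\beta} n_\beta = \dim_{\C} M_{<m}$, yields $l(w_0 w) + l(w_0 w') + \dim_{\C} M_{<m} = l(w_0 w_\lambda)$. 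Since $w_0$ is the longest element of $W$, we have $l(w_0 v) = l(w_0) - l(v)$ for every $v\in W$; applying this to $v = w, w', w_\lambda$ and simplifying produces identity~\eqref{eq:equality_lengths}.

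For the ``in particular'' clause, I would treat the two implications separately. The direction $w' = w_0 w w_\lambda \Rightarrow M_{<m} = 0$ is a length computation: since $w\in W^\lambda$ is the maximal length representative of its $W_\lambda$-coset, write $w = u w_\lambda$ with $u$ the minimal length representative, so that $l(w) = l(u) + l(w_\lambda)$. Using $w_\lambda^2 = 1$ (valid because any longest element of a finite Coxeter group is an involution), $w_0 w w_\lambda$ simplifies to $w_0 u$, and $l(w_0 w w_\lambda) = l(w_0) - l(u) = l(w_0) + l(w_\lambda) - l(w)$. Plugging $w' = w_0 w w_\lambda$ into~\eqref{eq:equality_lengths} then forces $\dim_{\C} M_{<m} = 0$.

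The converse direction is where the real content lies. Assuming $M_{<m}=0$, the cohomological identity above collapses to $\sigma_{w_0 w}^B \cdot \sigma_{w_0 w'}^B = \sigma_{w_0 w_\lambda}^B = \jmath^*(\pt)$. Since $w, w', w_\lambda \in W^\lambda$, each of $w_0 w$, $w_0 w'$, and $w_0 w_\lambda$ is the shortest element of its right $W_\lambda$-coset, so every Schubert class appearing above lies in the image of the injective ring morphism $\jmath^*:\coh{\KC/P(\lambda)} \to \coh{\KC/B}$. Pulling back yields $\sigma_{w_0 w}^{P(\lambda)} \cdot \sigma_{w_0 w'}^{P(\lambda)} = \pt$ in $\coh{\KC/P(\lambda)}$, and Poincar\'e duality on $\KC/P(\lambda)$ determines $\sigma_{w_0 w'}^{P(\lambda)}$ uniquely as the class Poincar\'e dual to $\sigma_{w_0 w}^{P(\lambda)}$. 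The computation of the previous paragraph identifies $w_0 w w_\lambda$ as a representative of that dual coset, and since $w'$ is the unique element of $W^\lambda$ in its $W_\lambda$-coset, $w' = w_0 w w_\lambda$ follows. The main obstacle is precisely this converse: length equality alone does not pin down $w'$, and one genuinely needs the full cohomological identity~(i) together with Poincar\'e duality on $\KC/P(\lambda)$ and the injectivity of $\jmath^*$.
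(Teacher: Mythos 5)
Your proposal is correct and follows essentially the same route as the paper: the length identity comes from the same degree count on condition \emph{(i)} of Theorem \ref{thm:nsc_wellcoveringpair}, and both directions of the equivalence reduce to the same length computation plus the Poincar\'e duality statement of Lemma \ref{lem:product_twoschubertclasses_inG/P_isequalto_ptclass}. The only difference is presentational: where the paper disposes of the implication $M_{<m}=0\Rightarrow w'=w_0ww_{\lambda}$ by citing the proof of Theorem \ref{thm:nsc_wellcoveringpair} (which encodes exactly this duality), you unwind that citation explicitly via the injectivity of $\jmath^*$, which is a legitimate and slightly more self-contained rendering of the same argument.
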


\begin{proof}
It remains to prove the last assertion. From Remark \ref{rem:N_and_M<m} in subsection \ref{subsection:parametrization} and the proof of Theorem \ref{thm:nsc_wellcoveringpair}, if $M_{<m} = 0$, then $w' = w_0ww_{\lambda}$, and a simple verification yields
\begin{equation}
\label{eq:equality_lengths_casew'=w_0w}
l(w) + l(w') = l(w_0) + l(w_{\lambda}).
\end{equation}
Here, we use the fact that $l(w'w_{\lambda}) = l(w') - l(w_{\lambda})$, since $w'$ is the longest element of $w'W_{\lambda}$.

Conversely, if $w'=w_0ww_{\lambda}$, then, equations \eqref{eq:equality_lengths} and \eqref{eq:equality_lengths_casew'=w_0w} are satisfied, thus
$\dim_{\C}(M_{<m})=0$, and finally $M_{<m}=0$.
\end{proof}

\begin{rem}
The formula \ref{eq:equality_lengths} is also true for any covering pair $(C(w,w',m),\lambda)$ of $X_M$, because the condition $\sigma_{w_0w}^B\,.\,\sigma_{w_0w'}^B\,.\,\prod_{\beta\in\WT(M_{<m})}\Theta(-\beta)^{n_{\beta}} = \jmath^*(\pt)$ is valid for any such pair (actually, it is a necessary and sufficient condition, see Proposition \ref{prop:cohomological_criterion_dominant/covering_pairs}).
\end{rem}

\subsection{Notations and parametrization}
\label{subsection:parametrization}

In order to prove Theorem \ref{thm:nsc_wellcoveringpair}, we have to choose a good identification of $\mathbb{P}(M)$ with some flag variety $\hKC/\wh{Q}$, defining a morphism $f:\KC\rightarrow\hKC$ and a maximal torus $\hTC$ such that $f(\TC) \subset\hTC$ and $\hTC\subset\wh{Q}$. The choice of such identification leads us to parametrize our vector space $M$.

Recall that we have fixed a dominant one parameter subgroup $\lambda$ of $\TC$. We denote by $\hat{P}(\lambda)$ the parabolic subgroup of $GL_{\C}(M)$ associated to the one parameter subgroup $\zeta\circ\lambda$. Then, clearly we have the inclusions $\zeta(B)\subseteq\zeta(P(\lambda))\subseteq\hat{P}(\lambda)$. Thus, there exists a Borel subgroup $\hat{B}$ of $\hat{P}(\lambda)$ containing $\zeta(B)$. The group $\hat{B}$ is also a Borel subgroup of $GL_{\C}(M)$. Let $\hat{T}_{\C}$ a maximal torus of $\hat{B}$ containing $\zeta(T_{\C})$.

Since $\zeta(\lambda(\C^*))\subseteq\hat{T}_{\C}$, for each $k\in\Z$, the subspace $M_{\lambda,k}$ is a $\hat{T}_{\C}$-submodule. Now, list the weights of $\lambda$ on $M$ in decreasing order $\{k_1 > k_2 > \cdots > k_s \} = \{k\in\Z \, | \, M_{\lambda,k}\neq 0\}$, and define the flag $0\subsetneq V_1 \subseteq \cdots \subseteq V_s = M$, where $V_i = \bigoplus_{j=1}^i M_{\lambda,k_j}$. One can see that this flag is $\hat{P}(\lambda)$-stable, hence $\hat{B}$-stable. But $\hat{B}$ is a Borel group, so there exists a $\hat{B}$-stable complete flag $0\subsetneq V'_1 \subseteq \cdots \subseteq V'_r = M$ such that $V'_{\dim V_1+\cdots+\dim V_i} = V_i$, for all $i=1,\ldots,s$. Now, take a basis $\mathcal{B}_{\lambda}=(u_1,\ldots,u_r)$ whose elements are common eigenvectors of the action of $\hTC$ on $M$ such that
\[
\mathrm{Vect}(u_1,\ldots,u_i) = V'_i, \quad \forall i=1,\ldots,r.
\]
For all $i\in\{1,\ldots,r\}$, let $\beta_i$ be the weight of $\WT(M)$ such that $u_i\in M_{\beta_i}$. From the definition of the complete flag $0\subsetneq V'_1 \subseteq \cdots \subseteq V'_r = M$, we must have $\langle\lambda,\beta_i\rangle \geq \langle\lambda,\beta_{i+1}\rangle$, for all $i\in\{1,\ldots,r-1\}$.

With this choice of basis of $M$, we can identify $\mathbb{P}(M)$ with the flag variety $\hKC/\wh{Q}$, where $\hKC=GL_r(\C)$ and $\wh{Q}$ is the stabilizer in $\hKC$ of the line $\C u_1$, that is, the maximal parabolic subgroup
\begin{equation}
\label{eq:parabolicsubgroup_wh{Q}}
\wh{Q} = \left(\begin{array}{c|ccc}
* & * & \ldots & * \\\hline
0 & * & \ldots & * \\
\vdots & \vdots & \ddots & \vdots \\
0 & * & \ldots & *
\end{array}\right).
\end{equation}
This identification is canonically defined by the map $\hat{g}\wh{Q}\in \hKC/\wh{Q}\mapsto[\hat{g}\cdot u_1]\in\mathbb{P}(M)$. This map is $\KC$-equivariant if we equip $\hKC/\wh{Q}$ with the action induced by the group homomorphism
\[
\begin{array}{rccl}
f_{\lambda} : & \KC & \longrightarrow & \hKC=GL_r(\C) \\
& g & \longmapsto & \Mat_{\mathcal{B}_{\lambda}}\left(\rho(g)\right).
\end{array}
\]

Clearly, $\hTC$ is identified to the maximal torus of the diagonal matrices of $\hKC$, and $\wh{B}$ the Borel subgroup of the upper triangular matrices of $\hKC$. By definition of $\hat{Q}$, we have $\hTC\subset\wh{B}\subset\wh{Q}$

From now on, we identify the $\KC$-variety $X_M$ with the product of flag varieties $\KC/B\times \KC/B\times\hKC/\wh{Q}$, endowed with the induced action of $\KC$. Following this identification, we have another simple and complete description of irreducible components of the projective variety $X_M^{\lambda}$. Indeed, we have
\[
X_M^{\lambda} = \bigcup_{\stackrel{w,w'\in W/W_{\lambda}}{\hat{w}\in\wh{W}_{\wh{Q}}\backslash \wh{W}/\wh{W}_{\lambda}}} \KC^{\lambda}w^{-1}B/B\times \KC^{\lambda}w'^{-1}B/B\times \hKC^{\lambda}\hat{w}^{-1}\wh{Q}/\wh{Q},
\]
where $\wh{W}$ (resp. $\wh{W}_{\lambda}$, resp. $\wh{W}_{\wh{Q}}$) is the Weyl group of $\hKC$ (resp. of the Levi $\hKC^{\lambda}$ of $\wh{P}(\lambda)$, resp. of the Levi of $\wh{Q}$).

For $i=1,\ldots,r-1$, let $s_i$ denote the standard simple permutation endomorphisms associated to the canonical basis of $\C^r$. That is, $s_i(u_i)=u_{i+1}$, $s_i(u_{i+1})=u_i$, and $s_i(u_k)=u_k$ if $k\notin\{i,i+1\}$. We define the elements $\hat{w}_k = s_1\circ\ldots\circ s_{k-1}$, for $k=2,\ldots r$, and $\hat{w}_1 = \id$, of $\wh{W}$.

\begin{lem}
\label{lem:C_m_and_wQwN}
Let $N=\dim_{\C}(M_{\geq m})$. Then $\mathbb{P}(M_{\lambda,m}) = \hKC^{\lambda}\hat{w}_N^{-1}\wh{Q}/\wh{Q}$, and $\hat{w}_{\wh{Q}}\hat{w}_N$ is the longest element of the class $\wh{W}_{\wh{Q}}\hat{w}_N\wh{W}_{\lambda}$ in $\wh{W}_{\wh{Q}}\backslash\wh{W}/\wh{W}_{\lambda}$.
\end{lem}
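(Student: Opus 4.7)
The plan is to prove both assertions by direct computation, exploiting the identification $\mathbb{P}(M)\cong\hKC/\wh{Q}$ given by $\hat{g}\wh{Q}\mapsto[\hat{g}\cdot u_1]$ together with the ordering of the basis $\mathcal{B}_\lambda$.

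For the first assertion, I would compute $\hat{w}_N^{-1}\cdot u_1$ explicitly. Writing $\hat{w}_N^{-1}=s_{N-1}\cdots s_2 s_1$ and iterating the rule $s_i(u_i)=u_{i+1}$ gives $\hat{w}_N^{-1}\cdot u_1=u_N$. By construction of $\mathcal{B}_\lambda$, the vectors $u_1,\ldots,u_N$ form a basis of $M_{\geq m}$, and $u_N$ lies in the weight space $M_{\lambda,m}$ (it is the last basis vector of the $\lambda$-weight-$m$ block). Under the identification, $\hat{w}_N^{-1}\wh{Q}/\wh{Q}$ corresponds to $[u_N]$, and $\hKC^{\lambda}$ acts on $[u_N]$ through its direct factor $GL_{\C}(M_{\lambda,m})$, which is transitive on $\mathbb{P}(M_{\lambda,m})$, while the remaining factors fix $[u_N]$. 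Hence $\hKC^{\lambda}\cdot\hat{w}_N^{-1}\wh{Q}/\wh{Q}=\mathbb{P}(M_{\lambda,m})$.

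For the second assertion, I would first describe the double coset $\wh{W}_{\wh{Q}}\hat{w}_N\wh{W}_{\lambda}$ combinatorially. Identifying $\wh{W}=\got{S}_r$, the subgroup $\wh{W}_{\wh{Q}}$ is the stabilizer of $1$, while $\wh{W}_{\lambda}$ is the product of symmetric groups on the blocks $I_1,\ldots,I_s$ of constant $\lambda$-weight. Any element $\sigma=\tau\hat{w}_N\pi$ of the double coset satisfies $\sigma^{-1}(1)=\pi^{-1}(N)$, which lies in the block $I_j$ containing $N$. Conversely, for any $\sigma\in\got{S}_r$ with $\sigma^{-1}(1)\in I_j$, one chooses $\pi\in\wh{W}_{\lambda}$ sending $\sigma^{-1}(1)$ to $N$ and sets $\tau:=\sigma(\hat{w}_N\pi)^{-1}$; then $\tau$ fixes $1$. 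So the double coset is exactly the set of $\sigma\in\got{S}_r$ with $\sigma^{-1}(1)\in I_j$.

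The longest element is then found by an inversion count. With $i_0:=\sigma^{-1}(1)$ fixed, the permutation with maximum number of inversions places the remaining values $\{2,\ldots,r\}$ in decreasing order on $\{1,\ldots,r\}\setminus\{i_0\}$, yielding $\binom{r-1}{2}+(i_0-1)$ inversions. Maximizing over $i_0\in I_j$ forces $i_0=N$, the largest element of $I_j$. The resulting permutation has one-line notation $(r,r-1,\ldots,r-N+2,1,r-N+1,\ldots,2)$, which matches the direct computation of $\hat{w}_{\wh{Q}}\hat{w}_N$. The only delicate step is setting up the combinatorial characterization of the double coset cleanly; the length maximization is then routine bookkeeping.
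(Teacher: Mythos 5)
Your proof is correct. For the first assertion you take essentially the paper's route: both arguments come down to the computation $\hat{w}_N^{-1}\cdot u_1=s_{N-1}\cdots s_1(u_1)=u_N\in M_{\lambda,m}$; the paper invokes the general fact that $\mathbb{P}(M_{\lambda,m})$, being an irreducible component of $\mathbb{P}(M)^{\lambda}$, is of the form $\hKC^{\lambda}\hat{w}^{-1}\wh{Q}/\wh{Q}$ and that this holds precisely when $\hat{w}^{-1}\cdot u_1\in M_{\lambda,m}$, while you check directly that $\hKC^{\lambda}=\prod_k GL_{\C}(M_{\lambda,k})$ acts transitively on $\mathbb{P}(M_{\lambda,m})$ through the factor indexed by $m$ — the same content, made slightly more explicit. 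For the second assertion your route is genuinely different. The paper decomposes the double coset into the right cosets $\wh{W}_{\wh{Q}}\hat{w}_k$ over the indices $k$ with $\langle\lambda,\beta_k\rangle=m$, using that the $\hat{w}_k$ are the shortest representatives of $\wh{W}_{\wh{Q}}\backslash\wh{W}$ (Lemma \ref{lem:shorter_elements_of_W_mod_WQ_right}), and then compares the lengths $l(\hat{w}_{\wh{Q}}\hat{w}_k)=l(\hat{w}_{\wh{Q}})+l(\hat{w}_k)$ to single out $k=N$. You instead identify $\wh{W}\cong\got{S}_r$, characterize the double coset as $\{\sigma\ |\ \sigma^{-1}(1)\in I_j\}$ (your back-and-forth with $\pi$ and $\tau$ is sound), and maximize the inversion number; the count $\binom{r-1}{2}+(i_0-1)$ is right because the pairs involving the position $i_0=\sigma^{-1}(1)$ contribute exactly $i_0-1$ inversions independently of the arrangement of the remaining values, and the resulting one-line permutation $(r,r-1,\ldots,r-N+2,1,r-N+1,\ldots,2)$ does equal $\hat{w}_{\wh{Q}}\hat{w}_N$. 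Your argument is more self-contained (it does not lean on the appendix) and additionally exhibits uniqueness of the length-maximizing element, at the cost of being tied to the symmetric-group model of type $A$; the paper's argument recycles its appendix lemmas and stays within general parabolic-coset combinatorics.
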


\begin{proof}
We know that $\mathbb{P}(M_{\lambda,m})$ is an irreducible component of $\mathbb{P}(M)^{\lambda}$. Hence, $\mathbb{P}(M_{\lambda,m}) = \hKC^{\lambda}\hat{w}^{-1}\wh{Q}/\wh{Q}$, for some $\hat{w}\in\wh{W}_{\wh{Q}}\backslash\wh{W}/\wh{W}_{\lambda}$. Moreover, by the identification $\hat{g}\wh{Q}\in\hKC/\wh{Q}\mapsto[\hat{g}\cdot u_1]\in\mathbb{P}(M)$, we have $\mathbb{P}(M_{\lambda,m}) = \hKC^{\lambda}\hat{w}^{-1}\wh{Q}/\wh{Q}$ if and only if $\hat{w}^{-1}\cdot u_1\in M_{\lambda,m}$ (clearly, this does not depend on the representative of the class $\wh{W}_{\wh{Q}}\hat{w}\wh{W}_{\lambda}$). Hence $\mathbb{P}(M_{\lambda,m}) = \hKC^{\lambda}\hat{w}_N^{-1}\wh{Q}/\wh{Q}$, since $\hat{w}_N^{-1}\cdot u_1 = s_{N-1}\circ\ldots\circ s_1(u_1) = u_N$ is in $M_{\lambda,m}$, by the definitions of $N$ and $\mathcal{B}_{\lambda}$, and the chosen ordering.

The class $\wh{W}_{\wh{Q}}\hat{w}_N\wh{W}_{\lambda}$ is stable by left multiplication by an element of $\wh{W}_{\wh{Q}}$. It is then a disjoint union of classes of $\wh{W}_{\wh{Q}}\backslash\wh{W}$. By Lemma \ref{lem:shorter_elements_of_W_mod_WQ_right}, the set $\{\hat{w}_k\ | \ k=1,\ldots,r\}$ is a system of shorter representatives of the classes of $\wh{W}_{\wh{Q}}\backslash\wh{W}$. 
Thus, $\wh{W}_{\wh{Q}}\hat{w}_N\wh{W}_{\lambda}$ decomposes into a disjoint union of classes defined by left multiplication by $\wh{W}_{\wh{Q}}$, that is,
\[
\wh{W}_{\wh{Q}}\hat{w}\wh{W}_{\lambda} = \bigcup_{k\in\{1,\ldots,r\}, \langle\lambda,\beta_k\rangle=m}\wh{W}_{\wh{Q}}\hat{w}_k.
\]
By its definition, $N$ is the largest integer of the set $\{k\in\{1,\ldots,r\}\ | \langle\lambda,\beta_k\rangle = m\}$. But, for two integers $k<k'$, we have $l(v\hat{w}_k) = l(v) + k < l(v) + k' = l(v\hat{w}_{k'})$, for all $v\in\wh{W}_{\wh{Q}}$, by Lemma \ref{lem:shorter_elements_of_W_mod_WQ_right}. We conclude that $\hat{w}_{\wh{Q}}\hat{w}_N$ is the longest element of $\wh{W}_{\wh{Q}}\hat{w}_N\wh{W}_{\lambda}$.
\end{proof}

\begin{rem}
\label{rem:N_and_M<m}
We notice that $N = \dim_{\C}(M_{\geq m}) = r-\dim_{\C}(M_{<m})$. Thus, $N=r$ if and only if $M_{<m}=0$.
\end{rem}

\medskip
Thus, the irreducible component $C(w,w',m)$ of $X_M^{\lambda}$ is identified with the irreducible component $C(w,w',\hat{w}_{\wh{Q}}\hat{w}_N)$ of $(\KC/B\times \KC/B\times\hKC/\wh{Q})^{\lambda}$.

Let $i_{\lambda}: g\in \KC \mapsto (g,f_{\lambda}(g))\in \KC\times \hKC$ be the injection we are going to study. The induced map $i_{\lambda} : \KC/P(\lambda) \rightarrow \KC/P(\lambda)\times \hKC/\wh{P}(\lambda)$ is a closed immersion. This map induces the map in cohomology $i_{\lambda}^* : \coh{\KC/P(\lambda)\times \hKC/\wh{P}(\lambda)} \rightarrow \coh{\KC/P(\lambda)}$. We also define the map
\[
f_{\lambda}^{P(\lambda)} : gP(\lambda)\in \KC/P(\lambda)\mapsto f_{\lambda}(g)\wh{P}(\lambda)\in\hKC/\wh{P}(\lambda),
\]
and the induced map $(f_{\lambda}^{P(\lambda)})^*:\coh{\hKC/\wh{P}(\lambda)} \rightarrow \coh{G/P(\lambda)}$. We define similarly the maps $f_{\lambda}^B$ and $(f_{\lambda}^B)^*$.

\subsection{Cohomological criterion}
\label{subsection:cohomological_criterion}

Let $\tKC$ be a complex connected reductive group, and $\KC$ a connected reductive subgroup. Let $i:\KC\hookrightarrow\tKC$ denote the embedding of groups. Let us fix a maximal torus $\TC$ (resp. $\tTC$) and a Borel subgroup $B$ (resp. $\wt{B}$) of $\KC$ (resp. $\tKC$) such that $\TC\subseteq B\subseteq\wt{B}\supseteq\tTC\subseteq\TC$. Let $Q$ (resp. $\wt{Q}$) be a parabolic subgroup of $\KC$ (resp. $\tKC$) containing $\TC$ (resp. $\tTC$). We recall that $\rho$ (resp. $\hat{\rho}$, resp. $\tilde{\rho}$) denotes the half sum of positive roots of $\got{k}_{\C}$ (resp. $\hat{\got{k}}_{\C}$, resp. $\tilde{\got{k}}_{\C}$).

\begin{lem}[\cite{ressayre10}, Lemma 14]
\label{lem:nsc_pair_is_covering/dominant_generalcase}
Let $\lambda$ be a dominant one parameter subgroup of $\TC$. Let $(w,\tilde{w})\in W\times\wt{W}$ be such that $w$ (resp. $\tilde{w}$) is the longest element in the class $W_{Q}wW_{\lambda}$ (resp. $\wt{W}_{\wt{Q}}\tilde{w}\wt{W}_{\lambda}$). Then: 
\begin{enumerate}
\item the pair $(C(w,\tilde{w}),\lambda)$ is dominant if and only if $\sigma_{w_0w}^{P(\lambda)}\,.\,i^*\bigl(\sigma_{\tilde{w}_0\tilde{w}}^{\wt{P}(\lambda)}\bigr) \neq 0$,
\item the pair $(C(w,\tilde{w}),\lambda)$ is covering if and only if $\sigma_{w_0w}^{P(\lambda)}\,.\,i^*\bigl(\sigma_{\tilde{w}_0\tilde{w}}^{\wt{P}(\lambda)}\bigr) = \pt$.
\end{enumerate}
\end{lem}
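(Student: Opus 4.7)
The claim is a dimension/intersection computation on the smooth projective variety $X=\KC/Q\times\tKC/\wt Q$. Set $Z:=\overline{\eta(\KC\times_{P(\lambda)}C(w,\tilde w)^+)}$. Since $\eta$ is $\KC$-equivariant and proper, it surjects onto $Z$, so dominance of $\eta$ means $Z=X$, and covering means moreover that the generic fibre of $\eta\colon\KC\times_{P(\lambda)}C(w,\tilde w)^+\to Z$ is a single reduced point. My strategy is to compute $[Z]\in\coh{X}$ by Schubert calculus and then read off (1) and (2) from that class.

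The core step is the identification of $[Z]$. By the Bia\l ynicki--Birula theorem, $C(w,\tilde w)^+$ is an affine bundle over $C(w,\tilde w)$ along the $\lambda$-attracting flow. Under the assumption that $w$ is the longest representative of $W_Q w W_\lambda$, the first projection sends $C(w,\tilde w)^+$ onto the Bruhat cell $Bw^{-1}Q/Q$, so that the closure of its $\KC$-saturate coincides with the Schubert variety $\overline{Bw^{-1}Q/Q}$; analogously for $\tilde w$ on the $\tKC$-side. Hence $Z$ is an irreducible component of the intersection inside $X$ of the preimages of these two Schubert varieties under the factor projections. Taking cohomology classes, pushing down to $P(\lambda)$ and $\wt P(\lambda)$ via $\jmath$ (legitimate precisely because of the longest-representative hypothesis), and pulling the second factor back through $i$, the class $[Z]\in\coh{\KC/P(\lambda)}$ identifies with the Schubert product $\sigma_{w_0w}^{P(\lambda)}\,.\,i^*\bigl(\sigma_{\tilde w_0\tilde w}^{\wt P(\lambda)}\bigr)$.

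Then (1) is immediate: $\eta$ is dominant iff $Z$ has top dimension in $X$, equivalently iff the Schubert product is nonzero. For (2), $\eta$ is covering iff its generic fibre is a single reduced point, equivalently iff the preimages of the two Schubert varieties meet in a single reduced point along a generic $\KC$-translate, which, by Kleiman's transversality applied to the $\KC$-action on $X$, is equivalent to the cohomology class being $\pt$.

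The main obstacle is the Schubert bookkeeping in the identification of $[Z]$: the parabolic reductions, the interplay of $\jmath^*$ with the longest-representative choice in both $W_QwW_\lambda$ and $\wt W_{\wt Q}\tilde w\wt W_\lambda$, and the compatibility of the product decomposition with the diagonal embedding $i\colon\KC\hookrightarrow\tKC$ must all line up without introducing spurious multiplicities. The subsidiary transversality point I would handle by inspecting the $\lambda$-weights on $T_{C(w,\tilde w)^+}$ and its normal bundle in $X$: the former are non-negative and the latter strictly positive, so $\eta$ is \'etale on a Zariski-open subset of $\KC\times_{P(\lambda)}C(w,\tilde w)^+$, which guarantees that the topological degree of $\eta$ coincides with the intersection multiplicity computed cohomologically.
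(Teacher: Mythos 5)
This lemma is not proved in the paper at all: it is quoted verbatim from Ressayre (\cite{ressayre10}, Lemma 14), so there is no in-paper argument to compare against. Judged on its own terms, your proposal contains a genuine error at its central step. You set $Z:=\overline{\eta(\KC\times_{P(\lambda)}C(w,\tilde w)^+)}$ and claim that $Z$ is a component of the intersection of the preimages of the two Schubert varieties under the factor projections, and that the cup product $\sigma_{w_0w}^{P(\lambda)}\,.\,i^*\bigl(\sigma_{\tilde{w}_0\tilde{w}}^{\wt{P}(\lambda)}\bigr)$ computes $[Z]$. Neither statement holds. The intersection of those two preimages in $X=\KC/Q\times\tKC/\wt{Q}$ is just $\overline{C(w,\tilde w)^+}=\overline{P(\lambda)w^{-1}Q/Q}\times\overline{\wt{P}(\lambda)\tilde w^{-1}\wt{Q}/\wt{Q}}$ itself, whereas $Z$ is its $\KC$-sweep, which is in general strictly larger; and the displayed cup product lives in $\coh{\KC/P(\lambda)}$, not in $\coh{X}$, so it cannot be the fundamental class of a subvariety of $X$. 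The deduction of (1) is then broken twice over: ``$Z$ has top dimension iff the product is nonzero'' is false even for a genuine fundamental class (any nonempty closed subvariety of a projective variety has nonzero class), and dominance of $\eta$ is in any case not equivalent to a degree count on the image when $\dim(\KC\times_{P(\lambda)}C^+)>\dim X$.

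The object that the product $\sigma_{w_0w}^{P(\lambda)}\,.\,i^*\bigl(\sigma_{\tilde{w}_0\tilde{w}}^{\wt{P}(\lambda)}\bigr)$ actually controls is the generic \emph{fibre} of $\eta$, not its image. For $(hQ,\tilde h\wt{Q})\in X$ one checks that $\eta^{-1}(hQ,\tilde h\wt{Q})$ is identified with the intersection, inside $\KC/P(\lambda)$, of the translated Schubert variety $h\cdot\overline{QwP(\lambda)/P(\lambda)}$ with the preimage under $\KC/P(\lambda)\rightarrow\tKC/\wt{P}(\lambda)$ of $\tilde h\cdot\overline{\wt{Q}\tilde w\wt{P}(\lambda)/\wt{P}(\lambda)}$; the hypothesis that $w$ and $\tilde w$ are the longest representatives of their double cosets is exactly what makes these closures genuine Schubert varieties with Poincar\'e duals $\sigma_{w_0w}^{P(\lambda)}$ and $\sigma_{\tilde w_0\tilde w}^{\wt{P}(\lambda)}$. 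Kleiman's transversality theorem, applied to the transitive actions of $\KC$ and $\tKC$ on the two factors, then gives directly that the generic fibre is nonempty (i.e.\ $\eta$ is dominant) iff the product is nonzero, and is a single reduced point (i.e.\ $\eta$ is birational, hence the pair is covering) iff the product equals $\pt$. Your final paragraph invokes essentially this Kleiman argument for (2), but it is inconsistent with your identification of the product as $[Z]$; the fibre description has to replace, not supplement, the image computation. The closing remark about $\lambda$-weights on the normal bundle of $C^+$ does not establish that $\eta$ is \'etale on an open set; in characteristic $0$ that follows from generic smoothness once generic finiteness is known, and is not needed if one argues via Kleiman on the fibres.
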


In the case of the variety $X_M$, we are able to improve the above formulas.

Fix once and for all a dominant one parameter subgroup $\lambda$  of $\TC$. We apply Lemma \ref{lem:nsc_pair_is_covering/dominant_generalcase} to $\tKC=\KC\times \hKC$, and the injection $i=i_{\lambda}:\KC\hookrightarrow \KC\times\hKC$ defined in subsection \ref{subsection:parametrization}.

Let $(w,w',m)\in W^{\lambda} \times W^{\lambda} \times \Z$ be such that $C(w,w',m)$ is not empty. We define $\tilde{w} = (w',\hat{w}_{\wh{Q}}\hat{w}_N)\in\wt{W}=W\times\wh{W}$. The pair $(w,\tilde{w})$ satisfies the assumption of Lemma \ref{lem:nsc_pair_is_covering/dominant_generalcase}. And, by Lemma \ref{lem:C_m_and_wQwN}, $(C(w,w',m),\lambda)$ is a dominant (resp. covering, resp. well covering) pair if and only if $(C(w,\tilde{w}),\lambda)$ is.

\begin{lem}
\label{lem:cohomological_alternatives}
Let $(w,w',m)\in W^\lambda\times W^\lambda\times\Z$. Then:
\begin{enumerate}
\item either $M_{<m}=0$, and then $\jmath^*\left(\sigma_{w_0w}^{P(\lambda)}\,.\,i_{\lambda}^*\bigl(\sigma_{(w_0w',\hat{w}_0\hat{w}_{\hat{Q}}\hat{w}_N)}^{P(\lambda)\times\hat{P}(\lambda)}\bigr)\right) = \sigma_{w_0w}^{B}\,.\,\sigma_{w_0w'}^{B}$,
\item or $\displaystyle\jmath^*\left(\sigma_{w_0w}^{P(\lambda)}\,.\,i_{\lambda}^*\bigl(\sigma_{(w_0w',\hat{w}_0\hat{w}_{\hat{Q}}\hat{w}_N)}^{P(\lambda)\times\hat{P}(\lambda)}\bigr)\right) = \sigma_{w_0w}^{B}\,.\,\sigma_{w_0w'}^{B}\,.\,\displaystyle\prod_{\beta\in\WT(M_{< m})}\Theta(-\beta)^{n_{\beta}}$.
\end{enumerate}
\end{lem}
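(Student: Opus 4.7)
Everything reduces to a Schubert calculus computation on the full flag variety of $\hKC = GL_r(\C)$, where the key input is the explicit form of the Grassmannian permutation $\hat{w}_0 \hat{w}_{\hat{Q}} \hat{w}_N$. First, since $i_\lambda(g) = (g, f_\lambda(g))$, the pullback of an exterior product factors as $i_\lambda^*(\alpha \boxtimes \beta) = \alpha \cdot (f_\lambda^{P(\lambda)})^*(\beta)$. Combined with the fact $\jmath^*\sigma_{w_0u}^{P(\lambda)} = \sigma_{w_0u}^B$ for $u \in W^\lambda$ (recalled at the start of subsection \ref{subsection:main_criterion}) and the commutative square $\hat{\jmath} \circ f_\lambda^B = f_\lambda^{P(\lambda)} \circ \jmath$, the lemma is reduced to showing
\[
(f_\lambda^B)^* \bigl(\hat{\jmath}^*\sigma^{\hat{P}(\lambda)}_{\hat{w}_0\hat{w}_{\hat{Q}}\hat{w}_N}\bigr) = \prod_{\beta\in\WT(M_{<m})}\Theta(-\beta)^{n_\beta},
\]
with the empty-product convention giving $1$ in case (\textit{i}), which corresponds precisely to $N=r$.

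\textbf{Reduction to the Borel case.} Lemma \ref{lem:C_m_and_wQwN} says $\hat{w}_{\hat{Q}}\hat{w}_N$ is the longest element of its double coset $\wh{W}_{\hat{Q}}\hat{w}_N\wh{W}_\lambda$, hence a fortiori the longest element of its right $\wh{W}_\lambda$-coset. Since left multiplication by $\hat{w}_0$ inverts length, $\hat{w}_0\hat{w}_{\hat{Q}}\hat{w}_N$ is the shortest representative of $\hat{w}_0\hat{w}_{\hat{Q}}\hat{w}_N\wh{W}_\lambda$ in $\wh{W}/\wh{W}_\lambda$. The same shortest-representative identity used for $\jmath$ now applied to $\hat{\jmath}$ gives $\hat{\jmath}^*\sigma^{\hat{P}(\lambda)}_{\hat{w}_0\hat{w}_{\hat{Q}}\hat{w}_N} = \sigma^{\hat{B}}_{\hat{w}_0\hat{w}_{\hat{Q}}\hat{w}_N}$ in $\coh{\hKC/\hat{B}}$.

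\textbf{Identifying the Schubert class.} A direct check shows that $\hat{w}_0\hat{w}_{\hat{Q}}\hat{w}_N \in S_r$ has one-line notation $(1,2,\ldots,N-1,r,N,N+1,\ldots,r-1)$: it fixes $\{1,\ldots,N-1\}$ and sends $N\mapsto r,\ N+1\mapsto N,\ \ldots,\ r\mapsto r-1$. It has a unique descent at position $N$ and length $r-N$, so it is a Grassmannian permutation with associated partition $(r-N)$. Its Schubert polynomial is the Schur polynomial $s_{(r-N)}(x_1,\ldots,x_N) = h_{r-N}(x_1,\ldots,x_N)$, where $x_j := \hat{\Theta}(\hat{\mu}_j)$ and $\hat{\mu}_j$ is the $j$-th diagonal character of $\hTC$. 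Working in Borel's presentation $\coh{\hKC/\hat{B}} = \Z[x_1,\ldots,x_r]/(e_1,\ldots,e_r)$, the vanishing of all positive-degree elementary symmetric polynomials yields $\prod_{j=1}^r(1-tx_j)\equiv 1$, so $\prod_{j=N+1}^r(1-tx_j)\equiv\prod_{j=1}^N(1-tx_j)^{-1}$. Extracting the coefficient of $t^{r-N}$ on both sides gives the identity $h_{r-N}(x_1,\ldots,x_N) \equiv \prod_{j=N+1}^r(-x_j)$ in the coinvariant algebra.

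\textbf{Pulling back to $\KC/B$.} By construction of the basis $\mathcal{B}_\lambda = (u_1,\ldots,u_r)$, each $u_j$ is a $\TC$-weight vector with weight $\beta_j$, so the restriction of the character $\hat{\mu}_j$ to $\TC$ via $f_\lambda$ is precisely $\beta_j$; equivalently $(f_\lambda^B)^*\hat{\Theta}(\hat{\mu}_j) = \Theta(\beta_j)$. Hence
\[
(f_\lambda^B)^*\sigma^{\hat{B}}_{\hat{w}_0\hat{w}_{\hat{Q}}\hat{w}_N} = \prod_{j=N+1}^r \Theta(-\beta_j) = \prod_{\beta\in\WT(M_{<m})}\Theta(-\beta)^{n_\beta},
\]
since $(u_{N+1},\ldots,u_r)$ is a basis of $M_{<m}$ and grouping weights with their multiplicities recovers the right-hand side. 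Case (\textit{i}) is the limit $N=r$ where the permutation collapses to the identity (one checks $\hat{w}_{\hat{Q}}\hat{w}_r = \hat{w}_0$, so $\hat{w}_0\hat{w}_{\hat{Q}}\hat{w}_r = e$) and both products are empty. The only non-formal step is the identification of the Schubert polynomial in the third paragraph; once the permutation is recognized as Grassmannian of one-row shape $(r-N)$, the rest is standard functoriality of Schubert pullbacks together with an elementary generating-function manipulation in the coinvariant algebra.
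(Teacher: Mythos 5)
Your proof is correct, and it follows the same skeleton as the paper's: factor $i_{\lambda}^*$ through the diagonal and $(f_{\lambda}^{P(\lambda)})^*$, use that $\jmath^*$ sends $\sigma_{w_0u}^{P(\lambda)}$ to $\sigma_{w_0u}^B$ for $u\in W^{\lambda}$, invoke the commutative square to reduce everything to computing $(f_{\lambda}^B)^*\bigl(\sigma^{\wh{B}}_{\hat{w}_0\hat{w}_{\wh{Q}}\hat{w}_N}\bigr)$, and use Remark \ref{rem:N_and_M<m} to match the dichotomy $N=r$ versus $N<r$ with $M_{<m}=0$ versus $M_{<m}\neq 0$. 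Where you genuinely diverge is in that last computation. The paper identifies $\hat{w}_0\hat{w}_{\wh{Q}}\hat{w}_N$ with $\check{w}_N=s_{r-1}\cdots s_N$ and then runs an induction through the Chevalley formula (Lemmas \ref{lem:cupproduct_sk_wcheckk} and \ref{lem:cupproduct_skminus1_wcheckk}, Theorem \ref{thm:formume_sigmawcheckk}), which rests on the length computations of the Appendix, and finally trades $\sum_{i<r}\beta_i|_{\got{t}_{ss}}$ for $-\beta_r|_{\got{t}_{ss}}$ via the vanishing of the sum of all weights on the semisimple part. You instead recognize $\check{w}_N$ as the one-row Grassmannian permutation of shape $(r-N)$ with descent at $N$, so that its class is the complete homogeneous symmetric polynomial $h_{r-N}(x_1,\ldots,x_N)$ in Borel's presentation, and you convert this to $\prod_{j=N+1}^{r}(-x_j)$ by the generating-function identity $\prod_{j}(1-tx_j)\equiv 1$ in the coinvariant algebra; the paper's weight-sum lemma is then just the pullback of the relation $e_1=0$. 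Your route buys a shorter argument that bypasses most of the Appendix combinatorics and makes the answer structurally transparent (it is visibly the product of the Chern roots of $M_{<m}$ with a sign); the paper's route buys self-containedness within the Chevalley formula, which it needs anyway elsewhere (e.g.\ Lemma \ref{lem:formula_powerof_sigma_s_1}), and avoids appealing to the dictionary between Grassmannian permutations, Schur polynomials, and Schubert classes. The one point worth making explicit in your write-up is the convention check that the paper's dual-basis classes $\sigma_w^{\wh{B}}$ are indeed represented by Schubert polynomials in the variables $x_j=\hat{\Theta}(\hat{e}_j)$; this follows from part (2) of Theorem \ref{thm:Chevalley}, since $\hat{\Theta}(\pi_{\halpha_{i,i+1}})=x_1+\cdots+x_i=\sigma_{s_i}^{\wh{B}}$ pins down the normalization.
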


%

\begin{proof}
First, we notice that we have
\[
i^*\left(\sigma_{\tilde{w}_0\tilde{w}}^{\wt{P}(\lambda)}\right) = i_{\lambda}^*\left(\sigma_{w_0w'}^{P(\lambda)}\otimes\sigma_{\hat{w}_0\hat{w}_{\wh{Q}}\hat{w}_N}^{\wh{P}(\lambda)}\right) = \sigma_{w_0w'}^{P(\lambda)}\,.\,\left(f_{\lambda}^{P(\lambda)}\right)^*\left(\sigma_{\hat{w}_0\hat{w}_{\wh{Q}}\hat{w}_N}^{\wh{P}(\lambda)}\right),
\]
since $i_{\lambda}$ is the composition of maps $(\id\times f_{\lambda}^{P(\lambda)})\circ\Delta$, where $\Delta$ is the diagonal map $\KC/P\rightarrow \KC/P\times \KC/P$. Thus, $i_{\lambda}^* = \Delta^*\circ(\id\times (f_{\lambda}^{P(\lambda)})^*)$, and $\Delta^*$ is the cup product.

Since $\jmath^*$ is a ring homomorphism for the cup product, and $w_0w$ (resp. $w_0w'$) is the shortest element of $w_0wW_{\lambda}$ (resp. $w_0wW_{\lambda}$), we have
\[
\jmath^*\left(\sigma_{w_0w}^{P(\lambda)}\,.\,\sigma_{w_0w'}^{P(\lambda)}\,.\,(f_{\lambda}^{P(\lambda)})^*\bigl(\sigma_{\hat{w}_0\hat{w}_{\hat{Q}}\hat{w}_N}^{\hat{P}(\lambda)}\bigr)\right) = \sigma_{w_0w}^{B}\,.\,\sigma_{w_0w'}^{B}\,.\,\jmath^*\left((f_{\lambda}^{P(\lambda)})^*\bigl(\sigma_{\hat{w}_0\hat{w}_{\hat{Q}}\hat{w}_N}^{\hat{P}(\lambda)}\bigr)\right).
\]
Moreover, the following commutative diagram
\[
\begin{CD}
   \KC/B @>\jmath>> \KC/P(\lambda) \\
   @Vf_{\lambda}^BVV @VVf_{\lambda}^{P(\lambda)}V\\
   \hKC/\wh{B} @>\hat{\jmath}>> \hKC/\wh{P}(\lambda)
\end{CD}
\]
yields the equalities
\[
\jmath^*\left((f_{\lambda}^{P(\lambda)})^*\bigl(\sigma_{\hat{w}_0\hat{w}_{\wh{Q}}\hat{w}_N}^{\wh{P}(\lambda)}\bigr)\right) = (f_{\lambda}^B)^*\left(\hat{\jmath}^*\bigl(\sigma_{\hat{w}_0\hat{w}_{\wh{Q}}\hat{w}_N}^{\wh{P}}\bigr)\right) = (f_{\lambda}^B)^*\bigl(\sigma_{\hat{w}_0\hat{w}_{\wh{Q}}\hat{w}_N}^{\wh{B}}\bigr),
\]
since $\hat{w}_0\hat{w}_{\wh{Q}}\hat{w}_N$ is the shortest element of its class in $\wh{W}/\wh{W}_{\lambda}$. Thus,
\[
\jmath^*\left(\sigma_{w_0w}^{P(\lambda)}\,.\,\sigma_{w_0w'}^{P(\lambda)}\,.\,(f_{\lambda}^{P(\lambda)})^*\bigl(\sigma_{\hat{w}_0\hat{w}_{\hat{Q}}\hat{w}_N}^{\hat{P}(\lambda)}\bigr)\right) = \sigma_{w_0w}^B\,.\,\sigma_{w_0w'}^B\,.\,(f_{\lambda}^B)^*\bigl(\sigma_{\hat{w}_0\hat{w}_{\wh{Q}}\hat{w}_N}^{\wh{B}}\bigr).
\]

We use the Chevalley formula to compute $(f_{\lambda}^B)^*\bigl(\sigma_{\hat{w}_0\hat{w}_{\wh{Q}}\hat{w}_N}^{\wh{B}}\bigr)$. All the necessary results are gathered in paragraph \ref{subsection:Chevalleyformula}. From Lemma \ref{lem:relation_between_wcheck_what}, we have $\hat{w}_0\hat{w}_{\wh{Q}}\hat{w}_{N} = s_{r-1}\ldots s_{N+1}s_N$, when $N\in\{1,\ldots,r-1\}$, and $\hat{w}_0\hat{w}_{\wh{Q}}\hat{w}_{r} = \id$. Thus, equation \eqref{eq:sigmawcheck_image_by_flambdacheck_final} yields
\[
(f_{\lambda}^B)^*(\sigma_{\hat{w}_0\hat{w}_{\wh{Q}}\hat{w}_N}^{\wh{B}}) = (f_{\lambda}^B)^*(\sigma_{s_{r-1}\ldots s_N}^{\wh{B}}) = \Theta\bigl((-\beta_{N+1}) \ldots (-\beta_{r})\bigr),
\]
when $N<r$, and
\[
(f_{\lambda}^B)^*(\sigma_{\hat{w}_0\hat{w}_{\wh{Q}}\hat{w}_r}^{\wh{B}}) = (f_{\lambda}^B)^*(\sigma_{\id}^{\wh{B}}) = \sigma_{\id}^B.
\]
Furthermore, we notice that $\Theta(\beta_{N+1}\ldots\beta_{r}) = \prod_{\beta\in\WT(M_{<m})}\Theta(\beta)^{n_{\beta}}$, since the elements $\beta_{N+1},\ldots,\beta_r$ are the weights of $M_{<m}$ with multiplicity.

We thus have the following alternative:
\begin{itemize}
\item either $N=r$, and then $\jmath^*\left(\sigma_{w_0w}^{P(\lambda)}\,.\,i_{\lambda}^*\bigl(\sigma_{(w_0w',\hat{w}_0\hat{w}_{\hat{Q}}\hat{w}_N)}^{P(\lambda)\times\hat{P}(\lambda)}\bigr)\right) = \sigma_{w_0w}^{B}\,.\,\sigma_{w_0w'}^{B}$,
\item or $1\leqslant N<r$, and then
\[
\jmath^*\left(\sigma_{w_0w}^{P(\lambda)}\,.\,i_{\lambda}^*\bigl(\sigma_{(w_0w',\hat{w}_0\hat{w}_{\hat{Q}}\hat{w}_N)}^{P(\lambda)\times\hat{P}(\lambda)}\bigr)\right) = \sigma_{w_0w}^{B}\,.\,\sigma_{w_0w'}^{B}\,.\,\prod_{\beta\in\WT(M_{< m})}\Theta(-\beta)^{n_{\beta}}.
\]
\end{itemize}
But $N=r-\dim_{\C}M_{<0}$ from Remark \ref{rem:N_and_M<m}. Hence we can deduce the statement of this lemma.
\end{proof}

The next proposition gives a cohomological criterion which determines the dominant (resp. covering) pairs of $X_M$.

\begin{prop}
\label{prop:cohomological_criterion_dominant/covering_pairs}
Let $(w,w',m)\in W^{\lambda}\times W^{\lambda}\times\Z$ such that 
 $C(w,w',m)$ is non-empty. Then, the pair $(C(w,w',m),\lambda)$ is dominant (resp. covering) if and only if
\begin{enumerate}
\item either $M_{<m} = 0$ and $\sigma_{w_0w}^{B}\,.\,\sigma_{w_0w'}^{B}\neq 0$ (resp. $M_{<m} = 0$ and $w' = w_0ww_{\lambda}$),
\item or $\sigma_{w_0w}^{B}\,.\,\sigma_{w_0w'}^{B}\,.\,\prod_{\beta\in\WT(M_{< m})}\Theta(-\beta)^{n_{\beta}} \neq 0$ (resp. $\dots=\jmath^*(\pt)$).
\end{enumerate}
\end{prop}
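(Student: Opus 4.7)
The plan is to apply Ressayre's Lemma \ref{lem:nsc_pair_is_covering/dominant_generalcase} to the pair $(C(w,\tilde{w}),\lambda)$ attached to $(C(w,w',m),\lambda)$ under the identification set up in subsection \ref{subsection:parametrization}, namely with $\tilde{K}_{\C}=\KC\times\hKC$, $i=i_\lambda$, and $\tilde{w}=(w',\hat{w}_{\wh{Q}}\hat{w}_N)$. As already observed just before Lemma \ref{lem:cohomological_alternatives}, $(w,\tilde{w})$ fits the hypotheses of Lemma \ref{lem:nsc_pair_is_covering/dominant_generalcase}, and the pair $(C(w,\tilde{w}),\lambda)$ is dominant (resp.\ covering) if and only if $(C(w,w',m),\lambda)$ is. Thus the cohomological content of Proposition \ref{prop:cohomological_criterion_dominant/covering_pairs} is encoded in the class $\sigma_{w_0w}^{P(\lambda)}\,.\,i_\lambda^*\bigl(\sigma_{(w_0w',\,\hat{w}_0\hat{w}_{\wh{Q}}\hat{w}_N)}^{P(\lambda)\times\wh{P}(\lambda)}\bigr)$ in $\coh{\KC/P(\lambda)}$.

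Next, I would pull this class back via the injective ring homomorphism $\jmath^*:\coh{\KC/P(\lambda)}\to\coh{\KC/B}$. Injectivity implies that the dominance condition (nonvanishing) and the covering condition (equality with $\pt\in\coh{\KC/P(\lambda)}$) are respectively equivalent to the vanishing of $\jmath^*(\cdots)$ and to the equality $\jmath^*(\cdots)=\jmath^*(\pt)=\sigma_{w_0w_\lambda}^B$. Lemma \ref{lem:cohomological_alternatives} then computes $\jmath^*$ of this class explicitly, yielding $\sigma_{w_0w}^{B}\,.\,\sigma_{w_0w'}^{B}$ if $M_{<m}=0$ (i.e.\ $N=r$) and $\sigma_{w_0w}^{B}\,.\,\sigma_{w_0w'}^{B}\,.\,\prod_{\beta\in\WT(M_{<m})}\Theta(-\beta)^{n_\beta}$ otherwise. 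In the dominant case this is exactly the statement, and in the covering case with $M_{<m}\neq 0$ this is also exactly the statement.

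It remains to deal with the covering case when $M_{<m}=0$, where the cohomological criterion gives $\sigma_{w_0w}^B\,.\,\sigma_{w_0w'}^B=\sigma_{w_0w_\lambda}^B$, and one has to convert this into the combinatorial condition $w'=w_0ww_\lambda$. Since $w,w'\in W^\lambda$, the elements $w_0w$ and $w_0w'$ are shortest in their respective $W_\lambda$-cosets, so by the general fact $\jmath^*(\sigma_{u}^{P(\lambda)})=\sigma_{u}^B$ recalled in subsection \ref{subsection:main_criterion}, the above equation in $\coh{\KC/B}$ is, by injectivity of $\jmath^*$, equivalent to the equation $\sigma_{w_0w}^{P(\lambda)}\,.\,\sigma_{w_0w'}^{P(\lambda)}=\pt$ in $\coh{\KC/P(\lambda)}$. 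Poincar\'e duality on the flag variety $\KC/P(\lambda)$ pins down $w_0w'$ as the unique shortest $W_\lambda$-coset representative dual to $w_0w$; a short length computation ($l(w_0uw_\lambda)=l(w_0)-l(w_\lambda)-l(u)$ when $u$ is shortest) shows that this dual is $w_0(w_0w)w_\lambda=ww_\lambda$, hence $w_0w'=ww_\lambda$, i.e.\ $w'=w_0ww_\lambda$. Conversely, when $w'=w_0ww_\lambda$ one immediately gets $\sigma_{w_0w}^{P(\lambda)}\,.\,\sigma_{w_0w'}^{P(\lambda)}=\pt$ by the same duality, closing the equivalence.

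The only delicate step, and the main obstacle, is the Poincar\'e-duality identification in the last paragraph: one must keep careful track of the two conventions used in the paper (elements of $W^\lambda$ are \emph{longest} representatives, while Schubert classes of $\KC/P(\lambda)$ are indexed by \emph{shortest} representatives $w_0w$), and verify the length identity $l(ww_\lambda)=l(w)-l(w_\lambda)$ to ensure that $ww_\lambda$ is indeed the shortest representative of $wW_\lambda$. Once this bookkeeping is done, the proof reduces to a formal chain of implications built from Lemma \ref{lem:nsc_pair_is_covering/dominant_generalcase}, Lemma \ref{lem:cohomological_alternatives}, injectivity of $\jmath^*$, and Poincar\'e duality on $\KC/P(\lambda)$.
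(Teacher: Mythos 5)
Your proof is correct and takes essentially the same route as the paper, which assembles exactly the same ingredients: Lemma \ref{lem:nsc_pair_is_covering/dominant_generalcase}, the computation of Lemma \ref{lem:cohomological_alternatives}, injectivity of $\jmath^*$, and Poincar\'e duality on $\KC/P(\lambda)$ (which the paper isolates as Lemma \ref{lem:product_twoschubertclasses_inG/P_isequalto_ptclass} and which handles your last-paragraph bookkeeping, including the length identity). One slip of the pen: in your second paragraph ``the vanishing of $\jmath^*(\cdots)$'' should read ``the nonvanishing'', as your parenthetical and your conclusion make clear.
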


This proposition results from Lemmas \ref{lem:nsc_pair_is_covering/dominant_generalcase} and \ref{lem:cohomological_alternatives}, from the injectivity of the morphism $\jmath^*:\coh{\KC/P(\lambda)}\rightarrow\coh{\KC/B}$, and from Lemma \ref{lem:product_twoschubertclasses_inG/P_isequalto_ptclass} which is given just below. This lemma is a well-known fact, generalizing results for Borel subgroup case proved for example in \cite[Lemma 1 and Proposition 1]{demazure}, or \cite{chevalley}.

\begin{lem}
\label{lem:product_twoschubertclasses_inG/P_isequalto_ptclass}
Let $(w,w')\in (W^{\lambda})^2$ be such that $l(w)+l(w')\leq l(w_0)+l(w_{\lambda})$, then 
\[
\sigma_{w}^{P(\lambda)}\,.\,\sigma_{w'}^{P(\lambda)} = \left\{\begin{array}{ll}
\pt & \text{if $w'=w_0ww_{\lambda}$,} \\
0 & \text{otherwise}.
\end{array}\right.
\]
\end{lem}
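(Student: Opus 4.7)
The plan is to combine a short degree computation with the classical Poincar\'e duality for Schubert classes in the partial flag variety $\KC/P(\lambda)$.

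First, I would record the cohomological degrees. For $v\in W^\lambda$, the unique minimal length representative of the coset $vW_\lambda$ is $vw_\lambda$, of length $l(v)-l(w_\lambda)$, so the Schubert variety $\overline{BvP(\lambda)/P(\lambda)}$ has complex dimension $l(v)-l(w_\lambda)$ and the dual class $\sigma_v^{P(\lambda)}$ sits in degree $2(l(v)-l(w_\lambda))$. Hence the cup product $\sigma_w^{P(\lambda)}\cdot\sigma_{w'}^{P(\lambda)}$ lies in $H^{2(l(w)+l(w')-2l(w_\lambda))}(\KC/P(\lambda),\Z)$, while $\pt$ generates the top cohomology in degree $2(l(w_0)-l(w_\lambda))$. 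The product degree strictly exceeds the top dimension precisely when $l(w)+l(w')>l(w_0)+l(w_\lambda)$, in which case the product vanishes for degree reasons; it matches the top dimension exactly when $l(w)+l(w')=l(w_0)+l(w_\lambda)$, so the only nontrivial case is this balanced one.

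In the balanced case the product lies in top cohomology, and is therefore an integer multiple $n\,\pt$. To pin down $n$, I would invoke the classical Poincar\'e duality for Schubert classes in $\KC/P(\lambda)$: writing $u:=ww_\lambda$ and $u':=w'w_\lambda$ for the minimal length representatives of the cosets $wW_\lambda$ and $w'W_\lambda$, this duality reads
\[
\sigma_u^{P(\lambda)}\cdot\sigma_{u'}^{P(\lambda)} \;=\; \delta_{u',\,w_0uw_\lambda}\,\pt
\]
whenever $l(u)+l(u')=\dim_{\C}(\KC/P(\lambda))$, reflecting the fact that $\overline{BuP(\lambda)/P(\lambda)}$ and the opposite Schubert variety $\overline{B^{-}u'P(\lambda)/P(\lambda)}$ meet transversally in the single reduced point $P(\lambda)/P(\lambda)$ when $u'=w_0uw_\lambda$, and are disjoint otherwise. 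Translating back via $w=uw_\lambda$ and $w'=u'w_\lambda$, the condition $u'=w_0uw_\lambda$ becomes $w'=w_0ww_\lambda$, which yields the statement.

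The only substantive input is this intersection-theoretic Poincar\'e duality for Schubert classes in the generalized flag variety $\KC/P(\lambda)$, which is a well-known consequence of Bruhat's decomposition and Kleiman's transversality theorem. Alternatively, one may pull back along the surjection $\jmath\colon\KC/B\rightarrow\KC/P(\lambda)$, use the compatibility $\jmath^*\sigma_u^{P(\lambda)}=\sigma_u^B$ for minimal length representatives $u$, and reduce to the Borel-subgroup identity $\sigma_v^B\cdot\sigma_{w_0v}^B=\pt$ in $\KC/B$ recalled from \cite{demazure,chevalley}; this is the approach already used throughout section~\ref{subsection:cohomological_criterion}.
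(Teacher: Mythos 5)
The paper offers no proof of this lemma at all: it is quoted as a well-known fact with a pointer to Demazure and Chevalley. So your argument --- a degree count followed by Poincar\'e duality for the Schubert basis of $\KC/P(\lambda)$ --- is exactly the intended standard proof, and your bookkeeping is right: with the paper's convention ($\sigma_v^{P(\lambda)}$ dual to the fundamental class of $\overline{BvP(\lambda)/P(\lambda)}$, hence of cohomological degree $2(l(v)-l(w_\lambda))$ for $v\in W^\lambda$, consistent with $\jmath^*(\pt)=\sigma_{w_0w_\lambda}^B$ and with the Chevalley formula), the product lies in degree $2(l(w)+l(w')-2l(w_\lambda))$, the balanced case is $l(w)+l(w')=l(w_0)+l(w_\lambda)$, and your translation of the duality condition between minimal and maximal coset representatives correctly yields $w'=w_0ww_\lambda$.

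There is, however, a mismatch you should not pass over. Your dichotomy handles $l(w)+l(w')>l(w_0)+l(w_\lambda)$ (vanishing for degree reasons) and $l(w)+l(w')=l(w_0)+l(w_\lambda)$ (duality); in other words you have proved the lemma under the hypothesis $l(w)+l(w')\geq l(w_0)+l(w_\lambda)$. The printed hypothesis is the opposite inequality, and the case $l(w)+l(w')<l(w_0)+l(w_\lambda)$, which it allows, is not covered by your argument --- nor can it be, because there the product sits strictly below the top degree and the asserted vanishing fails: taking $w=w_\lambda$ gives $\sigma_{w_\lambda}^{P(\lambda)}=1$, so $\sigma_{w_\lambda}^{P(\lambda)}\,.\,\sigma_{w'}^{P(\lambda)}=\sigma_{w'}^{P(\lambda)}\neq 0$ although $w'\neq w_0w_\lambda w_\lambda=w_0$ whenever $l(w')<l(w_0)$. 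The inequality in the statement must therefore be read as $\geq$; this is also the only regime in which the lemma is invoked (in Proposition \ref{prop:cohomological_criterion_dominant/covering_pairs} and Corollary \ref{cor:info_lengths_elements_for_wellcoveringpair} the relevant products have total degree at least $\dim_{\C}(\KC/P(\lambda))$, and for deciding whether a product equals $\pt$ only the top-degree case matters). With that correction your proof is complete. One minor caveat on your closing aside: pulling back by $\jmath^*$ does not reduce the balanced case to the top-degree identity $\sigma_v^B\,.\,\sigma_{w_0v}^B=\pt$ in $\KC/B$, since $\jmath^*(\pt)=\sigma_{w_0w_\lambda}^B$ is not the point class of $\KC/B$; the direct duality in $\KC/P(\lambda)$ that you use as your main route is the cleaner argument.
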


The criterion given by Proposition \ref{prop:cohomological_criterion_dominant/covering_pairs} induces a very interesting property satisfied by dominant pairs. Indeed, when the maximal torus $\TC$ has a nonrivial fixed element in $M$, or, equivalently, if the zero weight is a weight of the representation $\zeta$, we are able to say a little bit more about the integer $m$ appearing in dominant pairs. This is the statement of Theorem \ref{thm:nsc_wellcoveringpair_zeroweight}. 
%

\begin{proof}[Proof of Theorem \ref{thm:nsc_wellcoveringpair_zeroweight}]
Let $(w,w',m)\in W^{\lambda} \times W^{\lambda} \times \Z$ such that $C(w,w',m)$ is not empty in $(X_M)^{\lambda}$. Assume $m>0$. Hence, the zero weight is in $\WT(M_{<m})$, and $\prod_{\beta\in\WT(M_{<m})}\Theta(-\beta)^{n_{\beta}} = 0$, because $\Theta$ is a morphism of $\Z$-modules. Finally, Proposition \ref{prop:cohomological_criterion_dominant/covering_pairs} implies that the pair $(C(w,w',m),\lambda)$ is not dominant.
\end{proof}

\subsection{Proof of Theorem \ref{thm:nsc_wellcoveringpair}}
\label{subsection:proof_thm_wellcoveringpair}

We are going to prove the necessary and sufficient condition for a pair of $X_M$ to be well covering. Next theorem from \cite{ressayre10} gives a criterion in a more general context. Here we use the notations introduced in section \ref{subsection:cohomological_criterion}.

\renewcommand{\theenumi}{\textit{\alph{enumi}}}

\begin{thm}[\cite{ressayre10}, Proposition 11]
\label{thm:nsc_pair_is_wellcovering_generalcase}
Let $\lambda$ be a dominant one parameter subgroup of $\TC$. Let $(w,\tilde{w})\in W\times\wt{W}$ be such that $w$ (resp. $\tilde{w}$) is the longest element in the class $W_{Q}wW_{\lambda}$ (resp. $\wt{W}_{\wt{Q}}\tilde{w}\wt{W}_{\lambda}$). Then the pair $(C(w,\tilde{w}),\lambda)$ is well covering if and only if the two following assertions are satisfied :
\begin{enumerate}
\item $\sigma_{w_0w}^{P(\lambda)}\,.\,i^*\bigl(\sigma_{\tilde{w}_0\tilde{w}}^{\wt{P}(\lambda)}\bigr) = \pt$,

\item $\langle \lambda,\rho+w^{-1}\rho\rangle + \langle i^*(\lambda),\tilde{\rho}+\tilde{w}^{-1}\tilde{\rho}\rangle = \langle\lambda,2\rho\rangle$.
\end{enumerate}
\end{thm}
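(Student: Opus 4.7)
The plan is to deduce the criterion through a tangent-space analysis of the map $\eta : \KC \times_{P(\lambda)} C^+ \to Y$ (where $Y = \KC/Q \times \tKC/\wt{Q}$) at a distinguished $\lambda$-fixed point of $C$, combined with Lemma \ref{lem:nsc_pair_is_covering/dominant_generalcase}. Since a well covering pair is in particular covering, that lemma immediately yields the necessity of condition (\emph{a}). Conversely, assuming (\emph{a}), $\eta$ is birational, and it remains to characterize when $\eta$ restricts to an isomorphism on a $P(\lambda)$-stable open subset of $C^+$ meeting $C$. By $\KC$-equivariance of $\eta$ and smoothness of $Y$, this reduces to $d\eta$ being a linear isomorphism at $[e, x_0]$ for some $\lambda$-fixed point $x_0 \in C$. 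I would take $x_0 := (w^{-1}Q/Q, \wt{w}^{-1}\wt{Q}/\wt{Q})$, which lies in $C$ and is a smooth point of $Y$ thanks to the longest-element normalization of $w$ and $\wt{w}$.

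Next I would perform the weight decomposition at $[e,x_0]$. The tangent space splits as $(\got{k}_{\C}/\got{p}(\lambda)) \oplus T_{x_0} C^+$; the first summand has only strictly negative $\lambda$-weights (the root spaces with non-negative $\lambda$-weight of $\got{k}_{\C}$ are absorbed in $\got{p}(\lambda)$, using dominance of $\lambda$), while $T_{x_0} C^+ = T_{x_0} Y^{\geq 0}$ by definition of $C^+$. Since $d\eta$ is $\lambda$-equivariant, it decomposes as the identity on $T_{x_0} C^+ \to T_{x_0} Y^{\geq 0}$ together with a $\lambda$-equivariant map $\varphi : \got{k}_{\C}/\got{p}(\lambda) \to T_{x_0} Y^{<0}$. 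Under (\emph{a}), the Schubert-calculus relation $\sigma_{w_0 w}^{P(\lambda)} \cdot i^{*}(\sigma_{\wt{w}_0 \wt{w}}^{\wt{P}(\lambda)}) = \pt$ forces the dimensions of these two spaces to coincide, so $d\eta$ is an isomorphism if and only if $\varphi$ is.

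The final step is to translate this isomorphism condition into the numerical form (\emph{b}). The sum of $\lambda$-weights on $\got{k}_{\C}/\got{p}(\lambda)$ equals $-\langle \lambda, 2\rho\rangle$. On $T_{x_0} Y^{<0}$, using the identification of $T_{w^{-1}Q/Q}(\KC/Q)$ with $w^{-1}\cdot(\got{k}_{\C}/\got{q})$ (and the analogous statement for the $\wt{Q}$-factor via $i$), and extracting the strictly negative $\lambda$-part through the standard $\rho$--$\rho_{P(\lambda)}$ combinatorics (the Levi part pairs trivially with $\lambda$, leaving only the unipotent contribution), the sum of $\lambda$-weights becomes $-\langle \lambda, \rho + w^{-1}\rho\rangle - \langle i^{*}\lambda, \wt{\rho} + \wt{w}^{-1}\wt{\rho}\rangle$. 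Equating these two expressions rearranges directly into equation (\emph{b}).

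The principal obstacle will be that equality of the total $\lambda$-trace on each side of $\varphi$ is a priori only a \emph{necessary} condition for $\varphi$ to be an isomorphism: one must still rule out cancelling multiplicities in individual $\lambda$-weight spaces. To overcome this I would exploit the rigidity of the Schubert-cell $P(\lambda)$-module structures, whose weight multiplicities are fully determined by the data $(w, \wt{w}, \lambda)$, and combine the scalar equation (\emph{b}) with the finer per-weight dimension bookkeeping already encoded in the Chern-class equation (\emph{a}). Concretely, one organizes the comparison of $\varphi$ weight space by weight space into a single numerical identity by computing the graded Euler characteristic of the Koszul-type complex naturally associated to the closed embedding $\KC/P(\lambda) \hookrightarrow \KC/P(\lambda) \times \tKC/\wt{P}(\lambda)$; (\emph{a}) controls the Euler characteristic and (\emph{b}) controls the top-weight piece, together yielding the pointwise isomorphism of $\varphi$.
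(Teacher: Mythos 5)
You should first be aware that the paper contains no proof of this statement: it is quoted as Proposition 11 of \cite{ressayre10} and used as a black box, so the only meaningful comparison is with Ressayre's own argument. Measured against that, the sound parts of your proposal are: the necessity of (\emph{a}) via Lemma \ref{lem:nsc_pair_is_covering/dominant_generalcase}; the block-triangular decomposition of $d\eta$ at $[e,x_0]$ into the identity on $T_{x_0}C^+=T_{x_0}Y^{\geq 0}$ and an equivariant map $\varphi:\got{k}_{\C}/\got{p}(\lambda)\to T_{x_0}Y^{<0}$; and both trace identities, namely that the $\lambda$-weights on $\got{k}_{\C}/\got{p}(\lambda)$ sum to $-\langle\lambda,2\rho\rangle$ and those on $T_{x_0}Y^{<0}$ sum to $-\langle\lambda,\rho+w^{-1}\rho\rangle-\langle i^*(\lambda),\tilde{\rho}+\tilde{w}^{-1}\tilde{\rho}\rangle$. (One small correction: the longest-element normalization of $w$ and $\tilde{w}$ is needed to kill the Levi contributions in this last computation, not for ``smoothness of $Y$'' --- $Y$ is a product of flag varieties and is smooth everywhere.) Thus necessity of (\emph{a}) and (\emph{b}) is essentially complete, modulo the rigidity remark that the $\lambda$-character of $T_xY$ is constant along the connected component $C$.

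The genuine gap is the sufficiency direction, which is the heart of the theorem, and your closing paragraph concedes the problem without closing it. No amount of numerical bookkeeping --- equal dimensions, equal traces, equal weight multiplicities, or a ``graded Euler characteristic of a Koszul-type complex'' --- can force the \emph{specific} equivariant map $\varphi$ to be invertible: the zero map between two copies of the same $\lambda$-module satisfies every such constraint. The missing idea is to use birationality geometrically rather than numerically. Since $\eta$ is birational and we are in characteristic zero, $\det(d\eta)$ is a nonzero $\KC$-invariant section of the equivariant line bundle $\det(\eta^*TY)\otimes\det\bigl(T(\KC\times_{P(\lambda)}C^+)\bigr)^{-1}$; as $\KC\cdot[e,C^+]$ is dense, this section is not identically zero on $[e,C^+]$, so $d\eta$ is invertible at $[e,x]$ for some $x\in C^+$. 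Now degenerate: set $x_0=\lim_{t\to 0}\lambda(t)\cdot x\in C$; by equivariance, along this orbit closure the value of $\det(d\eta)$ is homogeneous of degree equal to the $\lambda$-weight $\mu_0$ of the determinant line at $[e,x_0]$, and your two trace computations show that $\mu_0=0$ is \emph{exactly} condition (\emph{b}). Hence under (\emph{b}) the nonvanishing propagates to the limit, $d\eta$ is invertible at a point of $C$, and well covering follows (to pass from invertibility of $d\eta$ at one point of $C$ to an isomorphism over a $P(\lambda)$-stable open set meeting $C$ you still need Zariski's main theorem for the proper birational map $\eta$ onto the normal variety $Y$ --- a step your ``this reduces to'' also elides). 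Without this degeneration argument, the implication (\emph{a})$+$(\emph{b})$\Rightarrow$ well covering is not established.
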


From Lemmas \ref{lem:nsc_pair_is_covering/dominant_generalcase} and \ref{lem:cohomological_alternatives}, Proposition \ref{prop:cohomological_criterion_dominant/covering_pairs} and Remark \ref{rem:N_and_M<m}, we already know that assertion {\it (a)} of Theorem \ref{thm:nsc_pair_is_wellcovering_generalcase} is equivalent to the following alternative:
\begin{itemize}
\item either $M_{<m} = 0$ and $w' = w_0ww_{\lambda}$,
\item or $M_{<m} \neq 0$ and $\sigma_{w_0w}^{B}\,.\,\sigma_{w_0w'}^{B}\,.\,\prod_{\beta\in\WT(M_{< m})}\Theta(-\beta)^{n_{\beta}} = \jmath^*(\pt)$)
\end{itemize}

Hence, it comes down to prove that assertion ($b$) of Theorem \ref{thm:nsc_pair_is_wellcovering_generalcase} for $(w,\tilde{w})$ with $\tilde{w}=(w',\hat{w}_{\wh{Q}}\hat{w}_N)$, is equivalent to the linear equation 
\[
\langle w\lambda+w'\lambda,\rho\rangle+\sum_{k<m}(m-k)\dim_{\C}(M_{\lambda,k}) = 0.
\]
We will essentially use the next lemma.

\begin{lem}
\label{lem:gammahat}
We have $\hat{\rho}+(\hat{w}_{\wh{Q}}\hat{w}_N)^{-1}\hat{\rho} = \sum_{l=N+1}^{r}\halpha_{N,l}$.
\end{lem}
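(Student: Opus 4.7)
The plan is to reduce the identity to a combinatorial statement about the symmetric group, using the classical formula
\[
\hat{\rho} + u^{-1}\hat{\rho} \;=\; \sum_{\alpha>0,\; u\alpha>0}\alpha,
\]
valid for any Weyl-group element $u$ and obtained by combining $u^{-1}\hat{\rho}=\hat{\rho}-\sum_{\alpha>0,\,u\alpha<0}\alpha$ with $2\hat{\rho}=\sum_{\alpha>0}\alpha$. Applied to $u:=\hat{w}_{\wh{Q}}\hat{w}_N$ acting on the root system of $\hat{\got{k}}_{\C}=\got{gl}_r(\C)$ (whose positive roots I identify with $\halpha_{i,j}=\varepsilon_i-\varepsilon_j$ for $i<j$, $\varepsilon_k$ denoting the standard characters of $\hTC$), this reduces the statement to showing that the positive roots not inverted by $u$ are exactly the $\halpha_{N,l}$ with $l\in\{N+1,\dots,r\}$.

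To carry out this verification, I would describe $u$ explicitly as a permutation of $\{1,\dots,r\}$. From the definitions in subsection~\ref{subsection:parametrization}, $\hat{w}_N=s_1 s_2\cdots s_{N-1}$ is the cyclic permutation sending $k\mapsto k+1$ for $1\leq k\leq N-1$ and $N\mapsto 1$ (all other indices fixed), while $\hat{w}_{\wh{Q}}$, being the longest element of the Weyl group of the Levi of $\wh{Q}$, fixes position $1$ and sends $k\mapsto r+2-k$ for $2\leq k\leq r$. Composing yields
\[
u(k) \;=\; \begin{cases} r+1-k, & 1\leq k\leq N-1,\\ 1, & k=N,\\ r+2-k, & N+1\leq k\leq r. \end{cases}
\]

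The rest is bookkeeping: $u$ is strictly decreasing on $\{1,\dots,N-1\}$ with values in $\{r-N+2,\dots,r\}$, strictly decreasing on $\{N+1,\dots,r\}$ with values in $\{2,\dots,r-N+1\}$, and attains its global minimum $1$ at $k=N$. Consequently, for $i<j$, the inequality $u(i)<u(j)$ (equivalently $u\halpha_{i,j}>0$) holds if and only if $i=N$ and $j>N$. Summing the corresponding roots $\halpha_{N,l}$ for $l=N+1,\dots,r$ produces the right-hand side of the lemma. The only delicate point in this approach is keeping the indexing straight while composing $\hat{w}_{\wh{Q}}$ with $\hat{w}_N$; once the one-line description of $u$ is written down correctly, the argument is immediate.
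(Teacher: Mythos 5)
Your proof is correct and rests on the same key identity as the paper's, namely $\hat{\rho}+u^{-1}\hat{\rho}=\sum_{\alpha>0,\,u\alpha>0}\alpha$ applied to $u=\hat{w}_{\wh{Q}}\hat{w}_N$; the only difference is that you compute the one-line form of $u$ directly and read off its non-inversions, whereas the paper first invokes Lemma \ref{lem:relation_between_wcheck_what} to write $u=\hat{w}_0\check{w}_N$ and then identifies the inversion set of the short element $\check{w}_N=s_{r-1}\cdots s_N$. Your permutation computation checks out ($u(k)=r+1-k$ for $k<N$, $u(N)=1$, $u(k)=r+2-k$ for $k>N$), and it correctly yields the non-inverted roots $\halpha_{N,l}$, $N+1\leq l\leq r$.
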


\begin{proof}
By Lemma \ref{lem:relation_between_wcheck_what}, we know that $w_0\hat{w}_{\wh{Q}}\hat{w}_N = \check{w}_N := s_{r-1}\ldots s_{N+1}s_N$. Thus, $\hat{w}_{\wh{Q}}\hat{w}_N = w_0\check{w}_N$, and
\[
(\hat{w}_{\wh{Q}}\hat{w}_N)^{-1}\hat{\rho} + \hat{\rho} = \check{w}_N^{-1}\hat{w}_0\hat{\rho} + \hat{\rho}.
\]
But it is clear that $(\check{w}_N^{-1}\hat{w}_0\hat{\rho} + \hat{\rho})$ is the sum of the roots $\halpha\in\wh{\got{R}}^+$ such that $\hat{w}_0\check{w}_N(\halpha)$ is positive. Since $\hat{w}_0$ switches the sets $\wh{\got{R}}^+$ and $\wh{\got{R}}^-$, $(\check{w}_N^{-1}\hat{w}_0\hat{\rho} + \hat{\rho})$ is the sum of the positive roots $\halpha$ such that $\check{w}_N(\halpha)$ is negative. These roots are the roots $\halpha_{N,l}$, for $N+1 \leq l \leq r$. Then the result follows.
\end{proof}

Clearly, we have
\[
\langle i_{\lambda}^*(\lambda),\tilde{\rho}+\tilde{w}^{-1}\tilde{\rho}\rangle = \langle\lambda,\rho+w'^{-1}\rho\rangle + \langle f_{\lambda}^*(\lambda),\hat{\rho}+(\hat{w}_{\wh{Q}}\hat{w}_N)^{-1}\hat{\rho}\rangle.
\]
By Lemma \ref{lem:gammahat}, using the fact that, for all $k=N+1,\ldots,r$, $\langle\lambda,\hat{\alpha}_{N,k}\rangle = \langle\lambda,\beta_N\rangle-\langle\lambda,\beta_k\rangle$, we obtain
\[
\langle f_{\lambda}^*(\lambda),\hat{\rho}+(\hat{w}_{\wh{Q}}\hat{w}_N)^{-1}\hat{\rho}\rangle = \sum_{l=N+1}^r(\langle\lambda,\beta_N\rangle -\langle\lambda,\beta_l\rangle) = \sum_{k<m}(m-k)\dim_{\C}(M_{\lambda,k}).
\]
Now, assertion ($b$) of Theorem \ref{thm:nsc_pair_is_wellcovering_generalcase} may be written as follows,
\begin{align*}
\langle \lambda,\rho+w^{-1}\rho\rangle + \langle i^*(\lambda),\tilde{\rho}+\tilde{w}^{-1}\tilde{\rho}\rangle-\langle\lambda,2\rho\rangle & = \langle \lambda,w^{-1}\rho+w'^{-1}\rho\rangle \\
& + \sum_{k<m}(m-k)\dim_{\C}(M_{\lambda,k}).
\end{align*}
Hence, we have proved that assertion ($b$) of Theorem \ref{thm:nsc_pair_is_wellcovering_generalcase} for $(w,\tilde{w})$, is equivalent to $\langle w\lambda+w'\lambda,\rho\rangle+\sum_{k<m}(m-k)\dim_{\C}(M_{\lambda,k}) = 0$.

\begin{lem}
\label{lem:equation_lambda_extremalcase}
For any $w\in W^{\lambda}$, we have $\langle w\lambda+w_0ww_{\lambda}\lambda,\rho\rangle=0$.
\end{lem}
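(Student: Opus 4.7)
The plan is to use two elementary facts about the Weyl group action: first, that $W_\lambda$ (the Weyl group of the Levi $\KC^\lambda$) fixes $\lambda$ pointwise, and second, that the longest element $w_0$ of $W$ acts as an involutive isometry on $\got{t}^*$ with $w_0\rho = -\rho$.

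More precisely, I would proceed as follows. Since $W_\lambda$ stabilizes $\lambda$ (as $\KC^\lambda$ is the centralizer of $\lambda(\C^*)$), we have $w_\lambda\lambda=\lambda$, and therefore
\[
\langle w\lambda+w_0ww_{\lambda}\lambda,\rho\rangle = \langle w\lambda,\rho\rangle + \langle w_0w\lambda,\rho\rangle.
\]
Now, using that the Weyl group acts by isometries (for the Killing form restricted to $\got{t}^*$) and that $w_0^{-1}=w_0$, I have
\[
\langle w_0w\lambda,\rho\rangle = \langle w\lambda,w_0\rho\rangle = -\langle w\lambda,\rho\rangle,
\]
where the last equality uses the classical fact $w_0\rho=-\rho$ (because $w_0$ sends the set $\got{R}^+$ of positive roots to $-\got{R}^+$, and $\rho=\tfrac12\sum_{\alpha\in\got{R}^+}\alpha$). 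Substituting back, the two terms cancel and we obtain $\langle w\lambda+w_0ww_{\lambda}\lambda,\rho\rangle=0$.

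There is really no obstacle here: the hypothesis $w\in W^\lambda$ plays no role (any $w\in W$ works), and the lemma is a one-line consequence of the identities $w_\lambda\lambda=\lambda$ and $w_0\rho=-\rho$. The statement is phrased this way only because it is used in subsection \ref{subsection:proof_thm_wellcoveringpair} to dispose of the extremal case $w'=w_0ww_\lambda$ of assertion (\textit{b}) of Theorem \ref{thm:nsc_pair_is_wellcovering_generalcase}, where one must check that the linear condition $\langle w\lambda+w'\lambda,\rho\rangle+\sum_{k<m}(m-k)\dim_\C(M_{\lambda,k})=0$ reduces to $0=0$ when $M_{<m}=0$.
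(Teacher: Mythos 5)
Your proof is correct and is essentially the paper's own argument: the paper proves this lemma in one line from exactly the two identities you isolate, $w_{\lambda}\lambda=\lambda$ (since $w_{\lambda}\in W_{\lambda}$) and $w_0\rho=-\rho$. The only cosmetic remark is that the cancellation $\langle w_0w\lambda,\rho\rangle=-\langle w\lambda,\rho\rangle$ is most naturally justified via the $W$-invariance of the duality pairing between $\got{t}$ and $\got{t}^*$ together with $w_0^{-1}=w_0$, rather than the Killing form on $\got{t}^*$, but this is the same fact and does not affect correctness.
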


\begin{proof}
This directly follows from $w_0\rho = -\rho$, and $w_{\lambda}\lambda = \lambda$, since $w_{\lambda}\in W_{\lambda}$.
\end{proof}


In the case of $N=r$, we have $M_{< m} = 0$, from Remark \ref{rem:N_and_M<m}. And this clearly yields that $\sum_{k<m}(m-k)\dim_{\C}(M_{\lambda,k}) = 0$. Lemma \ref{lem:equation_lambda_extremalcase} implies that the equation
\[
\langle w\lambda+w'\lambda,\rho\rangle+\sum_{k<m}(m-k)\dim_{\C}(M_{\lambda,k}) = 0
\]
is always satisfied. That is, if $w'=w_0ww_{\lambda}$, then the assertion ($b$) is satisfied.

\bigskip

Finally, we proved that the pair $(C(w,w',m),\lambda)$ is well covering if and only if we have either $w'=w_0ww_{\lambda}$ and $M_{<m}=0$ (that is, $N=r$), or the following two assertions are satisfied:
\begin{enumerate}
\item $\sigma_{w_0w}^B\,.\,\sigma_{w_0w'}^B\,.\,\prod_{\beta\in\WT(M_{<m})}\Theta(-\beta)^{n_{\beta}} = \sigma_{w_0w_{\lambda}}^B$,
\item $\langle w\lambda+w'\lambda,\rho\rangle+\sum_{k<m}(m-k)\dim_{\C}(M_{\lambda,k}) = 0$.
\end{enumerate}
This is exactly the statement of Theorem \ref{thm:nsc_wellcoveringpair}. This ends the proof of Theorem \ref{thm:nsc_wellcoveringpair}.


\section{The moment polyhedron $\Delta_K(G\cdot\Lambda)$}
\label{section:the_moment_polyhedra_DeltaK(GLambda)}

Let $G$ be a simple, connected, noncompact, real Lie group with finite center, $K$ a maximal compact subgroup of $G$, and $\got{g} = \got{k}\oplus\got{p}$ the associated Cartan decomposition. In this section, we finally give a set of equations of the Kirwan polyhedron associated with the projection on $\got{k}^*$ of an holomorphic coadjoint orbit of $G$.

\subsection{Equations of the moment polyhedron $\Delta_K(G\cdot\Lambda)$}
\label{subsection:equations_momentpolyhedron_G0Lambda}

We use notations from subsection \ref{subsection:Kirwanpolyhedron_KLambdaE}. Recall that we can define a $K$-invariant Hermitian structure on $\got{p}^-$,
\[
h_{\got{p}^-}(X,Y) = B_{\got{g}}(X, Y) - iB_{\got{g}}(X, \ad(z_0)Y), \mbox{ for all } X,Y\in\got{p},
\]
using the canonical $K$-equivariant isomorphism $\got{p}\rightarrow\got{p}^-$. The associated $K$-invariant symplectic form is
\[
\Omega_{\got{p}^-}(X,Y) = B_{\got{g}}(X, \ad(z_0)Y), \mbox{ for all } X,Y\in\got{p}.
\]
We get a group homomorphism $\zeta: K\rightarrow U(\got{p}^-)$.

Let $\Lambda\in\Chol$. We know from Corollary \ref{cor:equality_of_momentpolyhedra} that $\Delta_K(G\cdot\Lambda) = \Delta_K(K\cdot\Lambda\times\got{p}^-)$. We thus apply the results of section \ref{section:equations_momentpolyhedron_KLambdaE}, for $E = \got{p}^-$. As we saw in subsection \ref{subsection:Kirwanpolyhedron_KLambdaE}, the corresponding moment map $\Phi_{\got{p}^-}$ is proper.

\begin{lem}
The kernel of $\zeta:K\rightarrow U(\got{p}^-)$ is equal to the center $Z(G)$ of $G$. In particular, $\ker\zeta$ is finite.
\end{lem}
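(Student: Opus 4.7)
The plan is to identify $\ker\zeta$ explicitly with $Z(G)$, using that $G$ is simple. First I would observe that because the $K$-equivariant isomorphism $\got{p}\cong\got{p}^-$ intertwines the two $K$-actions,
\[
\ker\zeta = \{k\in K\ |\ \Ad(k)|_{\got{p}} = \id_{\got{p}}\}.
\]

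For the inclusion $Z(G)\subseteq\ker\zeta$, I need $Z(G)\subseteq K$, which follows from maximality of $K$: since $G$ has finite center, $Z(G)$ is compact, so $Z(G)\cdot K$ is a compact subgroup of $G$ containing $K$, hence equal to $K$. Any $z\in Z(G)$ satisfies $\Ad(z)=\id_{\got{g}}$, and in particular $\Ad(z)|_{\got{p}}=\id$, giving the first inclusion.

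For the reverse inclusion $\ker\zeta\subseteq Z(G)$, let $k\in K$ with $\Ad(k)|_{\got{p}}=\id$. For any $X,Y\in\got{p}$,
\[
\Ad(k)[X,Y] = [\Ad(k)X,\Ad(k)Y] = [X,Y],
\]
so $\Ad(k)$ acts trivially on $[\got{p},\got{p}]$ as well. The key remaining step, and the main (though mild) obstacle, is to establish $[\got{p},\got{p}]=\got{k}$. Since $[\got{k},\got{k}]\subseteq\got{k}$ and $[\got{p},\got{p}]\subseteq\got{k}$, one checks that $[\got{p},\got{p}]+\got{p}$ is a Lie subalgebra of $\got{g}$; and using $[\got{k},\got{p}]\subseteq\got{p}$ together with the Jacobi identity $[\got{k},[\got{p},\got{p}]]\subseteq[\got{p},\got{p}]$, it is even an ideal of $\got{g}$, nonzero because $\got{p}\neq 0$. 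Simplicity of $\got{g}$ then forces $[\got{p},\got{p}]+\got{p}=\got{g}$, whence $[\got{p},\got{p}]=\got{k}$.

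Combining these two facts, $\Ad(k)$ is the identity on $\got{k}\oplus\got{p}=\got{g}$, so $k\in\ker\Ad=Z(G)$ (using that $G$ is connected). This proves $\ker\zeta=Z(G)$, and finiteness is then immediate from the standing assumption that $G$ has finite center.
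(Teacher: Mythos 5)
Your proof is correct and follows essentially the same route as the paper: from $\Ad(k)|_{\got{p}}=\id$ deduce $\Ad(k)=\id$ on $[\got{p},\got{p}]=\got{k}$ and hence on all of $\got{g}$, then conclude $k\in Z(G)$. The only differences are in the supporting details — you prove $[\got{p},\got{p}]=\got{k}$ inline (the paper cites Knapp) and invoke $\ker\Ad=Z(G)$ for connected $G$ where the paper runs the explicit Cartan-decomposition argument, and you also justify the easy inclusion $Z(G)\subseteq\ker\zeta$ which the paper dismisses as clear.
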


\begin{proof}
Fix $k\in\ker\zeta$. For all $Y\in\got{p}^-$, $\Ad(k)Y = Y$. Since $\got{p}$ and $\got{p}^-$ are $K$-equivariantly isomorphic for the adjoint action, $\Ad(k)Y' = Y'$ for all $Y'\in\got{p}$. Thus, for all $Y,Z\in\got{p}$, we have $\Ad(k)[Y,Z] = [\Ad(k)Y,\Ad(k)Z] = [Y,Z]$. Consequently, $\Ad(k)$ is the identity map on $[\got{p},\got{p}] = \got{k}$, because $\got{g}$ is simple (see \cite{knapp}, problem $24$ page $430$). By linearity, since $\got{g} = \got{k} \oplus \got{p}$, $\Ad(k)$ fixes every element in $\got{g}$.

Now, if $g\in G$, the Cartan decomposition on the Lie group $G$ yields $g = h\exp(Y)$, with $h\in K$ and $Y\in\got{p}$. Since $K$ is compact and connected, the map $\exp:\got{k}\rightarrow K$ is surjective. Hence, there exists $X\in\got{k}$ such that $h = \exp(X)$. This implies $kgk^{-1} = \exp(\Ad(k)X)\exp(\Ad(k)Y) = \exp(X)\exp(Y) = g$, and $k$ commutes with $g$. Thus $\ker\zeta\subseteq Z(G)$. And clearly $Z(G)\subseteq\ker\zeta$. This proves the lemma.
\end{proof}

\begin{thm}[Equations of $\Delta_K(G\cdot\Lambda)$]
\label{thm:equations_DeltaK_G0Lambda}
Let $\mathscr{P}$ be a set of dominant pairs of $X_{\got{p}^-\oplus\C}$ such that $\mathscr{P}^{wc}_0\subseteq\mathscr{P}$. Fix $\Lambda\in\Chol$ and $\xi\in\got{t}^*_+$. Then $\xi$ is in $\Delta_K(G\cdot\Lambda)$ if and only if $\xi$ verifies the equations $\langle w\lambda,\xi\rangle\leq\langle w_0w'\lambda,\Lambda\rangle$ for all pairs $(C(w,w',0),\lambda)\in\mathscr{P}_0$.
\end{thm}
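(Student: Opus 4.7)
The plan is to assemble results already established in the paper, applying them to the special case $E=\got{p}^-$. Nothing new is required beyond recognising that $(\got{p},\Omega_{\got{p}})$ and $(\got{p}^-,\Omega_{\got{p}^-})$ are isomorphic as Hamiltonian $K$-manifolds, and checking that the hypotheses of Corollary \ref{cor:generalequations_DeltaKxK(T*KxE)} are fulfilled in this setting.

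First I would invoke Corollary \ref{cor:equality_of_momentpolyhedra} to reduce the problem to a compact computation: $\Delta_K(G\cdot\Lambda)=\Delta_K(K\cdot\Lambda\times\got{p})$, where $\got{p}$ carries the constant $K$-invariant symplectic form $\Omega_{\got{p}}(X,Y)=B_{\got{g}}(X,\ad(z_0)Y)$ from subsection \ref{subsection:Kirwanpolyhedron_KLambdaE}. Next, the canonical $K$-equivariant $\C$-linear projection $\got{p}\to\got{p}^-$ coming from the decomposition $\got{p}_{\C}=\got{p}^+\oplus\got{p}^-$ is a real $K$-module isomorphism (since $\ad(z_0)|_{\got{p}}^2=-\id_{\got{p}}$), and by construction it carries $\Omega_{\got{p}}$ onto the imaginary part of $-h_{\got{p}^-}$, i.e.\ onto $\Omega_{\got{p}^-}$ as defined at the opening of subsection \ref{subsection:equations_momentpolyhedron_G0Lambda}. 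Transporting the diagonal Hamiltonian $K$-structure through this symplectic $K$-isomorphism yields $\Delta_K(K\cdot\Lambda\times\got{p})=\Delta_K(K\cdot\Lambda\times\got{p}^-)$.

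Finally I would apply Corollary \ref{cor:generalequations_DeltaKxK(T*KxE)} to $E=\got{p}^-$, which delivers exactly the set of inequalities $\langle w\lambda,\xi\rangle\leq\langle w_0w'\lambda,\Lambda\rangle$ indexed by $(C(w,w',0),\lambda)\in\mathscr{P}_0$. Two hypotheses must be checked. The properness of $\Phi_{\got{p}^-}:\got{p}^-\to\got{k}^*$ follows from the properness of $\Phi_{\got{p}}$ already verified in subsection \ref{subsection:Kirwanpolyhedron_KLambdaE} (and transported through the symplectic identification above). The finiteness of $\ker\zeta$, which was not needed by Corollary \ref{cor:generalequations_DeltaKxK(T*KxE)} directly but underlies the structural results on which it rests (Corollary \ref{cor:coneample_nonemptyinterior} and Theorem \ref{thm:equations_PiQ}, both demanding that $C_{\Q}(X_{E\oplus\C})^+$ have nonempty interior), is exactly the content of the lemma immediately preceding the theorem.

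I do not anticipate any serious obstacle: the genuinely hard content—the cohomological characterisation of well covering pairs (section \ref{section:wellcoveringpairs}) and the passage from the semiample cone of $X_{\got{p}^-\oplus\C}$ to equations for $\Pi_{\Q}(\got{p}^-)$ via Theorem \ref{thm:nsc_wellcoveringpair_zeroweight}—has been carried out in previous sections. The present theorem is the final specialisation of that machinery to the concrete representation $\got{p}^-$ furnished by the Hermitian structure of $G/K$, and its proof is therefore essentially a one-line invocation of Corollaries \ref{cor:equality_of_momentpolyhedra} and \ref{cor:generalequations_DeltaKxK(T*KxE)}.
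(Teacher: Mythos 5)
Your proposal is correct and follows essentially the same route as the paper: reduce via Corollary \ref{cor:equality_of_momentpolyhedra} and the $K$-equivariant symplectic identification $\got{p}\cong\got{p}^-$ to the case $E=\got{p}^-$, then invoke Corollary \ref{cor:generalequations_DeltaKxK(T*KxE)} after checking properness of $\Phi_{\got{p}^-}$ and finiteness of $\ker\zeta$ (the latter being exactly the lemma the paper places just before the theorem). The paper's proof is the same one-line specialisation; your write-up merely makes the symplectic identification and the role of the finite-kernel hypothesis more explicit.
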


\begin{proof}
The proof directly results from Corollary \ref{cor:generalequations_DeltaKxK(T*KxE)}.
\end{proof}

We notice that the weights of $\TC$ on $\got{p}^-$ are the noncompact negative roots, i.e. $\WT(\got{p}^-) = \got{R}_n^-$. Moreover, a dominant one parameter subgroup $\lambda$ is $\got{p}^-$-admissible if there exists $n-1$ noncompact positive roots $\beta_1,\ldots,\beta_{n-1}$ such that $\C\lambda = \cap_{i=1}^{n-1}\ker\beta_i$.

\begin{prop}
Fix $\Lambda\in\Chol$. Then $\Delta_K(K\cdot\Lambda\times\got{p}^-)\subset\Chol$. In particular, for all $\Lambda\in\Chol$, we have $\Delta_K(G\cdot\Lambda)\subset\Chol$.
\end{prop}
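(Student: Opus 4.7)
The plan is to combine the containment $\Delta_K(K\cdot\Lambda\times E)\subseteq \Lambda+\CR(-\WT(E))$ from Proposition \ref{prop:faces_ConeWtT(E)_and_wellcoveringpairs} (applied to $E=\got{p}^-$) with the special geometry of positive noncompact roots in the Hermitian symmetric setting.

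First, I would apply Proposition \ref{prop:faces_ConeWtT(E)_and_wellcoveringpairs} with $E=\got{p}^-$. Since $\WT(\got{p}^-)=\got{R}_n^-$, this gives $-\WT(\got{p}^-)=\got{R}_n^+$ and consequently
\[
\Delta_K(K\cdot\Lambda\times\got{p}^-)\subseteq \Lambda+\CR(\got{R}_n^+).
\]
So it suffices to check that every element of the right-hand side lies in $\Chol$, using that $\Lambda\in\Chol$ means $(\beta,\Lambda)>0$ for all $\beta\in\got{R}_n^+$.

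The key ingredient is the following standard fact about Hermitian symmetric pairs: since $\got{p}^+$ is an abelian subalgebra of $\got{g}_{\C}$ (as $\ad(z_0)|_{\got{p}}^2=-\id$ implies $[\got{p}^+,\got{p}^+]=0$), the sum of any two elements of $\got{R}_n^+$ is never a root. Hence if $\beta,\beta'\in\got{R}_n^+$, we cannot have $(\beta,\beta')<0$, for otherwise $\beta+\beta'$ would be a root. Therefore $(\beta,\beta')\geq 0$ for all $\beta,\beta'\in\got{R}_n^+$.

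Now take $\xi\in\Delta_K(K\cdot\Lambda\times\got{p}^-)$. By the above containment, write $\xi=\Lambda+\sum_i c_i\beta_i$ with $c_i\geq 0$ and $\beta_i\in\got{R}_n^+$. For an arbitrary $\beta\in\got{R}_n^+$ one computes
\[
(\beta,\xi)=(\beta,\Lambda)+\sum_i c_i(\beta,\beta_i)\geq (\beta,\Lambda)>0,
\]
so $\xi\in\Chol$, proving the first assertion. For the ``in particular'' clause, combine Corollary \ref{cor:equality_of_momentpolyhedra}, which yields $\Delta_K(G\cdot\Lambda)=\Delta_K(K\cdot\Lambda\times\got{p})$, with the $K$-equivariant symplectic identification of $\got{p}$ with $\got{p}^-$ used at the beginning of subsection \ref{subsection:equations_momentpolyhedron_G0Lambda} to get $\Delta_K(G\cdot\Lambda)=\Delta_K(K\cdot\Lambda\times\got{p}^-)\subset\Chol$. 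The only non-formal step is the abelianness-of-$\got{p}^+$ input, which is immediate from the Hermitian hypothesis, so I do not anticipate a real obstacle.
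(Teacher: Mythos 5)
Your proof is correct and follows essentially the same route as the paper's: both start from Proposition \ref{prop:faces_ConeWtT(E)_and_wellcoveringpairs} to get $\Delta_K(K\cdot\Lambda\times\got{p}^-)\subseteq(\Lambda+\CR(\got{R}_n^+))\cap\got{t}^*_+$, and both then use that the sum of two positive noncompact roots is never a root to conclude $(\beta,\beta')\geq 0$ and hence $(\beta,\xi)>0$. The only cosmetic differences are that the paper reduces to checking the single inequality $(\bmin,\xi)>0$ (which cuts out $\Chol$ inside $\got{t}^*_+$) rather than all $\beta\in\got{R}_n^+$, and cites Knapp for the non-rootness where you invoke the abelianness of $\got{p}^+$; both justifications are valid.
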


\begin{proof}
By Proposition \ref{prop:faces_ConeWtT(E)_and_wellcoveringpairs}, we have $\Delta_K(K\cdot\Lambda\times\got{p}^-) \subset (\Lambda + \CR(\got{R}_n^+))\cap\got{t}^*_+$. We denote by $\bmin$ the smaller noncompact positive root. Then $\Chol = \{\xi\in\got{t}^*_+\ | \ (\bmin,\xi) > 0\}$, where $(\cdot,\cdot)$ is the inner product on $\got{t}^*$ induced by the Killing form on $\got{g}$. For any root $\beta\in\got{R}_n^+$,  $\bmin+\beta$ is not a root, nor $0$, so by \cite[Proposition 2.48 (e)]{knapp}, $(\bmin,\beta)\geq 0$. So any element $\xi$ in $\CR(\got{R}_n^+)$ verifies $(\bmin,\xi)\geq 0$. And if $\Lambda$ is in $\Chol$, then we have $(\bmin,\xi)> 0$ for all $\xi\in(\Lambda + \CR(\got{R}_n^+))$. Now, it is clear that $\Delta_K(K\cdot\Lambda\times\got{p}^-) \subset (\Lambda + \CR(\got{R}_n^+))\cap\got{t}^*_+ \subset \Chol$, as soon as $\Lambda$ is in $\Chol$.
\end{proof}

\begin{rem}
By Theorem \ref{thm:momentpolyhedron_and_irreduciblerepresentations}, the moment polyhedron $\Delta_K(K\cdot\Lambda\times\got{p})$, and thus $\Delta_K(G\cdot\Lambda)$, is closely related to the action of $K$ on $\got{p}^-$ and $\C[\got{p}^-]$. In fact, $\got{p}^-$ is an irreducible complex representation of $K$. But we have a more stronger property: the algebra $\C[\got{p}^-]$ is multiplicity-free as $K$-module. More specifically, we have,
\[
\C[\got{p}^-]=\sum_{p_1\geq\ldots\geq p_r\geq 0}V^K_{p_1\gamma_1+\ldots+p_r\gamma_r},
\]
where $\{\gamma_1,\ldots,\gamma_r\}$ is the maximal set constructed inductively: $\gamma_1$ is the maximal positive noncompact root, and, for $i=1,\ldots,r-1$, $\gamma_{i+1}$ is the maximal noncompact positive root strongly orthogonal to $\gamma_1,\ldots,\gamma_i$. Here, two roots $\alpha$ and $\beta$ are \emph{strongly orthogonal} if neither of $\alpha\pm\beta$ is a root. See \cite{johnson80} for more details.

In the classical cases, we have the following list, from \cite{knapp},

\begin{enumerate}
\item when $G=Sp(\R^{2n})$, $K=U(n)$ and $\got{p}^-\cong \mathrm{Sym}^2(\C^n)$, with standard action of $U(n)$;
\item when $G=SO^*(2n)$, $K=U(n)$ and $\got{p}^-\cong \wedge^2\C^n$, with standard action of $U(n)$;
\item when $G=SU(p,q)$, $K=S(U(p)\times U(q))$ and $\got{p}^-\cong M_{p,q}(\C)$, with $U(p)$ (resp. $U(q)$) acting by left (resp. right) multiplication on the space of matrices $M_{p,q}(\C)$;
\item when $G=SO(p,2)$, $K=SO(p)\times SO(2)$, and $\got{p}^-\cong \C^p$, with standard action of $SO(p)$ and $SO(2)=S^1$ on $\C^p$.
\end{enumerate}

In the next paragraph, we will compute effectively the equations of $\Delta_K(K\cdot\Lambda\times\got{p})$ without taking account of this property of $\C[\got{p}^-]$. But it would be interesting to see if it brings other geometric properties or simplification in the computation of the equations.
\end{rem}

\subsection{Examples of moment polyhedra}
\label{subsection:examples_of_moment_polyhedra}

In this subsection, we give a complete description of the moment polyhedron $\Delta_K(G\cdot\Lambda)$ when $G = Sp(\R^{2n})$, $SU(n,1)$ and $SU(2,2)$. In the first two cases, the maximal compact subgroup $K$ is isomorphic to $U(n)$. Let $T$ denote the maximal torus of diagonal matrices of $U(n)$. Let $\got{t}$ be the Lie algebra of $T$. We have a canonical basis $(e_1^*,\ldots,e_n^*)$ in $\got{t}^*$, where $e_j^*(i\diag(h_1,\ldots,h_n)) = h_j$. 
Let $(e_1,\ldots,e_n)$ be its dual basis in $\got{t}$. The compact roots of $\got{g}$ are the $\alpha_{i,j} = e_i^*-e_j^*$, with $1\leq i,j \leq n$, $i\neq j$. The set of compact positive roots is $\got{R}_c^+ = \{\alpha_{i,j}\ | \ 1\leq i<j\leq n\}$.

\subsubsection{Moment polyhedron of $Sp(\R^{2n})$, $n\geq 2$}

When $G = Sp(\R^{2n})$, the noncompact roots are the linear forms $\pm\beta_{i,j} = \pm(e_i^*+e_j^*)$, for $1\leq i\leq j\leq n$. The set of noncompact positive roots is $\got{R}^+_n=\{\beta_{i,j}\ | \ 1\leq i\leq j\leq n\}$. The smallest noncompact negative root is $-\beta_{1,1}=\bmin$. Any noncompact negative root $\beta$ is the sum
\[
\beta = \bmin + \sum_{\alpha\in\got{R}_c^+} n_{\alpha}\alpha,
\]
with $n_{\alpha}\in\Z_{\geq 0}$ for all $\alpha\in\got{R}_c^+$. Then for all dominant one parameter subgroups $\lambda$, $\langle\lambda,\bmin\rangle \leq \langle\lambda,\beta\rangle$, for all roots $\beta\in\got{R}_n^-$.

The Chevalley Formula yields $\Theta(-\bmin) = \Theta(\beta_{1,1}) = 2\sigma^B_{s_{\alpha_{1,2}}}$. Thus, for all $k\geq 1$, 
\[
\sigma_{w_0w}^B\,.\,\sigma_{w_0w'}^B\,.\,\prod_{\beta\in\WT(\got{p}^-_{< m})}\Theta(-\beta)^{n_{\beta}}= 2p\sigma_{w_0w_{\lambda}}^B+\ldots,
\]
for some $p\in\Z$
. We deduce that the only well covering pairs are of the form $(C(w,w_0ww_{\lambda},0),\lambda)$, with $w\in W^{\lambda}$, such that $\got{p}^-_{<0}= 0$.

The unique indivisible dominant $\got{p}^-$-admissible one parameter subgroup with $\got{p}^-_{<0}=0$ is $\lambda = (0,\ldots,0,-1)$. Theorem \ref{thm:equations_DeltaK_G0Lambda} implies the following complete description of the moment polyhedron of the Hamiltonian $K$-manifold $G\cdot\Lambda$.

\begin{thm}
For $G = Sp(\R^{2n})$ and $\Lambda\in\Chol$, we have
\begin{align*}
\Delta_K(G\cdot\Lambda) & = (\Lambda + \CR(\got{R}_n^+))\cap \got{t}^*_+ \\
& = \{(\xi_1,\ldots,\xi_n)\in\got{t}^*_+ \ | \ \xi_i\geq\Lambda_i, 
\mbox{ for all } i=1,\ldots,n\}.
\end{align*}
\end{thm}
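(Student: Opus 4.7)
The plan is to apply Corollary \ref{cor:generalequations_DeltaKxK(T*KxE)} (the specialization of Theorem \ref{thm:equations_DeltaK_G0Lambda}) once all well covering pairs with $m=0$ have been explicitly listed. The paragraph preceding the theorem already shows, via the Chevalley relation $\Theta(-\bmin)=2\sigma^B_{s_{\alpha_{1,2}}}$, that each factor $\Theta(-\beta)$ for $\beta\in\got{R}_n^-$ contributes an even coefficient in cohomology. Consequently the cohomological condition of Theorem \ref{thm:nsc_wellcoveringpair}(\textit{i}) can produce the primitive class $\sigma^B_{w_0w_\lambda}$ only when the product of $\Theta$'s is empty, i.e.\ when $\got{p}^-_{<0}=0$; Corollary \ref{cor:info_lengths_elements_for_wellcoveringpair} then forces $w'=w_0ww_\lambda$. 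Since the unique indivisible dominant $\got{p}^-$-admissible one parameter subgroup of $\TC$ satisfying $\got{p}^-_{<0}=0$ is $\lambda=-e_n$, the set $\mathscr{P}_0^{wc}$ reduces to a family indexed by the cosets of $W/W_\lambda$.

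Next I would translate this into explicit inequalities. With $\lambda=-e_n$ one has $W=S_n$ and $W_\lambda=S_{n-1}$ (the stabilizer of $n$), so $W/W_\lambda$ admits representatives $w_i$ with $w_i(n)=i$ for $i=1,\ldots,n$. Each $w_i$ gives $w_i\lambda=-e_i$, and for $w'=w_0w_iw_\lambda$ I would compute, using $w_0^2=\id$ and $w_\lambda\lambda=\lambda$, that
\[
w_0w'\lambda \;=\; w_iw_\lambda\lambda \;=\; w_i\lambda \;=\; -e_i.
\]
Substituting into the inequality $\langle w\lambda,\xi\rangle\leq\langle w_0w'\lambda,\Lambda\rangle$ of Corollary \ref{cor:generalequations_DeltaKxK(T*KxE)} yields $-\xi_i\leq -\Lambda_i$, i.e.\ $\xi_i\geq\Lambda_i$, and letting $i$ range over $\{1,\ldots,n\}$ produces the first equality of the theorem.

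For the second equality, I would observe that the positive noncompact roots $\beta_{i,j}=e_i^*+e_j^*$ generate the same convex cone as $\{e_1^*,\ldots,e_n^*\}$: since $e_i^*=\tfrac12\beta_{i,i}$ and each $\beta_{i,j}$ is a nonnegative combination of the $e_k^*$'s, the cone $\CR(\got{R}_n^+)$ coincides with the first orthant in the basis $(e_1^*,\ldots,e_n^*)$. Hence $\xi-\Lambda\in\CR(\got{R}_n^+)$ if and only if $\xi_i\geq\Lambda_i$ for every $i$, which identifies the two descriptions of $\Delta_K(G\cdot\Lambda)$. The only substantive step is the isolation of the single relevant one parameter subgroup $\lambda=-e_n$, essentially carried out just above the statement; the remainder is a short Weyl group computation.
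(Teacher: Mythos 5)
Your proposal follows the paper's own route: eliminate all well covering pairs with $\got{p}^-_{<0}\neq 0$ via the parity of the Chevalley product, identify $\lambda=(0,\ldots,0,-1)$ as the only remaining dominant indivisible $\got{p}^-$-admissible one parameter subgroup, and then unwind the inequalities of Corollary \ref{cor:generalequations_DeltaKxK(T*KxE)}. The Weyl-group computation ($w_i\lambda=-e_i$, $w_0w'\lambda=w_iw_\lambda\lambda=-e_i$, hence $\xi_i\geq\Lambda_i$) and the identification of $\CR(\got{R}_n^+)$ with the first orthant are both correct and match the paper.

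There is, however, one incorrect intermediate claim in your elimination step: it is not true that \emph{each} factor $\Theta(-\beta)$ for $\beta\in\got{R}_n^-$ has even coefficients. For $i<j$ one has $\beta_{i,j}=e_i^*+e_j^*=\pi_{\alpha_{j-1,j}}+\pi_{\alpha_{i-1,i}}-\ldots$, and e.g.\ $\Theta(\beta_{1,2})=\Theta(e_1^*+e_2^*)=\sigma^B_{s_{\alpha_{2,3}}}$ has coefficient $1$; only the long roots $\beta_{i,i}=2e_i^*$ give the factor $2$. The argument still goes through, but for the reason the paper gives just before the statement: since every $\beta\in\got{R}_n^-$ is $\bmin$ plus a nonnegative combination of positive compact roots, one has $\langle\lambda,\bmin\rangle\leq\langle\lambda,\beta\rangle$ for every dominant $\lambda$, so whenever $\got{p}^-_{<0}\neq 0$ the weight $\bmin$ itself belongs to $\WT(\got{p}^-_{<0})$, and the single factor $\Theta(-\bmin)=2\sigma^B_{s_{\alpha_{1,2}}}$ already forces the whole product to have even coefficients, hence to differ from the primitive class $\sigma^B_{w_0w_\lambda}$. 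With that one-line repair your proof coincides with the paper's.
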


\begin{figure}[htb]
\label{figure:polyèdremoment_sp(r,n)}
\begin{center}
\setlength{\unitlength}{1cm}
\resizebox{8cm}{8cm}{
\begin{pspicture}(10,10)
\definecolor{Couleur1}{rgb}{0,0.5,0.7}
\definecolor{Couleur2}{rgb}{0.6,0.4,0.2}

\pspolygon[linecolor=lightgray,fillstyle=solid,linewidth=0pt,fillcolor=lightgray](2,0)(2,10)(3.5,10)(4.5,9.9)(5,9.8)(6,9.6)(7,9.2)(8,8.6)(8.8,7.8)(9.3,7)(9.7,6.3)(10,5)(9.7,3.7)(9.3,3)(8.8,2.2)(8,1.4)(7,0.8)(6,0.4)(5,0.2)(4.5,0.1)(3.5,0)
\pspolygon[linecolor=gray,fillstyle=solid,linewidth=0pt,fillcolor=gray](2,1)(2,9.8)(3.5,9.8)(4.5,9.7)(5.5,9.4)(6.5,9)(7.6,8.4)(8.5,7.5)
\pspolygon[linecolor=lightgray,fillstyle=solid,linewidth=0pt,fillcolor=lightgray](3.5,5)(2,6.5)(2,9.7)(3.5,9.7)(4.5,9.6)(5.5,9.3)(6.3,8.95)(7,8.5)

\psdot(3.5,5)

\psline[linewidth=1pt](2,0)(2,10)
\psline[linewidth=1pt](1,0)(8.8,7.8)
\psline[linewidth=1pt](3.5,5)(2,6.5)
\psline[linewidth=1pt](3.5,5)(7,8.5)

\psline[linewidth=1pt]{->}(2,1)(2,2)
\psline[linewidth=1pt]{->}(2,1)(3,1)
\psline[linewidth=1pt]{->}(2,1)(3,2)
\psline[linewidth=1pt]{->}(2,1)(1,2)

\rput(3.5,4.5){$\Lambda$}
\rput(3,0.7){$\alpha$}
\rput(3,1.5){$\beta_{1,1}$}
\rput(2.4,2){$\beta_{1,2}$}
\rput(1,1.5){$\beta_{2,2}$}
\rput(8.1,4){$\got{t}^*_+$}
\rput(6.3,6.8){$\Chol$}
\rput(3.7,7.5){$\Delta_K(G\cdot\Lambda)$}

\end{pspicture}
}
\end{center}
\caption{Polyhedron $\Delta_K(Sp(\R^4)\cdot\Lambda)$ for $\Lambda\in\Chol$}
\end{figure}

\subsubsection{Moment polyhedron of $SU(n,1)$, $n\geq 2$}
\label{subsubsection:momentpolyhedron_su(n,1)}

When $G = SU(n,1)$, the noncompact roots are the linear form $\pm\beta_k = \pm(e_k^* + \sum_{j=1}^n e_j^*)$ for all $k=1,..,n$, with $\got{R}_n^+ =\{\beta_k\ | \ k=1,\ldots,n\}$ and $\got{R}_n^- =\{-\beta_k\ | \ k=1,\ldots,n\}$. The smallest noncompact negative root is $\bmin = -\beta_1$, and for all $k=2,\ldots,n$, we have
\[
-\beta_k = \bmin + \alpha_{1,2} + \ldots + \alpha_{k-1,k}.
\]
For all $k=1,\ldots,n$, we define the one parameter subgroup $\lambda_k = (n+1)e_k - \sum_{j=1}^ne_j$. We can easily check that $\C\lambda_k = \cap_{j\neq k} \ker(-\beta_j)$. Thus, $\pm\lambda_k$ is $\got{p}^-$-admissible, and the set of $\got{p}^-$-admissible indivisible one parameter subgroup is $\{\pm\lambda_k\ | \ k=1,\ldots,n\}$. Furthermore, the dominant indivisible $\got{p}^-$-admissible one parameter subgroups are $\lambda_1$ et $-\lambda_n$.
The latter, $-\lambda_n$, yields $\got{p}^-_{<0} = 0$, because $\langle\lambda,\bmin\rangle = 0$. Then $-\lambda_n$ gives the equations of the convex cone $\Lambda + \CR(\got{R}_n^+)$. These equations are $\langle-\lambda_k,\xi-\Lambda\rangle \leq 0$, for all $k$ in $\{1,\ldots,n\}$, that is, $\langle\lambda_k,\xi\rangle \geq \langle\lambda_k,\Lambda\rangle$.

Unlike $Sp(\R^{2n})$, we will see there are other equations, those of $\lambda_1$. We have $\langle\lambda_1,\bmin\rangle = -(n+1)$, and $\langle\lambda_1,-\beta_k\rangle = 0$, for all $k=2,\ldots,n$. Furthermore, $\langle\lambda_1,\alpha_{1,2}\rangle=n+1 > 0$ and $\langle\lambda_1,\alpha_{k,k+1}\rangle = 0$, for all $k\in\{2,\ldots,n-1\}$. Hence, $P(\lambda_1)$ is equal to the parabolic subgroup \eqref{eq:parabolicsubgroup_wh{Q}} for $r=n$.
The well covering pairs with $\lambda_1$ and $m=0$ are the pairs $(C(w,w',0),\lambda_1)$, with $(w,w')\in W^{\lambda}\times W^{\lambda}$, such that
\[
\sigma_{w_0w}^B\,.\,\sigma_{w_0w'}^B\,.\,\Theta(-\bmin) = \sigma_{w_0w_{\lambda_1}}^B,
\]
and
\[
\langle w\lambda_1+w'\lambda_1,\rho\rangle-\langle\lambda_1,\bmin\rangle = 0,
\]
by Theorem \ref{thm:nsc_wellcoveringpair}. Moreover, using Corollary \ref{cor:info_lengths_elements_for_wellcoveringpair} and Lemma \ref{lem:shorter_elements_of_W/WQ}, we can show that the well covering pairs must be of the form $(C(\hat{w}_k^{-1},\hat{w}_{n-k+2}^{-1},0),\lambda_1)$ with $k\in\{2,\ldots,n\}$.

Lemma \ref{lem:shorter_elements_of_W/WQ} shows $w_0\hat{w}_k^{-1}w_{\lambda_1} = \hat{w}_{n-k+1}^{-1} = s_{n-k}\ldots s_1$, for all $k=1,\ldots,n$ (for $k=n$, $w_0\hat{w}_n^{-1}w_{\lambda_1} = \id$), and $w_0w_{\lambda_1} = \hat{w}_{n}^{-1}=s_{n-1}\ldots s_1$. We can compute $\Theta(-\bmin) = \sigma_{s_1}^B$ by the Chevalley formula. We are reduced to compute $\sigma_{s_1}^B\,.\,\sigma_{\hat{w}_{n-k+1}^{-1}}^B\,.\,\sigma_{\hat{w}_{k-1}^{-1}}^B$ for all $k=1,\ldots,n$.

\begin{lem}
\label{lem:formula_powerof_sigma_s_1}
For all $k=1,\ldots,n-1$, we have $(\sigma_{s_1}^B)^k = \sigma_{s_k\ldots s_1}^B = \sigma_{\hat{w}_{k+1}^{-1}}^B$. Moreover, $(\sigma_{s_1}^B)^n = 0$.
\end{lem}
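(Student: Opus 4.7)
The plan is to prove the equality $(\sigma_{s_1}^B)^k=\sigma_{s_k\cdots s_1}^B$ by induction on $k$, using the Chevalley/Monk formula for multiplication by a divisor class, and then to deduce $(\sigma_{s_1}^B)^n=0$ either by one more application of the same formula or by a short geometric argument.

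First, the identification $\sigma_{s_k\cdots s_1}^B=\sigma_{\hat{w}_{k+1}^{-1}}^B$ in the statement follows from comparing one-line notations: both $s_k\cdots s_1$ and $\hat{w}_{k+1}^{-1}=(s_1\cdots s_k)^{-1}$ act as the cycle $1\mapsto k+1,\,2\mapsto 1,\,3\mapsto 2,\,\ldots,\,k+1\mapsto k$ (and fix the other points), compare Lemma \ref{lem:shorter_elements_of_W/WQ}; in particular $\ell(s_k\cdots s_1)=k$ and this expression is reduced. The base case $k=1$ is then tautological.

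For the induction step, assume $(\sigma_{s_1}^B)^{k-1}=\sigma_{w}^B$ with $w:=s_{k-1}\cdots s_1$ and $2\leq k\leq n-1$. The Chevalley/Monk formula for the simple reflection $s_1$ reads
\[
\sigma_{s_1}^B\cdot\sigma_{w}^B \;=\; \sum_{b=2}^{n}\,\sigma_{w\,t_{1,b}}^B,
\]
the sum being restricted to those $b$ for which $\ell(w\,t_{1,b})=\ell(w)+1=k$, where $t_{1,b}$ denotes the transposition $(1\ b)$. From the one-line notation $w=[k,1,2,\ldots,k-1,k+1,\ldots,n]$, a direct inversion count gives $\ell(w\,t_{1,b})=k-2$ for $2\leq b\leq k$ and $\ell(w\,t_{1,b})=2b-2-k$ for $k+1\leq b\leq n$; the unique index matching $\ell(\,\cdot\,)=k$ is thus $b=k+1$, and one checks immediately that $w\,t_{1,k+1}=s_k s_{k-1}\cdots s_1$. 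This completes the induction.

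For $(\sigma_{s_1}^B)^n=0$, the cleanest argument is geometric: applying the Chevalley formula to the first fundamental weight $\omega_1=e_1^*$ of $GL_n(\C)$ gives $\sigma_{s_1}^B=\Theta(\omega_1)=c_1(\mathcal{L}_{\omega_1})$, and $\mathcal{L}_{\omega_1}$ is the pullback of $\mathcal{O}_{\mathbb{P}^{n-1}}(1)$ along the natural projection $\KC/B\to\KC/P_{\omega_1}=\mathbb{P}^{n-1}$. Since $c_1(\mathcal{O}_{\mathbb{P}^{n-1}}(1))^{n}=0$ in $\coh{\mathbb{P}^{n-1}}$, the pullback vanishes. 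Alternatively, one further application of Monk's formula to $\sigma_{s_{n-1}\cdots s_1}^B$ produces no length-increasing term: the same inversion analysis gives $\ell(w\,t_{1,b})\leq n-2$ for every $b\in\{2,\ldots,n\}$ when $w=s_{n-1}\cdots s_1$. The whole argument reduces to bookkeeping of inversion counts combined with the standard Chevalley formula, so no serious obstacle arises.
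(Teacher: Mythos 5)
Your proof is correct and follows essentially the same route as the paper: an induction on $k$ via the Chevalley/Monk formula for multiplication by $\sigma_{s_1}^B=\Theta(\pi_{\alpha_{1,2}})$, where your explicit inversion counts showing that only $b=k+1$ gives a length increase are exactly the content of the paper's appendix Lemma \ref{lem:wcheck-1_plus1_jequalsk+2}. The vanishing $(\sigma_{s_1}^B)^n=0$, which the paper's terse proof leaves implicit, is handled cleanly by either of your two arguments.
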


\begin{proof}
This proof is similar to that of Lemmas \ref{lem:cupproduct_sk_wcheckk} and \ref{lem:cupproduct_skminus1_wcheckk}. We use Chevalley's formula combined to Lemma \ref{lem:wcheck-1_plus1_jequalsk+2} in order to show that $\sigma_{s_1}^B\,.\,\sigma_{s_k\ldots s_1}^B = \sigma_{s_{k+1}\ldots s_1}^B$ for all $k\in\{1,\ldots, n-2\}$. Then an obvious induction on $k$ proves the first assertion.
\end{proof}

\begin{lem}
\label{lem:dualitypairing_lambda1_gamma_SU(n,1)}
For all $k=1,\ldots,n$, we have $\langle\lambda_1,\hat{w}_k\rho+\hat{w}_{n-k+2}\rho\rangle = \langle\lambda_1,\bmin\rangle$.
\end{lem}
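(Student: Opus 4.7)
The plan is to reduce the statement to a direct numerical computation. First, I would write $\rho$ explicitly: since the compact roots are $\alpha_{i,j} = e_i^* - e_j^*$ with $1 \le i < j \le n$, the half-sum is
\[
\rho = \sum_{k=1}^n \rho_k\, e_k^*, \qquad \rho_k = \frac{n+1-2k}{2},
\]
and in particular $\sum_{k=1}^n \rho_k = 0$. This is the only input I need about $\rho$.

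Next I would identify $\hat{w}_k$, canonically viewed as an element of $W = S_n$ via the standard action of $K$ on $\got{p}^-$, with the cyclic permutation $(1,2,\ldots,k)$. Unwinding the composition $\hat{w}_k = s_1 \circ \cdots \circ s_{k-1}$ on the basis gives $\hat{w}_k(j) = j+1$ for $j < k$, $\hat{w}_k(k) = 1$, and $\hat{w}_k(j) = j$ for $j > k$. Consequently
\[
\hat{w}_k\,\rho = \sum_{j=1}^n \rho_j\, e_{\hat{w}_k(j)}^*,
\]
so the coefficient of $e_1^*$ in $\hat{w}_k \rho$ equals $\rho_{\hat{w}_k^{-1}(1)} = \rho_k$.

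The third step exploits the simple form $\lambda_1 = (n+1)e_1 - \sum_{j=1}^n e_j$: for any $v = \sum_i v_i e_i^*$ one has $\langle \lambda_1, v\rangle = (n+1)v_1 - \sum_i v_i$. Applied to $v = \hat{w}_k \rho$ the sum $\sum_i v_i$ is merely a permutation of the $\rho_j$'s, hence equals $0$, yielding
\[
\langle \lambda_1, \hat{w}_k\,\rho\rangle = (n+1)\,\rho_k.
\]

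The final step is the numerical identity
\[
\rho_k + \rho_{n-k+2} = \frac{n+1-2k}{2} + \frac{2k-n-3}{2} = -1,
\]
valid whenever $\hat{w}_{n-k+2}$ is defined (i.e.\ $2 \le k \le n$; the case $k=1$, if one wishes to include it, is handled by extending the pattern). Combining with the previous step gives
\[
\langle \lambda_1,\, \hat{w}_k\,\rho + \hat{w}_{n-k+2}\,\rho\rangle = (n+1)(\rho_k+\rho_{n-k+2}) = -(n+1) = \langle \lambda_1, \bmin\rangle,
\]
which is the claimed equality. The computation is elementary; the main point to get right is the identification of $\hat{w}_k \in \hat{W}$ with the permutation $(1,2,\ldots,k) \in W = S_n$ and the resulting action on $\got{t}^*$. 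Once this is set up, the vanishing $\sum_k \rho_k = 0$ does essentially all the work.
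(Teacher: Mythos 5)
Your proof is correct, and it takes a genuinely different route from the paper's. The paper proves the identity structurally: it invokes the appendix relation $\hat{w}_{n-k+2}=w_{\lambda_1}\hat{w}_{k-1}w_0$ (Lemma \ref{lem:shorter_elements_of_W/WQ}), uses $w_0\rho=-\rho$ and $s_{\alpha}\rho=\rho-\alpha$ to telescope the sum $\hat{w}_k\rho+\hat{w}_{n-k+2}\rho$ down to $-\hat{w}_{k-1}\alpha_{k,k+1}$, and concludes from $\langle\lambda_1,\alpha_{1,k+1}\rangle=n+1$. You instead compute in coordinates: the explicit $\rho_j=\tfrac{n+1-2j}{2}$, the identification of $\hat{w}_k$ with the cycle $(1,\ldots,k)$, the observation that $\sum_j\rho_j=0$ kills the $-\sum_j e_j$ part of $\lambda_1$, and the numerical identity $\rho_k+\rho_{n-k+2}=-1$. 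Both arguments are valid and about equally short; the paper's version makes visible \emph{why} the answer is $\langle\lambda_1,\bmin\rangle$ (the sum collapses onto a single root paired with $\lambda_1$) and reuses machinery already set up for the well-covering-pair computation, whereas yours is self-contained, avoids the appendix combinatorics entirely, and makes the $k$-independence transparent through the symmetric relation between $\rho_k$ and $\rho_{n-k+2}$. Your remark that the statement only literally makes sense for $2\le k\le n$ (since $\hat{w}_{n+1}$ is undefined in $W=S_n$) is a fair point of care that the paper glosses over; only $k=2,\ldots,n$ is used downstream, so nothing is lost.
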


\begin{proof}
We use the fact that $\hat{w}_{n-k+2} = w_{\lambda_1}\hat{w}_{k-1}w_0$. This yields
\begin{align*}
\langle\lambda_1,\hat{w}_k\rho+\hat{w}_{n-k+2}\rho\rangle & = \langle\lambda_1,\hat{w}_k\rho+\hat{w}_{k-1}w_0\rho\rangle = \langle\hat{w}_{k-1}\lambda_1,s_k\rho-\rho\rangle \\
& = -\langle\hat{w}_{k-1}\lambda_1,\alpha_{k,k+1}\rangle = -\langle\lambda_1,\alpha_{1,k+1}\rangle.
\end{align*}
Thus, $\langle\lambda_1,\hat{w}_k\rho+\hat{w}_{n-k+2}\rho\rangle = -(n+1) = \langle\lambda_1,\bmin\rangle$.
\end{proof}

We deduce from the above lemmas that the pair $(C(\hat{w}_k^{-1},\hat{w}_{n-k+2}^{-1},0),\lambda_1)$ is well covering, for all $k=2,\ldots,n$. The pair $(C(\hat{w}_k^{-1},\hat{w}_{n-k+2}^{-1},0),\lambda_1)$ brings the equation
\begin{equation}
\label{eq:equation_pairswithhatwkminus1}
\langle \hat{w}_k^{-1}\lambda_1,x\rangle\leq\langle w_0\hat{w}_{n-k+2}^{-1}\lambda_1,\Lambda\rangle.
\end{equation}
By definition, $\hat{w}_{k}^{-1} = s_{k-1}\ldots s_1$ for all $2\leq k\leq n$. Hence, $\hat{w}_{k}^{-1}\lambda_1 = s_{k-1}\ldots s_1\lambda_1 = \lambda_{k}$. Notice that we have
\[
\langle w_0\hat{w}_{n-k+2}^{-1}\lambda_1,\Lambda\rangle = \langle w_0\hat{w}_{n-k+2}^{-1}w_{\lambda_1}\lambda_1,\Lambda\rangle = \langle \hat{w}_{k-1}^{-1}\lambda_1,\Lambda\rangle = \langle\lambda_{k-1},\Lambda\rangle.
\]
because $w_{\lambda_1}$ is in $W_{\lambda_1}$, so it stabilizes $\lambda_1$. 
Thus, from equation \eqref{eq:equation_pairswithhatwkminus1}, we get 
\[
\langle \lambda_{k+1},x\rangle \leq \langle \lambda_k,\Lambda\rangle
\]
for all $k=1,\ldots,n-1$.
 
Next theorem follows from Theorem \ref{thm:equations_DeltaK_G0Lambda} and the above calculation.

\begin{thm}
For $G = SU(n,1)$ and $\Lambda\in\Chol$, we have
\[
\Delta_K(G\cdot\Lambda) = \bigl\{\xi\in\got{t}^*_+\ | \,  \langle\lambda_1,\xi\rangle\geq\langle\lambda_1,\Lambda\rangle\geq\langle\lambda_2,\xi\rangle\geq\ldots\geq\langle\lambda_n,\xi\rangle\geq\langle\lambda_n,\Lambda\rangle\bigr\}.
\]
\end{thm}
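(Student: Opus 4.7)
The plan is to apply Theorem \ref{thm:equations_DeltaK_G0Lambda} and collect the linear inequalities coming from all well covering pairs $(C(w,w',0),\lambda)$ of $X_{\got{p}^-\oplus\C}$ with $\lambda$ dominant, indivisible and $\got{p}^-$-admissible. The discussion preceding the statement has already identified exactly two such one parameter subgroups, namely $\lambda_1$ and $-\lambda_n$, so the proof reduces to two separate enumerations.

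For $\lambda=-\lambda_n$, the relation $\langle-\lambda_n,\bmin\rangle=0$ forces $\got{p}^-_{<0}=0$, hence by Corollary \ref{cor:info_lengths_elements_for_wellcoveringpair} every well covering pair is of the form $(C(w,w_0ww_{-\lambda_n},0),-\lambda_n)$, and the resulting equations are precisely $\langle\lambda_k,\xi\rangle\geq\langle\lambda_k,\Lambda\rangle$ for $k=1,\dots,n$, as already observed in the paper; these cut out the cone $\Lambda+\CR(\got{R}_n^+)$.

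For $\lambda=\lambda_1$, the main step, I would use Corollary \ref{cor:info_lengths_elements_for_wellcoveringpair} combined with the description of $W^{\lambda_1}$ given by Lemma \ref{lem:shorter_elements_of_W/WQ} to restrict the candidate triples to $(C(\hat{w}_k^{-1},\hat{w}_{n-k+2}^{-1},0),\lambda_1)$ for $k=2,\dots,n$. Then I would check the cohomological condition \emph{(i)} of Theorem \ref{thm:nsc_wellcoveringpair} by applying the explicit power formula $(\sigma_{s_1}^B)^k=\sigma_{\hat{w}_{k+1}^{-1}}^B$ of Lemma \ref{lem:formula_powerof_sigma_s_1}, and the numerical condition \emph{(ii)} via Lemma \ref{lem:dualitypairing_lambda1_gamma_SU(n,1)}. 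Using the identities $\hat{w}_k^{-1}\lambda_1=\lambda_k$ and $w_0\hat{w}_{n-k+2}^{-1}\lambda_1=\lambda_{k-1}$ (the latter because $w_{\lambda_1}$ stabilizes $\lambda_1$), these pairs yield the inequalities $\langle\lambda_{k+1},\xi\rangle\leq\langle\lambda_k,\Lambda\rangle$ for $k=1,\dots,n-1$.

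Concatenating the two families of inequalities gives exactly the chain $\langle\lambda_1,\xi\rangle\geq\langle\lambda_1,\Lambda\rangle\geq\langle\lambda_2,\xi\rangle\geq\cdots\geq\langle\lambda_n,\xi\rangle\geq\langle\lambda_n,\Lambda\rangle$ of the statement. The main obstacle is the $\lambda_1$ analysis: since here $\got{p}^-_{<0}\neq 0$, condition \emph{(i)} of Theorem \ref{thm:nsc_wellcoveringpair} becomes genuine cohomology in $\coh{\KC/B}$, and one must both verify it for the proposed family and exclude every other length-compatible candidate, which is precisely what Lemma \ref{lem:formula_powerof_sigma_s_1} and Corollary \ref{cor:info_lengths_elements_for_wellcoveringpair} combine to accomplish.
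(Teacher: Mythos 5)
Your proposal is correct and follows essentially the same route as the paper: the same split into the two dominant indivisible $\got{p}^-$-admissible one parameter subgroups $\lambda_1$ and $-\lambda_n$, the same use of Corollary \ref{cor:info_lengths_elements_for_wellcoveringpair} and Lemma \ref{lem:shorter_elements_of_W/WQ} to narrow the candidates to $(C(\hat{w}_k^{-1},\hat{w}_{n-k+2}^{-1},0),\lambda_1)$, and the same verification of conditions \emph{(i)} and \emph{(ii)} of Theorem \ref{thm:nsc_wellcoveringpair} via Lemmas \ref{lem:formula_powerof_sigma_s_1} and \ref{lem:dualitypairing_lambda1_gamma_SU(n,1)}. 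The concluding identities $\hat{w}_k^{-1}\lambda_1=\lambda_k$ and $w_0\hat{w}_{n-k+2}^{-1}\lambda_1=\lambda_{k-1}$ and the final concatenation of the two families of inequalities are exactly as in the paper.
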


\begin{figure}[htb]
\begin{center}
\setlength{\unitlength}{1cm}
\resizebox{8cm}{8cm}{
\begin{pspicture}(10,10)
\definecolor{Couleur1}{rgb}{0,0.5,0.7}
\definecolor{Couleur2}{rgb}{0.6,0.4,0.2}

\pspolygon[linecolor=lightgray,fillstyle=solid,linewidth=0pt,fillcolor=lightgray](2,0)(2,10)(3.5,10)(4.5,9.9)(5,9.8)(6,9.6)(7,9.2)(8,8.6)(8.8,7.8)(9.3,7)(9.7,6.3)(10,5)(9.7,3.7)(9.3,3)(8.8,2.2)(8,1.4)(7,0.8)(6,0.4)(5,0.2)(4.5,0.1)(3.5,0)
\pspolygon[linecolor=gray,fillstyle=solid,linewidth=0pt,fillcolor=gray](2,1)(2,9.8)(3.5,9.8)(4.5,9.7)(5.5,9.4)(6.5,9)(7.6,8.4)(8.5,7.5)(9.1,6.4)(9.4,5.272)
\pspolygon[linecolor=lightgray,fillstyle=solid,linewidth=0pt,fillcolor=lightgray](4,3.5)(2,6.964)(3.6,9.735)(4.4,9.65)(5,9.5)(5.8,9.25)(6.3,9.05)(6.7,8.85)(7,8.696)


\psdot(4,3.5)

\psline[linewidth=1pt](2,0)(2,10)
\psline[linewidth=1pt](0.268,0)(9.9,5.561)
\psline[linewidth=1pt](4,3.5)(2,6.964)
\psline[linewidth=1pt](4,3.5)(7,8.696)
\psline[linewidth=1pt](2,6.964)(3.6,9.735)

\psline[linewidth=1pt]{->}(2,1)(3,1)
\psline[linewidth=1pt]{->}(2,1)(2.5,1.87)
\psline[linewidth=1pt]{->}(2,1)(1.5,1.87)

\rput(4,3.2){$\Lambda$}
\rput(2.9,0.7){$\alpha$}
\rput(2.8,2){$\beta_{1}$}
\rput(1.2,2){$\beta_{2}$}
\rput(7,2){$\got{t}^*_+$}
\rput(8,5.5){$\Chol$}
\rput(4,7){$\Delta_K(G\cdot\Lambda)$}

\end{pspicture}
}
\end{center}
\caption{Polyhedron $\Delta_K(SU(2,1)\cdot\Lambda)$ for $\Lambda\in\Chol$}
\end{figure}

\begin{rem}
Almost all the computations above are the same when we consider the moment polyhedron $\Delta_{U(n)}(U(n)\cdot\Lambda\times(\C^n)^*)$, where $U(n)$ acts canonically on $(\C^n)^*$, and $\Lambda$ is an element of $\got{t}_+^*$. The only difference appears in the set of dominant indivisible $(\C^n)^*$-admissible one parameter subgroups, being in this case the set $\{(1,0,\ldots,0),(0,\ldots,-1)\}$. Thus, Corollary \ref{cor:generalequations_DeltaKxK(T*KxE)} yields
\[
\Delta_{U(n)}(U(n)\cdot\Lambda\times(\C^n)^*) = \bigl\{\xi\in\got{t}^*_+\ | \  \xi_1\geq\Lambda_1\geq\xi_2\geq\ldots\geq\xi_n\geq\Lambda_n\bigl\}.
\]
This gives another proof that the irreducible representation $V_{\mu}$ with highest weight $\mu=(\mu_1\geq\ldots\geq\mu_n)$ of $GL_n(\C)$ appears in the decomposition into irreducible representation of $V_{\Lambda}\otimes\mathrm{Sym}(\C^n)$ if and only if
\[
\mu_1\geq\Lambda_1\geq\mu_2\geq\ldots\geq\mu_n\geq\Lambda_n,
\]
from \cite{macdonald,brion}. See \cite[9.3]{woodward} for more details.
\end{rem}

\subsubsection{Moment polyhedron of $SU(2,2)$}

Now, take $G=SU(2,2)$. Let $K$ be the connected Lie subgroup of $G$ with Lie algebra
\[
\got{k} := \left\{\left(\begin{array}{cc}A & 0 \\ 0 & B\end{array}\right)\ |\ A,B\in\got{su}(2), \tr(A)+\tr(B)=0\right\},
\]
and $T$ be the maximal torus of diagonal matrices in $SU(2,2)$. We define, for all $i=1,\ldots,4$, the linear forms $e_i^*$ on $\got{t}$, the Lie algebra of $T$, by taking $e_i^*(H) = h_i$ for any element $H=i\diag(h_1,h_2,h_3,h_4)$ in $\got{t}$. Then, the set of roots of $\got{g}$ is $\got{R}=\{\pm\alpha_{i,j} = e_i^*-e_j^*\ | \ 1\leq i,j\leq 4\}$. Moreover, $\got{R}_c^+ = \{\alpha_{1,2},\alpha_{3,4}\}$ and $\got{R}_n^+=\{\alpha_{i,j}\ | \ 1\leq i\leq 2, 3\leq j\leq 4\}$.

The dominant indivisible $\got{p}^-$-admissible one parameter subgroups are the elements of the set
\begin{multline*}
\{\lambda_1=(1,-3,1,1), \lambda_2=(-1,-1,3,-1), \lambda_3=(1,-1,1,-1),\\
 \lambda_4=(1,1,1,-3), \lambda_5=(3,-1,-1,-1)\}.
\end{multline*}
The one parameter subgroups $\lambda_1$ and $\lambda_2$ yield $\got{p}^-_{<0}=0$, hence any corresponding pair is well covering. For $\lambda_4$, we can easily check that $\dim_{\C}(\got{p}^-_{<0}) = 2$, $\langle\lambda_4,-\alpha_{1,4}\rangle = \langle\lambda_4,-\alpha_{2,4}\rangle = -4$, with $\Theta(\alpha_{1,4}\alpha_{2,4})=0$, and similarly for $\lambda_5$. Thus, it remains to compute the well covering pairs corresponding to $\lambda_3$.

For $\lambda_3$, we have $\dim_{\C}(\got{p}^-_{<0}) = 1$, $\langle\lambda,-\alpha_{1,4}\rangle = -2$, and $\Theta(\alpha_{1,4}) = \sigma^B_{(s_1,\id)}+\sigma^B_{(\id,s_1)}$. The cup product here is known, we have $\sigma^B_{(s_1,\id)}\,.\,\sigma^B_{(\id,s_1)} = \sigma^B_{(s_1,s_1)}$, and $\sigma^B_{(s_1,\,\cdot\,)}\,.\,\sigma^B_{(s_1,\,\cdot\,)} = 0 = \sigma^B_{(\,\cdot\,,s_1)}\,.\,\sigma^B_{(\,\cdot\,,s_1)}$. By a straightforward computation, we get the following set,
\[
\left\{\bigl((s_1,\id),(s_1,s_1)\bigr),\bigl((\id,s_1),(s_1,s_1)\bigr),\bigl((s_1,s_1),(s_1,\id)\bigr),\bigl((s_1,s_1),(\id,s_1)\bigr)\right\},
\]
which parametrizes the well covering pairs corresponding to $\lambda_3$. From Theorem \ref{thm:equations_DeltaK_G0Lambda}, and the fact that $\sum\xi_i=0$ if $\xi=(\xi_1,\ldots,\xi_4)$ is in $\got{t}^*$, we deduce the following statement.

\begin{thm}
\label{thm:equations_DeltaK_SU(2,2)}
For $G = SU(2,2)$ and $\Lambda\in\Chol$, the polyhedron $\Delta_K(G\cdot\Lambda)$ is defined by the following equations
\[
\left\{\begin{aligned}
\xi_1 \geq \Lambda_1,\ \xi_2\geq \Lambda_2,&\quad \xi_3 \leq \Lambda_3,\ \xi_4 \leq \Lambda_4 \\
|\xi_1 - \xi_2 - \xi_3 + \xi_4| & \leq \Lambda_1 - \Lambda_2 + \Lambda_3 - \Lambda_4 \\
-\xi_1 + \xi_2 - \xi_3 + \xi_4 & \leq -|\Lambda_1 - \Lambda_2 - \Lambda_3 + \Lambda_4| \\
\xi_1\geq\xi_2& ,\quad \xi_3\geq \xi_4
\end{aligned}\right.
\]
\end{thm}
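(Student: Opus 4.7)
The plan is to directly apply Theorem \ref{thm:equations_DeltaK_G0Lambda} with $\mathscr{P} = \mathscr{P}_0^{wc}$, since all the heavy lifting has already been done just before the statement: we have enumerated the five dominant indivisible $\got{p}^-$-admissible one parameter subgroups $\lambda_1,\ldots,\lambda_5$, ruled out $\lambda_4$ and $\lambda_5$ (their cup-product class vanishes because $\Theta(\alpha_{1,4})\Theta(\alpha_{2,4})=0$ in the rank-$2$ Grassmannian-type factor), and shown that $\lambda_1,\lambda_2$ contribute pairs of the special shape $(C(w,w_0ww_{\lambda},0),\lambda)$ while $\lambda_3$ contributes exactly the four well covering pairs parametrized by the quadruple listed in the text. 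Thus only bookkeeping remains: for each of the surviving pairs, write down the linear inequality $\langle w\lambda,\xi\rangle\le\langle w_0w'\lambda,\Lambda\rangle$ supplied by Corollary~\ref{cor:generalequations_DeltaKxK(T*KxE)}, then simplify using the relation $\sum_{i=1}^4\xi_i=\sum_{i=1}^4\Lambda_i=0$.

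For the pairs coming from $\lambda_1=(1,-3,1,1)$ and $\lambda_2=(-1,-1,3,-1)$, the stabilizers $W_{\lambda_1}$ and $W_{\lambda_2}$ are generated respectively by $s_3$ and $s_1$, so $W^{\lambda_j}$ has two elements. Running $w$ through $W^{\lambda_1}=\{s_3,s_1s_3\}$ and $W^{\lambda_2}=\{s_1,s_1s_3\}$ and reducing each resulting inequality modulo $\sum\xi_i=0$ produces exactly the four affine half-spaces $\xi_1\ge\Lambda_1,\ \xi_2\ge\Lambda_2,\ \xi_3\le\Lambda_3,\ \xi_4\le\Lambda_4$. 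Consistency with Proposition~\ref{prop:faces_ConeWtT(E)_and_wellcoveringpairs} is a good check: $\lambda_1,\lambda_2$ are precisely the codimension-one faces of $\CR(\got{R}_n^+)$, the quarter-translates of these faces by $\Lambda$.

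For $\lambda_3=(1,-1,1,-1)$, the stabilizer is trivial, so $W^{\lambda_3}=W=\{e,s_1\}\times\{e,s_3\}$. On the four well covering pairs
\[
\bigl((s_1,e),(s_1,s_1)\bigr),\ \bigl((e,s_1),(s_1,s_1)\bigr),\ \bigl((s_1,s_1),(s_1,e)\bigr),\ \bigl((s_1,s_1),(e,s_1)\bigr),
\]
I compute $w\lambda_3$ and $w_0w'\lambda_3$ explicitly (the only $W$-orbit of $\lambda_3$ consists of the four sign patterns $(\pm 1,\mp 1,\pm 1,\mp 1)$). The first two pairs yield the two-sided inequality $\pm(\xi_1-\xi_2-\xi_3+\xi_4)\le\Lambda_1-\Lambda_2+\Lambda_3-\Lambda_4$, which is exactly $|\xi_1-\xi_2-\xi_3+\xi_4|\le\Lambda_1-\Lambda_2+\Lambda_3-\Lambda_4$. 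The last two pairs produce the same linear functional on the $\xi$-side but two different $\Lambda$-bounds, and taking the more restrictive one gives $-\xi_1+\xi_2-\xi_3+\xi_4\le-|\Lambda_1-\Lambda_2-\Lambda_3+\Lambda_4|$. Finally, one appends the chamber constraints $\xi_1\ge\xi_2$ and $\xi_3\ge\xi_4$ coming from $\xi\in\got{t}_+^*$.

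The only point where anything could go wrong is matching the four pairs attached to $\lambda_3$ with the two absolute-value inequalities: one must be careful that the pair whose second component is $(s_1,s_1)$ controls the sign of the left-hand side (so the left-hand side becomes $|\cdot|$), while the two pairs whose first component is $(s_1,s_1)$ keep the left-hand side fixed but let the right-hand side be either of two opposite numbers (so the bound becomes a minimum, i.e. $-|\cdot|$). Once that symmetry is handled, the theorem follows by direct substitution and no further argument is required.
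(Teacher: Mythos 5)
Your proposal is correct and follows essentially the same route as the paper: it invokes Theorem \ref{thm:equations_DeltaK_G0Lambda} together with the enumeration of the admissible one parameter subgroups $\lambda_1,\ldots,\lambda_5$ and their well covering pairs carried out just before the statement, and the explicit evaluations (the pairs $(C(w,w_0ww_{\lambda_j},0),\lambda_j)$ for $j=1,2$ giving $\xi_1\geq\Lambda_1$, $\xi_2\geq\Lambda_2$, $\xi_3\leq\Lambda_3$, $\xi_4\leq\Lambda_4$ after using $\sum\xi_i=\sum\Lambda_i=0$, and the four $\lambda_3$-pairs giving the two absolute-value inequalities) all check out. No gap.
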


In this setting, the holomorphic chamber is
\[
\Chol = \{\xi=(\xi_1,\ldots,\xi_4)\in\got{t}^*\ | \ \xi_1\geq\xi_2\geq\xi_3\geq\xi_4\}.
\]
If $\Lambda\in\Chol$, we easily check that $\Delta_K(G\cdot\Lambda)\subset\Chol$, using the first line of equations of Theorem \ref{thm:equations_DeltaK_SU(2,2)}.


\appendix

\section{Combinatorics in the Weyl group of $GL_r(\C)$}

\small

\subsection{Some properties of the Weyl group of $\hKC = GL_r(\C)$}
\label{subsection:properties_Weylgroup_GLr}

This section collects several properties about certain elements of the Weyl group $\wh{W}$ of $\hKC = GL_r(\C)$. These results are very useful in the proof of Theorem \ref{thm:nsc_wellcoveringpair}.

Let $\hat{\got{t}}_{\C}$ be the set of diagonal matrices of $\hat{\got{k}}_{\C}$. The roots of $\hat{\got{k}}_{\C}$ are the linear forms $\hat{\alpha}_{i,j}(\diag(h_1,\ldots,h_n)) = h_i - h_j$ on $\hat{\got{t}}_{\C}$,
where $1\leq i,j\leq r$, with $i\neq j$. The simple roots are the roots $\halpha_{i,i+1}$, where $i\in\{1,\ldots,r-1\}$. We denote by $s_{\halpha_{i,j}}$ the element of $\wh{W}$ associated to the root $\halpha_{i,j}$, and $s_i$ for the simple root $\halpha_{i,i+1}$.

We define $\check{w}_k = s_{r-1}s_{r-2}\ldots s_{k+1}s_k$ for any $k\in\{1,\ldots,r-1\}$, and $\check{w}_{r} = \id$. Let us recall that the elements $\hat{w}_k$ have been defined in the previous section by $\hat{w}_1 = \id$, and $\hat{w}_k = s_1\ldots s_{k-1}$ if $2\leq k\leq r$. An easy computation gives the lengths of these elements of $\wh{W}$.

\begin{lem}
\label{lem:wcheck_and_what_length}
For all $k=1,\ldots,r$, we have $l(\check{w}_k) = r-k$, and $l(\hat{w}_k) = k-1$.
\end{lem}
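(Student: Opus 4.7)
The plan is to identify the Weyl group $\wh{W}$ of $GL_r(\C)$ with the symmetric group $S_r$ permuting $\{1, \ldots, r\}$ (with each simple reflection $s_i$ corresponding to the transposition $(i, i+1)$), and then invoke the standard fact that, for a permutation $w \in S_r$, the Coxeter length $l(w)$ coincides with its number of inversions $\#\{(i,j) \ | \ 1 \leq i < j \leq r, \ w(i) > w(j)\}$.

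Because $\hat{w}_k = s_1 \cdots s_{k-1}$ and $\check{w}_k = s_{r-1} \cdots s_k$ are already expressed as products of $k-1$ and $r-k$ simple reflections respectively, subadditivity of length immediately yields the upper bounds $l(\hat{w}_k) \leq k-1$ and $l(\check{w}_k) \leq r-k$. For the matching lower bounds, I would identify each element explicitly as a permutation. A direct induction on $k$ (with base case $\hat{w}_1 = \id$) shows that $\hat{w}_k$ cyclically permutes the block $\{1, 2, \ldots, k\}$ and fixes $\{k+1, \ldots, r\}$; a similar induction on $r-k$ (with base case $\check{w}_r = \id$) shows that $\check{w}_k$ cyclically permutes $\{k, k+1, \ldots, r\}$ and fixes $\{1, \ldots, k-1\}$. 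Counting inversions in these explicit cyclic permutations gives exactly $k-1$ inversions for $\hat{w}_k$ (the pairs involving the position sent to $1$, or equivalently from $1$, depending on convention) and exactly $r-k$ inversions for $\check{w}_k$, matching the upper bounds.

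The argument is entirely combinatorial and no real obstacle arises. The only bookkeeping concern is the convention used to compose simple reflections (whether the product $s_i s_j$ means ``$s_j$ first'' or ``$s_i$ first''), but the inversion counts for $\hat{w}_k$ and $\check{w}_k$ are invariant under that choice since the two conventions produce mutually inverse permutations with the same number of inversions. Hence the conclusion $l(\hat{w}_k) = k-1$ and $l(\check{w}_k) = r-k$ is insensitive to it.
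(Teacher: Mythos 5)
Your proof is correct. The paper itself offers no argument here beyond the remark that ``an easy computation gives the lengths,'' so your inversion-counting argument (identifying $\hat{w}_k$ and $\check{w}_k$ as explicit cycles on $\{1,\ldots,k\}$ and $\{k,\ldots,r\}$, bounding above by the number of simple reflections in the given word and below by the inversion number) is exactly the kind of computation being alluded to, and it checks out.
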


Let $\wh{Q}$ be the maximal parabolic subgroup of $\hKC$ defined by \eqref{eq:parabolicsubgroup_wh{Q}}.

\begin{lem}
\label{lem:shorter_elements_of_W_mod_WQ_right}
The set $\wh{W}_{\wh{Q}}\backslash\wh{W}$ has exactly $r$ classes, and the elements $\hat{w}_k$, for all $k=1,\ldots,r$, form a set of shortest representatives of each class. More precisely, for all $w\in\wh{W}_{\wh{Q}}$, we have $l(ws_1\ldots s_k) = l(w) + k$.
\end{lem}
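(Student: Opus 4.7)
The plan is to translate everything to the combinatorics of $\wh{W} = S_r$ acting on $\{1,\dots,r\}$ via the transpositions $s_i = (i,i+1)$. Since $\wh{Q}$ is the stabilizer of the line $\C u_1$, its Levi Weyl group $\wh{W}_{\wh{Q}}$ is exactly the subgroup of permutations fixing the index $1$, so $\wh{W}_{\wh{Q}} \cong S_{r-1}$ and $|\wh{W}_{\wh{Q}}\backslash \wh{W}| = r$. This settles the counting part of the first assertion.

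Next, a straightforward induction (already implicit in the identity $\hat{w}_k^{-1}\!\cdot u_1 = u_k$ used earlier in the paper) gives the explicit formula $\hat{w}_k^{-1} = [k,1,2,\dots,k-1,k+1,\dots,r]$ in one-line notation, i.e.\ the cycle $(1\,k\,k{-}1\,\cdots\,2)$. Since $\hat{w}_k^{-1}(1) = k$ takes $r$ distinct values as $k=1,\dots,r$, the elements $\hat{w}_k$ lie in pairwise distinct right cosets of $\wh{W}_{\wh{Q}}$ and therefore exhaust $\wh{W}_{\wh{Q}}\backslash\wh{W}$.

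For the length additivity statement, I would invoke the standard parabolic-coset criterion in Coxeter theory: an element $v \in \wh{W}$ is the unique minimum-length representative of the right coset $\wh{W}_{\wh{Q}}v$ if and only if $v^{-1}$ sends every positive root of $\wh{W}_{\wh{Q}}$ to a positive root of $\wh{W}$, and in that case any $u \in \wh{W}_{\wh{Q}}v$ factors uniquely as $u = wv$ with $w \in \wh{W}_{\wh{Q}}$ and $l(u) = l(w) + l(v)$. For $v = \hat{w}_k$ this reduces to checking that $\hat{w}_k^{-1}(\halpha_{i,i+1})$ is positive for every simple root $\halpha_{i,i+1}$ of $\wh{W}_{\wh{Q}}$, i.e.\ for every $i \geq 2$. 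This is a three-case verification using the explicit form of $\hat{w}_k^{-1}$: the images are $\halpha_{i-1,i}$, $\halpha_{k-1,k+1}$, and $\halpha_{i,i+1}$ in the cases $2\leq i\leq k-1$, $i = k$, and $i\geq k+1$ respectively, all positive. Combined with $l(\hat{w}_k) = k-1$ from Lemma~\ref{lem:wcheck_and_what_length} and the identity $s_1\cdots s_k = \hat{w}_{k+1}$, the additivity yields $l(w s_1 \cdots s_k) = l(w\hat{w}_{k+1}) = l(w) + k$, as stated. The argument presents no substantial obstacle; it is a textbook application of minimum-coset-representative theory in type $A$ combined with a brief root-positivity check.
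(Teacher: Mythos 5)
Your proof is correct. It reaches the conclusion by a mildly different route than the paper's: the paper proves the length additivity directly by induction on $k$, appending one simple reflection at a time and checking at each step that $ws_1\cdots s_{k-1}(\halpha_{k,k+1}) = w(\halpha_{1,k+1})$ is positive (because $w$, being generated by $s_2,\ldots,s_{r-1}$, fixes the index $1$), so that $l(ws_1\cdots s_{k-1}s_k)=l(ws_1\cdots s_{k-1})+1$; the minimality of $\hat{w}_k$ in its coset is then read off at the end. You instead verify the minimal-coset-representative criterion up front — that $\hat{w}_k^{-1}$ sends the simple roots $\halpha_{i,i+1}$, $i\geq 2$, of $\wh{W}_{\wh{Q}}$ to positive roots, using the explicit one-line form of $\hat{w}_k^{-1}$ — and then invoke the general length-additive factorization $l(w\hat{w}_{k+1})=l(w)+l(\hat{w}_{k+1})$ for $w\in\wh{W}_{\wh{Q}}$. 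These are dual applications of the same root-positivity length criterion; the paper's version is marginally more self-contained (it needs only $l(us_\alpha)=l(u)+1\Leftrightarrow u(\alpha)>0$ for a simple root $\alpha$), while yours offloads the induction onto the parabolic factorization theorem and makes explicit two points the paper leaves implicit, namely the one-line form of $\hat{w}_k^{-1}$ and the observation that $\hat{w}_k^{-1}(1)=k$ already separates the $r$ cosets. Both computations check out.
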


\begin{proof}
The first assertion is obvious. It is clear that $\hat{w}_1 = \id$ is the shortest element of its class. Fix $w\in\wh{W}_{\wh{Q}}$. From \cite[Proposition 2.72]{knapp}, $\wh{W}_{\wh{Q}}$ is generated by the elements $s_2,\ldots,s_{r-1}$. Hence, $w(\halpha_{1,k+1})$ is always positive. Thus, the root $ws_1\ldots s_{k-1}(\halpha_{k,k+1}) = w(\halpha_{1,k+1})$ is positive. As well, $w(\halpha_{1,2}) > 0$. By Lemma 2.71 in \cite{knapp}, we necessarily have $l((ws_1\ldots s_{k-1})s_k) = l(ws_1\ldots s_{k-1})+1$ for $k=2,\ldots,r-1$, and $l(ws_1) = l(w) +1$. An obvious induction yields the expected result, i.e. $l(ws_1\ldots s_k) = l(w)+k$ for all $k\in\{1,\ldots,r-1\}$. And, $\hat{w}_k$ is clearly the shortest element of its class in $\wh{W}_{\wh{Q}}\backslash\wh{W}$.
\end{proof}

Now, by Lemma \ref{lem:shorter_elements_of_W_mod_WQ_right}, it is clear that $\hat{w}_{\wh{Q}}\hat{w}_k$ is the longest element of $\wh{W}_{\wh{Q}}\hat{w}_k$, and, for $k=r$,
\[
l(\hat{w}_{\wh{Q}}\hat{w}_{r}) = l(\hat{w}_{\wh{Q}}s_1\ldots s_{r-1}) = l(\hat{w}_{\wh{Q}}) + r-1 = l(\hat{w}_0).
\]
Thus, $\hat{w}_0\hat{w}_{\wh{Q}} = s_{r-1}\ldots s_1$. The next lemma is obvious.

\begin{lem}
\label{lem:relation_between_wcheck_what}
For all $k\in\{1,\ldots,r\}$, we have $\hat{w}_0\hat{w}_{\wh{Q}}\hat{w}_k = \check{w}_k$.
\end{lem}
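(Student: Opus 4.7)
The plan is to reduce the claim to a purely formal identity in the symmetric group $\wh{W} = S_r$, using the identity $\hat{w}_0 \hat{w}_{\wh{Q}} = s_{r-1} s_{r-2} \cdots s_1$ that has already been established in the paragraph immediately preceding the lemma. Once this is in hand, since $\check{w}_1 = s_{r-1} s_{r-2} \cdots s_1$ by the definition of $\check{w}_k$ at $k=1$, it suffices to show
\[
(s_{r-1} s_{r-2} \cdots s_1)(s_1 s_2 \cdots s_{k-1}) = s_{r-1} s_{r-2} \cdots s_k
\]
for every $k \in \{1,\dots,r\}$, with the standard convention that an empty product equals the identity. This convention handles the boundary cases $k=1$ (where $\hat{w}_1 = \mathrm{id}$ and the identity is immediate) and $k=r$ (where both sides collapse to $\mathrm{id} = \check{w}_r$).

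For the general case I would argue by iterated cancellation. The two central occurrences of $s_1$ on the left-hand side are adjacent, and $s_1^2 = e$, so the product collapses to $(s_{r-1} \cdots s_2)(s_2 s_3 \cdots s_{k-1})$. The two central $s_2$'s are now adjacent and cancel, leaving $(s_{r-1} \cdots s_3)(s_3 \cdots s_{k-1})$. Iterating, after exactly $k-1$ cancellations of pairs $s_j s_j$ (for $j=1,2,\dots,k-1$) the right factor has been entirely consumed, and we are left with $s_{r-1} s_{r-2} \cdots s_k = \check{w}_k$, as desired. Equivalently, one may phrase this as a short induction on $k$ using the one-step identity $(s_{r-1} \cdots s_j)(s_j s_{j+1} \cdots s_{k-1}) = (s_{r-1} \cdots s_{j+1})(s_{j+1} \cdots s_{k-1})$.

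I do not expect a real obstacle: once the earlier computation $\hat{w}_0 \hat{w}_{\wh{Q}} = s_{r-1} \cdots s_1$ is invoked, the remaining statement is a three-line manipulation in $S_r$. The only point requiring a little care is simply making sure the empty-product convention is applied consistently in the extremal cases $k = 1$ and $k = r$.
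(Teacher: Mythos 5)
Your proof is correct and follows exactly the route the paper intends: the paper establishes $\hat{w}_0\hat{w}_{\wh{Q}} = s_{r-1}\cdots s_1$ in the paragraph immediately before the lemma and then simply declares the lemma ``obvious,'' the omitted step being precisely the telescoping cancellation $(s_{r-1}\cdots s_1)(s_1\cdots s_{k-1}) = s_{r-1}\cdots s_k$ that you spell out. Your treatment of the boundary cases $k=1$ and $k=r$ is also consistent with the paper's conventions.
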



The next lemma results from Lemma \ref{lem:shorter_elements_of_W_mod_WQ_right}. Below, the element $\hat{w}_k^{-1}$ equals $\hat{w}_k^{-1}=s_{k-1}\ldots s_1$ for any $k=2,\ldots,r$, and $\hat{w}_1^{-1} = \id$.

\begin{lem}
\label{lem:shorter_elements_of_W/WQ}
The set $\wh{W}/\wh{W}_{\wh{Q}}$ has exactly $r$ classes, and the elements $\hat{w}_k^{-1}$, for all $k=1,\ldots,r$, form a set of shortest representatives of each class. More precisely, we have $l(s_k\ldots s_1w) = l(w) + k$, for all $w\in\wh{W}_{\wh{Q}}$. Moreover, for all $k=1,\ldots,r$, we have $w_0\hat{w}_k^{-1}w_{\wh{Q}} = \hat{w}_{r-k+1}^{-1}$.
\end{lem}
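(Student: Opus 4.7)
The plan is to deduce the lemma from the already-established Lemma~\ref{lem:shorter_elements_of_W_mod_WQ_right} by two general devices: the length-preserving inversion $w\mapsto w^{-1}$, which exchanges right and left cosets of $\wh{W}_{\wh{Q}}$, and the involution $u\mapsto w_0 uw_{\wh{Q}}$ on the set of shortest left-coset representatives.

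For the first two claims, observe that $(\wh{W}_{\wh{Q}}w)^{-1}=w^{-1}\wh{W}_{\wh{Q}}$, so inversion sets up a length-preserving bijection between $\wh{W}_{\wh{Q}}\backslash\wh{W}$ and $\wh{W}/\wh{W}_{\wh{Q}}$; Lemma~\ref{lem:shorter_elements_of_W_mod_WQ_right} then immediately yields that the elements $\hat{w}_k^{-1}$ are shortest representatives of the $r$ left cosets. For the length formula, write $s_k\cdots s_1 w=(w^{-1}s_1\cdots s_k)^{-1}$; since $\wh{W}_{\wh{Q}}$ is closed under inversion, $w^{-1}\in\wh{W}_{\wh{Q}}$, and applying Lemma~\ref{lem:shorter_elements_of_W_mod_WQ_right} to $w^{-1}$ gives $l(w^{-1}s_1\cdots s_k)=l(w^{-1})+k$, hence $l(s_k\cdots s_1 w)=l(w)+k$.

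For the final identity $w_0\hat{w}_k^{-1}w_{\wh{Q}}=\hat{w}_{r-k+1}^{-1}$, I would show that $f(u):=w_0 uw_{\wh{Q}}$ maps the set $\wh{W}^{\wh{Q}}=\{\hat{w}_k^{-1}:1\leq k\leq r\}$ of shortest left-coset representatives into itself; the involution property is immediate from $w_0^2=w_{\wh{Q}}^2=e$. The main (mild) obstacle is checking that $f(u)\in\wh{W}^{\wh{Q}}$ whenever $u\in\wh{W}^{\wh{Q}}$, which amounts to verifying $l(f(u)s)>l(f(u))$ for every simple reflection $s$ of $\wh{W}_{\wh{Q}}$. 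Using the standard identity $l(w_0 x)=l(w_0)-l(x)$, this reduces to $l(uw_{\wh{Q}}s)=l(u)+l(w_{\wh{Q}})-1$, which follows from $l(uv)=l(u)+l(v)$ for all $v\in\wh{W}_{\wh{Q}}$ (the defining property of shortest coset representatives) combined with $l(w_{\wh{Q}}s)=l(w_{\wh{Q}})-1$. Once this is established, computing $l(f(\hat{w}_k^{-1}))=l(w_0)-(k-1)-l(w_{\wh{Q}})=r-k$ and comparing with $l(\hat{w}_j^{-1})=j-1$ from Lemma~\ref{lem:wcheck_and_what_length}, which are pairwise distinct as $j$ ranges over $\{1,\ldots,r\}$, forces $f(\hat{w}_k^{-1})=\hat{w}_{r-k+1}^{-1}$.
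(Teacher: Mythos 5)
Your proof is correct, and it supplies exactly the deduction the paper leaves implicit: the paper's entire justification is the one-line remark that the lemma ``results from'' Lemma~\ref{lem:shorter_elements_of_W_mod_WQ_right}, and your use of the length-preserving inversion $w\mapsto w^{-1}$ to transport the right-coset statement to left cosets, together with the standard argument that $u\mapsto w_0uw_{\wh{Q}}$ permutes the minimal-length left-coset representatives (pinned down by the length count $l(w_0)-l(w_{\wh{Q}})-(k-1)=r-k$), is the natural completion of that remark. Nothing is missing.
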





The proofs of the next three lemmas are simple verifications.

\begin{lem}
\label{lem:wcheck_plus2}
Fix $1\leq i\leq k<j\leq r$. Then $l(\check{w}_ks_{\halpha_{i,j}}) = l(\check{w}_k)+1$ if and only if $(i,j)=(k-1,k+1)$. Moreover, $\check{w}_ks_{\halpha_{k-1,k+1}} = \check{w}_{k+1}s_{k-1}s_k = s_{r-1}\ldots s_{k+1}s_{k-1}s_k$.
\end{lem}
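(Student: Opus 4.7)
The plan is to reduce everything to explicit permutation combinatorics in $S_r$, since $\wh{W}\cong S_r$ and we have a concrete description of $\check{w}_k$ through its definition $\check{w}_k = s_{r-1}s_{r-2}\cdots s_{k+1}s_k$.

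First I would establish the one-line notation for $\check{w}_k$ as a permutation. A direct computation (tracking $s_k$ first, then $s_{k+1}$, and so on) shows that $\check{w}_k(a)=a$ for $a<k$, $\check{w}_k(k)=r$, and $\check{w}_k(a)=a-1$ for $k<a\leq r$. Counting inversions of this one-line notation recovers $l(\check{w}_k)=r-k$, consistent with Lemma \ref{lem:wcheck_and_what_length}.

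Next I would compute $l(\check{w}_ks_{\halpha_{i,j}})$ by counting inversions of the permutation $w'=\check{w}_k\circ(i,j)$, whose one-line notation differs from that of $\check{w}_k$ only by swapping the values in positions $i$ and $j$. I split into two cases. In the case $i=k<j$, a direct inversion count gives $l(w')=r-k-1=l(\check{w}_k)-1$, so this case never produces a length jump of $+1$. In the case $i<k<j$, the values of $w'$ are explicit on each of the ranges $a<i$, $a=i$, $i<a<k$, $a=k$, $k<a<j$, $a=j$, $a>j$, and a careful case analysis of pairs yields the closed formula
\[
l(\check{w}_ks_{\halpha_{i,j}})=2j+r-2i-k-3.
\]
Setting this equal to $l(\check{w}_k)+1=r-k+1$ reduces to $j-i=2$, which combined with $i<k<j$ forces $k=i+1$ and therefore $(i,j)=(k-1,k+1)$. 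This proves the stated necessary and sufficient condition.

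For the second assertion, I would use the factorisation $\check{w}_k=\check{w}_{k+1}s_k$ (immediate from the definition) to reduce the identity $\check{w}_ks_{\halpha_{k-1,k+1}}=\check{w}_{k+1}s_{k-1}s_k$ to the purely $S_r$-identity $s_ks_{\halpha_{k-1,k+1}}=s_{k-1}s_k$. Since $s_{\halpha_{k-1,k+1}}=s_{k-1}s_ks_{k-1}=s_ks_{k-1}s_k$, we get $s_ks_{\halpha_{k-1,k+1}}=s_k\cdot s_ks_{k-1}s_k=s_{k-1}s_k$, completing the proof. (Alternatively, verify both sides act as the $3$-cycle $(k-1,k,k+1)$ and fix every other index.)

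The main bookkeeping obstacle is the inversion count for $i<k<j$: the value $j-1$ placed at position $i$ lies above positions $i+1,\dots,j-1$ (excluding $k$), and the value $i$ placed at position $j$ lies below positions $i+1,\dots,j-1,k$. The signs must be tallied very carefully (in particular the row $a>j$ contributes no new inversions), but once the closed formula $2j+r-2i-k-3$ is obtained the equation $j-i=2$ falls out immediately, and the rigidity of the answer $(k-1,k+1)$ is what the lemma is really asserting.
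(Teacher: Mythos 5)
Your proof is correct, and it supplies the verification that the paper omits entirely (the paper simply declares Lemmas \ref{lem:wcheck_plus2}--\ref{lem:wcheck-1_plus1_jequalsk+2} to be ``simple verifications''). I checked the key steps: the one-line notation $\check{w}_k = (1,\ldots,k-1,r,k,\ldots,r-1)$ is right, the inversion count in the case $i=k$ does give $l(\check{w}_k)-1$, the closed formula $l(\check{w}_k s_{\halpha_{i,j}})=2j-2i+r-k-3$ for $i<k<j$ is correct and forces $j-i=2$, hence $(i,j)=(k-1,k+1)$, and the identity $s_k s_{\halpha_{k-1,k+1}}=s_k\cdot s_k s_{k-1} s_k=s_{k-1}s_k$ cleanly yields the second assertion via $\check{w}_k=\check{w}_{k+1}s_k$.
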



%

%

\begin{lem}
\label{lem:wcheck_plus2_jequalsk}
Let $k\in\{2,\ldots,r-1\}$. If $i<k-1$, then $l(\check{w}_k s_{\halpha_{i,k}}) \geq l(\check{w}_k) + 2$. For $i=k-1$, we have $\check{w}_k s_{\halpha_{k-1,k}} = \check{w}_{k-1}$, and $l(\check{w}_k s_{\halpha_{k-1,k}}) = l(\check{w}_k) + 1$
\end{lem}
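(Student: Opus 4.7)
The plan is to prove the two claims by direct computation in the symmetric group $S_r \cong \wh{W}$, using the fact that the length of an element of $S_r$ equals its number of inversions.

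For the case $i = k-1$, note that $s_{\halpha_{k-1,k}}$ is precisely the simple reflection $s_{k-1}$. Then immediately from the definitions,
\[
\check{w}_k s_{k-1} = s_{r-1}\cdots s_{k+1} s_k s_{k-1},
\]
which is the reduced expression defining $\check{w}_{k-1}$. The length equality $l(\check{w}_k s_{k-1}) = l(\check{w}_k) + 1$ then follows from Lemma \ref{lem:wcheck_and_what_length}, since $l(\check{w}_{k-1}) = r-k+1 = l(\check{w}_k)+1$.

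For the case $i < k-1$, I would first describe $\check{w}_k$ explicitly as the permutation of $\{1,\ldots,r\}$ that fixes $1,\ldots,k-1$, sends $k \mapsto r$, and sends $k+j \mapsto k+j-1$ for $j = 1,\ldots,r-k$. This is straightforward from the definition $\check{w}_k = s_{r-1}\cdots s_k$, by applying the simple transpositions from right to left to each input. Composing on the right with the transposition $s_{\halpha_{i,k}} = (i,k)$ then yields a permutation sending $i \mapsto r$, $k \mapsto i$, $k+j \mapsto k+j-1$ for $j=1,\ldots,r-k$, and fixing all other values (here I use that $i < k$ so $\check{w}_k(i) = i$).

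The last step is to count inversions of this permutation. Its one-line image word is
\[
(1,\ldots,i-1,\ r,\ i+1,\ldots,k-1,\ i,\ k,\ k+1,\ldots,r-1).
\]
The value $r$ sitting at position $i$ contributes $r-i$ inversions, because every later position holds a smaller value. The value $i$ at position $k$ contributes $k-1-i$ further inversions, those with the entries $i+1,\ldots,k-1$ on its left; all remaining pairs are non-inverted. The total is $r+k-2i-1$. Subtracting $l(\check{w}_k) = r-k$ from Lemma \ref{lem:wcheck_and_what_length} gives
\[
l(\check{w}_k s_{\halpha_{i,k}}) - l(\check{w}_k) = 2(k-i) - 1,
\]
which is at least $3$ (hence in particular at least $2$) whenever $i \leq k-2$. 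The only potential obstacle is keeping the inversion bookkeeping unambiguous; beyond that, the argument is routine.
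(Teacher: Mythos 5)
Your proof is correct, and since the paper gives no argument for this lemma beyond declaring it a ``simple verification,'' your direct inversion count in the symmetric group is precisely the intended kind of check. The computation $l(\check{w}_k s_{\hat{\alpha}_{i,k}})-l(\check{w}_k)=2(k-i)-1$ is accurate and in fact yields the sharper bound $\geq 3$ when $i\leq k-2$.
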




\begin{lem}
\label{lem:wcheck-1_plus1_jequalsk+2}
Let $k\in\{1,\ldots,r-2\}$. Then, for all $j\in\{3,\ldots,r\}$, $l(s_k\ldots s_1 s_{\halpha_{1,j}}) = l(s_k\ldots s_1) + 1$ if and only if $j=k+2$, and $s_k\ldots s_1 s_{\halpha_{1,k+2}} = s_{k+1}s_k\ldots s_1$.
\end{lem}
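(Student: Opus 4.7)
The plan is to work in $\wh W \cong S_r$ via one-line notation. The element $w := s_k\cdots s_1$ is the cycle sending $1 \mapsto k+1$, $2 \mapsto 1$, $i \mapsto i-1$ for $3 \le i \le k+1$, and fixing $l \ge k+2$; its inversions are exactly the $k$ pairs $(1,l)$ with $2 \le l \le k+1$, confirming $l(w) = k$. Since $s_{\halpha_{1,j}}$ is the transposition exchanging positions $1$ and $j$, the product $\sigma := w\,s_{\halpha_{1,j}}$ differs from $w$ only at these two positions, with $\sigma(1) = w(j)$ and $\sigma(j) = w(1) = k+1$. Consequently only inversions touching position $1$ or $j$ can alter the length, and a case analysis on $j$ finishes the proof.

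Three cases arise. \emph{(i)} For $3 \le j \le k+1$, $w(j) = j-1 < w(1)$, and direct enumeration gives $j-2$ inversions of the form $(1,l)$ together with $k+1-j$ of the form $(j,l)$, so $l(\sigma) = k-1 < l(w)+1$. \emph{(ii)} For $j = k+2$, $w(j) = k+2 > w(1)$, and the inversions of $\sigma$ are exactly the $k+1$ pairs $(1,l)$ with $2 \le l \le k+2$, yielding $l(\sigma) = k+1 = l(w)+1$; moreover the one-line form of $\sigma$ (sending $1\mapsto k+2$, $l\mapsto l-1$ for $2\le l\le k+2$, and fixing $l \ge k+3$) agrees term-by-term with that of $s_{k+1}s_k\cdots s_1$, which establishes the claimed equality. \emph{(iii)} For $j \ge k+3$, $w(j) = j > w(1)$, and the $j-k-2$ intermediate positions $c \in \{k+2,\ldots,j-1\}$ with $w(c) = c$ strictly between $w(1)$ and $w(j)$ contribute two new inversions each (namely $(1,c)$ and $(c,j)$), so $l(\sigma) = l(w) + 1 + 2(j-k-2) = 2j-k-3 \ge k+3 > l(w)+1$.

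Assembling the three cases gives $l(\sigma) = l(w)+1$ if and only if $j = k+2$, and case \emph{(ii)} delivers the explicit expression $\sigma = s_{k+1}s_k\cdots s_1$. The only subtlety lies in the bookkeeping for case \emph{(iii)}, where one must recognize that the pair of inversions $(1,c)$ and $(c,j)$ is created precisely when $w(c)$ lies strictly between $w(1)$ and $w(j)$; apart from this, the argument is a routine inversion count, squarely in the spirit of the \emph{simple verifications} advertised by the paper.
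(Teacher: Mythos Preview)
Your proof is correct. The paper itself gives no argument, merely declaring the result a ``simple verification''; your inversion count in $S_r$ via one-line notation is exactly the kind of computation intended, and each of the three cases checks out (in particular, case~\emph{(iii)} correctly invokes the standard fact that right-multiplication by the transposition $(1,j)$ changes the length by $1+2m$, where $m$ counts the intermediate positions $c$ with $w(c)$ strictly between $w(1)$ and $w(j)$).
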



\subsection{The Chevalley formula}
\label{subsection:Chevalleyformula}

\subsubsection{Statement of the Chevalley formula}
\label{subsubsection:statement_of_Chevalley_formula}

We keep the notation introduced in subsection \ref{subsection:main_criterion}. As noticed in \cite[A.2]{BS00} and in \cite{chevalley}, we may assume that $K$ is semisimple and simply connected.

The Chevalley formula is stated in the next theorem. We use the formulation given in \cite{BS00} (Theorem A.2.1). See also \cite{chevalley} and \cite{demazure} for a proof of the formula. We use the following notations. The vector $\alpha^{\vee}$ of $\got{t}$ is the coroot of $\alpha$ in $\got{t}$. Moreover, if $\alpha$ is a simple root, we denote by $\pi_{\alpha}$ the fundamental weight associated to $\alpha$. We define the morphism $\Theta : \wedge^*\rightarrow\mathrm{H}^2(K/T,\Z)$ which sends a weight $\mu$ of the weight lattice $\wedge^*$ of $T$, onto the first Chern class $\Theta(\mu) = c_1(\mathcal{L}_{\mu})$, of the line bundle $\mathcal{L}_{\mu}$ with weight $\mu$.

\begin{thm}[Chevalley]
\label{thm:Chevalley}
\begin{enumerate}
\item $\Theta$ is an isomorphism,
\item $\Theta(\pi_{\alpha}) = \sigma_{s_{\alpha}}$ for all simple root $\alpha$.
\item For all weight $\mu$ of $\wedge^*$,
\begin{equation}
\label{eq:Chevalley_formula}
\Theta(\mu).\sigma^{B}_{w} = \sum_{\stackrel{\alpha\in \got{R}^+}{l(w s_{\alpha}) = l(w)+1}}\mu(\alpha^{\vee})\sigma^{B}_{ws_{\alpha}}.
\end{equation}
\end{enumerate}
\end{thm}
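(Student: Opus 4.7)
For assertion (1), the plan is to factor $\Theta$ as the composition
$\wedge^*\cong\PicG(\KC/B)\cong\Pic(\KC/B)\xrightarrow{c_1}\mathrm{H}^2(\KC/B,\Z)$.
Every character of $B$ factors through $T$ (the unipotent radical has no characters), giving the first isomorphism $\mu\mapsto\mathcal{L}_\mu$. Since $\KC$ is semisimple and simply connected, forgetting the linearization is an isomorphism on Picard groups. Finally, $c_1$ is an isomorphism by the exponential sequence, using that $\KC/B$ is simply connected and $\mathrm{H}^1(\KC/B,\mathcal{O})=\mathrm{H}^2(\KC/B,\mathcal{O})=0$ (via Kodaira vanishing on the flag variety, or directly from Bott's theorem).

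For (2), both $\Theta(\pi_\alpha)$ and $\sigma_{s_\alpha}^B$ lie in $\mathrm{H}^2(\KC/B,\Z)$, and I would check that they have the same pairing with each element of the dual basis $\{[X_{s_\beta}]\}$ of $\mathrm{H}_2$, indexed by simple roots $\beta$. By duality, $\langle\sigma_{s_\alpha}^B,[X_{s_\beta}]\rangle=\delta_{\alpha\beta}$. Each Schubert line $X_{s_\beta}$ is a $\mathbb{P}^1$ coming from the rank-one subgroup associated to $\beta$, and a direct calculation shows $\mathcal{L}_\mu|_{X_{s_\beta}}\cong\mathcal{O}_{\mathbb{P}^1}(\mu(\beta^\vee))$; hence $\langle\Theta(\pi_\alpha),[X_{s_\beta}]\rangle=\pi_\alpha(\beta^\vee)=\delta_{\alpha\beta}$. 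The two classes therefore coincide.

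For (3), since $\Theta$ is a group homomorphism and both sides of \eqref{eq:Chevalley_formula} are $\Z$-linear in $\mu$, it suffices by (1) and (2) to prove the identity for $\mu=\pi_\alpha$ with $\alpha$ simple, i.e.\ to establish $\sigma_{s_\alpha}^B\cdot\sigma_w^B = \sum_{\gamma\in\got{R}^+,\,l(ws_\gamma)=l(w)+1}\pi_\alpha(\gamma^\vee)\,\sigma_{ws_\gamma}^B$. The plan is to pick the $B^-$-eigensection of $\mathcal{L}_{\pi_\alpha}$ whose zero divisor is the opposite Schubert divisor $\overline{B^- s_\alpha B/B}$, and to compute its intersection with the Schubert variety $\overline{BwB/B}$ cycle-theoretically. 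By degree, the result is supported on the codimension-one Schubert subvarieties of $\overline{BwB/B}$; the strong exchange condition identifies these precisely with the $\overline{Bws_\gamma B/B}$ for $\gamma\in\got{R}^+$ with $l(ws_\gamma)=l(w)+1$.

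The main obstacle will be extracting the correct multiplicity $\pi_\alpha(\gamma^\vee)$ for each such $\gamma$, which requires a local weight computation at the torus-fixed point $ws_\gamma B/B$: one compares the $T$-weight of the fiber of $\mathcal{L}_{\pi_\alpha}$ at $ws_\gamma B/B$ to the weight on the tangent line of the Schubert curve joining $ws_\gamma B/B$ to $wB/B$, and applies the degree formula on this rational curve. An alternative avoiding the direct intersection calculation is to use BGG divided-difference operators $\partial_\alpha$ on $\mathrm{H}^*(\KC/B,\Z)$: the Leibniz rule $\partial_\alpha(xy)=\partial_\alpha(x)y+s_\alpha(x)\partial_\alpha(y)$ applied to $x=\Theta(\pi_\alpha)$ and $y=\sigma_w^B$, together with the known action of $\partial_\alpha$ on Schubert classes, would reduce the identity to an induction on $l(w)$; but the genuinely new content in either approach remains the rank-one Chevalley identity on a Schubert line, and that is where the real work lies.
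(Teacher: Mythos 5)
The paper does not prove this theorem at all: it is quoted as a classical result, with the formulation taken from Berenstein--Sjamaar \cite{BS00} (Theorem A.2.1) and the proof deferred to \cite{chevalley} and \cite{demazure}. So there is no internal argument to compare yours against; what can be said is whether your sketch is a viable reconstruction of the standard proof, and for the most part it is. Parts (1) and (2) are correct and complete in outline: the factorization $\wedge^*\cong\PicG(\KC/B)\cong\Pic(\KC/B)\xrightarrow{c_1}\mathrm{H}^2(\KC/B,\Z)$ uses exactly the simplifying assumption the paper itself invokes ($K$ semisimple and simply connected), and identifying $\Theta(\pi_\alpha)$ with $\sigma_{s_\alpha}^B$ by pairing against the Schubert lines $[X_{s_\beta}]$ via $\deg(\mathcal{L}_\mu|_{X_{s_\beta}})=\mu(\beta^\vee)$ is the standard and correct argument.

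In part (3) there is one genuine (though repairable) inconsistency of conventions. The classes $\sigma_w^B$ are defined as the basis \emph{dual} to the fundamental classes $[\overline{BwB/B}]$, so $\sigma_w^B$ is Poincar\'e dual to the \emph{opposite} Schubert variety $\overline{B^-wB/B}$ of codimension $l(w)$. If, as you propose, you intersect the divisor of the $B^-$-eigensection with $\overline{BwB/B}$, the support of the result consists of Schubert varieties $\overline{BvB/B}$ with $l(v)=l(w)-1$, i.e.\ $v=ws_\gamma$ with $l(ws_\gamma)=l(w)\mathbin{-}1$ --- not $+1$ as you wrote. You are computing the cap product $\Theta(\pi_\alpha)\cap[X_w]$ (the homological Chevalley formula) and would still need to dualize to reach the cup-product statement \eqref{eq:Chevalley_formula}; alternatively, intersect a $B$-stable representative of the divisor with $\overline{B^-wB/B}$ directly, whose codimension-one Schubert strata are indeed indexed by $ws_\gamma$ with $l(ws_\gamma)=l(w)+1$. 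Beyond that, you correctly identify where the substance lies --- the local multiplicity $\pi_\alpha(\gamma^\vee)$ on each Schubert curve, or equivalently the rank-one identity feeding the BGG divided-difference induction --- but that computation is only named, not performed, so as it stands this is an accurate plan rather than a proof. Since the paper treats the theorem as a citation, that level of detail is arguably more than the paper itself supplies.
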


Let $\hat{K}$ be another connected, simply connected, compact, semisimple, real Lie group, and $f:K\rightarrow\hat{K}$ be a Lie group homomorphism with finite kernel. The map $f$ induces a homomorphism $\KC\rightarrow\hKC$ with finite kernel, and an embedding $f^B:\KC/B\rightarrow \hKC/\hat{B}$, which induces a map $(f^B)^*:\coh{\hKC/\hat{B}}\rightarrow\coh{\KC/B}$ in cohomology. As said in \cite[A.2]{BS00}, the map $(f^B)^*$ is completely determined by the maps $f^*$, $\Theta$ and $\wh{\Theta}$.

\subsubsection{Computation of $(f_{\lambda}^B)^*(\check{w}_k)$}

%

\begin{lem}
\label{lem:cupproduct_sk_wcheckk}
For all $k=2,\ldots,r-1$, we have
\[
\sigma_{s_k}^{\wh{B}}.\sigma_{\check{w}_k}^{\wh{B}} = \sigma^{\wh{B}}_{s_{r-1}\ldots s_{k+2}s_{k+1}s_{k-1}s_k}.
\]
\end{lem}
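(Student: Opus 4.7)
The approach is a direct application of the Chevalley formula, using the combinatorial lemmas already established in subsection \ref{subsection:properties_Weylgroup_GLr}. First, I would rewrite $\sigma_{s_k}^{\wh{B}}$ as the first Chern class of a line bundle: by part $(2)$ of Theorem \ref{thm:Chevalley}, $\sigma_{s_k}^{\wh{B}} = \wh{\Theta}(\pi_{\halpha_{k,k+1}})$, where $\pi_{\halpha_{k,k+1}}$ is the fundamental weight of $\hKC = GL_r(\C)$ associated to the simple root $\halpha_{k,k+1}$. Then I would apply formula \eqref{eq:Chevalley_formula} with $w = \check{w}_k$ and $\mu = \pi_{\halpha_{k,k+1}}$ to obtain
\[
\sigma_{s_k}^{\wh{B}}.\sigma_{\check{w}_k}^{\wh{B}} = \sum_{\stackrel{\halpha\in\wh{\got{R}}^+}{l(\check{w}_k s_{\halpha}) = l(\check{w}_k)+1}} \pi_{\halpha_{k,k+1}}(\halpha^{\vee})\sigma^{\wh{B}}_{\check{w}_k s_{\halpha}}.
\]

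The next step is to identify which positive roots $\halpha = \halpha_{i,j}$ (with $1\leq i<j\leq r$) contribute nontrivially. A short computation in type $A_{r-1}$ shows that $\pi_{\halpha_{k,k+1}}(\halpha_{i,j}^{\vee})$ equals $1$ if $i\leq k<j$ and $0$ otherwise, so the sum is supported on roots $\halpha_{i,j}$ with $i\leq k<j$. This is precisely the range covered by Lemma \ref{lem:wcheck_plus2}, which asserts that among such roots only the pair $(i,j)=(k-1,k+1)$ satisfies the length condition $l(\check{w}_k s_{\halpha_{i,j}}) = l(\check{w}_k)+1$.

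Finally, I would invoke the explicit formula in the second half of Lemma \ref{lem:wcheck_plus2}, namely $\check{w}_k s_{\halpha_{k-1,k+1}} = s_{r-1}\ldots s_{k+1}s_{k-1}s_k$, together with the pairing value $\pi_{\halpha_{k,k+1}}(\halpha_{k-1,k+1}^{\vee}) = 1$, to conclude
\[
\sigma_{s_k}^{\wh{B}}.\sigma_{\check{w}_k}^{\wh{B}} = \sigma^{\wh{B}}_{s_{r-1}\ldots s_{k+2}s_{k+1}s_{k-1}s_k}.
\]

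There is no real obstacle here: once the combinatorial lengths have been sorted out in Lemma \ref{lem:wcheck_plus2}, the proof collapses to a single application of the Chevalley formula and the elementary observation that the fundamental weight $\pi_{\halpha_{k,k+1}}$ detects exactly the roots that ``cross'' the index $k$. The only delicate point is to verify that the restriction $i\leq k<j$ coming from non-vanishing of $\pi_{\halpha_{k,k+1}}(\halpha_{i,j}^{\vee})$ matches the hypothesis in Lemma \ref{lem:wcheck_plus2}, so that no other contributions can arise.
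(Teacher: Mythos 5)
Your proof is correct and follows exactly the same route as the paper: apply the Chevalley formula with $\mu=\pi_{\halpha_{k,k+1}}$, observe that $\pi_{\halpha_{k,k+1}}(\halpha_{i,j}^{\vee})=1$ precisely when $i\leq k<j$ (and $0$ otherwise), and then invoke Lemma \ref{lem:wcheck_plus2} to see that only $(i,j)=(k-1,k+1)$ survives the length condition. You merely make explicit the pairing computation that the paper dismisses as a ``straightforward verification.''
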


\begin{proof}
A straightforward verification yields
\[
\sigma^{\wh{B}}_{s_k}.\sigma^{\wh{B}}_{\check{w}_k} = \sum_{\stackrel{i\leq k< j}{l(\check{w}_k s_{\halpha_{i,j}}) = l(\check{w}_k)+1}}\sigma^{\wh{B}}_{\check{w}_k s_{\halpha_{i,j}}},
\]
from \eqref{eq:Chevalley_formula}. Now the result directly follows from Lemma \ref{lem:wcheck_plus2}.
\end{proof}


\begin{lem}
\label{lem:cupproduct_skminus1_wcheckk}
For all $k=2,\ldots,r-1$, we have
\[
\sigma_{s_{k-1}}^{\wh{B}}.\sigma_{\check{w}_k}^{\wh{B}} = \sigma^{\wh{B}}_{s_{r-1}\ldots s_{k+2}s_{k+1}s_{k-1}s_k} + \sigma_{\check{w}_{k-1}}^{\wh{B}}.
\]
\end{lem}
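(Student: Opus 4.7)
The plan is to apply Chevalley's formula (Theorem \ref{thm:Chevalley}), writing $\sigma_{s_{k-1}}^{\wh{B}} = \Theta(\pi_{\halpha_{k-1,k}})$, to obtain
\[
\sigma_{s_{k-1}}^{\wh{B}}\,.\,\sigma_{\check{w}_k}^{\wh{B}} = \sum_{\substack{\halpha\in\wh{\got{R}}^+ \\ l(\check{w}_k s_{\halpha})=l(\check{w}_k)+1}} \pi_{\halpha_{k-1,k}}(\halpha^{\vee})\,\sigma^{\wh{B}}_{\check{w}_k s_{\halpha}}.
\]
For a positive root $\halpha_{i,j}$ of $\hKC=GL_r(\C)$, the coroot $\halpha_{i,j}^{\vee}$ is the sum of the simple coroots $\halpha_{l,l+1}^{\vee}$ for $i\leq l\leq j-1$, so $\pi_{\halpha_{k-1,k}}(\halpha_{i,j}^{\vee})$ equals $1$ exactly when $i\leq k-1<j$ and vanishes otherwise. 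The sum thus reduces to an enumeration of the roots $\halpha_{i,j}$ with $i\leq k-1$ and $j\geq k$ for which $l(\check{w}_k s_{\halpha_{i,j}})=l(\check{w}_k)+1$.

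I would split this enumeration according to whether $j=k$ or $j>k$. In the first case, Lemma \ref{lem:wcheck_plus2_jequalsk} forces $i=k-1$ and yields the contribution $\sigma^{\wh{B}}_{\check{w}_{k-1}}$. In the second case, the chain of inequalities $i\leq k-1\leq k<j$ puts the root in the scope of Lemma \ref{lem:wcheck_plus2}, which isolates $(i,j)=(k-1,k+1)$ as the only admissible pair and identifies $\check{w}_k s_{\halpha_{k-1,k+1}}$ with $\check{w}_{k+1}s_{k-1}s_k = s_{r-1}\ldots s_{k+2}s_{k+1}s_{k-1}s_k$. Summing these two surviving contributions, each with coefficient $1$, produces exactly the formula claimed.

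The argument is essentially a transcription of the proof of Lemma \ref{lem:cupproduct_sk_wcheckk}; the one genuinely new feature is the extra contribution from the simple-root index $j=k$, which was absent there because $\pi_{\halpha_{k,k+1}}(\halpha_{i,k}^{\vee})=0$ for every $i<k$. I therefore do not expect any real obstacle, as the combinatorial Lemmas \ref{lem:wcheck_plus2} and \ref{lem:wcheck_plus2_jequalsk} in the appendix have been tailored to this computation, and the only care required is in correctly bookkeeping the two ranges of $j$.
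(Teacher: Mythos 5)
Your proof is correct and follows exactly the route the paper takes: the paper's proof of this lemma simply says it is analogous to that of Lemma \ref{lem:cupproduct_sk_wcheckk}, using Lemmas \ref{lem:wcheck_plus2} and \ref{lem:wcheck_plus2_jequalsk}, which is precisely the Chevalley-formula computation you spell out. Your bookkeeping of the two ranges $j=k$ and $j>k$, and the observation that the extra term $\sigma_{\check{w}_{k-1}}^{\wh{B}}$ comes from the pair $(i,j)=(k-1,k)$, is exactly the intended argument.
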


\begin{proof}
This is similar to the proof of Lemma \ref{lem:cupproduct_sk_wcheckk}. Here we use Lemmas \ref{lem:wcheck_plus2} and \ref{lem:wcheck_plus2_jequalsk}.
\end{proof}

We now have the obvious following Theorem.

\begin{thm}
\label{thm:formume_sigmawcheckk}
For all $k=2,\ldots,r-1$, we have $\sigma_{\check{w}_{k-1}}^{\wh{B}} = (\sigma_{s_{k-1}}^{\wh{B}}-\sigma_{s_k}^{\wh{B}}).\sigma_{\check{w}_k}^{\wh{B}}$. Furthermore, we have
\begin{equation}
\label{eq:sigmawcheck_productofclassesofdegree2}
\sigma_{\check{w}_{k-1}}^{\wh{B}} = (\sigma_{s_{k-1}}^{\wh{B}}-\sigma_{s_k}^{\wh{B}}).(\sigma_{s_k}^{\wh{B}}-\sigma_{s_{k+1}}^{\wh{B}}).\cdots.(\sigma_{s_{r-2}}^{\wh{B}}-\sigma_{s_{r-1}}^{\wh{B}}).\sigma_{s_{r-1}}^{\wh{B}}.
\end{equation}
\end{thm}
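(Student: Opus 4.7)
The first identity falls out essentially for free by comparing the two lemmas immediately preceding the theorem. Lemma \ref{lem:cupproduct_skminus1_wcheckk} gives
\[
\sigma_{s_{k-1}}^{\wh{B}}\,.\,\sigma_{\check{w}_k}^{\wh{B}} = \sigma^{\wh{B}}_{s_{r-1}\ldots s_{k+2}s_{k+1}s_{k-1}s_k} + \sigma_{\check{w}_{k-1}}^{\wh{B}},
\]
while Lemma \ref{lem:cupproduct_sk_wcheckk} gives
\[
\sigma_{s_k}^{\wh{B}}\,.\,\sigma_{\check{w}_k}^{\wh{B}} = \sigma^{\wh{B}}_{s_{r-1}\ldots s_{k+2}s_{k+1}s_{k-1}s_k}.
\]
Subtracting the second from the first yields the identity $\sigma_{\check{w}_{k-1}}^{\wh{B}} = (\sigma_{s_{k-1}}^{\wh{B}}-\sigma_{s_k}^{\wh{B}})\,.\,\sigma_{\check{w}_k}^{\wh{B}}$, which is the first assertion of the theorem. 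So the first statement really is just an algebraic manipulation of the two preceding lemmas.

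For the product formula, I would argue by downward induction on $k$, using the first identity as the inductive step. The base case is $k=r-1$: by the definition of $\check{w}_k$ in subsection~\ref{subsection:properties_Weylgroup_GLr}, we have $\check{w}_{r-1}=s_{r-1}$, hence $\sigma_{\check{w}_{r-1}}^{\wh{B}}=\sigma_{s_{r-1}}^{\wh{B}}$, which matches the rightmost factor of the displayed product. Applying the first identity once converts $\sigma_{\check{w}_{r-1}}^{\wh{B}}$ into $(\sigma_{s_{r-2}}^{\wh{B}}-\sigma_{s_{r-1}}^{\wh{B}})\,.\,\sigma_{s_{r-1}}^{\wh{B}}=\sigma_{\check{w}_{r-2}}^{\wh{B}}$, and iterating this procedure downward, i.e.\ repeatedly substituting
\[
\sigma_{\check{w}_j}^{\wh{B}} = (\sigma_{s_{j}}^{\wh{B}}-\sigma_{s_{j+1}}^{\wh{B}})\,.\,\sigma_{\check{w}_{j+1}}^{\wh{B}}
\]
for $j=k-1,k,\ldots,r-2$, gives the full factorization \eqref{eq:sigmawcheck_productofclassesofdegree2}.

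There is no real obstacle here: the work has been done in Lemmas \ref{lem:cupproduct_sk_wcheckk} and \ref{lem:cupproduct_skminus1_wcheckk}, which themselves are Chevalley's formula applied carefully using the combinatorial Lemmas \ref{lem:wcheck_plus2} and \ref{lem:wcheck_plus2_jequalsk} to identify which terms in the Chevalley sum have length $l(\check{w}_k)+1$. The only mild point to be careful about is the range of $k$ in the induction, since Lemma \ref{lem:cupproduct_sk_wcheckk} and Lemma \ref{lem:cupproduct_skminus1_wcheckk} are stated for $k\in\{2,\ldots,r-1\}$, so the iterated application of the first identity is legal precisely down to the index needed in the product formula.
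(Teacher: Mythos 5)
Your proposal is correct and follows essentially the same route as the paper: the paper presents this theorem as the ``obvious'' consequence of Lemmas \ref{lem:cupproduct_sk_wcheckk} and \ref{lem:cupproduct_skminus1_wcheckk}, namely subtracting the two cup-product identities to get the first assertion and then iterating it (with the base case $\check{w}_{r-1}=s_{r-1}$) to obtain the factorization \eqref{eq:sigmawcheck_productofclassesofdegree2}. Your explicit downward induction, including the check that the index range $k\in\{2,\ldots,r-1\}$ makes each substitution legal, is exactly the argument the paper leaves implicit.
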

%

Now, we can compute the images of the classes $\sigma_{\check{w}_{k-1}}^{\wh{B}}\in\coh{\hKC/\hat{B}}$ by the map $(f_{\lambda}^B)^*$, for all $k\in\{2,\ldots,r\}$. We consider the map
\[
f_{\lambda}:H\in\got{t}_{\C}\cap[\got{k}_{\C},\got{k}_{\C}]\mapsto\diag(\beta_1(H),\ldots,\beta_r(H))\in\hat{\got{t}}_{\C}\cap\got{sl}_r,
\]
where $\beta_1,\ldots,\beta_r$ are the weights of $\TC$ on the $\KC$-module $M$, following the parametrization of $M$ made in paragraph \ref{subsection:parametrization}. Let us denote by $\got{t}_{ss} = \got{t}\cap[\got{k},\got{k}]$ the semisimple part of $\got{k}$ in $\got{t}$. Thus, $f_{\lambda}^*(\hat{e}_i) = \beta_i|_{\got{t}_{ss}}$, and consequently, $f_{\lambda}^*(\pi_{\halpha_{i,i+1}}) = \sum_{j=1}^i \beta_j|_{\got{t}_{ss}}$, and also $f_{\lambda}^*(\pi_{\halpha_{i-1,i}}) - f_{\lambda}^*(\pi_{\halpha_{i,i+1}}) = -\beta_i|_{\got{t}_{ss}}$. Applying this to the equality \eqref{eq:sigmawcheck_productofclassesofdegree2} for any $k\leq r-1$, we get the following formula
\begin{equation}
\label{eq:sigmawcheck_image_by_flambdacheck}
(f_{\lambda}^B)^*(\sigma_{\check{w}_{k-1}}^{\wh{B}}) = \Theta\left((-\beta_k|_{\got{t}_{ss}}) \ldots (-\beta_{r-1}|_{\got{t}_{ss}}) (\sum_{i=1}^{r-1}\beta_i|_{\got{t}_{ss}})\right).
\end{equation}

\begin{lem}
Let $\zeta=\sum_{\beta\in\WT(E)}\beta$ be the sum of the weights of the action of $T$ on $E$. Then $\zeta|_{\got{t}_{ss}} = 0$.
\end{lem}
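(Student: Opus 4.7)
The plan is to recognize $\zeta$ as the derivative of a global character of $K$ and invoke the fact that characters of $K$ are trivial on its semisimple part.

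By definition, $\zeta = \sum_{\beta \in \WT(E)} \beta$, counted with multiplicities, is exactly the weight of $T$ acting on the top exterior power $\bigwedge^{\dim_{\C} E} E$. This one-dimensional $T$-representation is the restriction of the one-dimensional $K$-representation $k \mapsto \det_{\C}(\zeta(k))$, which is a character $\chi: K \to \C^*$. Thus $\zeta = d\chi|_{\got{t}}$ after the standard identification of characters with their differentials.

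Since $\C^*$ is abelian, $\chi$ factors through the abelianization $K/[K,K]$. Consequently its differential $d\chi: \got{k} \to \C$ vanishes on the derived subalgebra $[\got{k},\got{k}]$. Restricting to $\got{t}$ gives $d\chi\big|_{\got{t} \cap [\got{k},\got{k}]} = 0$, that is, $\zeta|_{\got{t}_{ss}} = 0$.

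The argument is essentially a one-liner, so there is no real obstacle; the only thing to watch is a clean statement that the sum of weights of a finite-dimensional representation equals the weight of its determinant, together with the standard fact that any character of a compact (or complex reductive) Lie group is trivial on the commutator subgroup.
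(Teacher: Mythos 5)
Your proof is correct and follows essentially the same route as the paper: both identify $\zeta$ as the differential of the determinant character $\det E=\Lambda^{\dim_{\C}E}E$ of $K$ and conclude that, since the target is abelian, this differential kills $[\got{k},\got{k}]$ and hence $\got{t}_{ss}=\got{t}\cap[\got{k},\got{k}]$. The only cosmetic point is the double use of the symbol $\zeta$ for both the representation homomorphism and the sum of weights, a clash already present in the paper itself.
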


\begin{proof}
The representation $E$ of $\KC$ induces the representation $\det E = \Lambda^r E$ of $\KC$. This yields a character $\chi:\KC\rightarrow\C^*$, with derivative $d\chi_e:\got{k}_{\C}\rightarrow\C$, morphism of Lie algebras. But, here, $\C$ is a Abelian Lie algebra, thus, for all $X,Y\in\got{k}_{\C}$, we have $d\chi_e([X,Y]) = 0$. And it is obvious that we have $\zeta = \sum_{\beta\in\WT(E)}\beta = id\chi_e|_{\got{t}^*}$. Since $\got{t}_{ss} = \got{t}\cap[\got{k},\got{k}]$ , we conclude that $\zeta|_{\got{t}_{ss}} = 0$.
\end{proof}

In our case, $\zeta = \sum_{i=1}^r\beta_i$. Thus, $-\beta_r|_{\got{t}_{ss}} = \sum_{i=1}^{r-1}\beta_i|_{\got{t}_{ss}}$. We may replace this value in the equation 
 \eqref{eq:sigmawcheck_image_by_flambdacheck}
, which yields, for any integer $k\in\{2,\ldots,r\}$,
\begin{equation}
\label{eq:sigmawcheck_image_by_flambdacheck_final}
(f_{\lambda}^B)^*\left(\sigma_{\check{w}_{k-1}}^{\wh{B}}\right) = \Theta\left((-\beta_k|_{\got{t}_{ss}}) \ldots (-\beta_{r}|_{\got{t}_{ss}})\right).
\end{equation}


\normalsize

\bibliography{biblio}
\bibliographystyle{plain}

\end{document}